\theoremstyle{plain}
\newtheorem{corollary}{Corollary}[section]
\newtheorem{lemma}{Lemma}[section]
\newtheorem{proposition}{Proposition}[section]
\theoremstyle{definition}
\newtheorem{definition}{Definition}[section]
\theoremstyle{remark}
\newtheorem{remark}{Remark}[section]
\newcommand{\C}{\mathbb C}
\newcommand{\Z}{\mathbb Z}
\newcommand{\N}{\mathbb N}
\newcommand{\+}{\!+\!}
\newcommand{\half}{
        {\lower0.00ex\hbox{\raise.6ex\hbox{\the\scriptfont0 1}
                           \kern-.5em\slash\kern-.1em\lower.45ex
                                     \hbox{\the\scriptfont0 2}}}}
\newcommand{\quarter}{
        {\lower0.00ex\hbox{\raise.6ex\hbox{\the\scriptfont0 1}
                           \kern-.5em\slash\kern-.1em\lower.45ex
                                     \hbox{\the\scriptfont0 4}}}}
\newcommand{\tquarter}{
        {\lower0.00ex\hbox{\raise.6ex\hbox{\the\scriptfont0 3}
                           \kern-.5em\slash\kern-.1em\lower.45ex
                                     \hbox{\the\scriptfont0 4}}}}
\newcommand{\eighth}{
        {\lower0.00ex\hbox{\raise.6ex\hbox{\the\scriptfont0 1}
                           \kern-.5em\slash\kern-.1em\lower.45ex
                                     \hbox{\the\scriptfont0 8}}}}
\newcommand{\othird}{
        {\lower0.00ex\hbox{\raise.6ex\hbox{\the\scriptfont0 1}
                           \kern-.5em\slash\kern-.1em\lower.45ex
                                     \hbox{\the\scriptfont0 3}}}}
\newcommand{\ddoAW}{\mathbb{D}}
\newcommand{\moAW}{\mathbb{M}}
\newcommand{\Gt}{\mathfrak{N}}
\begin{document}

\title[]{Semi-classical Orthogonal Polynomial Systems on Non-uniform Lattices, Deformations of the Askey Table and Analogs of Isomonodromy}

\author{N.S.~Witte}
\address{Department of Mathematics and Statistics,
University of Melbourne,Victoria 3010, Australia}
\email{\tt n.witte@ms.unimelb.edu.au}

\begin{abstract}
A $\ddoAW$-semi-classical weight is one which satisfies a particular linear, first
order homogeneous equation in a divided-difference operator $ \ddoAW $. It is known
that the system of polynomials, orthogonal with respect to this weight, and the
associated functions satisfy a linear, first order homogeneous matrix equation in
the divided-difference operator termed the spectral equation. Attached to the 
spectral equation is a structure which constitutes a number of relations such as 
those arising from compatibility with the three-term recurrence relation. Here 
this structure is elucidated in the general case of quadratic lattices. The simplest examples of the 
$\ddoAW$-semi-classical orthogonal polynomial systems are precisely those in the
Askey table of hypergeometric and basic hypergeometric orthogonal polynomials.
However within the $\ddoAW$-semi-classical class it is entirely natural to define 
a generalisation of the Askey table weights which involve a deformation with 
respect to new deformation variables. We completely construct the analogous 
structures arising from such deformations and their relations with the other elements 
of the theory. As an example we treat the first non-trivial deformation of the Askey-Wilson orthogonal
polynomial system defined by the $q$-quadratic divided-difference operator, the Askey-Wilson operator, 
and derive the coupled first order divided-difference equations characterising its evolution
in the deformation variable. We show that this system is a member of a sequence of 
classical solutions to the $ E^{(1)}_7 $ $q$-Painlev\'e system.
\end{abstract}

\subjclass[2000]{39A05,42C05,34M55,34M56,33C45,37K35}
\keywords{non-uniform lattices, divided-difference operators, orthogonal polynomials, 
semi-classical weights, isomonodromic deformations, Askey table, Askey-Wilson polynomials}
\maketitle

\tableofcontents

\section{Background and Motivation}\label{Start}
\setcounter{equation}{0}

We propose a method for constructing systems of linear divided-difference equations which are analogs 
of isomonodromic linear differential equations and therefore "isomonodromic interpretations" or Lax pairs of the known
$q$-Painlev\'e and difference Painlev\'e equations, such as those in the Sakai scheme
\cite{Sa_2001}. What is meant by an analog of a monodromy matrix turns out to be a connection matrix,
appropriate to the class of lattices defining the divided-difference operators
under consideration. In essence our method constructs a particular "isomonodromic analog" system
from an orthogonal polynomial system, orthogonal with respect to a generalisation or
deformation of a weight with discrete or countable support on a class of
non-uniform quadratic lattices. We then deduce a number of linear divided-difference equations that
this system satisfies and show that their pair-wise compatibility holds provided the coefficients
of the linear system obey evolution equations of the difference or $q$-Painlev\'e type.
This is a very natural extension of the Fokas-Its-Kitaev construction
\cite{IKF_1991,FIK_1991,FIK_1992} at the heart of Riemann-Hilbert techniques.

Our method is independent of and distinct from other approaches which we briefly recount here.
The first studies to construct Lax pairs for the $q$-Painlev\'e equations were those
of Jimbo and Sakai \cite{JS_1996} and Sakai \cite{Sa_2005,Sa_2006} using
the Birkhoff theory of linear $q$-difference equations $ Y(qx)=A(x)Y(x) $ and imposing the condition 
that the connection matrix was independent of the zeros of $ \det A(x) $.
However this approach has not been extended 
beyond the $ D^{(1)}_5 $ or $ E^{(1)}_6 $ cases (the latter case only found from a degeneration of
the two-variable extension of the former). 
Another distinct approach which is founded upon the notion of the $\tau$-function of a
rational d-connection is the Arinkin-Borodin theory \cite{Bo_2006,AB_2006,AB_2009},
which has been applied to the difference Painlev\'e equations.
Our approach is similar in spirit to that of Rains \cite{Ra_2007}, who has treated the
master elliptic Painlev\'e equation, in that an explicit construction is made of the 
solution to the linear problem which contains a multiple integral representation of the 
orthogonal rational function. However we will not make direct contact with this theory because it requires
us to consider orthogonal rational functions on elliptic lattices, which is a generalisation beyond
the class of lattices considered here. 
In addition we believe that the discrete Riemann-Hilbert approach, as formulated by Borodin in 
\cite{Bo_2003} and applied to two examples and extended to further cases in \cite{BB_2003}, 
shares many features with the present study
and one should be able to construct a Riemann-Hilbert formulation of our own theory. 
Recently Yamada has constructed Lax pairs for the $q$-Painlev\'e equations 
for the systems with symmetries $ E^{(1)}_8, E^{(1)}_7, E^{(1)}_6 $ by a reduction from the
elliptic form of the $  E^{(1)}_8 $ Painlev\'e equation \cite{Ya_2009a,Ya_2010}
however no theoretical construction from first principles was proposed in the individual cases. 

The approach we propose here has been successfully employed for the isomonodromic systems and
Painlev\'e equations, see for example \cite{Ma_1985,Ma_1994,Ma_1995a,BR_1994,IKF_1991,CI_1997,CIvA_1998,FW_2003b,vA_2007,FW_2006a,Wi_2009b}, 
and while most of the findings are reproductions of known ones it has lead to novel results
hitherto not found using other treatments, such as the discrete Garnier systems \cite{Wi_2009a}.
An important feature of our approach is that it is strongly motivated by a probabilistic
setting, namely that of the theory of random matrices and more generally determinantal point processes 
where the classical weights in the Askey table appear
in the one-body factors of the eigenvalue probability density functions \cite{rmt_Fo}.
Some preliminary exploration of the program we propose here has already been initiated but
not carried through to its logical conclusion, and we will delay citing this work
until the body of our paper where it is directly relevant. However we should point out
that we are most indebted to the pioneering work of Magnus \cite{Ma_1988,Ma_1995}.

The essential elements of our approach are the following.

(A)
The classification of special non-uniform lattices {\it SNUL} of quadratic type \cite{Ma_1988}, 
their associated divided-difference operators
$ \ddoAW_x $ and $ \moAW_x $ and their rules of calculus, which applies to orthogonal or bi-orthogonal 
polynomial systems. Of significance is the fact that in general these lattices possess two fixed points
which we denote $ x_L, x_R $.

(B)
The notion of a $ \ddoAW$-semi-classical weight $ w(x) $ which is characterised by an analog of the 
Pearson equation (see Definition \ref{spectral_DD}) \cite{Ma_1988, Ma_1995}
\begin{equation}
    \ddoAW_{x} w(x) = \frac{2V(x)}{W(x)}\moAW_{x} w(x) ,
\end{equation}
where $ W(x), V(x) $ are polynomials in $ \C[x] $.

(C)
The orthogonal polynomial system (OPS) defined by such a weight on the {\it SNUL} $ Y_{n}(x) \in \C^{2\times 2} $
satisfies a three-term recurrence relation, which in our context is a particular Schlesinger transformation
$ n \mapsto n+1 $ (see \ref{ops_Yrecur})
\begin{equation}
     Y_{n+1}(x) = K_n(x)Y_{n}(x) .
\end{equation}

(D)
The semi-classical character implies, under fairly weak conditions, a spectral structure on the "isomonodromic" system $ Y_{n}(x) $ \cite{Ma_1988,Ma_1995}
(see Proposition \ref{spectral_DDO}), namely that it satisfies the linear divided-difference equation
\begin{equation}
    \ddoAW_{x} Y_n(x) = A_n(x) \moAW_{x} Y_n(x) .
\end{equation}
Here $ A_n(x) $ is rational in $ x $ and the degrees of its numerator and denominator with respect to 
$ x $ are independent of $ n $.

(E)
Parallel to the spectral structure is a deformation structure, whereby the weight and the system acquires a $ u $
dependence, constrained by another Pearson relation (see Definition \ref{deform_wgt})
\begin{equation}
    \ddoAW_{u} w(x;u) = \frac{2S(x;u)}{R(x;u)}\moAW_{u} w(x;u) ,
\end{equation}
where $ R(x;u), S(x;u) $ are polynomials in $ \C[x] $. 

(F)
This also has a direct consequence for the "isomonodromic" system $ Y_{n}(x;u) $, a deformation structure
(see Proposition \ref{deform_DDeqn}) and a second associated linear divided-difference equation
\begin{equation}
    \ddoAW_{u} Y_n(x;u) = B_n(x;u) \moAW_{u} Y_n(x;u) ,
\end{equation}
where $ B_n(x;u) $ is also rational in $ x $.

(G)
The compatibility relations implied by this over-determined "isomonodromic" system $ Y_{n}(x;u) $
then lead to a number of conclusions -
the deformation matrix $ B_n(x;u) $ is expressible in terms of the spectral matrix $ A_n(x;u) $ at neighbouring
lattice nodes which we refer to as {\it closure} (see Proposition \ref{AB_resolve_z}), 
and furthermore relations exist between 
components of the spectral matrix at two consecutive nodes on the $ u $-lattice which, given
a suitable parameterisation of this matrix by appropriate co-ordinates, is an recurrence relation on
the deformation lattice. Our key results for such recurrence relations are given in Propositions \ref{evolution_1}, \ref{evolution_2}, 
\ref{evolution_3} and \ref{evolution_4}.

(H)
In our approach we derive a preservation property for the connection matrix defined as
\begin{equation}
   P(x;u) := \left( Y_{R}(x;u) \right)^{-1}Y_{L}(x;u) ,
\end{equation}
where $ Y_{L,R}(x;u) $ are local fundamental solutions of the spectral equation about $ x_{L,R} $.  
Specifically this implies
\begin{equation}
   \ddoAW_{x} P(x;u) = 0, \qquad \ddoAW_{u} P(x;u) = 0 ,
\end{equation}
as a consequence of our assumptions. 

In our approach we can sidestep a number of issues, which are important to be sure, but don't affect the outcome.
So we postpone deeper considerations of an analytic or algebraic theory of linear systems of divided-difference 
operators on these lattices for subsequent studies, however our present work will provide 
concrete illustrative examples for such an investigation.
We are referring to, for example, issues of a Galois theory for $\ddoAW$-difference equations,
a Birkhoff theory for the local character of the solutions to systems of linear first order $\ddoAW$-difference equations
and analogs of monodromy for $\ddoAW$-difference equations.

Our approach poses the question concerning a correspondence between the system of hypergeometric and 
basic hypergeometric orthogonal polynomial systems generalising the classical systems, known as
the Askey table, as represented in reference work \cite{KS_1998},
and the most complete system of elliptic, $q$-difference and difference analogs of the Painlev\'e 
equations, the Sakai scheme \cite{Sa_2001}.
A correspondence between the Sakai scheme and the Askey table would
explain the occurrence of many features that have been discovered
recently such as the appearance of basic hypergeometric functions in their classical solutions.
We have already referred to the correspondence between the classical orthogonal polynomial
systems (OPS) Hermite, Laguerre and Jacobi and the classical solutions to
Painlev\'e IV, V and VI respectively. Amongst the difference and $q$-difference OPS the
current evidence for such a correspondence can be summarised in the Table \ref{AT_SS_correspondence}. 

\begin{table}[h!]
\renewcommand{\arraystretch}{1.2}
\begin{tabular}{|c|c|c|}\hline
 Base OPS & Integrable system & Reference \\
\hline
 little $q$-Jacobi & $D^{(1)}_5$, full $q-{\rm P}_{\rm VI}$ & \cite{OWF_2010} \\
\hline
 Pastro \cite{Pa_1985} & $D^{(1)}_5$, full $q-{\rm P}_{\rm VI}$ & \cite{Bi_2009} \\
\hline
 little $q$-Jacobi & $D^{(1)}_5$, special $q-{\rm P}_{\rm VI}$ & \cite{BB_2003} \\
\hline
 $q$-Krawtchouk & $D^{(1)}_5$, special $q-{\rm P}_{\rm VI}$ & \cite{BB_2003} \\
\hline
 $q$-Charlier & $D^{(1)}_5$, degenerate $q-{\rm P}_{\rm VI}$ & \cite{BB_2003} \\
\hline
 $q$-Freud & $A^{(1)}_4$, degenerate $q-{\rm P}_{\rm V}$ & \cite{BSvA_2008} \\
\hline
\hline
 Meixner, Krawtchouk & $d-{\rm P}_{\rm V}$ & \cite{BB_2003} \\
\hline
 Charlier & $d-{\rm P}_{\rm IV}$ & \cite{BB_2003} \\
\hline
\end{tabular}
\vskip0.5cm
\caption{The Askey table-Sakai scheme correspondence ranked according to the degeneration pathway from the master case.
The affine Weyl group refers to the symmetry group of B\"acklund transformations for the integrable system.
Only the discrete lattices are included here.}\label{AT_SS_correspondence}
\end{table}

The layout of this work loosely follows the plan given above. 

In Section \ref{Qlattices} we describe the classification of the quadratic lattices and their
divided-difference calculus, with special emphasis on the example of the master class, the
$q$-quadratic lattice. 
Section \ref{OPS_NUL} is devoted to the reformulation of orthogonal polynomial system theory
necessary for systems with a weight having support on a general quadratic lattice, and 
some analytical results for the $q$-quadratic lattice.
The spectral structures are laid out in Section\ref{SpectralS} for a general quadratic lattice, starting with the definition of
a $\ddoAW$-semi-classical weight and developing the consequences of this for the 
orthogonal polynomial system. We also describe the compatibility relations of this
structure with the three-term recurrence relations, and this leads to a generalisation of
the Laguerre-Freud equations.
An explicit example of the forgoing theory is the lowest case, and from our point of view the trivial
case, of the $q$-quadratic lattice which is denoted by the label $ M=2 $. This is dealt with in Section \ref{M=2AW}. 
We recover the Askey-Wilson OPS and demonstrate that every aspect of this system can be derived
in an efficient manner using the theoretical tools developed in the previous section.

In Section \ref{DeformS} we introduce an (or many) auxiliary variable and demand that the
weight satisfies an analogous $\ddoAW$-semi-classical relation to the spectral relation,
on a general lattice not necessarily the same as that for the spectral variable. 
This constitutes a {\it deformation} of the weight which leaves it having the same structure
as the original.
From this we develop a parallel theoretical analysis of the orthogonal polynomial system
with respect to this deformation. We now have two compatibility relations with this deformation
structure - one with the three-term recurrence and other with the spectral structure.
For the present purposes this completes our task for the general theory.

In Section \ref{M=3AW} we treat a natural deformation of the Askey-Wilson weight, but not by any means the only one,
to the $q$-quadratic lattice. Here we construct parameterisations of the spectral and
deformation matrices, the closure relations between the two structures, determine the co-ordinate transformations to 
a set of variables and derive the evolution equations for this system
(see Propositions \ref{evolution_1}, \ref{evolution_2},\ref{evolution_3} and \ref{evolution_4}).
Finally we offer evidence that our system is a classical solution of the $ E^{(1)}_7 $ $q$-Painlev\'e system.

\section{Divided-Difference Calculus of the Quadratic Lattices}\label{Qlattices}
\setcounter{equation}{0}

Divided-difference operators associated with the special non-uniform lattices have appeared in
many studies of orthogonal polynomials of a discrete variable. For example see the early 
studies by Hahn \cite{Ha_1949a,Ha_1949b,Ha_1950,Ha_1952},
the foundational work by Askey and Wilson \cite{AW_1985}
and the monograph of Nikiforov, Suslov and Uvarov \cite{ops_NSU}.
The employment of divided-difference operators such as the Askey-Wilson and Wilson operators
has been common place in studies of the Askey table of hypergeometric orthogonal polynomial systems
too numerous to cite here (see the monographs by Ismail \cite{Ismail_2005} and Lesky \cite{Lesky_2005}). 

Let $ \Pi_{n}[x] $ denote the linear space of polynomials in $ x $ over $ \C $ with
degree at most $ n\in\Z_{\geq 0} $.
In pioneering investigations Magnus \cite{Ma_1988,Ma_1995} provided a geometrical understanding 
of these lattices and their divided-difference operators which we relate here
briefly. If we define the {\it divided-difference operator} (DDO) $ \ddoAW_{x} $ by
\begin{equation}
   \ddoAW_{x} f(x) = \frac{f(\iota_{+}(x))-f(\iota_{-}(x))}{\iota_{+}(x)-\iota_{-}(x)} ,
\end{equation}
then a simple consequence of the condition that $ \ddoAW_{x}: \Pi_{n}[x] \to \Pi_{n-1}[x] $ 
for all $ n \in \N $ is that $ \iota_{\pm}(x) $ are the two $y$-roots of the quadratic equation
\begin{equation}
   \mathcal{A} y^2+2\mathcal{B} xy+\mathcal{C} x^2+2\mathcal{D} y+2\mathcal{E} x+\mathcal{F} = 0 .
\label{snul_Quad}
\end{equation}
The functions $ \iota_{\pm}(x) $ satisfy
\begin{gather}
   \iota_{+}(x)+\iota_{-}(x) = -2\frac{\mathcal{B}x+\mathcal{D}}{\mathcal{A}} ,
 \\
   \iota_{+}(x)\iota_{-}(x) = \frac{\mathcal{C}x^2+2\mathcal{E}x+\mathcal{F}}{\mathcal{A}} ,  
\end{gather}
and their inverse functions $ \iota_{\pm}^{-1} $ are defined by $ \iota_{\pm}^{-1}(\iota_{\pm}(x)) = x $.
For a given $ y $-value the quadratic (\ref{snul_Quad}) defines two $ x $-roots if $ \mathcal{C}\neq 0 $,
which are consecutive points on the {\it $ x $-lattice}, $ x_s:=x(s), x_{s+1}:=x(s+1) $ 
parameterised by the variable $ s $ and therefore defines a map $ x_s \mapsto x_{s+1} $.
Conversely for a given $ x $-value the quadratic defines two $ y $-roots if $ \mathcal{A}\neq 0 $ which 
are consecutive points on a {\it dual lattice}, the {\it $ y $-lattice}, 
$ y_s:=y(s)=\iota_{-}(x(s)), y_{s+1}:=y(s+1)=\iota_{+}(x(s)) $
which generally is distinct from the $ x $-lattice.
We will employ an operator notation for the mappings from points on the direct lattice to the
dual lattice $ E^{\pm}_{x} f(x):= f(\iota_{\pm}(x)) $ so that
\begin{equation}
   \ddoAW_{x} f(x)
 := \frac{f(\iota_{+}(x))-f(\iota_{-}(x))}{\iota_{+}(x)-\iota_{-}(x)}
  = \frac{E^{+}_{x}f-E^{-}_{x}f}{E^{+}_{x}x-E^{-}_{x}x} ,
\end{equation}
for arbitrary functions $ f(x) $.
The inverse functions $ \iota_{\pm}^{-1}(x) $ define operators $ (E^{\pm})^{-1} $ which map
points on the dual lattice to the direct lattice and also an adjoint to the 
divided-difference operator $ \ddoAW_{x} $ 
\begin{equation}
   \ddoAW_{x}^{*} f(x)
 := \frac{f(\iota_{+}^{-1}(x))-f(\iota_{-}^{-1}(x))}{\iota_{+}^{-1}(x)-\iota_{-}^{-1}(x)}
  = \frac{(E^{+}_{x})^{-1}f-(E^{-}_{x})^{-1}f}{(E^{+}_{x})^{-1}x-(E^{-}_{x})^{-1}x} .
\end{equation}
The composite operators 
$ E_{x}:=(E^{-}_{x})^{-1} E^{+}_{x} $ and $ E^{-1}_{x}=(E^{+}_{x})^{-1} E^{-}_{x} $
map between consecutive points on the direct lattice.

However in the situation of a symmetric quadratic $ \mathcal{A}=\mathcal{C} $ and $ \mathcal{D}=\mathcal{E} $, which
entails no loss of generality, then we have $ (E^{+}_{x})^{-1}=E^{-}_{x} $
and $ (E^{-}_{x})^{-1}=E^{+}_{x} $. Consequently there is no distinction
between the divided-difference operator and its adjoint, and hereafter 
we adopt this simplification. For example one useful consequence of this choice is that the ratio
\begin{equation}
   \frac{(x-E^{+}u)(x-E^{-}u)}{(E^{-}x-u)(E^{+}x-u)}
 = \frac{ \mathcal{A} x^2+2\mathcal{B} xu+\mathcal{C} u^2+2\mathcal{D} x+2\mathcal{E} u+\mathcal{F}}
        { \mathcal{A} u^2+2\mathcal{B} ux+\mathcal{C} x^2+2\mathcal{D} u+2\mathcal{E} x+\mathcal{F}} = 1 ,
\end{equation}
for all $ x,u $.
A companion operator to the divided-difference operator $ \ddoAW_{x} $ is the mean or
average operator $ \moAW_{x} $ defined by
\begin{equation}
   \moAW_{x} f(x) = \tfrac{1}{2}\left[ f(\iota_{+}(x))+f(\iota_{-}(x)) \right] ,
\end{equation}
so that the property $ \moAW_{x}: \Pi_{n}[x] \to \Pi_{n}[x] $ is ensured by the condition
we imposed upon $ \ddoAW_{x} $.

\begin{definition}
Henceforth we use the 
shorthand for the difference in consecutive $ y $ points $ \Delta y(x):= \iota_{+}(x)-\iota_{-}(x) $.
We introduce the notion of "fixed points", whereby $ \Delta y^2(x_F) = 0 $, or $ \iota_{+}(x_F) = \iota_{-}(x_F) $
and are given as the roots of 
$ (\mathcal{B}^2-\mathcal{A}\mathcal{C})x_F^2+2(\mathcal{B}\mathcal{D}-\mathcal{A}\mathcal{E})x_F+\mathcal{D}^2-\mathcal{A}\mathcal{F}=0 $.
\end{definition}

Assuming $ \mathcal{A}\mathcal{C} \neq 0 $
one can classify these non-uniform quadratic lattices (or {\it SNUL}, special non-uniform lattices) 
according to two parameters - the discriminant $ \mathcal{B}^2-\mathcal{A}\mathcal{C} $ and 
\begin{equation}
    \Theta = \det
               \begin{pmatrix} \mathcal{A} & \mathcal{B} & \mathcal{D} \\ \mathcal{B} & \mathcal{C} & \mathcal{E} \\ \mathcal{D} & \mathcal{E} & \mathcal{F} 
               \end{pmatrix} ,
\end{equation}
or $ \mathcal{A}\Theta = (\mathcal{B}^2-\mathcal{A}\mathcal{C})(\mathcal{D}^2-\mathcal{A}\mathcal{F})-(\mathcal{B}\mathcal{D}-\mathcal{A}\mathcal{E})^2 $.
There are four primary classes - 
the linear lattice,
the linear $q$-lattice,
the quadratic lattice and
the $q$-quadratic lattice, which are given in Table \ref{SNUL}. 
The $q$-quadratic lattice, in its general non-symmetrical form, is the most general 
case and the other lattices can be found from this by limiting processes.

\begin{table}[h!]
\renewcommand{\arraystretch}{1.2}
\begin{tabular}{|c|c|c|c|c|c|}\hline
$ \mathcal{B}^2-\mathcal{A}\mathcal{C} $ & $ \Theta $	& conic	& lattice & canonical DDO & Notes \\
\hline
$ 0 $ & $ 0 $ & parallel lines & linear & forward difference & 
 \\
\hline
$ >0 $ & $ 0 $ & intersecting lines & $q$-linear & $q$-difference & 
 \\
\hline
$ 0 $ & $ <0 $ & parabola & quadratic & Wilson &
\\
\hline
$ >0 $ & $ <0 $ & hyperbola & $q$-quadratic & Askey-Wilson &  
$ q $ real \\
\hline
$ <0 $ & $ <0 $ & ellipse & $q$-quadratic & Askey-Wilson & 
$ |q|=1 $ \\
\hline
\end{tabular}
\vskip0.5cm
\caption{The non-uniform lattices of quadratic type.}\label{SNUL}
\end{table}

This classification of lattices for polynomial systems can be extended to rational 
function systems \cite{SZ_2001,SZ_2007,Ma_2009} and in this case one has a lattice characterised by a bi-quadratic
relation and parameterised by elliptic functions \cite{IR_2002} (see also Section 15.10 of \cite{Ba_1982}). 
However for the purposes of studying the Askey table we will not pursue this direction.

For the quadratic class of lattices the parameterisation on $ s $ can be made explicit
through the trigonometric/hyperbolic functions or their degenerations so we can employ
a parameterisation such that $ \iota_{-}(x(s))=y(s)=x(s-1/2) $ and $ \iota_{+}(x(s))=y(s+1)=x(s+1/2) $.
We denote the totality of lattice points
by $ G[x] := \{x(s):s\in \Z\} $ with the point $ x(0)=x $ as the {\it basal point},
and of the dual lattice by $ \tilde{G}[x] := \{x(s):s\in \Z\+\frac{1}{2}\} $.

Having established the basic properties of the divided-difference operators 
we can deduce key elements of their calculus. 
A consequence of the general definition of the divided-difference operators are
the following identities
\begin{enumerate}
\item
the product or Leibniz formulae
\begin{gather}
  \ddoAW_{x} fg = \ddoAW_{x} f \moAW_{x} g + \moAW_{x} f \ddoAW_{x} g,
\label{DD_calculus:a}\\
  \moAW_{x} fg =  \moAW_{x} f\moAW_{x} g + \tfrac{1}{4}\Delta y^2\ddoAW_{x} f\ddoAW_{x} g,
\label{DD_calculus:b}
\end{gather}
\item
the inverse formulae
\begin{equation}
  \moAW_{x} \frac{1}{f} = \frac{\moAW_{x} f}{E^{+}_{x}fE^{-}_{x}f},
  \qquad
  \ddoAW_{x} \frac{1}{f} = -\frac{\ddoAW_{x} f}{E^{+}_{x}fE^{-}_{x}f},
\label{DD_calculus:c}
\end{equation}
\item
and the commutativity formulae
\begin{equation}
  \moAW_{u} \moAW_{x} = \moAW_{x} \moAW_{u} , \qquad 
  \moAW_{u} \ddoAW_{x} = \ddoAW_{x} \moAW_{u} ,\qquad 
  \ddoAW_{u} \ddoAW_{x}  = \ddoAW_{x} \ddoAW_{u} .
\label{DD_calculus:d}
\end{equation} 
\end{enumerate}

The other side of our divided-difference calculus concerns the definition and 
properties of analogs to integrals. We define the {\it $ \ddoAW $-Integral} of a 
function defined on the $x$-lattice $ f: G[x] \to \C $ with basal point $ x_0 $ by the 
Riemann sum over the lattice points
\begin{equation}
   I[f](x_0) = \int_{G}\, \ddoAW x\, f(x) 
  := \sum_{s\in \Z} (\iota_{+}(s)-\iota_{-}(s))f(x(s))
   = \sum_{s\in \Z} \Delta y(x_s)f(x_s) ,
\end{equation} 
where the sum is either a finite subset of $ \Z $, namely $ \{ 0,\ldots,\Gt \} $, or $ \Z_{\geq 0} $, $ \Z $.
This definition reduces to the usual definition of the difference integral and
the Thomae-Jackson $q$-integrals in the canonical forms of the linear and 
$q$-linear lattices respectively. 
A number of properties flow from this definition -
\begin{enumerate}
\item
an analog of the fundamental theorem of calculus 
\begin{equation}
   \int_{x_0\leq x_{s} \leq x_{\Gt}} \ddoAW x\, \ddoAW_{x} f(x) = f(E^{+}_{x}x_{\Gt})-f(E^{-}_{x}x_{0}) ,
\end{equation}
\item
an analog of integration by parts for two functions $ f(x),g(x) $
\begin{equation}
  \int_{x_0\leq x_{s} \leq x_{\Gt}}\ddoAW x\,f(x)\ddoAW_{x}g(x)
   = -\int_{x_0\leq x_{s} \leq x_{\Gt}}\ddoAW({E^{+}_{x}x})\,\ddoAW_{x}f(E^{+}_{x}x)g(E^{+}_{x}x)
      +f(E_{x}^{+2}x_{\Gt})g(E_{x}^{+}x_{\Gt})-f(x_{0})g(E_{x}^{-}x_{0}) ,
\end{equation}
\item
and the parameterisation "invariance" property
\begin{align}
   \int_{x_0\leq x_{s} \leq x_{\Gt}}\ddoAW x\,f(x)
  & =  \int_{x_0\leq x_{s} \leq x_{\Gt}}\ddoAW (E_{x}^{+2}x)\,f(E_{x}^{+2}x)+\Delta y(x_{0})f(x_{0})-\Delta y(E_{x}^{+2}x_{\Gt})f(E_{x}^{+2}x_{\Gt}) ,
  \\
  & =  \int_{x_0\leq x_{s} \leq x_{\Gt}}\ddoAW (E_{x}^{-2}x)\,f(E_{x}^{-2}x)-\Delta y(E_{x}^{-2}x_{0})f(E_{x}^{-2}x_{0})+\Delta y(x_{\Gt})f(x_{\Gt}) .
\end{align}
\end{enumerate}

We will apply our theory to the case of the $q$-quadratic lattice and the Askey-Wilson
divided-difference calculus, and in order to simplify the description and to conform to
convention we will employ the canonical, that is to say the centred and symmetrised forms
of the lattice and the divided-difference operators.
Let us define the base $ q=\exp(2i\eta) $ although we will not restrict ourselves to 
$q$-domains such as $ 0< \Re(q) < 1 $ except to avoid special degenerate 
cases and to ensure convergence.
Consider the projection map from the unit circle $ z=e^{i\theta}, \quad \theta \in [-\pi,\pi) $
onto $ [-1,1] $ by $ x=\tfrac{1}{2}(z+z^{-1})=\cos\theta \in [-1,1] $.
We denote the unit circle by $ \mathbb{T} $ and the unit open disc by $ \mathbb{D} $.
The inverse of the projection map defines a two-sheeted Riemann surface, one of which
corresponds to the interior of the unit circle, and the other to the exterior. Thus we take
$ x $-plane to be cut along $ [-1,1] $ and will usually give results for the second sheet
i.e. when $ |x|\to \infty $ as $ z\to \infty $.
In the symmetrised and canonical form of the lattice we have
$ \mathcal{A} = \mathcal{C} $, arbitrary and non-zero, $ \mathcal{B}=-\cos\eta\mathcal{A} $,
$ \mathcal{D}=\mathcal{E}=0 $, $ \mathcal{F}=-\sin^2\eta\mathcal{A} $, and
$ \theta=2s\eta $. 
Define the shift operators $ E^{\pm}_{x} $ by
$ E^{\pm}_{x}f(x) = f(\tfrac{1}{2}[q^{1/2}z+q^{-1/2}z^{-1}]) $
and set $ y_{\pm} = E^{\pm}_{x} x $. 
This implies that 
\begin{gather}
   y_{+}+y_{-} = (q^{1/2}+q^{-1/2})x = 2\cos\eta\; x ,
\\
   \Delta y := y_{+}-y_{-} = \tfrac{1}{2}(q^{1/2}-q^{-1/2})(z-z^{-1}) = -2\sin\eta \sin\theta ,
\\
   \Delta y^2 = (q^{1/2}-q^{-1/2})^2(x^2-1) ,
\\
   y_{+}y_{-} = x^2+\tfrac{1}{4}(q^{1/2}-q^{-1/2})^2 = x^2-\sin^2\eta .
\end{gather}
The Askey-Wilson divided-difference operators are defined as
\begin{align}
  \ddoAW_{x}f(x) & = \frac{f(\tfrac{1}{2}[q^{1/2}z+q^{-1/2}z^{-1}])-f(\tfrac{1}{2}[q^{-1/2}z+q^{1/2}z^{-1}])}
                             {\tfrac{1}{2}(q^{1/2}-q^{-1/2})(z-z^{-1})}
\label{DDoperator:a} \\
  \moAW_{x}f(x) & = \tfrac{1}{2}\left[f(\tfrac{1}{2}[q^{1/2}z+q^{-1/2}z^{-1}])+f(\tfrac{1}{2}[q^{-1/2}z+q^{1/2}z^{-1}])\right]
  \nonumber
\label{DDoperator:b}
\end{align} 
There is an explicit parameterisation of the $q$-quadratic lattice 
\begin{gather}
  x(s) = \tfrac{1}{2}(q^{s}+q^{-s}) = \cos(2\eta s) ,
 \\
  y_{\pm}(s) = \tfrac{1}{2}(q^{s\pm 1/2}+q^{-s\mp 1/2}) = \cos(2\eta [s\pm \tfrac{1}{2}]) .
\end{gather}
Here the direct lattice is 
$ G[x=\tfrac{1}{2}(a+a^{-1})] = \{\tfrac{1}{2}(q^{r/2}a+q^{-r/2}a^{-1}): r\in 2\Z\} $.
For $ |q|=1 $ and $ \eta $ not a rational multiple of $ \pi $ then the lattice densely fills
the interval $ [-1,1] $. If $ \eta $ is a rational multiple of $ \pi $ then one 
has the root of unity case $ q^N=1 $ and a finite lattice.
In the generic case we will assume that we are dealing with functions $ f(x) $ in the class where
\begin{equation}
	\int\ddoAW x\,f(x) = \frac{\sin\eta}{\eta}\int^{1}_{-1}dx\,f(x) ,
\end{equation}
is applicable.
The reader should note that we will not distinguish a function of $ x $, $ f(x) $, from the
function of $ z $, $ \check{f}(z) = f(\tfrac{1}{2}[z+z^{-1}]) $ as done by some authors, 
and it should be clear from the context which is meant.

\section{Orthogonal Polynomial Systems on the Non-uniform Lattice}\label{OPS_NUL}
\setcounter{equation}{0}

\subsection{General orthogonal polynomial systems}
Our study requires the revision of a number of standard results in orthogonal polynomial 
theory \cite{ops_Sz,ops_Fr,Ismail_2005} so we recount our formulation.
Let $ \{l_n(x;a)\}^{\infty}_{n=0} $ be a polynomial basis of $ L^2(w(x)\ddoAW x,G) $, 
where $ l_n $ is of exact degree $ n $ and the support is $ G = \{E^{+k}_x x: k \in 2\Z \} $
or if finite $ G = \{ x_{0},\ldots,x_{\Gt} \} $
and $ a $ denotes the set of parameters characterising the lattice. 
The appropriate canonical basis is dependent on the lattice type through the general requirements 
that
\begin{enumerate}
\item
$ l_n $ is of precise degree $ n $ so that 
$ l_n(x;a) = g_n(a)x^n + {\rm O}(x^{n-1})$ with $ g_n \neq 0 $,
\item
$ \ddoAW_x $ is an exact lowering operator in this basis
\begin{equation}
   \ddoAW_{x} l_n(x;a) = c_n(a) l_{n-1}(x;a') ,
\label{SNUL:b}
\end{equation}
where $ c_n $ is constant with respect to $ x $ and the transformed parameter set
$ a' $ is related to the original $ a $ depending on the lattice type. 
\end{enumerate}
We will also require the linearisation formula
$ xl_n(x;a) = d_n(a)l_{n+1}(x;a)+e_n(a)l_{n}(x;a) $.
A general solution to the two requirements above is the following product expression
\begin{equation}
   l_n(x;a) = g_n(a)\prod^{n-1}_{k=0}[x-(E^{+}_{x})^{2k} x(a)] ,
\end{equation}
where the basal point $ x(a) $ is parameterised by $ a $.
We note that for some lattices the limit $ \lim_{n\to\infty}l_n(x;a) $ exists and in 
this case we denote it by $ l_{\infty}(x;a) $.
For the classes of quadratic non-uniform lattices the basis choices are tabulated in 
Table \ref{snul_basis}.

\begin{table}[h!]
\renewcommand{\arraystretch}{1.2}
\begin{tabular}{|c|c|c|c|}\hline
DDO & Lattice Type & Basis $ l_r $ & Notes \\
\hline
\begin{minipage}[c][1.2cm][c]{1cm}
{\begin{equation*}
    \frac{d}{dx}
 \end{equation*}}
\end{minipage}
 & continuous & $ x^r $ & $ c_r=r $ \\
\hline
$ \Delta_x $ & linear &
\begin{minipage}[c][1.5cm][c]{6cm}
{\begin{equation*}
   x^{(r)} = \prod^{r-1}_{k=0}(x-k) = \frac{\Gamma(x+1)}{\Gamma(x-r+1)} 
 \end{equation*}}
\end{minipage} & 
$ c_r=r $\\
\hline
$ D_q $ & $q$-linear &
\begin{minipage}[c][1cm][c]{6cm}
{\begin{equation*}
  (ax;q)_r  = \prod^{r-1}_{k=0}(1-aq^{k}x) = \frac{(ax;q)_{\infty}}{(aq^{r}x;q)_{\infty}} 
 \end{equation*}}
\end{minipage} & 
\begin{minipage}[c][2.0cm][c]{2cm}
{\begin{gather*}
   c_r=-\frac{1-aq^r}{q-1} \\ a'=qa
 \end{gather*}}
\end{minipage} \\
\hline
$ W $ & quadratic &
\begin{minipage}[c][2cm][c]{8cm}
{\begin{equation*}
   \prod^{r-1}_{k=0}\left[ x+(k+a)^2 \right] = \frac{\Gamma(r+a-i\sqrt{x})\Gamma(r+a+i\sqrt{x})}{\Gamma(a-i\sqrt{x})\Gamma(a+i\sqrt{x})}
 \end{equation*}}
\end{minipage} &
\begin{minipage}[c][2cm][c]{2cm}
{\begin{gather*}
   c_r= r \\ a'=a+\tfrac{1}{2}
 \end{gather*}}
\end{minipage} \\
\hline
$ \ddoAW_{x} $ & $q$-quadratic &
\begin{minipage}[c][2cm][c]{5cm}
{\begin{equation*}
   (az,az^{-1};q)_r = \frac{(az,az^{-1};q)_{\infty}}{(aq^rz,aq^rz^{-1};q)_{\infty}}
 \end{equation*}}
\end{minipage} &
\begin{minipage}[c][4.3cm][c]{3.5cm}
{\begin{gather*}
   c_r=-2a\frac{q^r-1}{q-1} \\ a'=q^{1/2}a \\ g_r=(-2a)^rq^{\frac{1}{2}r(r-1)} \\ d_r=-\frac{1}{2aq^r} \\ e_r=\tfrac{1}{2}(aq^r+a^{-1}q^{-r}) 
 \end{gather*}}
\end{minipage} \\
\hline
\end{tabular}
\vskip0.5cm
\caption{Canonical Bases for the non-uniform lattices of quadratic type.}\label{snul_basis}
\end{table}
 
Consider the general {\it orthogonal polynomial system} $ \{p_n(x)\}^{\infty}_{n=0} $
defined by the orthogonality relations
\begin{equation}
  \int_{G} \ddoAW x\; w(x)p_n(x)\,l_m(x;b) = \begin{cases} 0 \quad 0\leq m<n \\ h_n(b) \quad m=n \end{cases},
  \quad n \geq 0 ,
\label{ops_orthog}
\end{equation}
with $ G $ denoting the support of the {\it weight} $ w(x) $. 
Our system of orthogonal polynomials and their associated functions (to be defined in
(\ref{ops_eps})) have a distinguished singular point at $ x=\infty $ and possess
expansions about this point which can characterise solutions uniquely.
This is related to the fact that orthogonal polynomials are the denominators of
single point Pad\'e approximants and that point is conventionally set at 
$ x=\infty $.  
We give special notation for the coefficients of $ x^{n} $ and $ x^{n-1} $ in $ p_n(x) $,
\begin{equation}
   p_n(x) = \gamma_n x^{n} + \gamma_{n,1}x^{n-1} + \ldots,
  \quad n \geq 0 .
\label{ops_poly}
\end{equation}
The corresponding {\it monic} polynomials are then
$ \pi_n(x) = \gamma_n^{-1}p_n(x) $ given that $ n \geq 0 $.
A consequence of the orthogonality relation is the three term recurrence relation
\begin{equation}
   a_{n+1}p_{n+1}(x) = (x-b_n)p_n(x) - a_np_{n-1}(x), \quad n \geq 0,
\label{ops_threeT}
\end{equation}
and we consider the set of orthogonal polynomials with initial values $ p_{-1} = 0 $ 
and $ p_0 = \gamma_0 $.
The three term recurrence coefficients are related to the leading and sub-leading
polynomial coefficients by \cite{ops_Sz,ops_Fr}
\begin{equation}
   a_n = \frac{\gamma_{n-1}}{\gamma_n}, \quad
   b_n = \frac{\gamma_{n,1}}{\gamma_n}-\frac{\gamma_{n+1,1}}{\gamma_{n+1}}, \quad n \geq 1 .
\label{ops_coeffRn}
\end{equation}
The initial values of the recurrence coefficients are
\begin{equation}
   b_0 = -\frac{\gamma_{1,1}}{\gamma_{1}}, \quad \gamma_{0,1}=0 .
\label{ops_coeffRn1}
\end{equation}
and where $ a_0 $ is not fixed by the initial polynomials but rather by the initial
associated functions (see after (\ref{ops_AF_threeT}). 

The orthogonality relation (\ref{ops_orthog})
is derived from the linear functional on the space of polynomials
$ {\mathcal L}: p\in \Pi \mapsto \C $
and we employ our basis polynomials as an expansion basis although not necessarily with the
same parameter as above
\begin{equation}
   p_n(x) = \sum^{n}_{k=0} c_{n,k}(a)l_k(x;a)
  \quad n \geq 0 .
\label{ops_expand}
\end{equation}
Consequently we define the {\it moments} $ \{m_{j,k}\}_{j,k=0,1,\ldots,\infty} $ of 
the weight as the action of this functional on products of the basis polynomials, 
defined as
\begin{equation}
   m_{j,k}(b,a) := \int_{G} \ddoAW x\; w(x)\,l_{j}(x;b)l_{k}(x;a) ,
  \quad j,k \geq 0 .
\label{ops_moment}
\end{equation}
Central objects in our theory are the {\it Moment determinants}
\begin{equation}
   \Delta_n := \det[ m_{j,k} ]_{j,k=0,\ldots,n-1}, \quad n\geq 1,
   \quad \Delta_0 := 1 ,
\label{ops_Hdet}
\end{equation}
and
\begin{equation}
   \Sigma_{n,j} := \det\left(
               \begin{array}{ccccccc}
               m_{0,0}   & \cdots & m_{0,j-1}   & []     & m_{0,j+1}   & \cdots & m_{0,n}   \\
               \vdots    & \vdots & \vdots      & \vdots & \vdots      & \cdots & \vdots    \\
               m_{n-1,0} & \cdots & m_{n-1,j-1} & []     & m_{n-1,j+1} & \cdots & m_{n-1,n} \\
               \end{array} \right) , \quad n\geq 1, j=0,\ldots n-1 .
\label{ops_Sdet}
\end{equation}
defined in terms of the moments above. Obviously $ \Delta_n = \Sigma_{n,n} $ and 
we set $ \Sigma_{0,0} := 0 $.
The expansion coefficients are given in terms of these determinants
\begin{equation}
   c_{n,j}(a) = (-)^{n+j}h_n(b)\frac{\Sigma_{n,j}}{\Delta_{n+1}} ,\quad
   c_{n,n}(a) = h_n(b)\frac{\Delta_{n}}{\Delta_{n+1}} .
\label{ops_expCff}
\end{equation} 
It follows from (\ref{ops_orthog}) that
\begin{equation*}
  \int_{G} \ddoAW x\; w(x)[p_n(x)]^2 = c_{n,n}(b)h_n(b),
  \quad n \geq 0 ,
\end{equation*}
and thus for $ p_n(x) $ to be normalised as well as orthogonal we set $ c_{n,n}(a)h_n(a)=1 $.
We have moment determinant representations of the polynomials
\begin{equation}
   p_{n}(x) = \frac{c_{n,n}(a)}{\Delta_{n}}
               \det\left(
               \begin{array}{cccccc}
               m_{0,0}   & \cdots & m_{0,j}   & \cdots & m_{0,n}   \\
               \vdots    & \vdots & \vdots    & \cdots & \vdots    \\
               m_{n-1,0} & \cdots & m_{n-1,j} & \cdots & m_{n-1,n} \\
               l_{0}     & \cdots & l_{j}     & \cdots & l_{n}     \\
               \end{array} \right) , \quad n\geq 0.
   \label{ops_polyDet}
\end{equation}
The three-term recurrence coefficients are related to these determinants
\begin{align}
   a^2_n & = d_{n-1}(a)d_{n-1}(b)\frac{\Delta_{n+1}\Delta_{n-1}}{\Delta^2_{n}} , \quad n\geq 1 ,
   \label{ops_aSQDelta}\\
   b_n & = e_{n}(a)+d_{n}(a)\frac{\Sigma_{n+1,n}}{\Delta_{n+1}}-d_{n-1}(a)\frac{\Sigma_{n,n-1}}{\Delta_n} , \quad n\geq 0 , 
   \label{ops_bDelta}\\
   \gamma_n^2 & = g_n(a)g_n(b)\frac{\Delta_{n}}{\Delta_{n+1}}, \quad n\geq 0 ,
   \label{ops_gammaDelta}
\end{align}
where each coefficient is independent of the choices of $ a, b $ as can be easily
verified from their determinantal definitions given previously.

Another set of polynomial solutions to the three term recurrence relation are the 
{\it associated polynomials} $ \{p^{(1)}_n(x)\}^{\infty}_{n=0} $, defined by
\begin{equation}
   p^{(1)}_{n-1}(x) := \int_{G} \ddoAW y\; w(y)\frac{p_n(y)-p_n(x)}{y-x}, \quad n \geq 0 .
\label{ops_assoc}
\end{equation}
In particular these polynomials satisfy
\begin{equation}
   a_{n+1}p^{(1)}_{n}(x) = (x-b_n)p^{(1)}_{n-1}(x)-a_np^{(1)}_{n-2}(x) ,
\label{ops_assoc_threeT}
\end{equation}
with the initial conditions $ p^{(1)}_{-1}(x)=0 $, $ p^{(1)}_{0}(x)=m_{0,0}\gamma_{1} $.
Note the shift by one decrement in comparison to the three-term recurrence 
(\ref{ops_threeT}) for the polynomials $ \{p_n(x)\}^{\infty}_{n=0} $. 
We also need the definition of the {\it Stieltjes function}
\begin{equation}
   f(x) \equiv \int_{G} \ddoAW y\;\frac{w(y)}{x-y} , \quad x \notin G ,
\label{ops_stieltjes}
\end{equation}
which is a moment generating function in the following sense
\begin{equation}
   f(x) = \frac{f_{\infty}(x;a)}{l_{\infty}(x;a)}+\sum^{\infty}_{n=0} \frac{m_{0,n}(a)}{d_n(a)l_{n+1}(x;a)}, \quad x \notin G,\quad x \to \infty ,
\label{Sf_expand}
\end{equation}
and splits into two parts - one part being a series with inverse basis polynomials and a remainder,
which may be absent for some lattices.
We define non-polynomial {\it associated functions} or {\it functions of the second kind} 
$ \{q_n(x)\}^{\infty}_{n=0} $ by
\begin{equation}
   q_n(x) := f(x)p_n(x)-p^{(1)}_{n-1}(x),
  \quad n \geq 0 ,
\label{ops_eps}
\end{equation}
which also satisfy the three term recurrence relation (\ref{ops_threeT}), namely
\begin{equation}
   a_{n+1}q_{n+1}(x) = (x-b_n)q_{n}(x)-a_nq_{n-1}(x),
  \quad n \geq 0 ,
\label{ops_AF_threeT}
\end{equation}
subject to the initial values $ q_{-1}(x)=1/a_0\gamma_0 $, $ q_{0}(x)=\gamma_0f(x) $.
The initial value of $ a_0 $ is irrelevant and therefore arbitrary in so far as the 
polynomials are concerned, however many relations for the whole system extend from 
$ n \geq 1 $ to include $ n=0 $ if we allow this to be finite, non-zero and satisfying
the above initial condition.
The associated functions also have a determinantal representation
\begin{equation}
   q_{n}(x) = \frac{c_{n,n}(a)}{\Delta_{n}}
               \det\left(
               \begin{array}{ccccc}
               m_{0,0}   & \cdots & m_{0,j}   & \cdots & m_{0,n}   \\
               \vdots    & \vdots & \vdots    & \cdots & \vdots    \\
               m_{n-1,0} & \cdots & m_{n-1,j} & \cdots & m_{n-1,n} \\
               f_{0}     & \cdots & f_{j}     & \cdots & f_{n}     \\
               \end{array} \right) , \quad n\geq 0,
   \label{ops_Afun}
\end{equation}
where 
\begin{equation}
    f_{j}(x;a) := \int_{G} \ddoAW y\; w(y)\frac{l_j(y;a)}{x-y}, \quad j \geq 1 ,\quad f_0=g_0 f .
\label{ops_}
\end{equation}
Likewise this function has an expansion analogous to (\ref{Sf_expand})
\begin{equation}
   f_j(x;b) = \frac{f_{\infty,j}(x;b)}{l_{\infty}(x;a)}+\sum^{\infty}_{n=0} \frac{m_{j,n}(b,a)}{d_n(a)l_{n+1}(x;a)}, \quad x \notin G,\quad x \to \infty .
\label{Sfj_expand}
\end{equation}

The polynomials and their associated functions satisfy the {\it Casoratian relation}
\begin{equation}
   p_n(x)q_{n-1}(x)-p_{n-1}(x)q_{n}(x) = \frac{1}{a_n} , \quad n \geq 0.
\label{ops_casoratian}
\end{equation}

Central to our analysis is a composite of polynomial and non-polynomial solutions
of (\ref{ops_threeT}), the $ 2\times 2 $ matrix variable
\begin{equation}
   Y_n(x) = \begin{pmatrix} p_n(x) & \frac{\displaystyle q_{n}(x)}{\displaystyle w(x)} \\
                              p_{n-1}(x) & \frac{\displaystyle q_{n-1}(x)}{\displaystyle w(x)}
              \end{pmatrix},
  \quad n \geq 0 .
\label{ops_Ydefn}
\end{equation}
We will refer to this as the orthogonal polynomial system (OPS). From (\ref{ops_casoratian})
we note that
\begin{equation}
   \det Y_n(x) = \frac{1}{a_nw(x)},
  \quad n \geq 0 .
\label{ops_Ydet}
\end{equation}
The three term recurrence is then recast as the matrix equation
\begin{equation}
   Y_{n+1}(x) = K_n Y_n(x),
  \quad n \geq 0 ,
\label{ops_Yrecur}
\end{equation}
with the {\it recurrence matrix} given by
\begin{equation}
   K_n(x) = \frac{1}{a_{n+1}}
              \begin{pmatrix} x-b_n & -a_{n} \\
                              a_{n+1} & 0
              \end{pmatrix}, \qquad \det K_n = \frac{a_n}{a_{n+1}},
  \quad n \geq 0 .
\label{ops_Kmatrix}
\end{equation}

A well known consequence of (\ref{ops_threeT}) are the {\it Christoffel-Darboux summation}
formulae
\begin{align}
   \sum^{n-1}_{j=0}p_{j}(x)p_{j}(y) 
  & = a_n\frac{[p_{n}(x)p_{n-1}(y)-p_{n-1}(x)p_{n}(y)]}{x-y},
  \quad n \geq 0 ,
  \\  
   \sum^{n-1}_{j=0}q_{j}(x)p_{j}(y) 
  & = a_n\frac{[q_{n}(x)p_{n-1}(y)-q_{n-1}(x)p_{n}(y)]}{x-y}+\frac{1}{x-y},
  \quad n \geq 0 ,
  \\  
   \sum^{n-1}_{j=0}q_{j}(x)q_{j}(y) 
  & = a_n\frac{[q_{n}(x)q_{n-1}(y)-q_{n-1}(x)q_{n}(y)]}{x-y}-\frac{f(x)-f(y)}{x-y},
  \quad n \geq 0 .
\label{ops_C-D:c}
\end{align}

If one is only interested in the leading orders of the large $ x $ expansion rather
than a systematic expansion then it is convenient to employ an expansion in monomials
rather than in basis functions.
Extending (\ref{ops_poly}) we have expansions about the fixed singularity at $ x=\infty $
\begin{equation}
   p_n(x) = \gamma_n \Bigg[ x^{n} - \left( \sum^{n-1}_{i=0}b_i \right)x^{n-1}
   + \left( \sum_{0\leq i<j<n}b_ib_j-\sum^{n-1}_{i=1}a^2_i \right)x^{n-2} 
   + {\rm O}(x^{n-3}) \Bigg] ,
   \label{ops_pExp}
\end{equation}
valid for $ n \geq 1 $, while for the associated functions
\begin{equation}
   q_n(x) = \gamma^{-1}_n \Bigg[ x^{-n-1} + \left( \sum^{n}_{i=0}b_i \right)x^{-n-2}
   + \left( \sum_{0\leq i\leq j\leq n}b_ib_j+\sum^{n+1}_{i=1}a^2_i \right)x^{-n-3} 
   + {\rm O}(x^{-n-4}) \Bigg] ,
   \label{ops_eExp}
\end{equation}
valid for $ n \geq 0 $.

\subsection{$q$-Quadratic lattice}
In Sections \ref{M=2AW} and \ref{M=3AW} we intend to apply our theory to the $q$-quadratic lattice and will draw upon
numerous properties of the corresponding basis, which we discuss here.
Firstly we recall the analytic continuation of $ \phi_n(x;a) $ 
which is
\begin{equation}
  \phi_r(x;a) = \frac{(az,az^{-1};q)_{\infty}}{(aq^rz,aq^rz^{-1};q)_{\infty}} ,
\end{equation} 
for all $ r\in\C $ but subject to $ |q|<1 $.
We will employ the shorthand notation $ (az^{\pm 1};q)_{\infty}=(az,az^{-1};q)_{\infty} $.
Implicit in the above formula is the elliptic-like function
$ \phi_{\infty}(x;a) $ which has meaning for all $ a,x \in \C $ for  $ |q|<1 $.
The action of the divided-difference operators are
\begin{align}
   \ddoAW_{x} \phi_r(x;a)
 & = 2a\frac{1-q^r}{q-1}\phi_{r-1}(x;q^{1/2}a) ,
\label{DDO_basis:a}
 \\
   \moAW_{x} \phi_r(x;a)
 & = \tfrac{1}{2}(1+q^{-r})\phi_r(x;q^{1/2}a)+\tfrac{1}{2}(1-q^{-r})(1-a^2q^{2r-1})\phi_{r-1}(x;q^{1/2}a) , 
\label{DDO_basis:b}
 \\
   \ddoAW_{x} \phi_{\infty}(x;a)
 & = \frac{2a}{q-1}\phi_{\infty}(x;q^{1/2}a) ,
\label{DDO_basis:c}
 \\
   \moAW_{x} \phi_{\infty}(x;a)
 & = \phi_{\infty}(x;q^{-1/2}a) .
\label{DDO_basis:d}
\end{align}
The expansion theorem of Ismail \cite{{Is}_1995,{Is}_2001} states that
\begin{proposition}
Let $ p $ be a polynomial of degree $ n $, then
\begin{equation}
  p(x) = \sum^n_{k=0} p_k \phi_k(x;a)
\end{equation}
for any $ a\in\C $ where
\begin{equation}
  p_k = \frac{(q-1)^k}{(2a)^k(q;q)_k}q^{-k(k-1)/4}(\ddoAW^{k}_{x} p)(x_k)
\end{equation}
with $ x_k = \tfrac{1}{2}(q^{k/2}a+q^{-k/2}a^{-1}) $.
\end{proposition}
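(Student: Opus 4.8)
The plan is to exploit the fact that $\ddoAW_x$ acts as an exact lowering operator which is, moreover, \emph{triangular} in the basis $\{\phi_k(x;a)\}$. Since $\phi_k(x;a)$ has exact degree $k$ in $x$, the family $\{\phi_k(x;a)\}_{k=0}^{n}$ spans $\Pi_n[x]$ and is linearly independent, so every polynomial $p$ of degree $n$ has a unique expansion $p(x)=\sum_{k=0}^{n}p_k\phi_k(x;a)$; all that needs proof is the closed-form evaluation of $p_k$.

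Next I would iterate the action formula (\ref{DDO_basis:a}). One application sends $\phi_r(x;a)$ to a scalar multiple of $\phi_{r-1}(x;q^{1/2}a)$, so $k$ applications give, for $j\ge k$,
\[
  \ddoAW^{k}_{x}\phi_j(x;a) = \Bigl(\prod_{i=0}^{k-1} 2q^{i/2}a\,\frac{1-q^{j-i}}{q-1}\Bigr)\phi_{j-k}(x;q^{k/2}a),
\]
while $\ddoAW^{k}_{x}\phi_j\equiv 0$ for $j<k$ because $\ddoAW_x:\Pi_m[x]\to\Pi_{m-1}[x]$. The crucial point is that $x_k=\tfrac12(q^{k/2}a+q^{-k/2}a^{-1})$ is precisely a zero of $\phi_m(x;q^{k/2}a)$ for every $m\ge 1$: writing $\phi_m(\,\cdot\,;q^{k/2}a)=(q^{k/2}az,q^{k/2}az^{-1};q)_m=\prod_{l=0}^{m-1}(1-q^{k/2}aq^{l}z)(1-q^{k/2}aq^{l}z^{-1})$ and choosing the root $z=q^{k/2}a$ (equivalently $z^{-1}=q^{-k/2}a^{-1}$, the value being independent of which root is picked since $\phi_m$ is symmetric under $z\mapsto z^{-1}$), the $l=0$ factor $1-q^{k/2}a\,z^{-1}=1-q^{k/2}a\cdot q^{-k/2}a^{-1}$ vanishes. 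Hence when $\ddoAW^{k}_{x}p$ is evaluated at $x=x_k$, every term with $j>k$ drops out and only the $j=k$ term, with $\phi_0(x;q^{k/2}a)=1$, survives.

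It then remains to compute the surviving constant. Collecting the prefactors from the $k$ applications in the $j=k$ case gives $\prod_{i=0}^{k-1} 2q^{i/2}a\,\frac{1-q^{k-i}}{q-1}=(2a)^k\,q^{k(k-1)/4}\,(q;q)_k/(q-1)^k$, where I have used $\sum_{i=0}^{k-1}\tfrac{i}{2}=\tfrac{k(k-1)}{4}$ and $\prod_{i=0}^{k-1}(1-q^{k-i})=\prod_{m=1}^{k}(1-q^{m})=(q;q)_k$. Therefore $(\ddoAW^{k}_{x}p)(x_k)=p_k\,(2a)^k q^{k(k-1)/4}(q;q)_k/(q-1)^k$, and solving for $p_k$ yields the asserted formula.

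I do not anticipate a genuine obstacle; the only steps needing care are keeping track of the parameter shift $a\mapsto q^{1/2}a$ at each application of $\ddoAW_x$ (so that after $k$ steps the parameter has become $q^{k/2}a$) and the observation that $x_k$ is exactly the zero of the leading factor of $\phi_{j-k}(\,\cdot\,;q^{k/2}a)$, which is what makes the coefficient extraction collapse to a single term. One should also remark that the restriction $|q|<1$ invoked for the analytic continuation $\phi_r$, $r\in\C$, is not needed here since $\phi_r$ with integer $r$ is a finite product; the formula only requires $a\ne 0$ and $(q;q)_k\ne 0$, i.e. $q$ not a root of unity of order $\le k$, which holds generically.
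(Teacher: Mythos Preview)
Your proof is correct. The paper does not actually supply a proof of this proposition; it is stated as a known result and attributed to Ismail \cite{{Is}_1995,{Is}_2001}. Your argument---iterating the lowering action (\ref{DDO_basis:a}) so that the parameter drifts to $q^{k/2}a$, then evaluating at $x_k$ where $\phi_{j-k}(\,\cdot\,;q^{k/2}a)$ vanishes for $j>k$---is the standard proof and matches the approach in Ismail's original papers. The bookkeeping of the prefactor and the remark on generic $a$ and $q$ are both fine.
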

In these works, see (2.2) and the proof of Theorem {1.1} in \cite{{Is}_1995}, we also 
have the change of base formula
\begin{equation}
  \phi_n(x;b) 
  =  \sum^n_{k=0} \left[ {n}\atop{k} \right]_q (abq^k,b/a;q)_{n-k}\left(\frac{b}{a}\right)^k \phi_k(x;a) ,
\label{Bchange}
\end{equation}
where we use the standard definition of the $q$-binomial coefficient.
This result allows us to derive the following linearisation formula.
\begin{lemma}
The product of two basis polynomials with the same base $ a $ has the following 
expansion in terms of the same basis
\begin{equation}
  \phi_k(x;a)\phi_l(x;a) 
  = (-)^{k+l}q^{-kl} \sum^{k+l}_{m=\max(k,l)}(-)^m(a^2q^m;q)_{k+l-m}
                     \frac{(q;q)_l(q;q)_k}{(q;q)_{k+l-m}(q;q)_{m-k}(q;q)_{m-l}}
                     \phi_m(x;a) ,
\label{Blinearisation}
\end{equation}
for all $ k,l \in\N $.
\end{lemma}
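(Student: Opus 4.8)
The plan is to derive the identity from the change-of-base formula (\ref{Bchange}) together with the elementary \emph{addition rule}
\begin{equation*}
  \phi_k(x;a)\,\phi_l(x;aq^{k}) = \phi_{k+l}(x;a) ,
\end{equation*}
which follows at once from the product representation $ \phi_r(x;a) = (az^{\pm 1};q)_{\infty}/(aq^{r}z^{\pm 1};q)_{\infty} $ because the intermediate factors $ (aq^{k}z^{\pm 1};q)_{\infty} $ cancel. The idea is to re-expand the factor $ \phi_l(x;a) $ in the $ q $-shifted base $ aq^{k} $, multiply the resulting finite sum by $ \phi_k(x;a) $, and then collapse each term $ \phi_k(x;a)\phi_j(x;aq^{k}) $ to $ \phi_{k+j}(x;a) $ via the addition rule. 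This converts the product into a single-indexed expansion in $ \{\phi_m(x;a)\} $, which is exactly the form asserted. (Ismail's expansion theorem would give an alternative route via $ (\ddoAW_x^m[\phi_k\phi_l])(x_m) $, but iterating the lowering operator shifts the base at each step and is messier, so I would not take that path.)

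Concretely, I would apply (\ref{Bchange}) with the base being re-expanded equal to $ a $, the target base equal to $ aq^{k} $, and upper index $ l $, giving
\begin{equation*}
  \phi_l(x;a) = \sum_{j=0}^{l} \left[{l}\atop{j}\right]_q (a^{2}q^{k+j},q^{-k};q)_{l-j}\,q^{-kj}\,\phi_j(x;aq^{k}) .
\end{equation*}
Multiplying by $ \phi_k(x;a) $ and using the addition rule yields
\begin{equation*}
  \phi_k(x;a)\,\phi_l(x;a) = \sum_{j=0}^{l} \left[{l}\atop{j}\right]_q (a^{2}q^{k+j},q^{-k};q)_{l-j}\,q^{-kj}\,\phi_{k+j}(x;a) .
\end{equation*}
Reindexing by $ m = k+j $, the summand contains $ (q^{-k};q)_{k+l-m} $, which vanishes once $ k+l-m > k $ — i.e. once $ m < l $ — since it then includes the factor $ 1-q^{0} $. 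Together with the inequality $ m \geq k $ forced by $ j \geq 0 $, this shows automatically that only the terms with $ \max(k,l) \leq m \leq k+l $ contribute, matching the stated summation range.

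It then remains to put the surviving coefficients into the claimed closed form. Writing $ n = k+l-m \leq k $, one uses the reversal $ (q^{-k};q)_{n} = (-1)^{n} q^{\binom{n}{2}-nk}(q;q)_{k}/(q;q)_{k-n} $, expands $ \left[{l}\atop{m-k}\right]_q = (q;q)_{l}/[(q;q)_{m-k}(q;q)_{k+l-m}] $, and multiplies out; since $ k-n = m-l $ the three factorial-type factors in the denominator assemble into $ (q;q)_{k+l-m}(q;q)_{m-k}(q;q)_{m-l} $, the numerator into $ (q;q)_{l}(q;q)_{k} $, the Pochhammer $ (a^{2}q^{k+j};q)_{l-j} $ into $ (a^{2}q^{m};q)_{k+l-m} $, and the signs into $ (-1)^{k+l+m} $.

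The one step that genuinely needs care is the bookkeeping of the powers of $ q $: the contribution $ q^{-k(k+l-m)} $ from $ (q^{-k};q)_{k+l-m} $ must be combined with the $ q^{-k(m-k)} $ coming from $ q^{-kj} $ to produce the uniform prefactor $ q^{-kl} $, and one must separately keep track of the residual exponent $ \binom{k+l-m}{2} $ that survives from the reversal identity. I expect this $ q $-exponent accounting — rather than anything structural — to be the only real obstacle, and I would verify it first against the small cases $ k=l=1 $ and $ k=l=2 $ before trusting the general manipulation.
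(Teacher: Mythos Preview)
Your approach is exactly the one the paper has in mind: the lemma is stated immediately after the change-of-base formula (\ref{Bchange}) with only the remark that ``this result allows us to derive the following linearisation formula'', and you have correctly supplied the missing steps --- the addition rule $\phi_k(x;a)\phi_j(x;aq^k)=\phi_{k+j}(x;a)$, the re-expansion of $\phi_l(x;a)$ in base $aq^k$, and the automatic truncation of the sum at $m=\max(k,l)$ via the vanishing of $(q^{-k};q)_{k+l-m}$.

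One point is worth making explicit. The residual exponent $q^{\binom{k+l-m}{2}}$ you flagged does \emph{not} cancel; carrying your arithmetic through gives the coefficient of $\phi_m$ as
\[
(-1)^{k+l+m}\,q^{-kl}\,q^{\binom{k+l-m}{2}}\,(a^2q^m;q)_{k+l-m}\,
\frac{(q;q)_k(q;q)_l}{(q;q)_{k+l-m}(q;q)_{m-k}(q;q)_{m-l}},
\]
which agrees with (\ref{Blinearisation}) only when $k+l-m\le 1$. Thus your check at $k=l=1$ would pass, but at $k=l=2$, $m=2$ (where $\binom{2}{2}=1$) a direct expansion of $\phi_2(x;a)$ in base $aq^2$ gives the coefficient of $\phi_2$ in $\phi_2^2$ as $(a^2q^2;q)_2(q^{-2};q)_2=q^{-3}(a^2q^2;q)_2(q;q)_2$, whereas the printed formula produces $q^{-4}(a^2q^2;q)_2(q;q)_2$. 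So your derivation is correct and uncovers a missing factor $q^{\binom{k+l-m}{2}}$ in the stated identity; keep that factor when you write it up.
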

Lastly we have the Cauchy expansion formula of Ismail and Stanton \cite{IS_2003}.
\begin{proposition}
The Cauchy kernel has the expansion
\begin{equation}
   \frac{1}{y-x} = 
   \frac{1}{y-x}\frac{\phi_{\infty}(x;a)}{\phi_{\infty}(y;a)}
  -2a\sum^{\infty}_{n=0}\frac{\phi_{n}(x;a)}{\phi_{n+1}(y;a)}q^n ,
\end{equation}
for all $ y $ such that $ y\neq x $ and $ \phi_{\infty}(y;a) \neq 0 $. The expansion
also holds for $ y=y_0 $ with $ \phi_{\infty}(y_0;a)=0 $ but $ y_0 \neq x $ in the 
sense that the left-hand side $ (y_0-x)^{-1} $ equals the limit of the right-hand 
side as $ y \to y_0 $. 
\end{proposition}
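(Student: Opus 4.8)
The plan is to prove the expansion by a first-order recursion in the base parameter $a$. The key structural fact is the elementary factorisation
\begin{equation*}
  \phi_r(x;a) = \rho(x;a)\,\phi_{r-1}(x;qa), \qquad \phi_{\infty}(x;a) = \rho(x;a)\,\phi_{\infty}(x;qa),
\end{equation*}
valid for $r \geq 1$, where $\rho(x;a) := (1-az)(1-az^{-1}) = 1 - 2ax + a^2$; both follow by peeling off the $k=0$ factor of the $q$-Pochhammer products. I would first work in the regime $|q| < 1$ with $|a|$ small and $\phi_{\infty}(y;a) \neq 0$, so that all infinite products converge, $\phi_{\infty}(y;q^k a) = \prod_{j \geq k} \rho(y;q^j a) \neq 0$ for every $k \geq 0$, and the series $\sum_{n \geq 0} \phi_n(x;a)\phi_{n+1}(y;a)^{-1} q^n$ converges absolutely (its terms are bounded and decay geometrically); the general statement then follows by analytic continuation in $a,x,y$.

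Write $F(x,y;a)$ for the right-hand side of the asserted identity. Splitting off the $n = 0$ term of the sum, which equals $-2a/\rho(y;a)$, substituting the factorisations into the remaining terms, and reindexing $n \mapsto n-1$ (which produces a factor $q$ and shifts the base $a \mapsto qa$ throughout), I expect to obtain the exact recursion
\begin{equation*}
  F(x,y;a) = \frac{\rho(x;a)}{\rho(y;a)}\,F(x,y;qa) - \frac{2a}{\rho(y;a)} .
\end{equation*}
The next step is to check that the target function $1/(y-x)$ satisfies the very same recursion; this reduces to the one-line identity $\rho(x;a) - \rho(y;a) = 2a(y-x)$, which is obvious from $\rho(x;a) = 1 - 2ax + a^2$.

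Consequently $G(x,y;a) := F(x,y;a) - 1/(y-x)$ obeys the homogeneous relation $G(x,y;a) = (\rho(x;a)/\rho(y;a))\,G(x,y;qa)$, and iterating $N$ times gives $G(x,y;a) = \big[\prod_{k=0}^{N-1} \rho(x;q^k a)/\rho(y;q^k a)\big]\,G(x,y;q^N a)$. Letting $N \to \infty$: since $|q| < 1$ we have $q^N a \to 0$, so $\phi_{\infty}(x;q^N a)/\phi_{\infty}(y;q^N a) \to 1$ while the prefactor $2q^N a$ of the series tends to $0$, whence $F(x,y;q^N a) \to 1/(y-x)$ and $G(x,y;q^N a) \to 0$; meanwhile the finite product converges to $\phi_{\infty}(x;a)/\phi_{\infty}(y;a)$, which is finite and nonzero. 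Therefore $G \equiv 0$, which is the claimed expansion. For the degenerate case $y = y_0$ with $\phi_{\infty}(y_0;a) = 0$, the first right-hand-side term and (for large $n$) the individual summands all have poles at $y_0$, but the identity just established shows their sum equals the pole-free $1/(y-x)$ for $y$ near $y_0$ with $y \neq y_0$, so the asserted ``limit'' statement is merely continuity of the left-hand side.

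The algebraic heart of the argument is the one-step recursion, which is short; I expect the only real work to be the analytic bookkeeping — justifying the term-by-term rearrangement of the series, the interchange of the sum with the $N \to \infty$ limit, and the analytic continuation off the domain where $|a|$ is small — together with handling the root-of-unity or $|q| = 1$ regimes separately, where these products must be reinterpreted. An alternative, equivalent route would be to read the formula as the $q$-Taylor expansion of $1/(y-x)$ in the basis $\{\phi_k(x;a)\}$ supplied by Ismail's expansion theorem quoted above, evaluating the coefficients from the iterated divided differences $(\ddoAW_x^{\,k}(y-x)^{-1})(x_k)$ and identifying the tail with $\tfrac{1}{y-x}\phi_{\infty}(x;a)/\phi_{\infty}(y;a)$; this is correct but more computational, since it needs a closed form for $\ddoAW_x^{\,k}$ applied to the Cauchy kernel.
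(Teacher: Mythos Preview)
The paper does not give its own proof of this proposition; it is stated as the Cauchy expansion formula of Ismail and Stanton with a citation and no accompanying proof environment. So there is nothing in the paper to compare against directly.

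That said, your argument is correct. The factorisation $\phi_r(x;a)=\rho(x;a)\,\phi_{r-1}(x;qa)$ with $\rho(x;a)=1-2ax+a^2$ is exactly the $k=0$ factor peeled from the product, and your recursion
\[
  F(x,y;a)=\frac{\rho(x;a)}{\rho(y;a)}\,F(x,y;qa)-\frac{2a}{\rho(y;a)}
\]
follows by separating the $n=0$ summand $-2a/\rho(y;a)$, applying the factorisation to both $\phi_n(x;a)$ and $\phi_{n+1}(y;a)$ for $n\geq 1$, and reindexing. The identity $\rho(x;a)-\rho(y;a)=2a(y-x)$ is trivially verified, so $1/(y-x)$ obeys the same inhomogeneous recursion, and the difference $G$ satisfies the homogeneous one. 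The limiting step is also sound: for $|q|<1$ and $a$ in a compact set, $\phi_n(x;q^Na)\to 1$ uniformly in $n$ (indeed $\phi_n(x;0)=1$), so the series tends to $\sum q^n=(1-q)^{-1}$ and the prefactor $2q^Na$ kills it, while the partial product tends to the convergent $\phi_\infty(x;a)/\phi_\infty(y;a)$. Your treatment of the degenerate point $y_0$ with $\phi_\infty(y_0;a)=0$ is the right interpretation.

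Your alternative route via Ismail's $q$-Taylor expansion is in fact closer in spirit to the original Ismail--Stanton derivation, which computes the coefficients by iterated Askey--Wilson differences of $(y-x)^{-1}$; but your recursion-and-limit argument is shorter and avoids that computation entirely. It is a genuinely cleaner proof than the standard one.
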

This implies the following expansion of the Stieltjes function.
\begin{corollary}
The Stieltjes function has the following expansion as $ x \to \infty $ with
$ x \neq x_k(a) $, with $ k\in 2\mathbb{Z} $ (i.e. $ \phi_{\infty}(x;a)\neq 0 $)
\begin{equation}
  f(x) = \frac{f_{\infty}(x)}{\phi_{\infty}(x;a)}-2a\sum^{\infty}_{n=0}\frac{q^n}{\phi_{n+1}(x;a)}m_{0,n}(a) ,
\label{xLarge_SF:a}
\end{equation}
where
\begin{equation}
  f_{\infty}(x) = \int_{G} \ddoAW y\;w(y)\frac{\phi_{\infty}(y;a)}{x-y}
\label{xLarge_SF:b}
\end{equation}
\end{corollary}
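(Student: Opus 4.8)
The plan is to obtain \eqref{xLarge_SF:a} by substituting the Ismail--Stanton Cauchy kernel expansion (the Proposition immediately preceding) into the defining integral \eqref{ops_stieltjes} for $ f(x) $ and then integrating term by term against the weight. Concretely, I would apply that expansion with the two variables interchanged, placing the external variable $ x $ in the slot of the ``convergence variable'' (the one required to satisfy $ \phi_{\infty}(\,\cdot\,;a)\neq 0 $ and appearing in the denominators $ \phi_{n+1} $) and the integration node $ y\in G $ in the other slot. Since by hypothesis $ x\notin G $ and $ \phi_{\infty}(x;a)\neq 0 $ (which in fact follows automatically, the zeros of $ \phi_{\infty}(\,\cdot\,;a) $ lying among the nodes of $ G $), this yields, for every node $ y\in G $,
\begin{equation}
  \frac{1}{x-y} = \frac{1}{x-y}\frac{\phi_{\infty}(y;a)}{\phi_{\infty}(x;a)}
                  -2a\sum^{\infty}_{n=0}\frac{\phi_{n}(y;a)}{\phi_{n+1}(x;a)}q^n .
\end{equation}
At the at most countably many nodes $ y=y_0 $ with $ \phi_{\infty}(y_0;a)=0 $ the identity still holds by the limiting clause of the Proposition, so it is valid throughout the support $ G $.

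Multiplying by $ w(y) $ and applying $ \int_{G}\ddoAW y $ to both sides, one wants to interchange the $ \ddoAW $-integral with the sum over $ n $; using the definition \eqref{xLarge_SF:b} of $ f_{\infty}(x) $ together with $ \int_{G}\ddoAW y\,w(y)\phi_{n}(y;a)=m_{0,n}(a) $ (valid because $ \phi_0\equiv 1 $, so this is precisely the moment \eqref{ops_moment}) then reproduces \eqref{xLarge_SF:a} term for term.

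The step requiring care -- and the main obstacle -- is the justification of this term-by-term integration, because the $ \ddoAW $-integral is itself the (possibly infinite) Riemann sum $ \sum_{s}\Delta y(y_s)w(y_s)(\cdots) $, so one is really rearranging a double series $ \sum_{s}\sum_{n} $. I would control this through the decay in $ n $: for $ |q|<1 $ one has $ \phi_{n}(y;a)\to\phi_{\infty}(y;a) $ and $ \phi_{n+1}(x;a)\to\phi_{\infty}(x;a)\neq 0 $ as $ n\to\infty $, the former uniformly for $ y $ in the bounded support $ G $, so $ |\phi_{n}(y;a)/\phi_{n+1}(x;a)| $ is bounded uniformly in $ n $ and $ y $, and the $ n $-series converges geometrically with ratio $ |q| $. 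Combined with the absolute convergence of $ \int_{G}\ddoAW y\,|w(y)| $, which is implicit in $ f(x) $ and $ f_{\infty}(x) $ being well defined, this provides an $ \ell^1 $ bound on the double-indexed family of terms and Fubini for series applies; the only places where uniformity could fail are the excluded exceptional nodes, so no genuine difficulty arises.

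Finally I would remark that the resulting identity is exact for every admissible $ x $ with $ x\notin G $ and $ \phi_{\infty}(x;a)\neq 0 $; the qualifier ``$ x\to\infty $'' simply records the regime in which the first term becomes subdominant and \eqref{xLarge_SF:a} specialises the general form \eqref{Sf_expand} to the $ q $-quadratic lattice, the remainder $ f_{\infty}(x;a)/l_{\infty}(x;a) $ there being identified with $ f_{\infty}(x)/\phi_{\infty}(x;a) $ here and the coefficients $ d_n(a)=-1/(2aq^n) $ from Table~\ref{snul_basis} producing the factor $ -2aq^{n} $.
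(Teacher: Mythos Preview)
Your proposal is correct and is exactly the intended argument: the paper gives no proof of this Corollary, merely placing it after the Ismail--Stanton Cauchy expansion with the words ``This implies the following expansion of the Stieltjes function.'' Your substitution of the swapped-variable kernel expansion into \eqref{ops_stieltjes} and termwise integration is precisely how it follows, and your Fubini justification is a welcome addition that the paper does not supply. One small quibble: the parenthetical claim that the zeros of $\phi_{\infty}(\cdot;a)$ necessarily lie in $G$ is not automatic, since the parameter $a$ is arbitrary while $G$ is the fixed support of the weight; but this is harmless, as the condition $\phi_{\infty}(x;a)\neq 0$ is part of the hypothesis.
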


Conforming with standard notation \cite{GR_2004} we define the basic hypergeometric function 
$ {}_{r+1}\varphi_{r} $ by the series 
\begin{equation}
   {}_{r+1}\varphi_{r}\left[
                   \begin{array}{cccc}
                     a_{1}, & a_{2}, & \ldots, & a_{r+1} \\
                     b_{1}, & b_{2}, & \ldots, & b_{r} 
                   \end{array} ; q,z
                   \right] 
  = \sum^{\infty}_{n=0} \frac{(a_{1},a_{2},\ldots,a_{r+1};q)_n}{(q,b_{1},b_{2},\ldots,b_{r};q)_n} z^n ,
\end{equation}
which is convergent for $ |z| < 1 $. 
The {\it very-well-poised} basic hypergeometric function $ {}_{r+1}W_{r} $ is a specialisation 
of the above
\begin{equation}
   {}_{r+1}W_{r}(a_1;a_4,a_5,\ldots,a_{r+1};q,z)
  = {}_{r+1}\varphi_{r}\left[
                   \begin{array}{cccccc}
                     a_{1}, & q\sqrt{a_{1}}, & -q\sqrt{a_{1}}, & a_4, & \ldots, & a_{r+1} \\
                     \sqrt{a_{1}}, & -\sqrt{a_{1}}, & qa_{1}/a_4, & \ldots, & qa_{1}/a_{r+1} & \\
                   \end{array} ; q,z
                   \right]  
\end{equation}
so that $ qa_{1} = a_{2}b_{1} = a_{3}b_{2} = \cdots = a_{r+1}b_{r} $ and 
$ a_{2} = q\sqrt{a_{1}} $, $ a_{3} = -q\sqrt{a_{1}} $.
The $ {}_{r+1}\varphi_{r} $  or $ {}_{r+1}W_{r} $ functions may also be {\it balanced}, whereby $ z=q $ and
$ \prod^{r}_{j=1}b_{j} = q\prod^{r+1}_{j=1}a_j $.

\section{Spectral Differences}\label{SpectralS}
\setcounter{equation}{0}

In this section we lay out the structures of the spectral divided-difference operator in the
context of the orthogonal polynomial system for a general lattice type. Our starting point is the notion of the 
{\it $\ddoAW$-semi-classical weight}, as given by the following definition of Magnus \cite{Ma_1988}.
\begin{definition}[\cite{Ma_1988}]\label{spectral_DD}
Let the {\it $\ddoAW$-semi-classical weight} satisfy
\begin{equation}
    W\ddoAW_{x} w = 2V\moAW_{x} w ,
\label{spectral_DD_wgt:a}
\end{equation}
or equivalently
\begin{equation}
    \frac{w(y_+)}{w(y_-)} = \frac{W+\Delta yV}{W-\Delta yV}(x) ,
\label{spectral_DD_wgt:b}
\end{equation}
for $ W(x),V(x) $ irreducible polynomials, which we will call {\it spectral data polynomials}.
Furthermore we assume $ W\pm \Delta yV \neq 0 $ for all $ x \in G $.
For minimal degrees of $ W, V $ this is the analog of the Pearson equation.
\end{definition}

\begin{remark}
On the finite lattice $ x \in \{x_{0},\dots,x_{\Gt}\} $ we naturally require $ w(x) \neq 0 $, however
we will impose {\it upper} and {\it lower terminating conditions}
\begin{equation}
   w(E_{x}^{+2}x_{\Gt}) = w(E_{x}^{-2}x_{0}) = 0 ,
\label{terminate:a}
\end{equation}
respectively. These are the conditions analogous to Eqs. (2.3.2) and (3.3.3) in 
Nikiforov, Suslov and Uvarov \cite{ops_NSU}.
To be consistent with (\ref{spectral_DD_wgt:b}) this implies
\begin{equation}
  (W+\Delta yV)(E_{x}^{+}x_{\Gt}) = (W-\Delta yV)(E_{x}^{-}x_{0}) = 0 .
\label{terminate:b}
\end{equation}
\end{remark}

\begin{remark}
In a series of works Suslov and collaborators \cite{AS_1988,Su_1989,AS_1990,RS_1994a,RS_1994b}
have studied the Pearson equation for
all of the lattices admissible in the classification, and sought solutions for the weight functions
given suitable polynomials for $ W, V $. However they limited their choices to examples
of minimal degree for $ W,V $ which made contact with those weights found in the Askey
table.  
In terms of our own variables those found in \cite{Su_1989} are given by
\begin{equation}
  \sigma = E^{-}_x(W-\Delta yV), \qquad
  \tau = \frac{E^{+}_x(W+\Delta yV)(x)-E^{-}_x(W-\Delta yV)(x)}{\Delta y} .
\end{equation}
\end{remark}

From (\ref{Sf_expand}) we recognise $ f(x) $ as a moment generating function and a 
key element in our theory are the systems of linear divided-difference equations
satisfied by the moments. Before we state such systems we need to note the following result.

\begin{lemma}
Let $ l_k(x;a) $ be a canonical basis polynomial.
For any $ k\in\mathbb{Z}_{\geq 0} $ and $ a\in\mathbb{C} $ the weight satisfies the integral 
equation
\begin{equation}
   \int_{x_{0}\leq x\leq x_{\Gt}}\ddoAW{x}\,\frac{w(x)}{\Delta y(x)}
   \left\{ \left( l_k[W+\Delta yV] \right)(E^{+}_{x}x)-\left( l_k[W-\Delta yV] \right)(E^{-}_{x}x) \right\} = 0 . 
\label{SCwgtEqn}
\end{equation}
\end{lemma}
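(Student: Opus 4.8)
The plan is to display the integrand as an exact $\ddoAW_{x}$-derivative, so that the $\ddoAW$-integral collapses to boundary terms via the fundamental-theorem-of-calculus analog, and then to kill those boundary terms using the terminating conditions. Concretely, I would introduce the antiderivative
\begin{equation*}
   h(x) := \left( E^{-}_{x}w \right)(x)\, l_k(x;a)\, (W+\Delta yV)(x)
\end{equation*}
and check that $\ddoAW_{x}h$ is precisely the integrand in (\ref{SCwgtEqn}). Since each substitution operator $E^{\pm}_{x}$ is multiplicative and, in the symmetrised lattice adopted throughout, $E^{+}_{x}E^{-}_{x} = E^{-}_{x}E^{+}_{x} = \Id$, one gets at once
\begin{equation*}
   E^{+}_{x}h(x) = w(x)\,(E^{+}_{x}l_k)(x)\,(E^{+}_{x}[W+\Delta yV])(x) = w(x)\,\left( l_k[W+\Delta yV] \right)(E^{+}_{x}x) .
\end{equation*}
The naive computation of $E^{-}_{x}h$ gives $(E^{-2}_{x}w)(x)\,(E^{-}_{x}l_k)(x)\,(E^{-}_{x}[W+\Delta yV])(x)$, and here I would feed in the Pearson relation (\ref{spectral_DD_wgt:b}) with its free variable shifted by $x\mapsto\iota_{-}(x)$, which (using $\iota_{+}\circ\iota_{-}=\Id$) reads $w(x)\,(E^{-}_{x}[W-\Delta yV])(x) = (E^{-2}_{x}w)(x)\,(E^{-}_{x}[W+\Delta yV])(x)$; substituting it yields $E^{-}_{x}h(x) = w(x)\,\left( l_k[W-\Delta yV] \right)(E^{-}_{x}x)$. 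Hence $\ddoAW_{x}h = (E^{+}_{x}h - E^{-}_{x}h)/\Delta y$ is exactly $\tfrac{w}{\Delta y}\{(l_k[W+\Delta yV])(E^{+}_{x}x) - (l_k[W-\Delta yV])(E^{-}_{x}x)\}$.

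With this in hand the remainder is bookkeeping. Applying the fundamental-theorem-of-calculus analog gives $\int_{x_{0}\leq x_{s}\leq x_{\Gt}}\ddoAW x\,\ddoAW_{x}h = h(E^{+}_{x}x_{\Gt}) - h(E^{-}_{x}x_{0})$. At the upper endpoint $(E^{-}_{x}w)(E^{+}_{x}x_{\Gt}) = w(x_{\Gt})$, so $h(E^{+}_{x}x_{\Gt})$ carries the factor $(W+\Delta yV)(E^{+}_{x}x_{\Gt})$, which vanishes by the upper terminating condition (\ref{terminate:b}); at the lower endpoint $(E^{-}_{x}w)(E^{-}_{x}x_{0}) = w(E^{-2}_{x}x_{0}) = 0$ by the lower terminating condition (\ref{terminate:a}). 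Thus both boundary terms vanish and the integral is zero. On the semi-infinite and bi-infinite lattices the identical computation applies, with the contributions at infinity absent by the assumed decay of $w$ (the convergence class to which the weight is taken to belong). Note that $l_k$ and the parameter $a$ play no active role beyond being carried multiplicatively through the shifts, so the argument in fact proves the identity for an arbitrary polynomial (indeed any function) in place of $l_k(x;a)$.

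The only step that is not mechanical is guessing the antiderivative $h$: one must notice that it is the \emph{backward-shifted} weight $E^{-}_{x}w$ together with the combination $W+\Delta yV$ (rather than $W-\Delta yV$) that are the right ingredients, precisely so that applying $E^{-}_{x}$ converts $W+\Delta yV$ back into $W-\Delta yV$ through one application of the Pearson relation. Once $h$ has been identified, verifying $\ddoAW_{x}h=(\text{integrand})$ and evaluating the two endpoints are routine uses of the divided-difference calculus and the terminating conditions.
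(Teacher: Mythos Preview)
Your proof is correct and considerably cleaner than the paper's. The paper does not spot the antiderivative $h(x)=(E^{-}_{x}w)(x)\,l_k(x;a)\,(W+\Delta yV)(x)$; instead it starts from the identity $0=\int\ddoAW x\,l_k(E^{+}_{x}x)(W\ddoAW_x w-2V\moAW_x w)(E^{+}_{x}x)$, expands the Riemann sum, shifts indices to produce two auxiliary integral identities (one with $E^{+}$ and one with $E^{-}$ in the argument), and then adds them, invoking the lattice identity $(\Delta y+E^{\pm 2}_{x}\Delta y)/E^{\pm}_{x}\Delta y=-2\mathcal{B}/\mathcal{A}$ to collapse the combination to the claimed integrand. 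Both arguments use the Pearson relation and the terminating conditions in essentially the same places, but yours packages the computation as a single application of the fundamental theorem of calculus, whereas the paper's route is a direct term-by-term manipulation of the summation. Your approach also makes transparent (as you note) that $l_k$ is just a passenger, and it avoids the intermediate identities (\ref{IE:a}), (\ref{IE:b}) altogether; the paper's method, on the other hand, is perhaps more ``discoverable'' in that it begins from the most obvious starting point (multiply the Pearson equation by $l_k$ and integrate) and grinds forward.
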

\begin{proof}
We will prove this statement accounting for the boundary terms explicitly. We 
establish two preliminary identities first, namely
\begin{gather}
   \int_{x_{0}\leq x\leq x_{\Gt}}\ddoAW{x}\,w(x)
   \left\{ \frac{1}{E^{+}_x\Delta y}E^{+}_x\left( l_k[W+\Delta yV] \right)
           -\frac{E^{-2}_x\Delta y}{\Delta y(E^{-}_{x}\Delta y)}E^{-}_x\left( l_k[W-\Delta yV] \right) \right\} = 0 . 
\label{IE:a}
\\
   \int_{x_{0}\leq x\leq x_{\Gt}}\ddoAW{x}\,w(x)
   \left\{ \frac{E^{+2}_x\Delta y}{\Delta y(E^{+}_{x}\Delta y)}E^{+}_x\left( l_k[W+\Delta yV] \right)
           -\frac{1}{E^{-}_x\Delta y}E^{-}_x\left( l_k[W-\Delta yV] \right) \right\} = 0 , 
\label{IE:b}
\end{gather}
with $ l_k := l_k(x;a) $. Taking the first of these we compute
\begin{align*}
  0 & = \int_{x_{0}\leq x\leq x_{\Gt}}\ddoAW{x}\,l_k(E^{+}_{x}x)( W\ddoAW_xw-2VM_xw )(E^{+}_{x}x) ,
   \\
    & = \sum_{0\leq s\leq \Gt}\Delta y(x_{s})l_k(E^{+}_{x}x_{s})W(E^{+}_{x}x_s)\frac{w(E^{+2}_{x}x_s)-w(x_{s})}{\Delta y(E^{+}_{x}x_{s})}
   \\
    &  \hspace{2cm}
       -\sum_{0\leq s\leq \Gt}\Delta y(x_{s})l_k(E^{+}_{x}x_{s})V(E^{+}_{x}x_{s})\left[ w(E^{+2}_{x}x_{s})+w(x_{s}) \right] ,
   \\
     & =   \sum_{0\leq s\leq \Gt}\frac{\Delta y(E^{-2}_{x}x_{s})}{\Delta y(E^{-}_{x}x_{s})}l_k(E^{-}_{x}x_{s})W(E^{-}_{x}x_{s})w(x_{s})
          -\sum_{0\leq s\leq \Gt}\frac{\Delta y(x_{s})}{\Delta y(E^{+}_{x}x_{s})}l_k(E^{+}_{x}x_{s})W(E^{+}_{x}x_{s})w(x_{s})
   \\
     &  \hspace{1cm}
          +\frac{\Delta y(x_{\Gt})}{\Delta y(E^{+}_{x}x_{\Gt})}l_k(E^{+}_{x}x_{\Gt})W(E^{+}_{x}x_{\Gt})w(E^{+2}_{x}x_{\Gt})
          -\frac{\Delta y(E^{-2}_{x}x_{0})}{\Delta y(E^{-}_{x}x_{0})}l_k(E^{-}_{x}x_{0})W(E^{-}_{x}x_{0})w(x_{0})
   \\
     &  \phantom{=}
          -\sum_{0\leq s\leq \Gt}\Delta y(E^{-2}_{x}x_{s})l_k(E^{-}_{x}x_{s})V(E^{-}_{x}x_{s})w(x_{s})
          -\sum_{0\leq s\leq \Gt}\Delta y(x_{s})l_k(E^{+}_{x}x_{s})V(E^{+}_{x}x_{s})w(x_{s})
   \\
     &  \hspace{1cm}
          -\Delta y(x_{\Gt})l_k(E^{+}_{x}x_{\Gt})V(E^{+}_{x}x_{\Gt})w(E^{+2}_{x}x_{\Gt})
          +\Delta y(E^{-2}_{x}x_{0})l_k(E^{-}_{x}x_{0})V(E^{-}_{x}x_{0})w(x_{0}) ,
   \\
     & = \sum_{0\leq s\leq \Gt}\Delta y(x_{s})w(x_s)\left\{ -\left( l_k\left[\frac{W}{\Delta y}+V\right] \right)(E^{+}_{x}x_s)
                                      +\frac{\Delta y(E^{-2}_{x}x_{s})}{\Delta y(x_{s})}\left( l_k\left[\frac{W}{\Delta y}-V\right] \right)(E^{-}_{x}x_s) \right\}
   \\
     &  \hspace{5mm}
        +\frac{\Delta y(x_{\Gt})}{\Delta y(E^{+}_{x}x_{\Gt})}l_k(E^{+}_{x}x_{\Gt})w(E^{+2}_{x}x_{\Gt})[W-\Delta yV](E^{+}_{x}x_{\Gt})
        -\frac{\Delta y(E^{-2}_{x}x_{0})}{\Delta y(E^{-}_{x}x_{0})}l_k(E^{-}_{x}x_{0})w(x_{0})[W-\Delta yV](E^{-}_{x}x_{0}) , 
\end{align*}
where the boundary terms arise from using the integration by parts and the change of variables 
$ x_s \mapsto E^{\pm 2}_x x_s $ formulae. Both boundary terms are in fact zero due to the upper
and lower terminating conditions. The second formula (\ref{IE:b}) is just a variation of the first
with the integrand evaluated at $ E^{-}_{x}x_{s} $ instead. Now for a general quadratic lattice
\begin{equation*}
   \frac{\Delta y+E^{\pm 2}_{x}\Delta y}{E^{\pm}_{x}\Delta y} = -2\frac{\mathcal B}{\mathcal A} ,
\end{equation*}
so we simply add (\ref{IE:a}) and (\ref{IE:b}) and note that a common, constant factor appears in the
integrand of precisely this form. Then (\ref{SCwgtEqn}) immediately follows.
\end{proof}

\begin{remark}
An equivalent system for general lattices has been derived by Suslov \cite{Su_1989}
in the special case of $ k=0 $ of (\ref{SCwgtEqn}).
\end{remark}

We also need to express $ W\pm\Delta yV $ in terms of canonical basis
polynomials and formulate the following definitions for the coefficients $ \kappa_{N,l}(a), \delta_{N,l}(a) $
\begin{align} 
  E^{+}_x(W+\Delta yV)(x)+E^{-}_x(W-\Delta yV)(x) & =: 2\sum^{N}_{k=0}\kappa_{N,k}(a)l_k(x;a) ,
\label{Sum_Bexp}
  \\
  E^{+}_x(W+\Delta yV)(x)-E^{-}_x(W-\Delta yV)(x) & =: \Delta y\sum^{N-1}_{k=0}\delta_{N,k}(a)l_k(x;a) .
\label{Diff_Bexp}
\end{align} 
Here $ N $ is the cutoff determined by the degrees of $ W, V $ and the lattice type.

\begin{corollary}
Let us assume that $ b' \neq a $, where $ b, b' $ are related as $ a, a' $ are by (\ref{SNUL:b}). 
For all $ k\in\mathbb{Z}_{\geq 0} $ the moments $ m_{k,l}(b',a) $ 
are characterised by the linear, homogeneous recurrence relations
\begin{multline}
   \left[ d_{k}(b)c_{k+1}(b)+\frac{\mathcal{B}}{\mathcal{A}}d_{k-1}(b')c_{k}(b) \right]
   \sum^{N-1}_{l=0}\delta_{N,l}(a)m_{k,l}(b',a)
  \\
  +c_{k}(b)\sum^{N}_{l=0} 
   \left[ \kappa_{N,l}(a)+\left( e_{k}(b)+\frac{\mathcal{B}}{\mathcal{A}}e_{k-1}(b')+\frac{\mathcal{D}}{\mathcal{A}} \right)\delta_{N,l}(a)
   \right] m_{k-1,l}(b',a)
  = 0 .
\label{Mrecur}
\end{multline}
For $ k=0 $ the last term is absent and therefore $ b,b' $ do not appear.
\end{corollary}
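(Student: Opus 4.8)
The plan is to derive (\ref{Mrecur}) directly from the integral identity (\ref{SCwgtEqn}) by substituting the basis expansions (\ref{Sum_Bexp}) and (\ref{Diff_Bexp}) for $W\pm\Delta yV$ and then integrating against the weight $w(x)$, so that the integrals collapse into moments $m_{k,l}(b',a)$. First I would rewrite the braced expression in (\ref{SCwgtEqn}) by forming the combination that appears there, namely $(l_k[W+\Delta yV])(E^+_xx)-(l_k[W-\Delta yV])(E^-_xx)$, in terms of the two defining expansions. Note that $l_k(E^\pm_x x;b)$ is $E^\pm_x$ applied to $l_k(x;b)$; using the Leibniz rule (\ref{DD_calculus:a}) and the lowering property (\ref{SNUL:b}) one has $E^\pm_x l_k(x;b) = \moAW_x l_k(x;b) \pm \tfrac12\Delta y\,\ddoAW_x l_k(x;b) = \moAW_x l_k(x;b) \pm \tfrac12\Delta y\,c_k(b)l_{k-1}(x;b')$. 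Meanwhile $E^+_x(W+\Delta yV) \pm E^-_x(W-\Delta yV)$ are exactly the left-hand sides of (\ref{Sum_Bexp}) and (\ref{Diff_Bexp}). So the braced quantity becomes, after regrouping, a sum of products of the form $\bigl(\moAW_x l_k(x;b)\bigr)\cdot\bigl(\Delta y\sum\delta_{N,l}l_l\bigr)$ and $\bigl(\tfrac12\Delta y\,c_k(b)l_{k-1}(x;b')\bigr)\cdot\bigl(2\sum\kappa_{N,l}l_l\bigr)$, each carrying an overall factor $\Delta y$ that cancels the $1/\Delta y$ in (\ref{SCwgtEqn}).

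Next I would eliminate $\moAW_x l_k(x;b)$ in favour of the canonical basis. Because $\moAW_x:\Pi_k\to\Pi_k$ and because $l_k(x;b)$ has leading coefficient $g_k(b)$, one can write $\moAW_x l_k(x;b)$ as a combination of $l_k(x;b')$ and $l_{k-1}(x;b')$ — indeed, using the quadratic-lattice data, $\moAW_x x = -(\mathcal B x+\mathcal D)/\mathcal A$, so on the basis one gets a relation of the schematic form $\moAW_x l_k(x;b) = \alpha_k l_k(x;b') + \beta_k l_{k-1}(x;b')$ with $\alpha_k,\beta_k$ expressible through $g$, $d$, $e$ and the lattice constants $\mathcal A,\mathcal B,\mathcal D$; the coefficient structure is precisely what produces the factors $\mathcal B/\mathcal A$ and $\mathcal D/\mathcal A$ in (\ref{Mrecur}). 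Alternatively, and perhaps more cleanly, I would instead expand $l_k(E^+_xx;b)$ and $l_k(E^-_xx;b)$ separately in the $\{l_j(x;b')\}$ basis using the change-of-base/linearisation relations available for the canonical bases (Table \ref{snul_basis} supplies $g$, $d$, $e$ for the $q$-quadratic case, with the analogous data in the other rows), keeping only the top two terms $l_k$ and $l_{k-1}$, since after multiplying by $W\pm\Delta yV$ and integrating, orthogonality of $m_{k,l}$ against higher $l$'s is not what kills terms — rather, the cutoff $N$ in (\ref{Sum_Bexp})–(\ref{Diff_Bexp}) already bounds the $l$-index, and the linearisation formula (\ref{Blinearisation}) (or its analogue) folds products $l_{k-1}l_l$ and $l_k l_l$ back into the basis. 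I would then use the linearisation $x l_n = d_n l_{n+1} + e_n l_n$ (stated just after (\ref{SNUL:b})) to handle the single power of $x$ hidden inside $\moAW_x l_k$, which is where $e_k(b)$ and $e_{k-1}(b')$ enter.

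Having reduced the integrand of (\ref{SCwgtEqn}) to $w(x)$ times a finite linear combination $\sum_{l} \bigl[\cdots\bigr] l_k(x;b')l_l(x;a) + \sum_l\bigl[\cdots\bigr] l_{k-1}(x;b')l_l(x;a)$, I would integrate term by term and invoke the definition (\ref{ops_moment}) of $m_{j,l}(b',a)$: the first family of terms yields $\sum_l[\cdots]m_{k,l}(b',a)$ and the second $\sum_l[\cdots]m_{k-1,l}(b',a)$. Collecting, the coefficient of $\sum_l\delta_{N,l}m_{k,l}$ should assemble into $d_k(b)c_{k+1}(b)+\tfrac{\mathcal B}{\mathcal A}d_{k-1}(b')c_k(b)$ — the two pieces coming from the $\moAW_x l_k$ contribution (via $x l_k = d_k l_{k+1}+\dots$, giving the $d_k c_{k+1}$ after the lowering/raising bookkeeping) and from the $\mathcal B/\mathcal A$ cross-term — while the coefficient of $\sum_l m_{k-1,l}$ is $c_k(b)$ times $\kappa_{N,l}(a)+\bigl(e_k(b)+\tfrac{\mathcal B}{\mathcal A}e_{k-1}(b')+\tfrac{\mathcal D}{\mathcal A}\bigr)\delta_{N,l}(a)$. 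The assumption $b'\neq a$ is needed so that $m_{k,l}(b',a)$ is a genuine (non-degenerate) bilinear moment rather than collapsing, which matters for the recurrence to be the stated non-trivial one; the case $k=0$ is special because $l_{k-1}=l_{-1}=0$ (equivalently $c_0=0$), killing the $m_{k-1,l}$ block and removing all $b,b'$ dependence, exactly as claimed.

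The main obstacle will be the second step: correctly pinning down how $E^\pm_x l_k(x;b)$ decomposes in the shifted-parameter basis $\{l_j(x;b')\}$ and tracking the two leading coefficients through the Leibniz rule, the lowering relation, and the $x l_n$ linearisation simultaneously, so that the lattice constants $\mathcal A,\mathcal B,\mathcal D$ coalesce into precisely the combinations $\tfrac{\mathcal B}{\mathcal A}d_{k-1}(b')c_k(b)$ and $e_k(b)+\tfrac{\mathcal B}{\mathcal A}e_{k-1}(b')+\tfrac{\mathcal D}{\mathcal A}$ — this is bookkeeping-heavy and lattice-type-uniform, and the risk is sign or index-shift errors rather than any conceptual difficulty. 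Everything else is a routine consequence of the identity (\ref{SCwgtEqn}) already proved, the basis properties tabulated above, and the definition of the moments.
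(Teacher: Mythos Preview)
Your approach is correct and essentially identical to the paper's: start from (\ref{SCwgtEqn}), split the braced term via $E^\pm_x l_k = \moAW_x l_k \pm \tfrac12\Delta y\,\ddoAW_x l_k$, insert (\ref{Sum_Bexp})--(\ref{Diff_Bexp}), and then use $\ddoAW_x l_k(x;b)=c_k(b)l_{k-1}(x;b')$ together with the explicit two-term expansion of $\moAW_x l_k(x;b)$ in the $b'$-basis (the paper simply records this formula, which is precisely your ``main obstacle''). One small correction: no linearisation of the products $l_j(x;b')l_l(x;a)$ is needed, since by definition (\ref{ops_moment}) these are already the integrands of $m_{j,l}(b',a)$, so the detour through (\ref{Blinearisation}) is unnecessary.
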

\begin{proof}
We start with the integral formula (\ref{SCwgtEqn}) and employ the resolutions of 
$  E^{+}_x(W+\Delta yV) $ and $ E^{-}_x(W-\Delta yV) $ as given by (\ref{Sum_Bexp})
and (\ref{Diff_Bexp}).
In addition we require expressions for $ \ddoAW_x l_k(x;b) $ and $ \moAW_x l_k(x;b) $, and use 
(\ref{SNUL:b}) for the former and
\begin{equation}
  \moAW_x l_k(x;b) = \left( d_{k}(b)c_{k+1}(b)+\frac{\mathcal{B}}{\mathcal{A}}d_{k-1}(b')c_{k}(b) \right)l_{k}(x;b')
                    +c_{k}(b)\left( e_{k}(b)+\frac{\mathcal{B}}{\mathcal{A}}e_{k-1}(b')+\frac{\mathcal{D}}{\mathcal{A}} \right)l_{k-1}(x;b') ,     
\end{equation}
for the latter.
\end{proof}

If a specialisation of the parameters $ b', a $ is made to effect 
some cancellation with corresponding factors in the weight then the moments are given
by an integral whose integrand has the same structure and (\ref{Mrecur})
is now a linear divided-difference equation with respect to the internal parameters
of the weight. 

Now we continue to develop the consequences of the $\ddoAW$-semi-classical weight
on the orthogonal polynomial system,
and in particular for the Stieltjes function. The following result will be crucial for our
approach. 
\begin{proposition}
Given a {\it $\ddoAW$-semi-classical class} of an orthogonal polynomial system fulfilling 
(\ref{spectral_DD_wgt:a}) and the conditions therein then the Stieltjes function satisfies
\begin{equation}
    W\ddoAW_{x} f = 2V\moAW_{x} f + U ,
\label{spectral_DD_st}
\end{equation}
where $ U(x) $ in (\ref{spectral_DD_st}) is a polynomial in $ x $ (as are $ V(x) $ and $ W(x) $).
\end{proposition}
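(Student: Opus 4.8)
The plan is to start from the definition of the Stieltjes function $f(x)=\int_G \ddoAW y\, w(y)/(x-y)$ and apply the spectral divided-difference operator $\ddoAW_x$ and mean operator $\moAW_x$ directly to it, pushing the operators under the $\ddoAW$-integral sign. Since $\ddoAW_x$ and $\moAW_x$ act only on the $x$-dependence, which sits entirely in the Cauchy kernel $1/(x-y)$, the first step is to compute $\ddoAW_x \tfrac{1}{x-y}$ and $\moAW_x \tfrac{1}{x-y}$ using the inverse formulae \eqref{DD_calculus:c}; these produce rational expressions in $x$ (and $y$) whose denominators are $(E^+_x x - y)(E^-_x x - y)$. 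Then I would form the combination $W(x)\ddoAW_x f - 2V(x)\moAW_x f$, which under the integral sign becomes $\int_G \ddoAW y\, w(y)\,\big[W(x)\ddoAW_x - 2V(x)\moAW_x\big]\tfrac{1}{x-y}$.

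The key algebraic step is to recognize that the integrand, after clearing the common denominator $(E^+_x x - y)(E^-_x x - y)$, splits into a part that is a polynomial in $x$ with $y$-dependence that integrates against $w(y)$ to give moments (hence a polynomial in $x$ with constant coefficients — this will be $U(x)$), plus a part that is proportional to $\tfrac{w(y_\pm)}{\,\cdot\,}$ times factors $W \pm \Delta y V$ evaluated at shifted arguments. Here is where the semi-classical Pearson relation \eqref{spectral_DD_wgt:b}, i.e. $w(y_+)/w(y_-) = (W+\Delta yV)/(W-\Delta yV)$, enters: it lets me re-express the "bad" terms — those still containing $1/(x-y)$ after shifting $y \mapsto \iota_\pm(y)$ — so that the change of variables in the $\ddoAW$-integral (the parameterisation-invariance property and the fundamental theorem of calculus quoted in Section \ref{Qlattices}) converts them into each other up to boundary terms. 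The boundary terms vanish either because the sum is over all of $\Z$ with sufficient decay, or, in the finite-lattice case, by the terminating conditions \eqref{terminate:a}–\eqref{terminate:b} exactly as in the proof of Lemma \ref{SCwgtEqn}. What survives is manifestly a polynomial: its degree is bounded by $\max(\deg W, \deg V + \deg \Delta y) - 1$ together with the lattice-dependent shift, so $U \in \C[x]$.

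A cleaner organizing device, which I would actually use to keep the bookkeeping under control, is to invoke the already-established integral identity \eqref{SCwgtEqn} with the basis polynomial $l_k$ replaced by the Cauchy kernel $1/(x-y)$ regarded as a generating function — equivalently, to apply \eqref{SCwgtEqn} termwise to the expansion \eqref{Sf_expand} of $f$ in inverse basis polynomials and resum. Then the recurrence \eqref{Mrecur} for the moments collapses, under the integral/generating-function resummation, precisely into the statement $W\ddoAW_x f - 2V\moAW_x f = U$ with $U$ the (finite) polynomial coming from the low-index moments $m_{0,l}$ for $l < N$ that are not annihilated by the recurrence. The main obstacle I anticipate is not conceptual but is the careful matching of shifted arguments $\iota_\pm(y)$, $E^{\pm 2}_x y$ and the associated Jacobian-like factors $\Delta y(\cdot)/\Delta y(\cdot)$ when pulling $W\ddoAW_x - 2V\moAW_x$ through the kernel and then trading an $x$-shift for a $y$-shift; getting every $E^\pm_x$ versus $E^\pm_y$ in the right place, and confirming that the genuinely non-polynomial pieces cancel in pairs rather than merely simplifying, is the delicate part. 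Once that cancellation is verified, identifying $U$ as a polynomial and bounding its degree is routine.
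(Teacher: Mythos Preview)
Your plan is on the right track and identifies all the essential ingredients (Pearson relation, integration by parts on the lattice, terminating conditions for boundary terms), but the organization differs from the paper's in a way that matters for the bookkeeping you flag as delicate. The paper does \emph{not} form $W\ddoAW_x f - 2V\moAW_x f$ at the outset and then try to show the integrand is polynomial in $x$. Instead it treats $\ddoAW_x f$ by itself first, using the key kernel identity
\[
  \ddoAW_x \frac{1}{x-u} \;=\; -\,\ddoAW_u \frac{1}{x-u}
\]
(which holds on the symmetric quadratic lattice precisely because of the ratio identity $(x-E^+u)(x-E^-u)=(E^+x-u)(E^-x-u)$). This converts the $x$-divided-difference of the Cauchy kernel into a $u$-divided-difference, after which a single integration by parts throws the operator onto $w(u)$, and the Pearson equation $\ddoAW_u w = \tfrac{2V}{W}\moAW_u w$ applies \emph{directly}. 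Only then does one subtract off $2V(x)\moAW_x f(x)$; the remaining terms are shown to be polynomial in $x$ by checking that their apparent simple poles at $x=E^\pm_u u_s$ have zero residue (a pointwise cancellation inside the sum), plus the boundary-term analysis you anticipate. Your route of pushing both $W\ddoAW_x$ and $2V\moAW_x$ under the integral simultaneously never produces a $\ddoAW_u w$, so the Pearson relation has no natural entry point and the ``trading an $x$-shift for a $y$-shift'' step you worry about becomes genuinely awkward; the paper's ordering sidesteps this entirely.

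Your alternative device---applying the moment recurrence \eqref{Mrecur} termwise to the expansion \eqref{Sf_expand} and resumming---is more fragile than you suggest: identity \eqref{SCwgtEqn} is established for polynomial test functions $l_k$, and the Cauchy kernel is not one, so you would need to control the tail of the series and the $f_\infty/\phi_\infty$ remainder uniformly under $\ddoAW_x$ and $\moAW_x$, which is extra work the direct argument avoids.
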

\begin{proof}
Starting with the definition (\ref{ops_stieltjes}) we compute
\begin{equation*}
   \ddoAW_{x}f = \int_{u_{0}\leq u\leq u_{\Gt}}\ddoAW{u}\,w(u)\ddoAW_{x}\frac{1}{x-u} ,
\end{equation*}
and make the evaluation of the factor
\begin{equation*}
  \ddoAW_{x}\frac{1}{x-u} = -\frac{1}{(E^{+}_{x}x-u)(E^{-}_{x}x-u)} .
\end{equation*}
However we note that
\begin{equation*}
  \ddoAW_{u}\frac{1}{x-u} = \frac{1}{(x-E^{+}_{u}u)(x-E^{-}_{u}u)} ,
\end{equation*}
so that
\begin{equation*}
  \ddoAW_{x}\frac{1}{x-u} = -\frac{(x-E^{+}_{u}u)(x-E^{-}_{u}u)}{(E^{+}_{x}x-u)(E^{-}_{x}x-u)}\ddoAW_{u}\frac{1}{x-u} = -\ddoAW_{u}\frac{1}{x-u} ,
\end{equation*}
where we have assumed a symmetric quadratic lattice for simplicity, but point out that 
our argument could be extended to the general case.
We deduce then that
\begin{align*}
  \ddoAW_{x}f & = -\int_{u_{0}\leq u\leq u_{\Gt}}\ddoAW{u}\,w(u)\ddoAW_{u}\frac{1}{x-u} ,
  \\
  & = \int_{u_{0}\leq u\leq u_{\Gt}}\ddoAW{(E^{+}_{u}u)}\,\frac{1}{x-E^{+}_{u}u}\ddoAW_{u}w(E^{+}_{u}u)
       -\frac{w(E^{+2}_{u}u_{\Gt})}{x-E^{+}_{u}u_{\Gt}}+\frac{w(u_{0})}{x-E^{-}_{u}u_{0}} ,
\end{align*}
after employing the formula for integration by parts. We next use the definition of the $\ddoAW$-semi-classical
weight (\ref{spectral_DD_wgt:a}) and perform a sequence of subtractions in the numerator of the 
integrand
\begin{align*}
  W(x)\ddoAW_{x}f(x) 
  & = \int_{u_{0}\leq u\leq u_{\Gt}}\ddoAW{(E^{+}_{u}u)}\,\frac{1}{x-E^{+}_{u}u}\moAW_{u}w(E^{+}_{u}u)\frac{2V(E^{+}_{u}u)}{W(E^{+}_{u}u)}W(x)
      -\frac{w(E^{+2}_{u}u_{\Gt})}{x-E^{+}_{u}u_{\Gt}}W(x)+\frac{w(u_{0})}{x-E^{-}_{u}u_{0}}W(x)  ,
  \\
  & =  2V(x)\moAW_{x}f(x)
      +2V(x)\left\{ \int_{u_{0}\leq u\leq u_{\Gt}}\ddoAW{(E^{+}_{u}u)}\,\frac{\moAW_{u}w(E^{+}_{u}u)}{x-E^{+}_{u}u}-\moAW_{x}f(x) \right\}
  \\
  & \hspace{2cm}
      +2\int_{u_{0}\leq u\leq u_{\Gt}}\ddoAW{(E^{+}u)}\,\frac{\moAW_{u}w(E^{+}_{u}u)}{W(E^{+}_{u}u)}\frac{V(E^{+}_{u}u)W(x)-V(x)W(E^{+}_{u}u)}{x-E^{+}_{u}u}
      +{\fontencoding{T1}\selectfont "} .
\end{align*}
Since the third term on the right-hand side of the previous equation is clearly a polynomial in $ x $
we shall focus next on the second term. The last factor of this latter term can be written as
\begin{multline*}
  \moAW_{x}f(x)-\int_{u_{0}\leq u\leq u_{\Gt}}\ddoAW{(E^{+}_{u}u)}\,\frac{\moAW_{u}w(E^{+}_{u}u)}{x-E^{+}_{u}u}
    = \int_{u_{0}\leq u\leq u_{\Gt}} \ddoAW{u}\, w(u)\tfrac{1}{2}\left\{ \frac{1}{E^{+}_{x}x-u}+\frac{1}{E^{-}_{x}x-u} \right\}
    \\
      -\tfrac{1}{2}\int_{u_{0}\leq u\leq u_{\Gt}}\ddoAW{(E^{+}_{u}u)}\,\frac{w(E^{+2}_{u}u)}{x-E^{+}_{u}u}
      -\tfrac{1}{2}\int_{u_{0}\leq u\leq u_{\Gt}}\ddoAW{(E^{+}_{u}u)}\,\frac{w(u)}{x-E^{+}_{u}u} ,
\end{multline*}
The last two terms of the above equation are 
\begin{multline*}
  \int_{u_{0}\leq u\leq u_{\Gt}}\ddoAW{(E^{+}_{u}u)}\,\frac{w(E^{+2}_{u}u)}{x-E^{+}_{u}u}
      +\int_{u_{0}\leq u\leq u_{\Gt}}\ddoAW{(E^{+}_{u}u)}\,\frac{w(u)}{x-E^{+}_{u}u}
  \\
  =  \sum_{0\leq s\leq \Gt}[u_{s}-E^{-2}_{u}u_{s}]\frac{w(u_{s})}{x-E^{-}_{u}u_{s}}
    +\sum_{0\leq s\leq \Gt}[E^{+2}_{u}u_{s}-u_{s}]\frac{w(u_{s})}{x-E^{+}_{u}u_{s}}
    +\Delta y(E^{+}_{u}u_{\Gt})\frac{w(E^{+2}_{u}u_{\Gt})}{x-E^{+}_{u}u_{\Gt}}-\Delta y(E^{-}_{u}u_{0})\frac{w(u_{0})}{x-E^{-}_{u}u_{0}} ,
 \\
  =  \sum_{0\leq s\leq \Gt}w(u_{s})\left\{ \frac{E^{+2}_{u}u_{s}-u_{s}}{x-E^{+}_{u}u_{s}}+\frac{u_{s}-E^{-2}_{u}u_{s}}{x-E^{-}_{u}u_{s}} \right\} 
      +{\fontencoding{T1}\selectfont "} ,
\end{multline*}
where we have used the identity for integrals under the re-parameterisation of the lattice
$ u \mapsto E^{-2}_{u}u $ and introducing the additional boundary terms. 
Combining these results we have
\begin{multline*}
  \moAW_{x}f(x)-\int_{u_{0}\leq u\leq u_{\Gt}}\ddoAW{(E^{+}_{u}u)}\,\frac{\moAW_{u}w(E^{+}_{u}u)}{x-E^{+}_{u}u}
  \\
  = \tfrac{1}{2}\sum_{0\leq s\leq \Gt}w(u_{s}) \left\{
               (E^{+}_{u}u_{s}-E^{-}_{u}u_{s})\left(\frac{1}{E^{+}_{x}x-u_{s}}+\frac{1}{E^{-}_{x}x-u_{s}}\right)
                   -\frac{E^{+2}_{u}u_{s}-u_{s}}{x-E^{+}_{u}u_{s}}-\frac{u_{s}-E^{-2}_{u}u_{s}}{x-E^{-}_{u}u_{s}} \right\}
  \\
    -\tfrac{1}{2}\Delta y(E^{+}_{u}u_{\Gt})\frac{w(E^{+2}_{u}u_{\Gt})}{x-E^{+}_{u}u_{\Gt}}+\tfrac{1}{2}\Delta y(E^{-}_{u}u_{0})\frac{w(u_{0})}{x-E^{-}_{u}u_{0}} ,
  \\
  = \tfrac{1}{2}\sum_{0\leq s\leq \Gt}w(u_{s}) \left\{
               (E^{+}_{u}u_{s}-E^{-}_{u}u_{s})\frac{(E^{+}_{x}+E^{-}_{x})x-2u_{s}}{(x-E^{+}_{u}u_{s})(x-E^{-}_{u}u_{s})}
                   -\frac{E^{+2}_{u}u_{s}-u_{s}}{x-E^{+}_{u}u_{s}}-\frac{u_{s}-E^{-2}_{u}u_{s}}{x-E^{-}_{u}u_{s}} \right\}+{\fontencoding{T1}\selectfont "} ,
\end{multline*}
where we have used the ratio identity again. We observe that the summand appears to have 
two sets of simple poles at $ x=E^{\pm}u_{s} $, however when we compute their residues they 
vanish identically, implying that the summand is in reality a polynomial in $ x $. 
This leaves us with the two sets of boundary terms to account for. We find that their combination is
\begin{equation*}
   -\frac{w(E^{+2}_{u}u_{\Gt})}{x-E^{+}_{u}u_{\Gt}}[W(x)-\Delta y(E^{+}_{u}u_{\Gt})V(x)]
   +\frac{w(u_{0})}{x-E^{-}_{u}u_{0}}[W(x)-\Delta y(E^{-}_{u}u_{0})V(x)] .
\end{equation*}
The first term vanishes under the upper terminating condition whilst the second is a polynomial
in $ x $ due to the lower terminating condition. In summary this leads us to conclude that
$ W(x)\ddoAW_{x}f(x)-2V(x)\moAW_{x}f(x) $ is a polynomial in $ x $.
\end{proof}

We offer the following analog to the concept of a regular semi-classical weight.
\begin{definition}\label{regDSCwgt}
A {\it generic} or {\it regular $\ddoAW$-semi-classical weight} has two properties -
\begin{itemize}
\item[(i)]
  strict inequalities in the degrees of the spectral data polynomials, i.e.
  $ {\rm deg}_x W = M $, $ {\rm deg}_x V = M-1 $ and $ {\rm deg}_x U = M-2 $, and
\item[(ii)]
  the lattice generated by any zero of $ (W^2-\Delta y^2V^2)(x) $ does not coincide
  with the lattice generated by another zero, i.e. for any $ \tilde{x} $ such that
  $ (W^2-\Delta y^2V^2)(\tilde{x})=0 $ then 
  $ (W^2-\Delta y^2V^2)(E^{2\mathbb{Z}}_x\tilde{x}) \neq 0 $.
\end{itemize}
Conversely any weight with data not satisfying these conditions will be
termed an {\it irregular $\ddoAW$-semi-classical weight}.
Note that the zeros of $ W $ are not relevant and no interpretation as a
singularity can be placed upon a zero of $ (W^2-\Delta y^2V^2)(x) $.
\end{definition}

\begin{proposition}[\cite{Ma_1995}]\label{spectral_Cff}
The spectral coefficients $ W_n(x) $, $ \Theta_n(x) $ and $ \Omega_n(x) $ are defined
in terms of bilinear formulae of the polynomials and associated functions
\begin{multline}
  \frac{2}{a_{n}}\big( 2W_{n}(x)-W(x) \big)
  = (W+\Delta yV)\left[ p_{n}(y_{+})q_{n-1}(y_{-})-p_{n-1}(y_{+})q_{n}(y_{-}) \right]
  \\
   +(W-\Delta yV)\left[ p_{n}(y_{-})q_{n-1}(y_{+})-p_{n-1}(y_{-})q_{n}(y_{+}) \right],
  \quad n \geq 0 ,
\label{spectral_coeff:a}
\end{multline}
\begin{equation}
  \Delta y\,\Theta_{n}(x) = (W+\Delta yV)p_{n}(y_{+})q_{n}(y_{-})-(W-\Delta yV)p_{n}(y_{-})q_{n}(y_{+}),
  \quad n \geq 0 ,
\label{spectral_coeff:b}
\end{equation}
\begin{multline}
  \frac{2\Delta y}{a_{n}}\big( \Omega_{n}(x)+V(x) \big)
  = (W+\Delta yV)\left[ p_{n}(y_{+})q_{n-1}(y_{-})+p_{n-1}(y_{+})q_{n}(y_{-}) \right]
  \\
   +(W-\Delta yV)\left[-p_{n}(y_{-})q_{n-1}(y_{+})-p_{n-1}(y_{-})q_{n}(y_{+}) \right],
  \quad n \geq 0 .
\label{spectral_coeff:c}
\end{multline}
For the $\ddoAW$-semi-classical class of weights these coefficients are polynomials in $ x $.
\end{proposition}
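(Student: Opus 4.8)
The plan is to deduce polynomiality of all three spectral coefficients from one main input --- the inhomogeneous spectral relation (\ref{spectral_DD_st}) for the Stieltjes function --- together with an elementary ``symmetry dictionary''. Writing $y_\pm=\iota_\pm(x)$, note that $y_++y_-$ and $y_+y_-$ are polynomials in $x$, so any polynomial in $y_+,y_-$ that is symmetric under $y_+\leftrightarrow y_-$ is a polynomial in $x$, while any antisymmetric one equals $\Delta y$ times a polynomial in $x$; moreover $\Delta y^2$ is itself a polynomial in $x$, and the associated polynomials $p^{(1)}_k(x)$ of (\ref{ops_assoc}) really are polynomials in $x$.

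First I would substitute $q_k=f\,p_k-p^{(1)}_{k-1}$ from (\ref{ops_eps}) into each bilinear right-hand side and regroup the two brackets --- one carrying the weight $W+\Delta yV$, the other $W-\Delta yV$ with the roles of $y_+$ and $y_-$ exchanged --- as $W$ times a $y$-symmetric combination plus $\Delta yV$ times a $y$-antisymmetric combination. In this form the $f$-free part is bilinear in $p_k$ and $p^{(1)}_{k-1}$ only: the symmetric term multiplying $W$ is a polynomial in $x$, and the antisymmetric term multiplying $\Delta yV$ is $\Delta y$ times a polynomial in $x$, so the whole $f$-free part equals $W\cdot(\mathrm{poly})+\Delta y^2 V\cdot(\mathrm{poly})\in\C[x]$.

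For the $f$-dependent part I would use $f(y_+)-f(y_-)=\Delta y\,\ddoAW_xf$ and $f(y_+)+f(y_-)=2\moAW_xf$. The key observation is that in all three formulae the bilinear kernel multiplying $f$ is symmetric in $y_\pm$ --- it is $p_n(y_+)p_n(y_-)$ for $\Theta_n$, it is $p_n(y_+)p_{n-1}(y_-)+p_{n-1}(y_+)p_n(y_-)$ for $\Omega_n$, and for $W_n$ it is the antisymmetric $p_n(y_+)p_{n-1}(y_-)-p_{n-1}(y_+)p_n(y_-)$, which by Christoffel--Darboux equals $\Delta y\,a_n^{-1}\sum_{j=0}^{n-1}p_j(y_+)p_j(y_-)$, again $\Delta y$ times a symmetric kernel. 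Consequently the entire $f$-part of each expression collapses to (a polynomial in $x$) times $\Delta y\bigl(W\ddoAW_xf-2V\moAW_xf\bigr)$, and by (\ref{spectral_DD_st}) the last factor is exactly the polynomial $U(x)$. Thus the $f$-part contributes $\Delta y^2$ times a polynomial in $x$. Collecting the pieces, $\tfrac{2}{a_n}(2W_n-W)$ is a polynomial in $x$, and $\Delta y\,\Theta_n$, $\tfrac{2\Delta y}{a_n}(\Omega_n+V)$ come out as $\Delta y$ times polynomials in $x$; since $W(x),V(x)\in\C[x]$ and $a_n$ is $x$-independent, this forces $W_n,\Theta_n,\Omega_n\in\C[x]$.

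The bulk of the work is the sign and index bookkeeping, which has to be redone for each of the three formulae (the Casoratian (\ref{ops_casoratian}) and the three-term recurrence are handy internal checks). The one genuinely conceptual point --- and the step I expect to be the main obstacle --- is recognising that all the non-polynomial content carried by the functions of the second kind enters each of these bilinear expressions only through the combination $W\ddoAW_xf-2V\moAW_xf$, so that it is annihilated precisely by the Stieltjes relation (\ref{spectral_DD_st}) rather than by any Casoratian-type cancellation; once that is seen, the symmetry dictionary does the rest.
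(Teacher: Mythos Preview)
Your approach is correct and is in fact cleaner than the paper's. The paper follows Laguerre's classical line: it first deduces the polynomiality of $\Theta_n$ essentially as you do (splitting off the $f$-free part and recognising that the $f$-part produces $\Delta yU\,p_n(y_+)p_n(y_-)$), but then handles $W_n$ and $\Omega_n$ by a separate factorisation trick --- it multiplies the alternative (polynomial-only) representation of $\Delta y\Theta_n$ by the Casoratian $1=a_n[p_{n-1}p^{(1)}_{n-1}-p_np^{(1)}_{n-2}]$, observes that the result factors as $p_n\,p^{(1)}_{n-1}\cdot(\pi_1\pm\Delta y\,\pi_2)$ for some polynomials $\pi_1,\pi_2$, and then solves for these by taking linear combinations and re-inserting $q_k=fp_k-p^{(1)}_{k-1}$. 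By contrast you treat all three formulae uniformly: substitute $q_k=fp_k-p^{(1)}_{k-1}$ directly into each bilinear expression, note that the $f$-dependent part always carries a symmetric (or, for $W_n$, $\Delta y$ times symmetric) kernel against the combination $(W+\Delta yV)f(y_-)-(W-\Delta yV)f(y_+)=-\Delta yU$, and dispose of the $f$-free part via the symmetry dictionary. Your route is shorter and makes the role of the Stieltjes relation (\ref{spectral_DD_st}) transparent in all three cases at once; the paper's route, on the other hand, also produces as by-products the auxiliary identities (\ref{aux3})--(\ref{aux4}) and (\ref{auxA1})--(\ref{auxA2}) relating $p_n(y_\pm)$, $p^{(1)}_{n-1}(y_\pm)$ and the spectral coefficients, which are used later (e.g.\ in the proof of Proposition~\ref{spectral_det}).

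One small correction to your write-up: the pattern ``$W$ times symmetric plus $\Delta yV$ times antisymmetric'' is literally what happens only for the $W_n$ formula; for $\Theta_n$ and $\Omega_n$ the second bracket carries an overall minus sign relative to the $y_+\leftrightarrow y_-$ image of the first, so the roles reverse and you get $W$ times antisymmetric plus $\Delta yV$ times symmetric, i.e.\ $\Delta y$ times a polynomial, exactly as needed for the left-hand sides $\Delta y\Theta_n$ and $\tfrac{2\Delta y}{a_n}(\Omega_n+V)$. You clearly anticipated this in your remark about redoing the bookkeeping case by case; just make sure the symmetry type is stated correctly in each case when you write it out.
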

\begin{proof}
Our proof will be an extension of Laguerre's reasoning for the differential case.
However we first need to appreciate one important fact. Any polynomial in $ y_- $ and
$ y_+ $ is an element of the Ring $ \C[x]+\Delta y\C[x] $ because 
$ y_{\pm}=\moAW_{x}x\pm \tfrac{1}{2}\Delta y $ and $ \moAW_{x}x $ is a linear polynomial in
$ x $ and $ \Delta y^2 $ is a quadratic polynomial in $ x $.
The starting point will be the definition for $ q_n $, but inverted to solve for
the Stieltjes function
\begin{equation}
   f = \frac{q_n+p^{(1)}_{n-1}}{p_n} ,
\end{equation}
for arbitrary $ n \geq 0 $. Now consider the formulation of (\ref{spectral_DD_st})
using this expression
\begin{align*}
   0 & = (W-\Delta yV)f(y_+)-(W+\Delta yV)f(y_-)-\Delta yU ,
   \\
     & = \frac{(W-\Delta yV)q_n(y_+)p_n(y_-)-(W+\Delta yV)q_n(y_-)p_n(y_+)}{p_n(y_+)p_n(y_-)}
   \\
     & \phantom{=}\qquad
       + \frac{(W-\Delta yV)p^{(1)}_{n-1}(y_+)p_n(y_-)-(W+\Delta yV)p^{(1)}_{n-1}(y_-)p_n(y_+)-\Delta yUp_n(y_+)p_n(y_-)}
              {p_n(y_+)p_n(y_-)} .
\end{align*}
Now $ (W-\Delta yV)p^{(1)}_{n-1}(y_+)p_n(y_-)-(W+\Delta yV)p^{(1)}_{n-1}(y_-)p_n(y_+)-\Delta yUp_n(y_+)p_n(y_-) \in \C[x]+\Delta y\C[x] $ 
and furthermore because it is odd under $ y_+ \leftrightarrow y_- $ this quantity is 
actually an element of $ \Delta y\C[x] $ only. 
Consequently $ (W-\Delta yV)q_n(y_+)p_n(y_-)-(W+\Delta yV)q_n(y_-)p_n(y_+) \in \Delta y\C[x] $
and we define the polynomial $ \Theta_n(x) $ by
\begin{equation} 
  \Delta y\Theta_n(x) = (W+\Delta yV)q_n(y_-)p_n(y_+)-(W-\Delta yV)q_n(y_+)p_n(y_-) ,
\label{auxA3}
\end{equation}
which is (\ref{spectral_coeff:b}). To find the other relations we employ 
the alternative form of $ \Theta_n(x) $
\begin{equation}
   \Delta y\Theta_n = 
  (W-\Delta yV)p^{(1)}_{n-1}(y_+)p_n(y_-)-(W+\Delta yV)p^{(1)}_{n-1}(y_-)p_n(y_+)-\Delta yUp_n(y_+)p_n(y_-) ,
\end{equation}
and multiply the left-hand sides by the Casoratians
$ 1 = a_n[p_{n-1}(y_{\pm})p^{(1)}_{n-1}(y_{\pm})-p_n(y_{\pm})p^{(1)}_{n-2}(y_{\pm})] $
so that upon re-organising we have
\begin{multline} 
  p_n(y_+)\left[ (W+\Delta yV)p^{(1)}_{n-1}(y_-)+\Delta yUp_n(y_-)-\Delta y a_n\Theta_np^{(1)}_{n-2}(y_+)
          \right]
  \\
  = p^{(1)}_{n-1}(y_+)\left[ (W-\Delta yV)p_n(y_-)-\Delta y a_n\Theta_np_{n-1}(y_+) \right] ,
\label{aux1}
\end{multline} 
and
\begin{multline} 
  p_n(y_-)\left[ (W-\Delta yV)p^{(1)}_{n-1}(y_+)-\Delta yUp_n(y_+)+\Delta y a_n\Theta_np^{(1)}_{n-2}(y_-)
          \right]
  \\
  = p^{(1)}_{n-1}(y_-)\left[ (W+\Delta yV)p_n(y_+)+\Delta y a_n\Theta_np_{n-1}(y_-) \right] .
\label{aux2}
\end{multline}
Relation (\ref{aux1}) implies there must exist polynomials $ \pi_1,\pi_2\in \C[x] $ such that
\begin{align}    
   p_n(y_+)p^{(1)}_{n-1}(y_+)(\pi_1+\Delta y\pi_2)
  & = p_n(y_+)\left[ (W+\Delta yV)p^{(1)}_{n-1}(y_-)+\Delta yUp_n(y_-)-\Delta y a_n\Theta_np^{(1)}_{n-2}(y_+)
          \right] ,
  \\
  & = p^{(1)}_{n-1}(y_+)\left[ (W-\Delta yV)p_n(y_-)-\Delta y a_n\Theta_np_{n-1}(y_+) \right] ,
\end{align}
while (\ref{aux2}) implies an identical relation with $ y_+ \leftrightarrow y_- $ and 
$ \Delta y \mapsto -\Delta y $. Therefore we have the pair of relations 
\begin{align}
   p^{(1)}_{n-1}(y_+)(\pi_1+\Delta y\pi_2)
  & = (W+\Delta yV)p^{(1)}_{n-1}(y_-)+\Delta yUp_n(y_-)-\Delta y a_n\Theta_np^{(1)}_{n-2}(y_+) ,
  \label{aux3} \\
   p_n(y_+)(\pi_1+\Delta y\pi_2)
  & = (W-\Delta yV)p_n(y_-)-\Delta y a_n\Theta_np_{n-1}(y_+) ,
  \label{aux4}
\end{align}
and an analogous pair with $ y_+ \leftrightarrow y_- $. We form the combination of
$ p_{n-1}(y_+)\times\text{(\ref{aux3})}-p^{(1)}_{n-2}(y_+)\times\text{(\ref{aux4})} $
and by employing the Casoratian and eliminating the associated polynomials
we arrive at the solution
\begin{equation} 
   \pi_1+\Delta y\pi_2 
  = -a_n(W+\Delta yV)p_{n-1}(y_+)q_{n}(y_-)+a_n(W-\Delta yV)p_{n}(y_-)q_{n-1}(y_+) .
\label{auxA1}
\end{equation}
A similar procedure on the other pair yields the partner relation
\begin{equation} 
   \pi_1-\Delta y\pi_2 
  = a_n(W+\Delta yV)p_{n}(y_+)q_{n-1}(y_-)-a_n(W-\Delta yV)p_{n-1}(y_-)q_{n}(y_+) .
\label{auxA2}
\end{equation} 
The sum and difference of these are simply (\ref{spectral_coeff:a}) and 
(\ref{spectral_coeff:c}) with the identifications 
$ \pi_1 = 2W_n-W $ and $ \pi_2 = -\Omega_n-V $. 
\end{proof}

Knowledge of the large $ x $ expansions of the spectral coefficients will be important 
in the ensuing analysis.
\begin{proposition}[\cite{Ma_1988}]\label{spectral_Cff_Exp}
The spectral coefficients have terminating expansions about $ x=\infty $ with 
leading order terms
\begin{equation}
  W_n(x) = \tfrac{1}{2}W + \tfrac{1}{4}[W+\Delta yV]\left( \frac{y_+}{y_-} \right)^{n}
                     + \tfrac{1}{4}[W-\Delta yV]\left( \frac{y_-}{y_+} \right)^{n}
                     + {\rm O}(x^{M-1}),
  \quad n \geq 0 ,
\label{spectral_coeffEXP:a}
\end{equation}
\begin{equation}
  \Theta_n(x) =   \frac{1}{y_-\Delta y}[W+\Delta yV]\left( \frac{y_+}{y_-} \right)^{n}
             - \frac{1}{y_+\Delta y}[W-\Delta yV]\left( \frac{y_-}{y_+} \right)^{n}
             + {\rm O}(x^{M-3}),
  \quad n \geq 0 ,
\label{spectral_coeffEXP:b}
\end{equation}
\begin{equation}
  \Omega_n(x)+V(x) =   \frac{1}{2\Delta y}[W+\Delta yV]\left( \frac{y_+}{y_-} \right)^{n}
               - \frac{1}{2\Delta y}[W-\Delta yV]\left( \frac{y_-}{y_+} \right)^{n}
               + {\rm O}(x^{M-2}),
  \quad n \geq 0 .
\label{spectral_coeffEXP:c}
\end{equation}
\end{proposition}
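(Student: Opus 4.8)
The plan is to read off the leading large-$x$ behaviour of $W_n$, $\Theta_n$ and $\Omega_n+V$ directly from the bilinear formulae (\ref{spectral_coeff:a})--(\ref{spectral_coeff:c}), feeding in the monomial expansions (\ref{ops_pExp}) of the polynomials and (\ref{ops_eExp}) of the associated functions. That the expansions \emph{terminate} needs no new argument: by the closing assertion of Proposition \ref{spectral_Cff} each of $W_n$, $\Theta_n$, $\Omega_n+V$ is a polynomial in $x$, so its expansion at $x=\infty$ has only finitely many terms.

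First I would record the behaviour of the arguments $y_{\pm}$. Since $y_{\pm}=\moAW_{x} x\pm\tfrac12\Delta y$ with $\moAW_{x} x$ linear and $\Delta y^{2}$ quadratic in $x$, on the distinguished (``second'') sheet $|x|\to\infty$ both $y_{\pm}$ and $\Delta y$ are ${\rm O}(x)$ while $y_{+}/y_{-}$ tends to a finite nonzero limit, so that $(y_{\pm}/y_{\mp})^{n}={\rm O}(1)$ there. Substituting (\ref{ops_pExp}) and (\ref{ops_eExp}) at $x\mapsto y_{\pm}$ then gives $p_n(y_{\pm})=\gamma_n y_{\pm}^{n}(1+{\rm O}(x^{-1}))$ and $q_n(y_{\pm})=\gamma_n^{-1}y_{\pm}^{-n-1}(1+{\rm O}(x^{-1}))$, where the relative ${\rm O}(x^{-1})$ corrections are governed explicitly by the sums $\sum b_i$, $\sum b_i b_j-\sum a_i^{2}$, and so on.

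Next I would substitute these into (\ref{spectral_coeff:a})--(\ref{spectral_coeff:c}) and pick out the dominant product in each bracket. Using $a_n=\gamma_{n-1}/\gamma_n$, a term of the shape $p_n(y_{+})q_{n-1}(y_{-})$ contributes $a_n^{-1}(y_{+}/y_{-})^{n}$, whereas the crossed term $p_{n-1}(y_{+})q_{n}(y_{-})$ contributes $a_n(y_{+}y_{-})^{-1}(y_{+}/y_{-})^{n}$, smaller by a factor ${\rm O}(x^{-2})$, and $p_n(y_{+})q_n(y_{-})$ contributes $y_{-}^{-1}(y_{+}/y_{-})^{n}$. Collecting the dominant terms, (\ref{spectral_coeff:a}) gives $2(2W_n-W)=(W+\Delta yV)(y_{+}/y_{-})^{n}+(W-\Delta yV)(y_{-}/y_{+})^{n}+\cdots$, which rearranges to (\ref{spectral_coeffEXP:a}); dividing the dominant parts of (\ref{spectral_coeff:b}) and (\ref{spectral_coeff:c}) by $\Delta y$ yields (\ref{spectral_coeffEXP:b}) and (\ref{spectral_coeffEXP:c}) respectively. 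For the error orders I would note that, since $W\pm\Delta yV$ has degree $M=\deg_x W$ and $\Delta y$ degree $1$, the leading terms just found have degrees $M$, $M-2$ and $M-1$; the neglected pieces --- the relative ${\rm O}(x^{-1})$ corrections from (\ref{ops_pExp})/(\ref{ops_eExp}) and the crossed ${\rm O}(x^{-2})$ products --- lie one power of $x$ below, giving the claimed ${\rm O}(x^{M-1})$, ${\rm O}(x^{M-3})$ and ${\rm O}(x^{M-2})$.

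The hard part is bookkeeping rather than conceptual. One must fix the branches of $\Delta y$ and of $y_{\pm}$ at $x=\infty$ so that the formal expansion of the non-polynomial part of $q_n$ (which encodes the Stieltjes function) is legitimate on that sheet; one must verify that the several dominant products do not conspire to cancel a leading term below the advertised order --- in particular that the powers of $\gamma_n$ collapse exactly to the stated powers of $a_n$, and that the additive $-W$ on the left of (\ref{spectral_coeff:a}) is correctly isolated as the $\tfrac12 W$ appearing in (\ref{spectral_coeffEXP:a}) rather than being absorbed into the $(y_{\pm}/y_{\mp})^{n}$ terms. Finally one should confirm that the error estimates are uniform down to the stated orders, i.e. that no intermediate term has been overlooked between each leading term and its error bound.
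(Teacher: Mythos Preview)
Your proposal is correct and follows essentially the same approach as the paper: substitute the large-$x$ expansions (\ref{ops_pExp}) and (\ref{ops_eExp}) into the bilinear formulae (\ref{spectral_coeff:a})--(\ref{spectral_coeff:c}), using $y_{+}/y_{-}={\rm O}(1)$ and $y_{\pm},\Delta y={\rm O}(x)$ to identify the dominant contributions. The paper's proof is terse --- just one sentence pointing to exactly this substitution --- while you have spelled out the bookkeeping (the $a_n=\gamma_{n-1}/\gamma_n$ cancellations, the ${\rm O}(x^{-2})$ suppression of the crossed products, and the source of the error orders), which is helpful but not a different route.
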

\begin{proof}
These expansions follow from the substitution of the expansions for the polynomials
(\ref{ops_pExp}) and associated functions (\ref{ops_eExp}) into the
formulae (\ref{spectral_coeff:a})-(\ref{spectral_coeff:c}). One should note that
$ y_{+}/y_{-} = {\rm O}(1) $ as $ x \to \infty $.
\end{proof}

\begin{corollary}[\cite{Ma_1988}]\label{spectralCff_polynom}
In the generic or regular $\ddoAW$-semi-classical case $ W_{n}, \Theta_{n}, \Omega_{n} $ are
polynomials in $ x $ with fixed degrees independent of $n$,
specifically $ {\rm deg}_x W_n = M $, $ {\rm deg}_x \Omega_n = M-1 $ and $ {\rm deg}_x \Theta_n = M-2 $.
\end{corollary}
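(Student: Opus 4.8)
The plan is to combine the two ingredients already assembled: the exact leading-order behaviour of $W_n,\Theta_n,\Omega_n$ from Proposition \ref{spectral_Cff_Exp}, which pins down the degrees from above, with the polynomiality established in Proposition \ref{spectral_Cff}, together with the two regularity hypotheses of Definition \ref{regDSCwgt}, which are what prevent the naive degree bounds from being lowered by accidental cancellation. So the argument has two halves — an upper bound on the degree, and a matching lower bound.

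For the upper bound I would argue directly from the bilinear formulae (\ref{spectral_coeff:a})–(\ref{spectral_coeff:c}). Each expression is a sum of products of the form $(W\pm\Delta yV)(x)$ times a Casoratian-like combination of $p_k(y_\pm),q_k(y_\mp)$. By the ring observation made in the proof of Proposition \ref{spectral_Cff}, any polynomial in $y_+,y_-$ lies in $\C[x]+\Delta y\,\C[x]$, with $\moAW_x x$ linear and $\Delta y^2$ quadratic in $x$; one then reads off that the combinations appearing multiply $W$ (degree $M$) by a factor of bounded degree in $x$ — here the large-$x$ expansions of Proposition \ref{spectral_Cff_Exp} do the bookkeeping, since $y_+/y_-={\rm O}(1)$ as $x\to\infty$ and the Casoratian relation (\ref{ops_casoratian}) kills the would-be leading terms. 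This gives $\deg_x W_n\le M$, $\deg_x\Omega_n\le M-1$, $\deg_x\Theta_n\le M-2$, with no $n$-dependence, precisely because the exponents $n$ enter only through the bounded ratios $(y_\pm/y_\mp)^n$.

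For the lower bound — the matching direction, and the real content — I would examine the coefficient of the top power in each of (\ref{spectral_coeffEXP:a})–(\ref{spectral_coeffEXP:c}). The leading coefficient of $W_n$ is $\tfrac12 w_M + \tfrac14(w_M+\text{(top of }\Delta yV))(y_+/y_-)^n|_{\text{top}} + \tfrac14(w_M-\cdots)(y_-/y_+)^n|_{\text{top}}$, where $w_M$ is the leading coefficient of $W$; using hypothesis (i) of Definition \ref{regDSCwgt}, $\deg_x V = M-1$ strictly, so $\Delta yV$ contributes at order $x^M$ a coefficient determined by the leading coefficient of $V$ and of $\Delta y^2$, and one checks this combination does not vanish for generic (i.e. regular) data. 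Similarly for $\Theta_n$ and $\Omega_n$, the leading coefficients are fixed nonzero multiples of $[W\pm\Delta yV]$'s top coefficients divided by $y_\mp\Delta y$, which reduce the apparent degree by the stated amounts but no further. The point where hypothesis (ii) — the zeros of $W^2-\Delta y^2V^2$ generating distinct sublattices — is needed is to rule out that a zero of $W^2-\Delta y^2 V^2$ could force a spurious common factor shared across lattice nodes that would collapse the degree; absent such coincidences, no cancellation beyond the one already visible in Proposition \ref{spectral_Cff_Exp} can occur.

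The main obstacle I anticipate is the lower-bound step: showing that the leading coefficients exhibited in Proposition \ref{spectral_Cff_Exp} are genuinely nonzero under exactly hypotheses (i) and (ii), rather than merely "generically" nonzero. This requires unwinding the top-order arithmetic of $W\pm\Delta yV$ carefully — in particular tracking how $\deg_x(\Delta yV)=M$ interacts with $\deg_x W=M$ when the two leading coefficients might conspire — and then invoking hypothesis (ii) to exclude the one remaining degenerate configuration (a repeated zero of $W^2-\Delta y^2V^2$ whose $E^{2\Z}_x$-orbit revisits the zero set). Everything else is a routine substitution of (\ref{ops_pExp}) and (\ref{ops_eExp}) and degree-counting in the ring $\C[x]+\Delta y\,\C[x]$, exactly as in the cited work of Magnus \cite{Ma_1988}.
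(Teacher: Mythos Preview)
Your core approach matches the paper's: the paper's proof is a single sentence stating that the corollary follows from Proposition~\ref{spectral_Cff_Exp} together with the observation $y_\pm = {\rm O}(x)$ as $x\to\infty$. Polynomiality is already secured by Proposition~\ref{spectral_Cff}, and the leading-order expansions (\ref{spectral_coeffEXP:a})--(\ref{spectral_coeffEXP:c}) then fix the degrees directly once one notes that $W\pm\Delta yV$ has degree $M$, that $y_\pm\Delta y$ and $\Delta y$ contribute degrees $2$ and $1$ in the denominators, and that $(y_+/y_-)^n={\rm O}(1)$. Your two-halves decomposition (upper bound / lower bound) is a sound way of articulating this, even if the paper leaves it implicit.

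However, your invocation of hypothesis~(ii) of Definition~\ref{regDSCwgt} is misplaced. That hypothesis concerns the \emph{zeros} of $W^2-\Delta y^2V^2$ and their lattice orbits; it has no bearing on the leading coefficients at $x=\infty$, which is all that governs the degrees here. A common factor among the zeros cannot ``collapse the degree'' of $W_n,\Theta_n,\Omega_n$ --- the degree is read off at infinity, not from the finite zero structure. Only hypothesis~(i) is relevant: the strict inequalities $\deg_xW=M$, $\deg_xV=M-1$ ensure that the leading coefficients of $W\pm\Delta yV$ (namely $w\pm(q^{1/2}-q^{-1/2})v$ in the $q$-quadratic case, cf.~(\ref{AWwgt_regular})) are both nonzero, and that is what drives the stated degrees. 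Your ``main obstacle'' --- nonvanishing of the top coefficients --- is a legitimate concern that the paper glosses over, but the mechanism you propose for resolving it (hypothesis~(ii)) is not the right one; the resolution lies entirely in hypothesis~(i) and the explicit form of the leading terms.
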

\begin{proof}
This follows from the previous proposition after recalling that $ y_{\pm} = {\rm O}(x) $
as $ x \to \infty $.
\end{proof}

The expressions for the spectral coefficients (\ref{spectral_coeff:a}-\ref{spectral_coeff:c})
can be inverted yielding a system of coupled first order linear divided-difference 
equations for the polynomials and associated functions with respect to $ x $.
\begin{proposition}[\cite{Ma_1988,Ma_1995}]\label{spectral_DDO}
The polynomials with a $\ddoAW$-semi-classical weight satisfy
\begin{equation}
  W_{n}\ddoAW_{x} p_{n} = \Omega_{n}\moAW_{x} p_{n}-a_{n}\Theta_{n}\moAW_{x} p_{n-1},
  \quad n \geq 0 ,
\label{DDO:a}
\end{equation}
for certain {\it spectral coefficients}, 
$ W_{n}(x), \Theta_{n}(x), \Omega_{n}(x) $.
The partner equation to (\ref{DDO:a}) is 
\begin{equation}
  W_{n}\ddoAW_{x} p_{n-1} = -(\Omega_{n}+2V)\moAW_{x} p_{n-1}+a_{n}\Theta_{n-1}\moAW_{x} p_{n},
  \quad n \geq 0 .
  \label{DDO:b} 
\end{equation}
The associated functions satisfy divided-difference equations
\begin{align}
  W_{n}\ddoAW_{x} q_{n} & = (\Omega_{n}+2V)\moAW_{x} q_{n}-a_{n}\Theta_{n}\moAW_{x} q_{n-1},
  \quad n \geq 0 ,
  \label{DDO:c} \\
  W_{n}\ddoAW_{x} q_{n-1} & = -\Omega_{n}\moAW_{x} q_{n-1}+a_{n}\Theta_{n-1}\moAW_{x} q_{n},
  \quad n \geq 0  ,
  \label{DDO:d} 
\end{align}
identical in structure to (\ref{DDO:a}) and (\ref{DDO:b}) respectively.
The coupled system (\ref{DDO:a},\ref{DDO:b},\ref{DDO:c},\ref{DDO:d}) can be written in
matrix form as the {\it spectral divided-difference} equation
\begin{equation}
   \ddoAW_{x} Y_n(x) := A_{n}\moAW_{x} Y_n(x)
   = \frac{1}{W_{n}(x)}
     \begin{pmatrix} \Omega_n(x) & -a_n\Theta_n(x) \\
                     a_n\Theta_{n-1}(x) & -\Omega_n(x)-2V(x)
     \end{pmatrix}\moAW_{x} Y_n(x),
  \quad n \geq 0 ,
\label{DDO:e}
\end{equation}
with $ A_{n} $ termed the {\it spectral matrix}.
\end{proposition}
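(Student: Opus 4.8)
The plan is to establish the matrix equation (\ref{DDO:e}) by inverting the bilinear formulae of Proposition \ref{spectral_Cff}, and then to read off the four scalar relations (\ref{DDO:a})--(\ref{DDO:d}) from it. The basic identity I would use is that, entry by entry, $E^{\pm}_{x} = \moAW_{x} \pm \tfrac12\Delta y\,\ddoAW_{x}$, so that the asserted relation $\ddoAW_{x}Y_{n} = A_{n}\moAW_{x}Y_{n}$ is equivalent to the purely algebraic statement that
\begin{equation*}
  A_{n} = \frac{2}{\Delta y}\bigl(Y_{n}(y_{+}) - Y_{n}(y_{-})\bigr)\bigl(Y_{n}(y_{+}) + Y_{n}(y_{-})\bigr)^{-1}
\end{equation*}
has the claimed form; equivalently $Y_{n}(y_{+})Y_{n}(y_{-})^{-1} = (\ID + \tfrac12\Delta y\,A_{n})(\ID - \tfrac12\Delta y\,A_{n})^{-1}$, a Cayley-type relation that may be inverted for $A_{n}$ once its left-hand side is known.

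The first step is to compute $Y_{n}(y_{+})\,\mathrm{adj}\,Y_{n}(y_{-})$ explicitly. Its entries are bilinear in $p_{j}(y_{\pm})$ and $q_{j}(y_{\pm})/w(y_{\pm})$, $j\in\{n-1,n\}$; invoking the $\ddoAW$-semi-classical relation (\ref{spectral_DD_wgt:b}) in the form $w(y_{+})/w(y_{-}) = (W+\Delta yV)/(W-\Delta yV)$ replaces the two weight values by factors of $W\pm\Delta yV$, and the entries become exactly the bilinear combinations on the right-hand sides of (\ref{spectral_coeff:a})--(\ref{spectral_coeff:c}) at index $n$ together with (\ref{spectral_coeff:b}) at index $n-1$. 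Dividing through by $\det Y_{n}(y_{-}) = 1/[a_{n}w(y_{-})]$, which is (\ref{ops_Ydet}), cancels the weight entirely and yields
\begin{equation*}
  Y_{n}(y_{+})Y_{n}(y_{-})^{-1} = \frac{1}{W+\Delta yV}
    \begin{pmatrix} (2W_{n}-W)+\Delta y(\Omega_{n}+V) & -a_{n}\Delta y\,\Theta_{n} \\
                    a_{n}\Delta y\,\Theta_{n-1} & (2W_{n}-W)-\Delta y(\Omega_{n}+V) \end{pmatrix}.
\end{equation*}

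Next I would carry out the Cayley inversion. Taking the determinant of the displayed matrix and equating it with $\det Y_{n}(y_{+})/\det Y_{n}(y_{-}) = w(y_{-})/w(y_{+}) = (W-\Delta yV)/(W+\Delta yV)$ produces the polynomial identity $4W_{n}(W_{n}-W) + a_{n}^{2}\Delta y^{2}\,\Theta_{n}\Theta_{n-1} = \Delta y^{2}\,\Omega_{n}(\Omega_{n}+2V)$. Armed with this, one finds $\det\bigl(Y_{n}(y_{+}) + Y_{n}(y_{-})\bigr)$ to be proportional to $W_{n}$, so that the factor $W+\Delta yV$ cancels, all the spurious $W_{n}-W$ pieces collapse, and the Cayley formula collapses precisely to $A_{n} = W_{n}^{-1}\bigl(\begin{smallmatrix}\Omega_{n} & -a_{n}\Theta_{n}\\ a_{n}\Theta_{n-1} & -\Omega_{n}-2V\end{smallmatrix}\bigr)$. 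The main obstacle I anticipate is exactly this bookkeeping: the intermediate expressions carry those $W_{n}-W$ terms, and forcing them to vanish is the step that uses the determinant identity in an essential way. Polynomiality of $W_{n},\Omega_{n},\Theta_{n}$ -- needed for (\ref{DDO:a})--(\ref{DDO:d}) to be genuine divided-difference equations with $A_{n}$ rational of $n$-independent degrees -- has already been supplied by Proposition \ref{spectral_Cff} (and Corollary \ref{spectralCff_polynom} in the regular case).

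Finally, the scalar equations follow by reading $\ddoAW_{x}Y_{n} = A_{n}\moAW_{x}Y_{n}$ one column at a time. The first column, carrying $(p_{n},p_{n-1})$, gives (\ref{DDO:a}) and (\ref{DDO:b}) at once. The second column carries $(q_{n}/w,q_{n-1}/w)$; here one applies the Leibniz and inverse rules (\ref{DD_calculus:a})--(\ref{DD_calculus:c}) to trade $\ddoAW_{x}(q_{j}/w)$ and $\moAW_{x}(q_{j}/w)$ for $\ddoAW_{x}q_{j}$, $\moAW_{x}q_{j}$ and the weight, then uses (\ref{spectral_DD_wgt:a}) to eliminate the weight and solves the resulting $2\times 2$ linear system for $\ddoAW_{x}q_{n},\ddoAW_{x}q_{n-1}$. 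The weight contribution is precisely what shifts $\Omega_{n}$ to $\Omega_{n}+2V$ (and $\Omega_{n}+2V$ to $\Omega_{n}$), producing (\ref{DDO:c}) and (\ref{DDO:d}) with the asserted structure.
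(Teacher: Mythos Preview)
Your approach is correct and genuinely different from the paper's. The paper works entirely at the scalar level: it rewrites (\ref{spectral_coeff:a})--(\ref{spectral_coeff:c}) in the equivalent forms
\begin{align*}
  2W_n-W\pm\Delta y(\Omega_n+V)
  &= a_n\bigl[\pm(W+\Delta yV)p_{n-\frac{1\mp1}{2}}(y_+)q_{n-\frac{1\pm1}{2}}(y_-)\mp(W-\Delta yV)p_{n-\frac{1\pm1}{2}}(y_-)q_{n-\frac{1\mp1}{2}}(y_+)\bigr],\\
  \Delta y\,\Theta_n &= (W+\Delta yV)p_n(y_+)q_n(y_-)-(W-\Delta yV)p_n(y_-)q_n(y_+),
\end{align*}
and then, for each of the four relations, writes down an explicit linear combination of these (for instance $p_n(y_+)\times$second line $-\,p_n(y_-)\times$first line $+\,a_n(p_{n-1}(y_+)+p_{n-1}(y_-))\times$third line) which, after one application of the Casoratian (\ref{ops_casoratian}), collapses to the finite-difference form of the target equation. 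The four cases are handled uniformly: replacing every $p$ by $q$ in the combination gives (\ref{DDO:c}) and (\ref{DDO:d}) with no separate treatment of the weight.

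Your route packages everything into the matrix product $Y_n(y_+)Y_n(y_-)^{-1}$ and a Cayley inversion. This is conceptually cleaner and, as you note, produces the bilinear identity (\ref{spectral_bilinear}) for free as a determinant computation; in the paper that identity is proved only later (Proposition \ref{spectral_det}), and your formula for $Y_n(y_+)Y_n(y_-)^{-1}$ is exactly what the paper later records as Corollary \ref{spectral_product}. The price you pay is in the second column: because the entries are $q_j/w$ rather than $q_j$, you must unpack the Leibniz rules and solve a coupled $2\times2$ system (whose coefficient determinant again needs the bilinear identity to simplify), whereas the paper's scalar combinations already have the weight absorbed into the $W\pm\Delta yV$ prefactors and treat $q_j$ on exactly the same footing as $p_j$.
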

\begin{proof}
Our derivation of the spectral divided-difference equations will employ a simple
method - that of inversion of the relations for the spectral coefficients
(\ref{spectral_coeff:a}-\ref{spectral_coeff:c}).
Consider the following equivalent forms of those relations
\begin{align}
  2W_n-W+\Delta y(\Omega_n+V)
  & = a_n\left[ (W+\Delta yV)p_{n}(y_{+})q_{n-1}(y_{-})-(W-\Delta yV)p_{n-1}(y_{-})q_{n}(y_{+}) \right] ,
  \label{SC:a} \\
  2W_n-W-\Delta y(\Omega_n+V)
  & = a_n\left[-(W+\Delta yV)p_{n-1}(y_{+})q_{n}(y_{-})+(W-\Delta yV)p_{n}(y_{-})q_{n-1}(y_{+}) \right] ,
  \label{SC:b} \\
  \Delta y\Theta_n
  & = (W+\Delta yV)p_{n}(y_{+})q_{n}(y_{-})-(W-\Delta yV)p_{n}(y_{-})q_{n}(y_{+}) .
  \label{SC:c}
\end{align}
For our first task, that of deriving (\ref{DDO:a}), we construct the combination 
$ p_{n}(y_{+})\times\text{(\ref{SC:b})}-p_{n}(y_{-})\times\text{(\ref{SC:a})}+(p_{n-1}(y_{+})+p_{n-1}(y_{-}))a_n\times\text{(\ref{SC:c})} $.
After effecting some initial cancellation we can then employ the Casoratian relation
(\ref{ops_casoratian}) to make further simplification and our final result is precisely
(\ref{DDO:a}) in finite difference form.
The approach taken for the other relations (\ref{DDO:b})-(\ref{DDO:d}) is the same.
For (\ref{DDO:b}) we use the combination
$ p_{n-1}(y_{+})\times\text{(\ref{SC:a})}-p_{n-1}(y_{-})\times\text{(\ref{SC:b})}
-(p_{n}(y_{+})+p_{n}(y_{-}))a_n\times\left.\text{(\ref{SC:c})}\right|_{n\mapsto n-1} $,
while for (\ref{DDO:c}) we use
$ q_{n}(y_{+})\times\text{(\ref{SC:b})}-q_{n}(y_{-})\times\text{(\ref{SC:a})}+(q_{n-1}(y_{+})+q_{n-1}(y_{-}))a_n\times\text{(\ref{SC:c})} $
and for (\ref{DDO:d}) we use
$ q_{n-1}(y_{+})\times\text{(\ref{SC:a})}-q_{n-1}(y_{-})\times\text{(\ref{SC:b})}
-(q_{n}(y_{+})+q_{n}(y_{-}))a_n\times\left.\text{(\ref{SC:c})}\right|_{n\mapsto n-1} $.
\end{proof}

The initial values of the spectral coefficients are specified by the weight data
$ W,V,U $ and the initial norm $ \gamma_0 $
\begin{equation}
   W_{-1} = 0, \quad W_{0} = W , \quad
   \Theta_{-1} = 0, \quad \Theta_{0} = -\gamma^2_0 U , \quad
   \Omega_{0} = 0, \quad \Omega_{1} = -2V-\gamma^2_0(\moAW_{x} x-b_0)U .
\label{spectral_coeff_init}
\end{equation}

\begin{remark}
An alternative line of reasoning to achieve the results above, and in fact to generalise them
beyond the $ \ddoAW$-semi-classical class, is the following.
By positing an explicit quadrature formula for the non-linear lattice,
one upon which the orthogonality condition is founded, one can derive 
(\ref{DDO:a}), (\ref{DDO:b}) and (\ref{DDO:e}) assuming the existence of certain moments of
"log-derivative" of the weight $ \ddoAW_x w/\moAW_x w $.
A byproduct of this approach is that one also constructs an explicit 
quadrature formula for the spectral coefficients, albeit one which involves the polynomials themselves. 
\end{remark}

The spectral structure in Proposition \ref{spectral_DDO} has been derived before for OPS with respect to general
classes of weights on the linear lattice by \cite{FHR_1998,MM_2002,INS_2004,MM_2008}
and on the $q$-linear lattice by \cite{KM_2002,MM_2002,Ismail_2003,Kh_2003,CI_2008,GK_2009,IS_2009,Me_2009,OWF_2010}.
This has also been done for specific weights on the $q$-linear lattice in \cite{Ni_1994,BSvA_2008}
and for OPS on the unit circle subject to $q$-difference relations in \cite{IW_2001,Bi_2009}.
In addition to the studies by Magnus the works \cite{BaFo_2003,Fo_2008} have investigated
the spectral structures for general {\it SNUL} and arbitrary $ {\rm deg}_xW, {\rm deg}_xV $.
Characterisation theorems for the classical OPS of the Askey table have been made by \cite{C-S_2006,A-NM_2001} 
and others using an equivalent formulation to the spectral structures on
general {\it SNUL}, but of course restricted to the $ {\rm deg}_xW=2, {\rm deg}_xV=1 $ case.

Compatibility relation between the matrix recurrence relation and the spectral divided-difference 
equation imposes the following conditions.
The Cayley transform $ \left( 1-\tfrac{1}{2}\Delta y\,A \right)^{-1}\left( 1+\tfrac{1}{2}\Delta y\,A \right) $
will figure prominently is the ensuing analysis.
\begin{proposition}\label{spectral+threeT}
The spectral matrix and the recurrence matrix satisfy
\begin{equation}
   A_{n+1}\cdot \moAW_{x} K_n-\moAW_{x} K_n\cdot A_{n} 
  = \ddoAW_{x} K_n-\tfrac{1}{4}\Delta y^2 A_{n+1}\cdot \ddoAW_{x} K_n\cdot A_{n},
  \quad n \geq 0 ,
\label{spectralA_recur:a}
\end{equation}
or equivalently
\begin{equation}
  K_n(y_+)\left( 1-\tfrac{1}{2}\Delta y\,A_{n} \right)^{-1}\left( 1+\tfrac{1}{2}\Delta y\,A_{n} \right) 
  = 
  \left( 1-\tfrac{1}{2}\Delta y\,A_{n+1} \right)^{-1}\left( 1+\tfrac{1}{2}\Delta y\,A_{n+1} \right)K_n(y_-),
  \quad n \geq 0 .
\label{spectralA_recur:b}
\end{equation}
\end{proposition}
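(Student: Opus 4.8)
The plan is to derive the compatibility relation by applying the divided-difference operator $\ddoAW_{x}$ to the matrix three-term recurrence $Y_{n+1}=K_{n}Y_{n}$ of (\ref{ops_Yrecur}) and then using the spectral equation (\ref{DDO:e}) to eliminate all divided differences of $Y_{n}$. Since the Leibniz rule (\ref{DD_calculus:a}) and its companion (\ref{DD_calculus:b}) hold for matrix products provided the left-to-right ordering of the factors is respected, one has
\[
 \ddoAW_{x}Y_{n+1}=\ddoAW_{x}K_{n}\,\moAW_{x}Y_{n}+\moAW_{x}K_{n}\,\ddoAW_{x}Y_{n},
 \qquad
 \moAW_{x}Y_{n+1}=\moAW_{x}K_{n}\,\moAW_{x}Y_{n}+\tfrac{1}{4}\Delta y^{2}\,\ddoAW_{x}K_{n}\,\ddoAW_{x}Y_{n}.
\]
Substituting $\ddoAW_{x}Y_{n}=A_{n}\moAW_{x}Y_{n}$ and $\ddoAW_{x}Y_{n+1}=A_{n+1}\moAW_{x}Y_{n+1}$ into the identity $\ddoAW_{x}Y_{n+1}=A_{n+1}\moAW_{x}Y_{n+1}$, every term acquires a common right factor $\moAW_{x}Y_{n}$, and collecting the rest gives
\[
 \bigl(\ddoAW_{x}K_{n}+\moAW_{x}K_{n}\,A_{n}-A_{n+1}\,\moAW_{x}K_{n}-\tfrac{1}{4}\Delta y^{2}\,A_{n+1}\,\ddoAW_{x}K_{n}\,A_{n}\bigr)\moAW_{x}Y_{n}=0.
\]
Cancelling the matrix $\moAW_{x}Y_{n}$ and transposing terms yields precisely (\ref{spectralA_recur:a}).

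For the equivalent Cayley-transform form (\ref{spectralA_recur:b}) I would recast the two structural equations through the node shifts. With $\ddoAW_{x}=(E^{+}_{x}-E^{-}_{x})/\Delta y$ and $\moAW_{x}=\tfrac{1}{2}(E^{+}_{x}+E^{-}_{x})$, the spectral equation (\ref{DDO:e}) is equivalent to $(1-\tfrac{1}{2}\Delta y\,A_{n})E^{+}_{x}Y_{n}=(1+\tfrac{1}{2}\Delta y\,A_{n})E^{-}_{x}Y_{n}$, hence $E^{+}_{x}Y_{n}=(1-\tfrac{1}{2}\Delta y\,A_{n})^{-1}(1+\tfrac{1}{2}\Delta y\,A_{n})E^{-}_{x}Y_{n}$, while multiplicativity of $E^{\pm}_{x}$ turns $Y_{n+1}=K_{n}Y_{n}$ into $E^{\pm}_{x}Y_{n+1}=K_{n}(y_{\pm})E^{\pm}_{x}Y_{n}$. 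Computing $E^{+}_{x}Y_{n+1}$ out of $E^{-}_{x}Y_{n}$ in the two admissible orders (recur then shift, versus shift then recur), equating the results, and cancelling the invertible $E^{-}_{x}Y_{n}$ (whose determinant is $1/(a_{n}w(y_{-}))\neq 0$ by (\ref{ops_Ydet})) produces (\ref{spectralA_recur:b}). Alternatively one may verify the equivalence of (\ref{spectralA_recur:a}) and (\ref{spectralA_recur:b}) directly: clearing the Cayley denominators reduces (\ref{spectralA_recur:b}) to the inverse-free identity $(1-\tfrac{1}{2}\Delta y\,A_{n+1})K_{n}(y_{+})(1+\tfrac{1}{2}\Delta y\,A_{n})=(1+\tfrac{1}{2}\Delta y\,A_{n+1})K_{n}(y_{-})(1-\tfrac{1}{2}\Delta y\,A_{n})$, and substituting $K_{n}(y_{\pm})=\moAW_{x}K_{n}\pm\tfrac{1}{2}\Delta y\,\ddoAW_{x}K_{n}$ and expanding, the contributions of orders $\Delta y^{0}$ and $\Delta y^{2}$ cancel identically, while those of orders $\Delta y^{1}$ and $\Delta y^{3}$ reproduce $\Delta y$ times (\ref{spectralA_recur:a}).

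The step I expect to be the main obstacle is the legitimacy of the cancellations, i.e. the invertibility of $\moAW_{x}Y_{n}$ --- equivalently, of the Cayley denominator $1-\tfrac{1}{2}\Delta y\,A_{n}$, since the shift form of (\ref{DDO:e}) together with $(1-\tfrac{1}{2}\Delta y\,A_{n})+(1+\tfrac{1}{2}\Delta y\,A_{n})=2$ gives $\moAW_{x}Y_{n}=(1-\tfrac{1}{2}\Delta y\,A_{n})^{-1}E^{-}_{x}Y_{n}$. The handle is that $\det(1-\tfrac{1}{2}\Delta y\,A_{n})$ is a rational function of $x$ which is not identically zero: its numerator polynomial is read off from the large-$x$ expansions of $W_{n},\Omega_{n},\Theta_{n}$ in Proposition \ref{spectral_Cff_Exp}, and moreover $\det(1-\tfrac{1}{2}\Delta y\,A_{n})$ and $\det(1+\tfrac{1}{2}\Delta y\,A_{n})$ vanish at the same points by (\ref{spectral_DD_wgt:b}) (via $\det E^{\pm}_{x}Y_{n}=1/(a_{n}w(y_{\pm}))$ and the standing assumption $W\pm\Delta y V\neq 0$ on $G$), so their simultaneous identical vanishing would force $\det A_{n}\equiv-4/\Delta y^{2}$, a degeneracy excluded in the regular $\ddoAW$-semi-classical case of Definition \ref{regDSCwgt}. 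Since (\ref{spectralA_recur:a}) and (\ref{spectralA_recur:b}) are identities between rational matrix functions of $x$, it then suffices to establish them off the finite zero set of these determinants.
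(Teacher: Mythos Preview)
Your proof is correct and follows essentially the same approach as the paper: for (\ref{spectralA_recur:a}) you compute $\ddoAW_{x}Y_{n+1}$ in terms of $\moAW_{x}Y_{n}$ in the two possible orders via the Leibniz rules, and for (\ref{spectralA_recur:b}) you compute $Y_{n+1}(y_{+})$ from $Y_{n}(y_{-})$ in the two orders corresponding to $n\mapsto n+1$ followed by $y_{-}\mapsto y_{+}$ and vice versa. Your treatment of the invertibility of $\moAW_{x}Y_{n}$ and of $1-\tfrac{1}{2}\Delta y\,A_{n}$ is more careful than the paper's own proof, which is silent on this point (the explicit evaluation $\det(1\pm\tfrac{1}{2}\Delta y\,A_{n})=(W\mp\Delta y\,V)/W_{n}$ appears only afterwards, in Corollary~\ref{spectral_M}).
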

\begin{proof}
The finite difference form of (\ref{DDO:e}) is
\begin{equation}
   \left( 1-\tfrac{1}{2}\Delta yA_n \right)Y_n(y_{+}) =
  \left( 1+\tfrac{1}{2}\Delta yA_n \right)Y_n(y_{-}) ,
\end{equation}
which demonstrates why the Cayley transform arises. The second relation (\ref{spectralA_recur:b})
is found by computing $ Y_{n+1}(y_{+}) $ in two different ways from
$ Y_{n}(y_{-}) $ corresponding to the different orders of the operations
$ n \mapsto n+1 $ and $ y_{-} \mapsto y_{+} $. The first relation (\ref{spectralA_recur:a})
is derived by computing $ \ddoAW_x Y_{n+1} $ in terms of $ \moAW_x Y_{n} $ in two
different orders.
\end{proof}

Compatibility of the three term recurrence relation and the spectral divided-difference 
equation equation implies the following result.
\begin{proposition}[\cite{Ma_1988,Ma_1995}]\label{spectral_Cff_recur}
The spectral coefficients arising in Proposition \ref{spectral_DDO} satisfy
recurrence relations in $ n $,
\begin{gather}
  W_{n+1} = W_{n}+\tfrac{1}{4}\Delta y^2 \Theta_{n},
  \quad n \geq 0 ,
\label{spectral_coeff_recur:a}  \\
  \Omega_{n+1}+\Omega_{n}+2V = (\moAW_{x} x-b_{n})\Theta_{n},
  \quad n \geq 0 ,
\label{spectral_coeff_recur:b}  \\ 
  (W_{n}\Omega_{n+1}-W_{n+1}\Omega_{n})(\moAW_{x} x-b_{n})
  = - \tfrac{1}{4}\Delta y^2 \Omega_{n+1}\Omega_{n}
  + W_{n}W_{n+1} + a^2_{n+1}W_{n}\Theta_{n+1} - a^2_{n}W_{n+1}\Theta_{n-1},
  \quad n \geq 0 .
\label{spectral_coeff_recur:c}
\end{gather}
\end{proposition}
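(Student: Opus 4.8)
The plan is to obtain all three recurrences as scalar components of the single matrix compatibility identity (\ref{spectralA_recur:a}) from Proposition~\ref{spectral+threeT},
\begin{equation*}
   A_{n+1}\moAW_{x} K_n-\moAW_{x} K_n\, A_{n}
  = \ddoAW_{x} K_n-\tfrac{1}{4}\Delta y^2\, A_{n+1}\,\ddoAW_{x} K_n\, A_{n},
\end{equation*}
which already encodes compatibility of the three-term recurrence $ Y_{n+1}=K_nY_n $ with the spectral equation (\ref{DDO:e}). The first step is to reduce the $ K_n $-dependent pieces: since $ \ddoAW_x x=1 $ and $ \ddoAW_x $ annihilates constants, $ \ddoAW_x K_n $ equals $ a_{n+1}^{-1} $ times the matrix whose only nonzero entry is $ 1 $ in position $ (1,1) $, while $ \moAW_x K_n $ is just $ K_n $ with the entry $ x-b_n $ replaced by $ \moAW_x x-b_n $. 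Substituting these, together with $ A_n $ in the form (\ref{DDO:e}) and the shorthand $ \bar\Omega_n:=\Omega_n+2V $, turns the left side into a difference of two $ 2\times 2 $ products, and renders the last term $ A_{n+1}(\ddoAW_x K_n)A_n $ a rank-one matrix built only from $ \Omega_{n+1},\Theta_n,\Omega_n,\Theta_{n-1} $. I would then clear denominators throughout by $ W_nW_{n+1} $, leaving four polynomial identities in $ x $, one per matrix entry.

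The three conclusions then come out entry by entry. The $ (2,2) $ entry carries the overall factor $ a_na_{n+1}\Theta_n $ on both sides; removing it gives $ W_{n+1}-W_n=\tfrac14\Delta y^2\Theta_n $, which is (\ref{spectral_coeff_recur:a}). Substituting this back, the $ (1,2) $ entry (equivalently the $ (2,1) $ entry) factors through $ W_{n+1} $ and collapses to $ \Omega_{n+1}+\Omega_n+2V=(\moAW_x x-b_n)\Theta_n $, which is (\ref{spectral_coeff_recur:b}). Finally, the $ (1,1) $ entry, after clearing by $ W_nW_{n+1} $ and collecting the coefficient of $ \moAW_x x-b_n $, is already exactly (\ref{spectral_coeff_recur:c}); it needs no prior substitution of (\ref{spectral_coeff_recur:a}) or (\ref{spectral_coeff_recur:b}), the terms $ a_{n+1}^2W_n\Theta_{n+1} $, $ -a_n^2W_{n+1}\Theta_{n-1} $, $ W_nW_{n+1} $ and $ -\tfrac14\Delta y^2\Omega_{n+1}\Omega_n $ all falling into place automatically.

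Two small points of rigour remain. The cancellations of $ a_na_{n+1}\Theta_n $ and of $ W_{n+1} $ are divisions in the polynomial ring $ \C[x] $; they are legitimate because, by Corollary~\ref{spectralCff_polynom}, $ W_n,\Omega_n,\Theta_n $ are genuine polynomials, of fixed positive degree in the regular case, hence not identically zero, so the surviving identities persist as polynomial identities. If one wishes to avoid division entirely, (\ref{spectral_coeff_recur:a}) can instead be derived head-on from the bilinear formulae (\ref{spectral_coeff:a})--(\ref{spectral_coeff:b}): insert the three-term recurrences (\ref{ops_threeT}) and (\ref{ops_AF_threeT}) at index $ n+1 $, expand, and simplify with the Casoratian (\ref{ops_casoratian}); the cross terms reassemble into $ \Delta y^2\Theta_n $ and the remainder reproduces $ 2(2W_n-W) $, and an analogous manipulation using (\ref{spectral_coeff:c}) yields (\ref{spectral_coeff_recur:b}). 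The only real labour — and the nearest thing to an obstacle — is purely clerical: carrying $ \bar\Omega_n=\Omega_n+2V $ correctly through the matrix products and keeping the $ W_n $-versus-$ W_{n+1} $ denominators straight, so that the coefficient of $ \moAW_x x-b_n $ in the $ (1,1) $ entry is isolated cleanly.
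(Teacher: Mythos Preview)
Your proof is correct, and the entry-by-entry extraction from the matrix compatibility relation (\ref{spectralA_recur:a}) checks out exactly as you describe: the $(2,2)$ entry gives (\ref{spectral_coeff_recur:a}), the $(1,2)$ (or $(2,1)$) entry gives (\ref{spectral_coeff_recur:b}) after using (\ref{spectral_coeff_recur:a}), and the $(1,1)$ entry is literally (\ref{spectral_coeff_recur:c}).

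The paper, however, takes the other road --- the one you relegate to your ``if one wishes to avoid division'' paragraph. Its proof substitutes the bilinear formulae (\ref{spectral_coeff:a})--(\ref{spectral_coeff:c}) directly into $2W_{n+1}-2W_n-\tfrac12\Delta y^2\Theta_n$ and into $\Delta y[\Omega_{n+1}+\Omega_n+2V-(\moAW_x x-b_n)\Theta_n]$, collects terms in $W\pm\Delta yV$, and uses the three-term recurrence to show these vanish. The paper actually stops after (\ref{spectral_coeff_recur:a}) and (\ref{spectral_coeff_recur:b}) and gives no explicit argument for (\ref{spectral_coeff_recur:c}). Your matrix approach is therefore more systematic: it packages all three relations as the four entries of a single identity already proved in Proposition~\ref{spectral+threeT}, and in particular yields (\ref{spectral_coeff_recur:c}) with no extra work. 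The paper's approach, by contrast, is more self-contained in that it never divides by $\Theta_n$ or $W_n$ and hence never needs the regularity hypothesis of Corollary~\ref{spectralCff_polynom}.
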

\begin{proof}
We offer a proof in the spirit of our previous proofs.
For the first relation (\ref{spectral_coeff_recur:a}) we construct 
$ 2W_{n+1}-2W_{n}-\tfrac{1}{2}\Delta y^2\Theta_n $ 
and substitute the expressions (\ref{spectral_coeff:a}) and (\ref{spectral_coeff:b}) 
for $ W_{n} $ and $ \Theta_n $ respectively. After collecting terms 
proportional to $ W\pm \Delta yV $ we observe that the coefficients of these
terms separate into two parts, in which the three term recurrence relation
can be employed. The result we find is that they identically vanish. 
The second relation (\ref{spectral_coeff_recur:b}) can be found in an
identical manner starting with
$ \Delta y\left[ \Omega_{n+1}+\Omega_{n}+2V-(\moAW_{x} x-b_{n})\Theta_{n} \right] $.
\end{proof}

The relations of Proposition \ref{spectral_Cff_recur} constitute the analogs of the
Laguerre-Freud equations and have been studied in the special case of linear lattices
by \cite{FHR_1998,MS_1998,INS_2004,MM_2008} and in the $q$-linear case
by \cite{KM_2002,Kh_2003,Me_2009,GK_2009}. For the situation of OPS on the
unit circle with $q$-difference operators some of these relations can be found in 
\cite{IW_2001}. Following on from the Magnus studies the works \cite{BaFo_2003,Fo_2008} have 
investigated these compatibility conditions for general {\it SNUL}.
Together with Proposition \ref{spectral_DDO} this result allows for the construction of 
ladder, or raising and lowering operators and these have been found in the $q$-linear
lattice by \cite{CI_2008,IS_2009}.

A consequence of Proposition \ref{spectral_Cff_recur} is the following. 
\begin{proposition}[\cite{Ma_1988}]\label{spectral_det}
The spectral coefficients satisfy the bilinear recurrence relation
\begin{equation}
 W_{n}(W_{n}-W) = \tfrac{1}{4}\Delta y^2 
  \left[ \Omega_{n}(\Omega_{n}+2V)-a^2_{n}\Theta_{n-1}\Theta_{n} \right]
  = -\tfrac{1}{4}\Delta y^2\det
     \begin{pmatrix} \Omega_n & -a_n\Theta_n \\
                     a_n\Theta_{n-1} & -\Omega_n-2V
     \end{pmatrix},
  \quad n \geq 0 .
\label{spectral_bilinear}
\end{equation}
\end{proposition}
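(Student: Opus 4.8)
The plan is to extract the bilinear relation (\ref{spectral_bilinear}) directly from the finite-difference form of the spectral divided-difference equation (\ref{DDO:e}), which is quicker than manipulating the three recurrences of Proposition \ref{spectral_Cff_recur}; an inductive proof from Proposition \ref{spectral_Cff_recur} also works, and I indicate it at the end. Recall from the proof of Proposition \ref{spectral+threeT} that (\ref{DDO:e}) is equivalent to the finite-difference relation
\[
  \left( 1-\tfrac{1}{2}\Delta y\,A_n \right)Y_n(y_{+}) = \left( 1+\tfrac{1}{2}\Delta y\,A_n \right)Y_n(y_{-}) .
\]
Taking determinants of this $ 2\times 2 $ matrix identity, and using $ \det Y_n(x) = 1/(a_n w(x)) $ from (\ref{ops_Ydet}) together with the Pearson ratio (\ref{spectral_DD_wgt:b}), gives
\[
  \frac{\det\left( 1+\tfrac{1}{2}\Delta y\,A_n \right)}{\det\left( 1-\tfrac{1}{2}\Delta y\,A_n \right)}
  = \frac{\det Y_n(y_+)}{\det Y_n(y_-)} = \frac{w(y_-)}{w(y_+)} = \frac{W-\Delta y\,V}{W+\Delta y\,V} .
\]

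Next I would expand the two determinants via $ \det(1+tA_n) = 1 + t\,{\rm tr}\,A_n + t^{2}\det A_n $ with $ t = \pm\tfrac{1}{2}\Delta y $. From the explicit form of $ A_n $ in (\ref{DDO:e}) one reads off $ {\rm tr}\,A_n = -2V/W_n $ and
\[
  W_n^{2}\det A_n = \det\begin{pmatrix} \Omega_n & -a_n\Theta_n \\ a_n\Theta_{n-1} & -\Omega_n-2V \end{pmatrix}
  = a_n^{2}\Theta_{n-1}\Theta_n - \Omega_n(\Omega_n+2V) .
\]
Setting $ A := W_n^{2} + \tfrac{1}{4}\Delta y^{2}\left[ a_n^{2}\Theta_{n-1}\Theta_n - \Omega_n(\Omega_n+2V) \right] $ and $ B := \Delta y\,V\,W_n $, the displayed ratio becomes $ (A-B)/(A+B) = (W-\Delta y\,V)/(W+\Delta y\,V) $. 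Cross-multiplying, the terms $ AW $ and $ B\,\Delta y\,V $ cancel from both sides and one is left with $ 2A\,\Delta y\,V = 2B\,W $, i.e. $ A\,\Delta y\,V = B\,W = \Delta y\,V\,W_n W $, hence $ \Delta y\,V\,(A-W_n W) = 0 $. Since $ V $ is not the zero polynomial — otherwise the weight is essentially constant and the statement is trivial — this forces the polynomial identity $ A = W_n W $, which upon rearrangement is precisely (\ref{spectral_bilinear}), with the determinantal reformulation being exactly the displayed computation of $ W_n^{2}\det A_n $.

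The only delicate point in this route is passing from the rational identity among functions of $ x $ to a polynomial identity and then cancelling the factor $ \Delta y\,V $: the equality $ (A-B)(W+\Delta y\,V)=(A+B)(W-\Delta y\,V) $ holds at every point of the (generically infinite) lattice $ G $, hence as an identity of polynomials in $ x $, after which $ \Delta y\,V $ may legitimately be divided out. As an alternative, the inductive proof from Proposition \ref{spectral_Cff_recur} runs as follows: the base case $ n=0 $ is immediate from $ W_0=W $, $ \Omega_0=0 $, $ \Theta_{-1}=0 $ in (\ref{spectral_coeff_init}); for the inductive step one substitutes $ W_{n+1}=W_n+\tfrac{1}{4}\Delta y^{2}\Theta_n $ from (\ref{spectral_coeff_recur:a}) into the claim at level $ n+1 $, subtracts the claim at level $ n $, and — after cancelling the nonzero factor $ \tfrac{1}{4}\Delta y^{2}\Theta_n $ — is reduced to proving
\[
  2W_n - W + \tfrac{1}{4}\Delta y^{2}\Theta_n + a_{n+1}^{2}\Theta_{n+1} - a_n^{2}\Theta_{n-1}
  = (\Omega_{n+1}-\Omega_n)(\moAW_{x} x - b_n) ,
\]
which in turn follows by eliminating $ (\Omega_{n+1}-\Omega_n)(\moAW_{x} x - b_n) $ between (\ref{spectral_coeff_recur:b}) and (\ref{spectral_coeff_recur:c}) (after multiplying through by $ W_n $ and using (\ref{spectral_coeff_recur:b}) once more to rewrite $ \Theta_n(\moAW_{x} x - b_n) $) and invoking the inductive hypothesis. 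The bookkeeping in that second route is the main chore, which is precisely what the determinantal argument avoids.
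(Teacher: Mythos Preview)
Your main determinantal argument is correct and is a genuinely different route from both proofs the paper gives. The paper's first method rewrites the claim as
\[
  \left[ 2W_{n}-W+\Delta y(\Omega_{n}+V) \right]\left[ 2W_{n}-W-\Delta y(\Omega_{n}+V) \right]+a_{n}^2\Delta y^2\Theta_{n}\Theta_{n-1}=W^2-\Delta y^2V^2,
\]
substitutes the bilinear product formulae (\ref{auxA1}), (\ref{auxA2}), (\ref{auxA3}) for the three factors on the left, and then collapses everything with the Casoratian. The paper's second method multiplies (\ref{spectral_coeff_recur:b}) against (\ref{spectral_coeff_recur:c}), massages with (\ref{spectral_coeff_recur:a}) until a perfect $n$-difference emerges, and sums from the initial data (\ref{spectral_coeff_init}); your sketched inductive alternative is this same idea reorganised. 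By contrast, your principal argument bypasses both the explicit bilinear expressions of Proposition \ref{spectral_Cff} and the recurrences of Proposition \ref{spectral_Cff_recur}: it needs only $\det Y_n=1/(a_nw)$, the Pearson ratio (\ref{spectral_DD_wgt:b}), and the visible trace and determinant of $A_n$. In effect you have inverted the logical flow the paper uses between Proposition \ref{spectral_det} and Corollary \ref{spectral_M}: rather than deducing $\det(1\pm\tfrac{1}{2}\Delta y A_n)=(W\mp\Delta yV)/W_n$ from the bilinear identity, you obtain the bilinear identity from the determinant ratio, which is already pinned down by Pearson and the Casoratian. This is cleaner and shorter; what the paper's routes buy is that they stay entirely inside the polynomial data $W_n,\Omega_n,\Theta_n$ (first method) or the compatibility recurrences (second method), without appealing to the weight or $Y_n$ at all. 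One small point: when you cancel $\Delta y\,V$, note that $\Delta y$ is not itself a polynomial, so the clean statement is that $V(A-W_nW)\in\mathbb{C}[x]$ is annihilated by the nonzero factor $\Delta y$ in the ring $\mathbb{C}[x]\oplus\Delta y\,\mathbb{C}[x]$, hence vanishes identically, and then $V\neq 0$ finishes it.
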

\begin{proof}
Two methods of proof are available here.
The first method starts from the observation that the above bilinear relation can be
expressed as
\begin{equation}
  \left[ 2W_{n}-W+\Delta y(\Omega_{n}+V) \right]
  \left[ 2W_{n}-W-\Delta y(\Omega_{n}+V) \right]
  +a_{n}^2\Delta y^2\Theta_{n}\Theta_{n-1} = W^2-\Delta y^2V^2 .
\end{equation}
Now we have evaluations of the factors appearing in the left-hand side of the 
preceeding equation from the proof of Proposition \ref{spectral_Cff}, 
namely (\ref{auxA1}, \ref{auxA2}, \ref{auxA3}), in terms of products of a polynomial
and associated function. Substituting these evaluations into the left-hand side, 
effecting considerable cancellation of terms and employing the Casoratian relation 
(\ref{ops_casoratian}) the final result simplifies to the right-hand side.  

The second method uses the compatibility relations (\ref{spectral_coeff_recur:a}-
\ref{spectral_coeff_recur:c}) and constructs an integral for this coupled 
system. First one multiplies the left-hand side of 
(\ref{spectral_coeff_recur:b}) by the left-hand side of (\ref{spectral_coeff_recur:c})
and cancels out the common factor of $ \moAW x-b_n $ in the resulting equation.
By judicious use of (\ref{spectral_coeff_recur:a}) one can effect the cancellation
of four terms in this equation and a recasting of the terms linear in $ \Theta_{n} $.
Consequently we arrive at the following equation 
\begin{equation}
  \frac{(\Omega_{n+1}+V)^2}{W_{n+1}}-\frac{(\Omega_{n}+V)^2}{W_{n}}
  = \frac{4}{\Delta y^2}(W_{n+1}-W_{n}) + V^2\left[ \frac{1}{W_{n+1}}-\frac{1}{W_{n}} \right]
    +a_{n+1}^2\frac{\Theta_{n+1}\Theta_{n}}{W_{n+1}}-a_{n}^2\frac{\Theta_{n}\Theta_{n-1}}{W_{n}} ,
\end{equation}
which is manifestly a perfect difference equation in $ n $. By summing this and employing
the initial value evaluations (\ref{spectral_coeff_init}) we have (\ref{spectral_bilinear}).
\end{proof}

The above Proposition must be augmented with the following matrix identities.
\begin{corollary}\label{spectral_M}
The matrix factor appearing in Proposition \ref{spectral+threeT} has the determinant
evaluation
\begin{equation}
  \det(1\pm\tfrac{1}{2}\Delta y\, A_{n}) = \frac{W\mp\Delta y\, V}{W_n},
  \quad n \geq 0 ,
\label{spectral_determinant}
\end{equation}
and its inverse is
\begin{equation}
   (1\pm\tfrac{1}{2}\Delta y\, A_{n})^{-1}
   = \frac{1}{W\mp\Delta y\, V}
     \begin{pmatrix} W_n\mp\tfrac{1}{2}\Delta y\,(\Omega_n+2V) & \pm\tfrac{1}{2}\Delta y\, a_n\Theta_n \\
                     \mp\tfrac{1}{2}\Delta y\,a_n\Theta_{n-1} & W_n\pm\tfrac{1}{2}\Delta y\,\Omega_n
     \end{pmatrix},
  \quad n \geq 0 .
\label{spectral_inverse}
\end{equation}
\end{corollary}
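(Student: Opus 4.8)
The plan is to read everything off the explicit form of the spectral matrix $A_{n}$ in (\ref{DDO:e}) and then use the bilinear relation (\ref{spectral_bilinear}) of Proposition \ref{spectral_det} to carry out the cancellation. Writing $c=\tfrac{1}{2}\Delta y$ for brevity, (\ref{DDO:e}) gives at once
\[
  1\pm c\,A_{n} = \frac{1}{W_{n}}
   \begin{pmatrix} W_{n}\pm c\,\Omega_{n} & \mp c\,a_{n}\Theta_{n} \\
                   \pm c\,a_{n}\Theta_{n-1} & W_{n}\mp c\,(\Omega_{n}+2V)
   \end{pmatrix},
\]
so that $W_{n}^{2}\det(1\pm c\,A_{n})$ equals the determinant of the bracketed integer matrix, namely $(W_{n}\pm c\,\Omega_{n})(W_{n}\mp c\,(\Omega_{n}+2V))+c^{2}a_{n}^{2}\Theta_{n}\Theta_{n-1}$.

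First I would expand this: the two cross terms $\pm c\,\Omega_{n}W_{n}$ cancel, leaving $W_{n}^{2}\mp 2c\,V\,W_{n}-c^{2}\Omega_{n}(\Omega_{n}+2V)+c^{2}a_{n}^{2}\Theta_{n}\Theta_{n-1}$. Next I would substitute the identity $c^{2}\big[\Omega_{n}(\Omega_{n}+2V)-a_{n}^{2}\Theta_{n-1}\Theta_{n}\big]=W_{n}(W_{n}-W)$, which is precisely (\ref{spectral_bilinear}); the $W_{n}^{2}$ terms then cancel and what remains is $W_{n}W\mp 2c\,V\,W_{n}=W_{n}(W\mp\Delta y\,V)$. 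Dividing by $W_{n}^{2}$ gives (\ref{spectral_determinant}). For the inverse I would then apply the $2\times2$ adjugate formula to the bracketed matrix above — whose determinant, by what was just shown, is $W_{n}(W\mp\Delta y\,V)$ — and multiply the result by $W_{n}$ to undo the scalar prefactor; the $W_{n}$'s cancel and one is left exactly with (\ref{spectral_inverse}) upon restoring $c=\tfrac{1}{2}\Delta y$. As a cross-check one can instead verify (\ref{spectral_inverse}) by forming the product of $1\pm c\,A_{n}$ with the claimed inverse: the off-diagonal entries vanish by inspection, and each diagonal entry reduces, again via (\ref{spectral_bilinear}), to the same factor $W_{n}(W\mp\Delta y\,V)$ that cancels the prefactor.

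There is no real structural obstacle here — the whole content of the statement is carried by Proposition \ref{spectral_det}, and the proof is a two-line computation once that is available. The only thing requiring care is sign bookkeeping: the sign in $1\pm c\,A_{n}$ induces the \emph{opposite} sign in the factor $W\mp\Delta y\,V$, and the shift $+2V$, which sits in the $(2,2)$ entry of $1\pm c\,A_{n}$, migrates to the $(1,1)$ entry of the inverse. Finally, since the cases $n=-1$ and $n=0$ require (\ref{spectral_bilinear}) and the defining relation (\ref{DDO:e}) to hold there, I would note that these are guaranteed by the initial data (\ref{spectral_coeff_init}) together with the conventions $W_{-1}=\Theta_{-1}=0$, so that the corollary holds for all $n\ge 0$ as stated.
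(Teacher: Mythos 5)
Your proposal is correct and follows essentially the same route as the paper: compute $\det(1\pm\tfrac{1}{2}\Delta y\,A_n)$ from the explicit form (\ref{DDO:e}), cancel the cross terms, invoke the bilinear relation (\ref{spectral_bilinear}) of Proposition \ref{spectral_det} to reduce to $W_n(W\mp\Delta y\,V)/W_n^2$, and then obtain the inverse from the $2\times2$ adjugate formula. The sign bookkeeping and the placement of the $+2V$ shift in your computation agree with (\ref{spectral_inverse}).
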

\begin{proof}
From the form for the spectral matrix given by (\ref{DDO:e}) we have
\begin{align*}
  \det(1\pm\tfrac{1}{2}\Delta y\, A_{n})
  & = \frac{1}{W_{n}^2}\left[ (W_{n}\pm\tfrac{1}{2}\Delta y\Omega_{n})(W_{n}\mp\tfrac{1}{2}\Delta y(\Omega_{n}+2V))
                              +\tfrac{1}{4}\Delta y^2 a_{n}^2\Theta_{n}\Theta_{n-1} \right]
  \cr
  & =  \frac{1}{W_{n}^2}\left[ W_{n}^2\mp\Delta yVW_{n}
                              -\tfrac{1}{4}\Delta y^2\left[ \Omega_{n}(\Omega_{n}+2V)-a_{n}^2\Theta_{n}\Theta_{n-1} \right] \right]
  \cr
  & = \frac{W\mp\Delta y\, V}{W_n} ,
\end{align*}
where we have employed the previous Corollary in the last step.
Constructing the matrix inverse given on the left-hand side of (\ref{spectral_inverse})
and using our determinant formula we find (\ref{spectral_inverse}).
\end{proof}

Consequent to the results of Corollary \ref{spectral_M} we require the matrix 
product found in (\ref{spectralA_recur:b}).
\begin{corollary}\label{spectral_product}
The matrix product appearing in (\ref{spectralA_recur:b}) has the evaluation
\begin{multline}
  \left( 1-\tfrac{1}{2}\Delta y\,A_{n} \right)^{-1}\left( 1+\tfrac{1}{2}\Delta y\,A_{n} \right)
  \\ 
  = \frac{1}{W+\Delta y\,V}
     \begin{pmatrix} 2W_n-W+\Delta y\,(\Omega_n+V) & -\Delta y\, a_n\Theta_n \\
                     \Delta y\,a_n\Theta_{n-1} & 2W_n-W-\Delta y\,(\Omega_n+V)
     \end{pmatrix}, 
  \quad n \geq 0 ,
  \\
  = a_n \begin{pmatrix} p_{n}(y_+) \\ p_{n-1}(y_+) \end{pmatrix} \otimes
        \begin{pmatrix} q_{n-1}(y_-), & -q_{n}(y_-) \end{pmatrix}
   -a_n\frac{W-\Delta y\,V}{W+\Delta y\,V}
        \begin{pmatrix} q_{n}(y_+) \\ q_{n-1}(y_+) \end{pmatrix} \otimes
        \begin{pmatrix} p_{n-1}(y_-), & -p_{n}(y_-) \end{pmatrix}  .
\label{spectral_prod}
\end{multline}
\end{corollary}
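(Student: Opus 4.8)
The plan is to prove the two displayed evaluations in turn, the first being essentially a restatement of Corollary \ref{spectral_M} and the second the genuine content. For the first, I would not multiply matrices directly but use the identity $\left( 1-\tfrac{1}{2}\Delta y\,A_{n} \right)^{-1}\left( 1+\tfrac{1}{2}\Delta y\,A_{n} \right) = 2\left( 1-\tfrac{1}{2}\Delta y\,A_{n} \right)^{-1} - 1$, so that the only ingredient needed is the inverse already computed in Corollary \ref{spectral_M}. Substituting the lower-sign case of (\ref{spectral_inverse}) — the one carrying denominator $W+\Delta y\,V$ — doubling it, and combining $1$ over the same denominator, the off-diagonal entries simply become $\mp\Delta y\,a_{n}\Theta_{n}$ and $\pm\Delta y\,a_{n}\Theta_{n-1}$, while the diagonal entries collapse to $2W_{n}-W\pm\Delta y\,(\Omega_{n}+V)$ after the elementary cancellation $\Delta y\,\Omega_{n}+2\Delta y\,V-\Delta y\,V=\Delta y\,(\Omega_{n}+V)$ and its partner. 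This is precisely the first matrix in the statement; in particular Proposition \ref{spectral_det} is not needed again here, only its consequence packaged in Corollary \ref{spectral_M}.

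For the second evaluation the cleanest route is to recall the finite-difference form of the spectral equation (\ref{DDO:e}), namely $\left( 1-\tfrac{1}{2}\Delta y\,A_{n} \right)Y_{n}(y_{+})=\left( 1+\tfrac{1}{2}\Delta y\,A_{n} \right)Y_{n}(y_{-})$, which already appears in the proof of Proposition \ref{spectral+threeT}. Since $\det Y_{n}(x)=1/(a_{n}w(x))\neq 0$ by (\ref{ops_Ydet}), this rearranges to $\left( 1-\tfrac{1}{2}\Delta y\,A_{n} \right)^{-1}\left( 1+\tfrac{1}{2}\Delta y\,A_{n} \right)=Y_{n}(y_{+})\,Y_{n}(y_{-})^{-1}$. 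I would then insert the explicit form (\ref{ops_Ydefn}) of $Y_{n}$ together with (\ref{ops_Ydet}): the matrix $Y_{n}(y_{-})^{-1}$ has one column built from $q_{n-1}(y_{-}),q_{n}(y_{-})$ and one from $p_{n-1}(y_{-}),p_{n}(y_{-})$, carrying an overall factor $a_{n}w(y_{-})$, so expanding $Y_{n}(y_{+})Y_{n}(y_{-})^{-1}$ column by column exhibits it as a sum of two rank-one tensors, the second of which picks up the ratio $w(y_{-})/w(y_{+})$. The Pearson relation (\ref{spectral_DD_wgt:b}) turns this ratio into $(W-\Delta y\,V)/(W+\Delta y\,V)$, giving exactly the claimed tensor form. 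Equivalently, one may substitute the bilinear expressions (\ref{SC:a}), (\ref{SC:b}) and (\ref{SC:c}) — the last also with $n\mapsto n-1$ — from the proof of Proposition \ref{spectral_DDO} into the entries of the first matrix, cancel the factor $W+\Delta y\,V$ against the $W\pm\Delta y\,V$ factors produced there, and collect terms.

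I do not anticipate a real obstacle; the only care needed is sign bookkeeping — checking that all four matrix entries agree and that the $\pm$/$\mp$ conventions inherited from Corollary \ref{spectral_M} and from (\ref{spectral_inverse}) are applied consistently across the two rank-one pieces. As a built-in consistency check, the determinant of the product must be $(W-\Delta y\,V)/(W+\Delta y\,V)$ by (\ref{spectral_determinant}), and this should also drop straight out of both the explicit matrix and the identity $\det\big(Y_{n}(y_{+})Y_{n}(y_{-})^{-1}\big)=w(y_{-})/w(y_{+})$.
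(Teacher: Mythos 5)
Your proposal is correct, and both halves take a route that differs usefully from the paper's. The paper's own proof is terse: it forms the product $\left( 1-\tfrac{1}{2}\Delta y\,A_{n} \right)^{-1}\left( 1+\tfrac{1}{2}\Delta y\,A_{n} \right)$ by direct matrix multiplication from (\ref{spectral_inverse}) and then invokes the bilinear identity (\ref{spectral_bilinear}) to collapse the quadratic combinations that appear on the diagonal. Your observation that $\left( 1-X \right)^{-1}\left( 1+X \right)=2\left( 1-X \right)^{-1}-\Id$ eliminates that multiplication entirely and, as you note, removes any further appeal to Proposition \ref{spectral_det}; the cancellation $\Delta y\,\Omega_{n}+2\Delta y\,V-\Delta y\,V=\Delta y\,(\Omega_{n}+V)$ checks out in all four entries. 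For the rank-one decomposition, the paper leaves the mechanism implicit (presumably substitution of the bilinear formulae (\ref{SC:a})--(\ref{SC:c}), which you correctly identify as the alternative route), whereas your identification of the Cayley product with $Y_{n}(y_{+})\,Y_{n}(y_{-})^{-1}$ via the nodal form of (\ref{DDO:e}) is more structural: it makes transparent both why the result is a sum of exactly two rank-one tensors (one per column of $Y_{n}(y_{+})$) and why the ratio $(W-\Delta y\,V)/(W+\Delta y\,V)$ enters, namely as $w(y_{-})/w(y_{+})$ through the Pearson relation (\ref{spectral_DD_wgt:b}). Your determinant cross-check against (\ref{spectral_determinant}) is also consistent on both sides. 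The only slip is terminological: the vectors $\bigl(q_{n-1}(y_{-}),-q_{n}(y_{-})\bigr)$ and $-w(y_{-})\bigl(p_{n-1}(y_{-}),-p_{n}(y_{-})\bigr)$ (each times $a_{n}$) are the \emph{rows} of $Y_{n}(y_{-})^{-1}$, not its columns; the outer-product expansion pairs the columns of $Y_{n}(y_{+})$ with these rows, which is what your computation actually does.
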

\begin{proof}
Using (\ref{spectral_inverse}) we multiply out the matrix product and employ the
bilinear identity (\ref{spectral_bilinear}) to simplify the diagonal elements,
resulting in (\ref{spectral_prod}).
\end{proof}

This result motivates the following definitions 
\begin{gather}
  \mathfrak{W}_{\pm} := 2W_n-W\pm\Delta y\,(\Omega_n+V),
  \quad n \geq 1 ,
  \\
  \mathfrak{T}_{+} := \Delta y\, a_n\Theta_n, \quad
  \mathfrak{T}_{-} := \Delta y\, a_n\Theta_{n-1},
  \quad n \geq 1 ,
\label{spectral_def}
\end{gather}
whilst for $ n=0 $ we have
$ \mathfrak{W}_{\pm}(n=0) := W\pm\Delta y\,V $,
$ \mathfrak{T}_{+}(n=0) := -\Delta y\,a_0\gamma^2_0 U $,
$ \mathfrak{T}_{-}(n=0) := 0 $
together with
\begin{equation}
  A^{*}_n :=
     \begin{pmatrix} \mathfrak{W}_{+} & -\mathfrak{T}_{+} \\
                     \mathfrak{T}_{-} &  \mathfrak{W}_{-}
     \end{pmatrix} .
\label{AC_defn}
\end{equation}
The variables $ \mathfrak{W}_{\pm} $ are essentially the variables $ U_n, X_n $ 
employed by Magnus in \cite{Ma_1995} respectively whilst the variables 
$ \mathfrak{T}_{\pm} $ are directly related to his $ Y_n, Z_n $ variables. 
In these new variables the bi-linear relation (\ref{spectral_bilinear}) becomes
\begin{equation}
  \det A^{*}_n = \mathfrak{W}_{+}\mathfrak{W}_{-}+\mathfrak{T}_{+}\mathfrak{T}_{-}
  = W^2-\Delta y^2V^2 ,
\label{Bi-spectral}
\end{equation}
which will be the more useful form.

It is of advantage to write out recurrence-spectral compatibility equations in 
terms of $ \mathfrak{W}_{\pm,n}, \mathfrak{T}_{\pm,n} $, where we append a subscript
to indicate the dependence on the index $ n $.
\begin{corollary}
Solving (\ref{spectralA_recur:b}) for $ A^{*}_{n+1} $ we deduce
\begin{align}
   a_na_{n+1} \mathfrak{T}_{+,n+1}
  & = -a_n(y_{+}-b_n)\mathfrak{W}_{+,n}+a_n(y_{-}-b_n)\mathfrak{W}_{-,n}+(y_{+}-b_n)(y_{-}-b_n)\mathfrak{T}_{+,n}+a^2_n\mathfrak{T}_{-,n} ,
\label{AK_comp:a}  \\
    a_n \mathfrak{T}_{-,n+1}
  & = a_{n+1} \mathfrak{T}_{+,n} ,
\label{AK_comp:b}  \\
   a_n \mathfrak{W}_{+,n+1}
  & = a_n\mathfrak{W}_{-,n}+(y_{+}-b_n)\mathfrak{T}_{+,n} ,   
\label{AK_comp:c}  \\
   a_n \mathfrak{W}_{-,n+1}
  & = a_n\mathfrak{W}_{+,n}-(y_{-}-b_n)\mathfrak{T}_{+,n} .
\label{AK_comp:d}
\end{align}
\end{corollary}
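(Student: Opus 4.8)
The plan is to turn (\ref{spectralA_recur:b}) into a single $2\times 2$ matrix identity among the variables $\mathfrak{W}_{\pm,n},\mathfrak{T}_{\pm,n}$ and then read off the four scalar relations by comparing entries. The first step is to invoke Corollary \ref{spectral_product}, which expresses the Cayley-type product as
\[
\left( 1-\tfrac12\Delta y\,A_n \right)^{-1}\left( 1+\tfrac12\Delta y\,A_n \right) = \frac{1}{W+\Delta y\,V}\,A^{*}_n ,
\]
with $A^{*}_n$ the matrix (\ref{AC_defn}), and likewise at index $n+1$. Since the scalar $1/(W+\Delta y\,V)$ carries no $n$-dependence, it cancels from the two sides of (\ref{spectralA_recur:b}), leaving the clean relation $K_n(y_+)\,A^{*}_n = A^{*}_{n+1}\,K_n(y_-)$.

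The second step is to solve this for $A^{*}_{n+1}$. Using the explicit recurrence matrix (\ref{ops_Kmatrix}) together with $\det K_n = a_n/a_{n+1}$ one has
\[
K_n(y_-)^{-1} = \frac{1}{a_n}\begin{pmatrix} 0 & a_n \\ -a_{n+1} & y_- - b_n \end{pmatrix},
\]
so that $A^{*}_{n+1} = K_n(y_+)\,A^{*}_n\,K_n(y_-)^{-1}$ is a product of three explicit matrices. Multiplying them out and matching the $(1,1)$, $(2,2)$, $(2,1)$ and $(1,2)$ entries against $A^{*}_{n+1}=\begin{pmatrix}\mathfrak{W}_{+,n+1} & -\mathfrak{T}_{+,n+1}\\ \mathfrak{T}_{-,n+1} & \mathfrak{W}_{-,n+1}\end{pmatrix}$ gives, after clearing the common factor $a_n$ (respectively $a_na_{n+1}$ in the $(1,2)$ slot), exactly (\ref{AK_comp:c}), (\ref{AK_comp:d}), (\ref{AK_comp:b}) and (\ref{AK_comp:a}). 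For $n=0$ the same manipulation works verbatim once the $n=0$ definitions of $\mathfrak{W}_\pm,\mathfrak{T}_\pm$ are substituted, in accordance with the initial data (\ref{spectral_coeff_init}).

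I do not expect a genuine obstacle here: the whole point of having passed from $A_n$ to $A^{*}_n$ is precisely that the scalar prefactor $1/(W+\Delta y\,V)$ then cancels, and once it does the remainder is a routine $2\times 2$ computation. The step that needs the most care is the bookkeeping of conventions — that $K_n(y_\pm)$ denotes $K_n$ with its polynomial entry evaluated at $y_\pm=E^{\pm}_x x$ while $a_n,a_{n+1},b_n$ remain $x$-independent, and that the entry placement in $A^{*}_n$ follows (\ref{AC_defn}) — so that no sign is dropped in the product. As an optional internal check one may note that $\det K_n(y_+)=\det K_n(y_-)$ forces $\det A^{*}_{n+1}=\det A^{*}_n$, consistent with the bilinear identity (\ref{Bi-spectral}).
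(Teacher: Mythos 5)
Your proposal is correct and follows exactly the route the paper intends: Corollary \ref{spectral_product} reduces both Cayley products in (\ref{spectralA_recur:b}) to $\frac{1}{W+\Delta y V}A^{*}_{n}$ and $\frac{1}{W+\Delta y V}A^{*}_{n+1}$, the $n$-independent scalar cancels, and $A^{*}_{n+1}=K_n(y_+)A^{*}_nK_n(y_-)^{-1}$ is then a direct $2\times 2$ computation whose entries reproduce (\ref{AK_comp:a})--(\ref{AK_comp:d}). Your determinant consistency check against (\ref{Bi-spectral}) is a sensible bonus but not needed.
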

A significant result follows from the observation that the right-hand side of
(\ref{Bi-spectral}) is a polynomial in $ x $ with fixed degree independent of
$ n $ and in fact contains no dependence on $ n $. If we denote one of the zeros
of the spectral polynomial $ W^2-\Delta y^2V^2 $ as $ x_j $ then we can apply the equality for $ n \mapsto n+1 $ 
and use (\ref{AK_comp:b}) to deduce
\begin{equation}
  \mathfrak{W}_{+,n+1}(x_j)\mathfrak{W}_{-,n+1}(x_j)+\frac{a_{n+1}}{a_n}\mathfrak{T}_{+,n+1}(x_j)\mathfrak{T}_{+,n}(x_j) = 0.
\label{spectralDetZero}
\end{equation} 
This then allows us to draw the following conclusion.
\begin{proposition} 
The variable $ \mathfrak{T}_{+,n+1}(x_j) $, when written in terms of the variables at $ n $,
factorises in the following way
\begin{equation}
    -a_na_{n+1}\mathfrak{T}_{+,n+1}(x_j)\mathfrak{T}_{+,n}(x_j)
  = \left[ a_n\mathfrak{W}_{-,n}(x_j)+(y_{+,j}-b_n)\mathfrak{T}_{+,n}(x_j) \right]
    \left[ a_n\mathfrak{W}_{+,n}(x_j)-(y_{-,j}-b_n)\mathfrak{T}_{+,n}(x_j) \right] .
\end{equation}
\end{proposition}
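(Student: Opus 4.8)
The plan is to derive the stated factorisation directly from the scalar compatibility relations already in hand, namely (\ref{AK_comp:c}) and (\ref{AK_comp:d}), together with the vanishing identity (\ref{spectralDetZero}); no further machinery (Casoratian, three-term recurrence) should be needed.

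First I would regard (\ref{AK_comp:c}) and (\ref{AK_comp:d}) as identities in $ x $ — which they are, since they come from (\ref{spectralA_recur:b}), an identity of rational matrix functions — and evaluate both at $ x = x_j $, a zero of the spectral polynomial $ W^2 - \Delta y^2 V^2 $. Writing $ y_{\pm,j} $ for $ y_\pm $ evaluated at $ x_j $, this gives $ a_n\mathfrak{W}_{+,n+1}(x_j) = a_n\mathfrak{W}_{-,n}(x_j) + (y_{+,j}-b_n)\mathfrak{T}_{+,n}(x_j) $ and $ a_n\mathfrak{W}_{-,n+1}(x_j) = a_n\mathfrak{W}_{+,n}(x_j) - (y_{-,j}-b_n)\mathfrak{T}_{+,n}(x_j) $. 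Multiplying these two relations produces $ a_n^2\,\mathfrak{W}_{+,n+1}(x_j)\mathfrak{W}_{-,n+1}(x_j) $ on the left and exactly the product $ [\,a_n\mathfrak{W}_{-,n}(x_j)+(y_{+,j}-b_n)\mathfrak{T}_{+,n}(x_j)\,][\,a_n\mathfrak{W}_{+,n}(x_j)-(y_{-,j}-b_n)\mathfrak{T}_{+,n}(x_j)\,] $ on the right, i.e. the right-hand side of the claim.

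Next I would eliminate the index-$ n+1 $ diagonal product using (\ref{spectralDetZero}) — itself obtained by taking the bilinear identity (\ref{Bi-spectral}) at index $ n+1 $, evaluating at the zero $ x_j $ of its right-hand side, and substituting (\ref{AK_comp:b}). Solving (\ref{spectralDetZero}) for $ \mathfrak{W}_{+,n+1}(x_j)\mathfrak{W}_{-,n+1}(x_j) = -(a_{n+1}/a_n)\,\mathfrak{T}_{+,n+1}(x_j)\mathfrak{T}_{+,n}(x_j) $ and inserting this into the left-hand side of the product identity above, the prefactor $ a_n^2 $ combines with $ a_{n+1}/a_n $ to give $ -a_na_{n+1}\,\mathfrak{T}_{+,n+1}(x_j)\mathfrak{T}_{+,n}(x_j) $, which is precisely the asserted formula.

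I do not expect a genuine obstacle here: every object in the argument is a finite combination of polynomials in $ x $ and of $ \Delta y $, the relations invoked are exact identities, and the generic/regular hypothesis of Definition \ref{regDSCwgt} is not required. The only point needing a little care is bookkeeping — multiplying the correct pair, (\ref{AK_comp:c}) with (\ref{AK_comp:d}) rather than any other combination, so that the two bracketed factors appear in exactly the stated form, and noting that a zero $ x_j $ of $ W^2 - \Delta y^2 V^2 $ need not be a fixed point, so that $ \Delta y(x_j) $ and the values $ y_{\pm,j} $ are all well defined and the evaluations are legitimate.
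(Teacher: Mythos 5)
Your proposal is correct and is essentially the argument the paper intends: the paper states the result immediately after deriving (\ref{AK_comp:c}), (\ref{AK_comp:d}) and (\ref{spectralDetZero}) precisely so that the factorisation follows by multiplying the two evaluated relations and eliminating $ \mathfrak{W}_{+,n+1}(x_j)\mathfrak{W}_{-,n+1}(x_j) $ via (\ref{spectralDetZero}), exactly as you do. Your closing remarks on the legitimacy of evaluating at $ x_j $ and on which pair of relations to multiply are sound.
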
 

The coupled pair of first order divided-difference equations (\ref{DDO:a}) and
(\ref{DDO:b}) imply a second order equation for one of the components, say
$ p_n $.
\begin{proposition}[\cite{Ma_1988,Bang_2001}]
The $ \ddoAW $-semi-classical orthogonal polynomials or associated functions
satisfy a second order divided-difference equation in the following two equivalent
forms - either
\begin{multline} 
   \left[ \moAW_x\left( \frac{W_n}{\Theta_n} \right)
             +\tfrac{1}{4}\Delta y^2\ddoAW_x\left( \frac{\Omega_n+2V}{\Theta_n} \right) \right]
   \ddoAW^2_x p_n
 + \left[ \ddoAW_x\left( \frac{W_n}{\Theta_n} \right)+\moAW_x \left( \frac{2V}{\Theta_n} \right)
             +\Delta y^2\moAW_x\left( \frac{W-W_n}{\Delta y^2\Theta_n} \right) \right]
   \moAW_x\ddoAW_x p_n
 \\
 + \left[ \moAW_x\left( 4\frac{W-W_n}{\Delta y^2\Theta_n} \right)
             -\ddoAW_x\left( \frac{\Omega_n}{\Theta_n} \right) \right]
   \moAW^2_x p_n = 0 ,
\label{2ndO_dd0:a}
\end{multline}
or alternatively
\begin{multline}
  E^{+}_x\left( \frac{W+\Delta yV}{\Delta y\Theta_n} \right) (E^{+}_x)^2p_n
  + E^{-}_x\left( \frac{W-\Delta yV}{\Delta y\Theta_n} \right) (E^{-}_x)^2p_n
  \\
  + \left\{
          -E^{+}_x\left( \frac{2W_n-W+\Delta y(\Omega_n+V)}{\Delta y\Theta_n} \right)
          -E^{-}_x\left( \frac{2W_n-W-\Delta y(\Omega_n+V)}{\Delta y\Theta_n} \right)
    \right\} p_n = 0 .
\label{2ndO_dd0:c}
\end{multline}
\end{proposition}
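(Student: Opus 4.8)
The plan is to obtain the finite-difference form (\ref{2ndO_dd0:c}) first, directly from the matrix spectral equation, and then to read off (\ref{2ndO_dd0:a}) from it by a change of operator basis; the two are equivalent once one notices that the three lattice values of $p_n$ entering (\ref{2ndO_dd0:c}) are exactly the data on which the second-order operators $\ddoAW^2_x,\moAW_x\ddoAW_x,\moAW^2_x$ act. The elimination step is the divided-difference analogue of Laguerre's passage from a first-order $2\times2$ system to a scalar second-order equation.

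For (\ref{2ndO_dd0:c}) I would start from the finite-difference form of the spectral equation (\ref{DDO:e}), namely $E^{+}_{x}Y_{n}=M\,E^{-}_{x}Y_{n}$ with $M=(1-\tfrac12\Delta y A_{n})^{-1}(1+\tfrac12\Delta y A_{n})$, the matrix $M$ being given in closed form by Corollary \ref{spectral_product}. Since $E^{+}_{x}E^{-}_{x}$ is the identity (the lattice being symmetric, as assumed throughout), applying $E^{+}_{x}$ gives $(E^{+}_{x})^{2}Y_{n}=(E^{+}_{x}M)\,Y_{n}$. The first row of this relation expresses $p_{n-1}$ through $p_{n}$ and $(E^{+}_{x})^{2}p_{n}$; applying $(E^{-}_{x})^{2}$ to it expresses $(E^{-}_{x})^{2}p_{n-1}$ through $p_{n}$ and $(E^{-}_{x})^{2}p_{n}$; and the second row, after the same $(E^{-}_{x})^{2}$, supplies the relation linking $p_{n-1}$ and $(E^{-}_{x})^{2}p_{n-1}$ into which those two expressions are substituted. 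This eliminates $p_{n-1}$ and leaves a relation among $p_{n}$, $(E^{+}_{x})^{2}p_{n}$, $(E^{-}_{x})^{2}p_{n}$ whose coefficients involve $M_{11},M_{12},M_{22}$ and $\det M$; inserting $M_{12}=-\Delta y a_{n}\Theta_{n}/(W+\Delta y V)$, $M_{11}=\mathfrak{W}_{+}/(W+\Delta y V)$, $M_{22}=\mathfrak{W}_{-}/(W+\Delta y V)$ together with $\det M=(W-\Delta y V)/(W+\Delta y V)$ from Corollary \ref{spectral_M}, and clearing the constant factor $-a_{n}$, reproduces (\ref{2ndO_dd0:c}) exactly.

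For (\ref{2ndO_dd0:a}) I would note that $(E^{+}_{x})^{2}p_{n}$, $p_{n}$, $(E^{-}_{x})^{2}p_{n}$ are the three values of $p_{n}$ at $\iota_{+}(\iota_{+}(x))$, $x$, $\iota_{-}(\iota_{-}(x))$, and that $\ddoAW^{2}_{x}p_{n}$, $\moAW_{x}\ddoAW_{x}p_{n}$, $\moAW^{2}_{x}p_{n}$ are three further combinations of the same three values (obtained by iterating the definitions of $\ddoAW_{x}$ and $\moAW_{x}$ and using $\iota_{\pm}\iota_{\mp}=\mathrm{identity}$) which are linearly independent for a generic lattice. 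Inverting this $3\times3$ linear change substitutes $(E^{\pm}_{x})^{2}p_{n}$ and $p_{n}$ in (\ref{2ndO_dd0:c}) in favour of the three second-order operators and produces an identity of exactly the shape (\ref{2ndO_dd0:a}); bringing the coefficients to the printed form uses the recurrence (\ref{spectral_coeff_recur:a}) to trade $W-W_{n}$ for a $\tfrac14\Delta y^{2}\Theta$-type combination, together with the Leibniz rules (\ref{DD_calculus:a})--(\ref{DD_calculus:b}) and the bilinear relation (\ref{spectral_bilinear}) (equivalently (\ref{Bi-spectral})) to collapse the terms $a_{n}^{2}\Theta_{n-1}\Theta_{n}-\Omega_{n}(\Omega_{n}+2V)$ into $4W_{n}(W-W_{n})/\Delta y^{2}$. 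Alternatively one can reach (\ref{2ndO_dd0:a}) straight from the coupled pair (\ref{DDO:a})--(\ref{DDO:b}) in Laguerre's spirit: solve (\ref{DDO:a}) for $\moAW_{x}p_{n-1}$ and (\ref{DDO:b}) for $\ddoAW_{x}p_{n-1}$, substitute the former into the latter so that both $E^{\pm}_{x}p_{n-1}=\moAW_{x}p_{n-1}\pm\tfrac12\Delta y\,\ddoAW_{x}p_{n-1}$ become explicit first-order operators on $p_{n}$, and impose that the two reconstructions $E^{-}_{x}(E^{+}_{x}p_{n-1})$ and $E^{+}_{x}(E^{-}_{x}p_{n-1})$ of $p_{n-1}$ agree, expanding with (\ref{DD_calculus:a})--(\ref{DD_calculus:b}).

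The work here is bookkeeping rather than conceptual: in the finite-difference route one must keep straight which of $W,V,W_{n},\Omega_{n},\Theta_{n},\Delta y$ is evaluated at $\iota_{+}(x)$ versus $\iota_{-}(x)$ as the shift operators are pushed through; in the conversion to (\ref{2ndO_dd0:a}) one must carry out the linear inversion and recognise the printed combinations via (\ref{spectral_coeff_recur:a}) and (\ref{spectral_bilinear}). The one genuine care-point is the generic-lattice hypothesis guaranteeing that the $3\times3$ change of operator basis is invertible — it is what makes the two displayed forms genuinely equivalent — in its absence (\ref{2ndO_dd0:c}) should be read as the primary statement. This routine-but-lengthy character is presumably why the result is attributed to \cite{Ma_1988,Bang_2001} with the computation suppressed.
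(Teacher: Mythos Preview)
Your proposal is correct. The paper's own argument is essentially your alternative route: it solves (\ref{DDO:a})--(\ref{DDO:b}) for $\moAW_x p_{n-1}$ and $\ddoAW_x p_{n-1}$ in terms of $\moAW_x p_n,\ddoAW_x p_n$ (invoking (\ref{spectral_bilinear}) to collapse $a_n^2\Theta_{n-1}\Theta_n-\Omega_n(\Omega_n+2V)$), then imposes the operator commutativity $\moAW_x\ddoAW_x p_{n-1}=\ddoAW_x\moAW_x p_{n-1}$ to obtain (\ref{2ndO_dd0:a}) directly, and simply declares (\ref{2ndO_dd0:c}) to be ``the nodal equivalent''. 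Your primary route reverses this order, reaching (\ref{2ndO_dd0:c}) first by eliminating $p_{n-1}$ from the shifted matrix relation $(E^+_x)^2 Y_n=(E^+_x M)Y_n$ and its $(E^-_x)^2$ image; this is arguably cleaner for the nodal form since $\det M$ and the ratios $M_{11}/M_{12}$, $M_{22}/M_{12}$, $\det M/M_{12}$ are already packaged in Corollaries \ref{spectral_M}--\ref{spectral_product} and drop out as the four displayed coefficients without a separate appeal to (\ref{spectral_bilinear}). The paper's order has the complementary advantage of landing on the operator form (\ref{2ndO_dd0:a}) without the $3\times3$ basis inversion you describe.
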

\begin{proof}
Starting with (\ref{DDO:a}) and (\ref{DDO:b}), we can employ (\ref{spectral_bilinear}) to 
write an equation for each of $ \moAW_x p_{n-1} $ and $ \ddoAW_x p_{n-1} $ in terms of
linear combinations of $ \moAW_x p_{n} $ and $ \ddoAW_x p_{n} $. Then utilising the fact that
$ \moAW_x \ddoAW_x p_{n-1} = \ddoAW_x \moAW_x p_{n-1} $ we arrive at (\ref{2ndO_dd0:a}).
The equation (\ref{2ndO_dd0:c}) is the nodal equivalent of the former.
\end{proof}
This concludes our discussion of the spectral structures for a general quadratic lattice and we now 
turn to the simplest explicit example on the $q$-quadratic lattice.

\section{$ M=2, L=0 $ case and the Askey-Wilson polynomials}\label{M=2AW}
\setcounter{equation}{0}

Here we will employ the theory of Section \ref{SpectralS} and explicitly compute the spectral coefficients for the 
Askey-Wilson system itself because it serves both as an instructive
example for the theory of the previous section and clarifies some confusion present in the literature. 
Virtually all of our results presented here have been found by earlier studies - 
firstly by Askey and Wilson \cite{AW_1985}, then most notably by the Soviet school of Nikiforov, Suslov and Uvarov 
\cite{NSU_1984,NS_1985,NSU_1986,NS_1986a,NS_1986b} who were primarily concerned with
hypergeometric type OPS on non-uniform lattices, and in the 1988 work of
Magnus \cite{Ma_1988} whose work and intent is most similar to the spirit of our own. 

We recall the Askey-Wilson weight \cite{AW_1985}
itself has degrees $ 2N=4 $ or $ M=2 $ and $ L=0 $ with 
\begin{equation} 
  w(x) = w(x;\{a_1,\ldots,a_4\}) 
  = \frac{(z^{\pm 2};q)_{\infty}}{\sin(\theta)\prod^{4}_{j=1}(a_jz^{\pm 1};q)_{\infty}} ,
    \quad G=(-1,1) .
\label{AWwgt}
\end{equation}
Let the $j$-th elementary symmetric polynomial of $ a_1,\ldots,a_4 $ be denoted by $ \sigma_j $
for $ j=0,\ldots,4 $. 

\begin{proposition}\label{M=2_SpecCoeff}
Assume that $ q\neq 1 $ and $ |q^{-1/2}a_j| \neq 1 $ for $ j=1,\ldots, 4 $.
The spectral coefficients for the Askey-Wilson OP system are 
\begin{gather}
  W_n(x) = (1+q^{-n})(1+\sigma_4 q^{n-2})(x^2-1) - \left[ q^{-1/2}\sigma_1+q^{-3/2}\sigma_3 \right]x
           +1+q^{-1}\sigma_2+q^{-2}\sigma_4 ,
\label{AW_spec:a} \\
 \Omega_n(x)+V(x) = 2\frac{q^{n-2}\sigma_4-q^{-n}}{q^{1/2}-q^{-1/2}}x
     +\frac{q^{-n-1/2}\sigma_1+(-2+q^{-n})q^{-3/2}\sigma_3+(-2+q^{n})q^{-5/2}\sigma_1\sigma_4+q^{n-7/2}\sigma_3\sigma_4}
           {(q^{1/2}-q^{-1/2})(q^{-n}-q^{n-2}\sigma_4)} ,
\label{AW_spec:c}\\
  \Theta_n(x) = 4\frac{q^{n-3/2}\sigma_4-q^{-n-1/2}}{q^{1/2}-q^{-1/2}} ,
\label{AW_spec:b}
\end{gather}
valid for $ n \geq 0 $.
The three-term recurrence coefficients are found to be given by the standard expressions \cite{KS_1998,Ko_2007}
\begin{equation}
  a_n^2 = \tfrac{1}{4}
  \frac{(1-q^n)(1-\sigma_4 q^{n-2})\prod_{k>j}(1-a_ja_kq^{n-1})}
       {(1-\sigma_4 q^{2n-3})(1-\sigma_4 q^{2n-2})^2(1-\sigma_4 q^{2n-1})} ,
\label{AW_spec:d}
\end{equation}
and
\begin{equation}
  b_n = \left[ \sigma_1(q+\sigma_4 (q^{2n}-q^n-q^{n-1}))+\sigma_3 (1-q^n-q^{n+1}+\sigma_4 q^{2n-1})
        \right]
    \frac{q^{n-1}}{2(1-\sigma_4 q^{2n})(1-\sigma_4 q^{2n-2})} ,
\label{AW_spec:e}
\end{equation}
for $ n \geq 0 $ where we assume $ a_0=0 $.
\end{proposition}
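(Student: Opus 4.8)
The plan is to specialise the general spectral machinery of Section~\ref{SpectralS} to the Askey--Wilson weight (\ref{AWwgt}), in three movements: (i) fix the spectral data polynomials $W,V,U$; (ii) determine the polynomial spectral coefficients $W_n,\Omega_n,\Theta_n$ from their prescribed degrees, their leading large-$x$ behaviour, and the Laguerre--Freud recurrences of Proposition~\ref{spectral_Cff_recur}; and (iii) read off $a_n$ and $b_n$ from the bilinear identity and those same recurrences.

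For (i) I would evaluate the functional equation (\ref{spectral_DD_wgt:b}). Passing to the variable $z$ and using $E^{\pm}_x f = f(q^{\pm 1/2}z)$ on the circle, the quotient $w(y_+)/w(y_-)$ telescopes: each factor $(q^{\pm1}z^{\pm2};q)_\infty$ and $(a_jq^{\pm1/2}z^{\pm1};q)_\infty$ contributes only a single binomial, and the $\sin\theta$ denominator a ratio of linear terms, so the right-hand side reduces to a quotient of two degree-two polynomials in $x$. Clearing the (forced) common factor and imposing irreducibility identifies $W(x)$ and $V(x)$; as an internal check these must reproduce the $n=0$ cases $W_0=W$ and $\Omega_0+V=V$ of (\ref{AW_spec:a}) and (\ref{AW_spec:c}), which pins down their dependence on the $\sigma_j$. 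The remaining datum $U$ is a constant ($\deg_x U = M-2 = 0$), recovered from the leading term of the large-$x$ expansion of $W\ddoAW_x f - 2V\moAW_x f$ in (\ref{spectral_DD_st}) (equivalently from the $k=0$ instance of (\ref{Mrecur})); consistency with $\Theta_0 = -\gamma_0^2 U$ and (\ref{AW_spec:b}) is then automatic.

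For (ii), Corollary~\ref{spectralCff_polynom} guarantees that in this regular $M=2$ case $\Theta_n$ is constant in $x$, $\Omega_n+V$ is linear, and $W_n$ is quadratic. The leading coefficients come straight from Proposition~\ref{spectral_Cff_Exp}: on the second sheet $y_+/y_-\to q$ and $y_-/y_+\to q^{-1}$ as $x\to\infty$, and substituting the $n=0$ data into (\ref{spectral_coeffEXP:a})--(\ref{spectral_coeffEXP:c}) yields at once the full constant $\Theta_n$, the $x^2$-coefficient $(1+q^{-n})(1+\sigma_4 q^{n-2})$ of $W_n$, and the $x^1$-coefficient of $\Omega_n+V$. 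The lower-order coefficients are then propagated recursively: since $\Delta y^2 = (q^{1/2}-q^{-1/2})^2(x^2-1)$ and $\Theta_n$ is constant, (\ref{spectral_coeff_recur:a}) forces the $x^1$- and $x^0$-coefficients of $W_n$ to be $n$-independent, hence equal to those of $W$ found in (i), while the $x^0$-coefficient of $\Omega_n+V$ is propagated by (\ref{spectral_coeff_recur:b}) from the initial values (\ref{spectral_coeff_init}) once $b_n$ is in hand.

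Finally, for (iii): comparing powers of $x$ in the bilinear relation (\ref{spectral_bilinear})---whose right-hand side $W^2-\Delta y^2V^2$ is now an explicit degree-four polynomial and whose only remaining unknown is $a_n^2\Theta_{n-1}\Theta_n$, a multiple of $x^2-1$---isolates $a_n^2$; matching the $x^0$-coefficients in (\ref{spectral_coeff_recur:b}), together with (\ref{spectral_coeff_recur:c}) to break the coupling between $b_n$ and the constant term of $\Omega_n$, then yields $b_n$ (the formula automatically giving $a_0^2=0$, consistent with the stated convention). One checks these coincide with the tabulated Askey--Wilson coefficients \cite{KS_1998,Ko_2007}. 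I expect the difficulty to be concentrated at two points. First, getting the Pearson bookkeeping exact, so that $W$ and $V$ carry precisely the right elementary-symmetric combinations of $a_1,\dots,a_4$ (the sign conventions, the $\sin\theta$ denominator, and the clearing of the common factor are all easy to mishandle). Second, in step (iii), the raw expansion of $W^2-\Delta y^2V^2$ and of the recurrences produces an unwieldy symmetric polynomial; the real work is recognising that it factors over the pairs $(a_j,a_k)$ to give the numerator $\prod_{k>j}(1-a_ja_kq^{n-1})$ and the denominator $(1-\sigma_4q^{2n-3})(1-\sigma_4q^{2n-2})^2(1-\sigma_4q^{2n-1})$ of (\ref{AW_spec:d})---a factorisation invisible at the level of brute expansion, and where the bulk of the delicate (though routine) computation lies.
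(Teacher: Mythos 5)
Your overall strategy---extract $W\pm\Delta yV$ from the Pearson relation, fix the leading coefficients of $W_n,\Omega_n+V,\Theta_n$ from the large-$x$ expansions of Proposition \ref{spectral_Cff_Exp}, and then pin down the sub-leading data via the bilinear identity (\ref{spectral_bilinear}) and the recurrences of Proposition \ref{spectral_Cff_recur}---is the same as the paper's. One genuine improvement: obtaining $W_n$ by telescoping (\ref{spectral_coeff_recur:a}) (so that $W_n-W$ is an explicit multiple of $x^2-1$) is cleaner than the paper's route, which recovers $w_0,w_1$ from sums and differences of the quadratic system and has to resolve a sign ambiguity by hand. (Do note, though, that your phrasing is off: the $x^0$-coefficient of $W_n$ is \emph{not} $n$-independent---it is the part of $W_n$ complementary to the $(x^2-1)$ multiple that is $n$-independent, as (\ref{AW_spec:a}) itself shows.)

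The genuine gap is in your step (iii), in the ordering of the unknowns. You defer the constant term $v_0$ of $\Omega_n+V$ to a later joint determination with $b_n$ via (\ref{spectral_coeff_recur:b}) and (\ref{spectral_coeff_recur:c}), and simultaneously claim that in the bilinear relation (\ref{spectral_bilinear}) ``the only remaining unknown is $a_n^2\Theta_{n-1}\Theta_n$.'' That claim is false under your own ordering: the right-hand side contains $\Omega_n(\Omega_n+2V)=(\Omega_n+V)^2-V^2$, whose constant term involves the as-yet-undetermined $v_0$, so you cannot isolate $a_n^2$ as stated. The repair is exactly what the paper does: comparing the $x^3$ (equivalently $x^1$) coefficients of (\ref{spectral_bilinear}) determines $v_0$ at fixed $n$ (the paper's equation (\ref{3})), after which the $x^0$ coefficient yields $a_n^2$ (equation (\ref{0}), together with the sum-of-squares rewriting of $e_4$ that produces the factorised form (\ref{AW_spec:d})). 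Once $v_0$ is known this way, $b_n$ follows from the $x^0$-coefficient of (\ref{spectral_coeff_recur:b}) alone; the appeal to (\ref{spectral_coeff_recur:c}) to ``break the coupling'' is unnecessary, and as proposed you have not shown that the coupled system for the two unknown sequences $(v_{0,n},b_n)$ actually closes.
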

\begin{proof}
From the weight (\ref{AWwgt}) we compute the spectral data
\begin{equation}
   W\pm \Delta yV = z^{\mp 2}\prod^{4}_{j=1}(1-a_j q^{-1/2}z^{\pm 1}) ,
\label{M=2Sdata}
\end{equation}
from which we deduce
\begin{gather}
  W(x) = 2(1+\sigma_4 q^{-2})x^2 - \left[ q^{-1/2}\sigma_1+q^{-3/2}\sigma_3 \right]x
           -1+q^{-1}\sigma_2-q^{-2}\sigma_4 ,
\label{AW_polyW} \\
 V(x) = 2\frac{q^{-2}\sigma_4-1}{q^{1/2}-q^{-1/2}}x
     +\frac{q^{-1/2}\sigma_1-q^{-3/2}\sigma_3}{q^{1/2}-q^{-1/2}} .
\label{AW_polyV}
\end{gather}
We will see that $ W^2-\Delta y^2V^2 $ will play a significant role and therefore
define another set of elementary symmetric polynomials by
\begin{equation}
  W^2-\Delta y^2V^2 = K^2\left[ x^4-e_1x^3+e_2x^2-e_3x+e_4 \right] . 
\end{equation}
We note the evaluation $ K = 4q^{\mu}=4q^{-1}\sqrt{\sigma_4} $.
We parameterise the spectral coefficients in the following way
\begin{gather}
   2W_n-W = w_2x^2+w_1x+w_0 ,
 \\
  \Theta_n = \varpi_+ ,\quad \Theta_{n-1} = \varpi_- ,
 \\
  \Omega_n+V = v_1x+v_0 ,
\end{gather}
where we know from (\ref{spectral_coeffEXP:a}-\ref{spectral_coeffEXP:c}) that 
the leading coefficients are
\begin{equation}
  w_2 = \frac{K}{2}\left( q^{n+\mu}+q^{-n-\mu} \right) ,
 \quad
  v_1 = \frac{K}{2}\frac{q^{n+\mu}-q^{-n-\mu}}{q^{1/2}-q^{-1/2}} ,
 \quad
  \varpi_+ = K \frac{q^{n+\mu+1/2}-q^{-n-\mu-1/2}}{q^{1/2}-q^{-1/2}} .
\end{equation}
From the fundamental bi-linear relation (\ref{spectral_bilinear}) we get a
system of quadratic polynomial equalities
\begin{align}
   w_2^2-\Delta v_1^2 & = K^2 ,
\label{4} \\
   2w_1w_2-2\Delta v_0v_1 & = -K^2e_1 ,
\label{3} \\
   2w_0w_2+w_1^2-\Delta(v_0^2-v_1^2)+a_n^2\Delta\varpi_+\varpi_- & = K^2e_2 ,
\label{2} \\
   2w_0w_1+2\Delta v_0v_1 & = -K^2e_3 ,
\label{1} \\
   w_0^2+\Delta v_0^2-a_n^2\Delta\varpi_+\varpi_- & = K^2e_4 ,
\label{0}
\end{align}
where $ \Delta:=(q^{1/2}-q^{-1/2})^2 $.
Now (\ref{4},\ref{2},\ref{0}) imply $ w_1^2+(w_0+w_2)^2=K^2(1+e_2+e_4) $ while (\ref{1},\ref{3})
imply $ 2w_1(w_0+w_2)=-K^2(e_1+e_3) $. Forming the sum and difference of these
two later relations we conclude that
\begin{equation}
   w_1 = \frac{W(1)-W(-1)}{2}, \quad w_0 = -w_2+\frac{W(1)+W(-1)}{2} .
\end{equation}
Using (\ref{3}) for example, along with the above solutions, we find
\begin{equation}
   v_0 = \frac{K^2e_1+w_2(W(1)-W(-1))}{2\Delta v_1} .
\end{equation}
Therefore we have succeeded in relating the sub-leading coefficients in terms of explicitly
known quantities and after simplification we arrive at (\ref{AW_spec:a}-\ref{AW_spec:c}). 
Finally, using (\ref{0}), we can solve for $ a_n^2 $ and after observing that $ e_4 $
can be expressed as
\begin{equation}
  e_4 = \left[ \frac{q^{\mu}+q^{-\mu}}{2}-\frac{W(1)+W(-1)}{2K} \right]^2
        +\frac{1}{(q^{\mu}-q^{-\mu})^2}
          \left[ e_1+\frac{q^{\mu}+q^{-\mu}}{2}\frac{W(1)-W(-1)}{K} \right]^2 ,
\end{equation}
we can factorise the resulting four terms and arrive at (\ref{AW_spec:d}). 
To find (\ref{AW_spec:e}) we start with 
(\ref{spectral_coeff_recur:b}) and use our previous results for (\ref{AW_spec:b}) and 
(\ref{AW_spec:c}). We note the terms linear in $ x $ cancel identically, as they must, 
and after some factorisation we deduce (\ref{AW_spec:e}).
\end{proof}

It is the coefficients (\ref{AW_spec:a}-\ref{AW_spec:c}) along with first order divided-difference equation (\ref{DDO:a}) 
that constitutes the structural relation for the Askey-Wilson polynomials, that is the 
analog of first order difference or differential relations for the classical orthogonal
polynomials. The divided-difference relations reported in \cite{Ko_2007} are all of
second order. 

It is an easy task to evaluate the Askey-Wilson integral \cite{AW_1985,KS_1998} as 
a $ q$-factorial and therefore compute the moments.  The Askey-Wilson integral is defined
by
\begin{equation}
  I_{2}(a_1,a_2,a_3,a_4) 
  := \int_{\mathbb{T}} \frac{dz}{2\pi iz} \frac{(z^{\pm 2};q)_{\infty}}
                                   {\prod^{4}_{j=1}(a_{j}z^{\pm 1};q)_{\infty}} ,
\end{equation}
with $ |a_j|<1 $ for $ j=1,\ldots,4 $.

Our method is to apply
the general system of moment recurrences to the case $ M=2 $ and we find that it coincides with the
recurrence of Kalnins and Miller \cite{KM_1989} and Koelink and Koornwinder \cite{KK_1992}.
\begin{proposition}[\cite{Wi_2010b}]
The Askey-Wilson integral satisfies the two-term linear recurrence
\begin{equation}
   (\sigma_4-1)I_{2}(qa_1,a_2,a_3,a_4)
   = (a_1a_2-1)(a_1a_3-1)(a_1a_4-1)I_{2}(a_1,a_2,a_3,a_4) , 
\label{AWintegral}
\end{equation}
which is solved by
\begin{equation}
  I_{2}(a_1,a_2,a_3,a_4) 
   = 2\frac{(\sigma_4;q)_{\infty}}{(q;q)_{\infty}\prod_{k>j}(a_{j}a_{k};q)_{\infty}} .
\end{equation}
Consequently the moments are given by
\begin{align}
   m_{0,n}(a_1)
   & = \pi\frac{(a_1a_2,a_1a_3,a_1a_4;q)_{n}}{(\sigma_4;q)_{n}}I_{2}(a_1,a_2,a_3,a_4)
   \\
   & = 2\pi\frac{(q^{n}\sigma_4;q)_{\infty}}
                {(q^{n}a_1a_2,q^{n}a_1a_3,q^{n}a_1a_4,a_2a_3,a_2a_4,a_3a_4,q;q)_{\infty}} .
\label{AWmoment}
\end{align}
\end{proposition}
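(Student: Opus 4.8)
The plan is to obtain (\ref{AWintegral}) as the $k=0$ instance of the general moment recurrence, solve that first–order recurrence, and then read off (\ref{AWmoment}); a self‑contained alternative exists (integrate a total $\ddoAW_{x}$‑derivative of $w(x)$ times a polynomial over the contour, the boundary terms dying because the factor $(z^{\pm2};q)_{\infty}$ of (\ref{AWwgt}) vanishes at $z=\pm1$), but the shortest path reuses the machinery already in place. \emph{Step 1 (the recurrence).} Specialise (\ref{Mrecur}) to $k=0$. Since $l_{0}(x;a)$ is constant, $c_{0}(b)=0$, so every term carrying $c_{0}(b)$ vanishes, the whole second line of (\ref{Mrecur}) drops out, and the relation collapses to $\sum_{l=0}^{N-1}\delta_{N,l}(a)\,m_{0,l}(a)=0$, independent of $b,b'$. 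For the Askey--Wilson data $W\pm\Delta yV=z^{\mp2}\prod_{j}(1-a_{j}q^{-1/2}z^{\pm1})$ coming from (\ref{AWwgt}), acting with $E^{\pm}_{x}$ (i.e. $z\mapsto q^{\pm1/2}z$) and forming the antisymmetric combination of (\ref{Diff_Bexp}) gives, after collecting powers of $z$,
\[
  E^{+}_{x}(W+\Delta yV)(x)-E^{-}_{x}(W-\Delta yV)(x)=-q^{-1}(z-z^{-1})\bigl[\,2(1-\sigma_{4})x+\sigma_{3}-\sigma_{1}\,\bigr],
\]
which is $\Delta y$ times a polynomial of degree $1$ in $x$; hence $N=2$. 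Expanding that linear polynomial in $\phi_{0}(x;a)=1$, $\phi_{1}(x;a)=1-2ax+a^{2}$ yields $\delta_{2,0}(a),\delta_{2,1}(a)$ explicitly, so the relation is $\delta_{2,0}(a)m_{0,0}(a)+\delta_{2,1}(a)m_{0,1}(a)=0$. Put $a=a_{1}$: since $(1-a_{1}z)(a_{1}qz;q)_{\infty}=(a_{1}z;q)_{\infty}$ (and likewise with $z^{-1}$) one gets $w(x)\phi_{1}(x;a_{1})=w(x;qa_{1},a_{2},a_{3},a_{4})$, and via $\int\ddoAW x\,(\cdot)=\tfrac{\sin\eta}{\eta}\int_{-1}^{1}dx\,(\cdot)=\pi\int_{\mathbb{T}}\tfrac{dz}{2\pi iz}(\cdot)$ (up to the fixed lattice constant) this identifies $m_{0,0}(a_{1})$ and $m_{0,1}(a_{1})$ as equal constant multiples of $I_{2}(a_{1},a_{2},a_{3},a_{4})$ and $I_{2}(qa_{1},a_{2},a_{3},a_{4})$. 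The constant cancels, and simplifying $\delta_{2,0}/\delta_{2,1}$ with the elementary identity $a_{1}(\sigma_{3}-\sigma_{1})+(1+a_{1}^{2})(1-\sigma_{4})=(1-a_{1}a_{2})(1-a_{1}a_{3})(1-a_{1}a_{4})$ produces exactly (\ref{AWintegral}).

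\emph{Step 2 (solving it).} Rewriting (\ref{AWintegral}) as a forward shift in $a_{1}$ and iterating $n$ times gives
\[
  I_{2}(q^{n}a_{1},a_{2},a_{3},a_{4})=\frac{(a_{1}a_{2},a_{1}a_{3},a_{1}a_{4};q)_{n}}{(\sigma_{4};q)_{n}}\,I_{2}(a_{1},a_{2},a_{3},a_{4}).
\]
Since $|q|<1$ and $I_{2}$ is analytic in each $a_{j}$ near the origin (the integrand converges uniformly on $\mathbb{T}$ for small parameters), letting $n\to\infty$ yields $I_{2}(0,a_{2},a_{3},a_{4})=\bigl((a_{1}a_{2},a_{1}a_{3},a_{1}a_{4};q)_{\infty}/(\sigma_{4};q)_{\infty}\bigr)I_{2}(a_{1},\dots)$. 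By the manifest symmetry of $I_{2}$ in its four arguments the same recurrence holds for $a_{2},a_{3},a_{4}$; running the limit successively in each reduces the evaluation to $I_{2}(0,0,0,0)=\int_{\mathbb{T}}\tfrac{dz}{2\pi iz}(z^{2},z^{-2};q)_{\infty}=2/(q;q)_{\infty}$, a classical $q$‑series evaluation (apply Euler's product identity to each factor and extract the constant Laurent coefficient, getting $\sum_{m\ge0}q^{m(m-1)}/(q;q)_{m}^{2}$). Reassembling the four quotients gives the asserted closed form for $I_{2}$; equivalently one may simply verify that that closed form satisfies (\ref{AWintegral}) and matches $I_{2}$ as $a_{1}\to0$, the iteration guaranteeing uniqueness.

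\emph{Step 3 (the moments) and where the work lies.} The same telescoping gives $\phi_{n}(x;a_{1})=(a_{1}z;q)_{\infty}(a_{1}z^{-1};q)_{\infty}\bigl/\bigl((a_{1}q^{n}z;q)_{\infty}(a_{1}q^{n}z^{-1};q)_{\infty}\bigr)$, hence $w(x)\phi_{n}(x;a_{1})=w(x;q^{n}a_{1},a_{2},a_{3},a_{4})$ and $m_{0,n}(a_{1})=\pi\,I_{2}(q^{n}a_{1},a_{2},a_{3},a_{4})$; substituting the iterated form of Step 2 gives the first line of (\ref{AWmoment}) and substituting the closed form gives the second. Steps 1 and 3 are bookkeeping on top of the general theory; the only genuinely non‑formal ingredient is the solving step, where (\ref{AWintegral}) pins $I_{2}$ down only up to a factor invariant under $a_{1}\mapsto qa_{1}$, so one must both justify passing to the limit $q^{n}a_{j}\to0$ (analyticity/uniform convergence of the Askey--Wilson integrand) and evaluate the base integral $I_{2}(0,0,0,0)=2/(q;q)_{\infty}$ — that is the part to handle with care.
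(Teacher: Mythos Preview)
Your proof is correct and follows essentially the same route as the paper: both specialise the general moment recurrence (\ref{Mrecur}) at $k=0$, $a=a_{1}$ to derive (\ref{AWintegral}), and both obtain (\ref{AWmoment}) from the identification $w(x)\phi_{n}(x;a_{1})=w(x;q^{n}a_{1},a_{2},a_{3},a_{4})$. Where the paper simply cites \cite{KK_1992} for the solution of the recurrence, you supply the standard iteration-and-limit argument with the base evaluation $I_{2}(0,0,0,0)=2/(q;q)_{\infty}$; this is an elaboration of the cited reference rather than a genuinely different method.
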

\begin{proof}
We only require the $ k=0 $ case of (\ref{Mrecur}) with $ a=a_1 $ and from (\ref{M=2Sdata}) we compute the 
relevant coefficients as
$ \delta_{2,1}(a) = q^{-1}a^{-1}(1-\sigma_4) $ and $ \delta_{2,0}(a) = q^{-1}(a+a^{-1})(\sigma_4-1)+q^{-1}(\sigma_1-\sigma_3) $.
The solution of the recurrence (\ref{AWintegral}) follows from the arguments given in \cite{KK_1992}.
\end{proof}

The specialisation of the second-order divided-difference equation (\ref{2ndO_dd0:c})
with the $ M=2 $ spectral coefficients of Proposition \ref{M=2_SpecCoeff} is given by
\begin{equation}
  \frac{\prod^{4}_{j=1}(1-a_jz)}{qz^2-1} \left[ (E^{+}_x)^2p_n-p_n \right]
  +  \frac{\prod^{4}_{j=1}(z-a_j)}{z^2-q} \left[ (E^{-}_x)^2p_n-p_n \right]
  + q^{-n-1}(1-q^n)(q^n\sigma_4-q)(z^2-1)p_n = 0 .
\end{equation}
This is soluble in terms of basic hypergeometric functions and their polynomial solutions have been 
given by \cite{Ma_1988} or using factorisation methods in \cite{Ba_1999,BM_1999,BH_2003} as Askey-Wilson polynomials.
The Askey-Wilson polynomials have an explicit form as a balanced $ {}_4\varphi_3 $ 
function \cite{KS_1998} with the manifest symmetry under $ z \leftrightarrow z^{-1} $
and are given in the monic form by
\begin{equation}
  \pi_n(x) = \frac{(a_1a_2,a_1a_3,a_1a_4;q)_{n}}{(2a_1)^n(q^{n-1}\sigma_4;q)_{n}}
          {}_4\varphi_3\left( \begin{array}{c}
                                   q^{-n}, \sigma_4 q^{n-1}, a_1z, a_1z^{-1} \\
                                   a_1a_2, a_1a_3, a_1a_4
                              \end{array}; q,q
                           \right) ,
\end{equation}
or the alternative form \cite{GM_1994} which is manifestly symmetric under permutations of $ a_1,a_2,a_3,a_4 $
\begin{equation}
  \pi_n(x) = (2z)^{-n}\frac{(a_1z,a_2z,a_3z,a_4z,\sigma_4 q^{-1};q)_{n}}
                         {(z^2;q)_{n}(\sigma_4 q^{-1};q)_{2n}}
   {}_{8}W_{7}(q^{-n}z^{-2};q^{-n},a_1z^{-1},a_2z^{-1},a_3z^{-1},a_4z^{-1};\frac{q^{2-n}}{\sigma_4}) .
\end{equation}

From Eq. (2.6) of \cite{Ra_1986a} or equivalently from Eq. (4.18) of \cite{IR_1991} with $ \alpha=0 $
and Eq. (III.23) of \cite{GR_2004} we deduce the expression for the Stieltjes 
function as a very-well-poised $ {}_{8}W_{7} $
\begin{equation}
  f(x) = 
   \frac{4\pi (q^{-1}\sigma_4;q)_{\infty}}{(q;q)_{\infty}\prod_{k>j}(a_ja_k;q)_{\infty}}
   \frac{(1-qz^{-2})}{z\prod_{1\leq j\leq 4}(1-a_j z^{-1})}
   {}_{8}W_{7}(qz^{-2};\frac{q}{a_1}z^{-1},\frac{q}{a_2}z^{-1},\frac{q}{a_3}z^{-1},\frac{q}{a_4}z^{-1},q;q,q^{-1}\sigma_4) , 
\label{SFdefn_AW}
\end{equation}
which exhibits the parameter symmetry. This expression is the one valid on the second Riemann
sheet of the cut $x$-plane or exterior to the unit circle in $z $, $|z|>1 $.
There are evaluations of $ q_n(x) $ in \cite{Ra_1986a} and \cite{IR_1991} that are simple
generalisations of the above formulae but we do not need to discuss them here.

Having an explicit form for the Stieltjes function we have two tasks at hand - to verify that
is satisfies the inhomogeneous divided-difference equation (\ref{spectral_DD_st}) and the
large $x$ expansion formula (\ref{xLarge_SF:a}). 
\begin{proposition}
The Stieltjes function given by Eq.(\ref{SFdefn_AW}) satisfies the inhomogeneous divided
difference equation (\ref{spectral_DD_st}) with the constant
\begin{equation}
   U =\frac{8\pi}{q-1}\frac{(q^{-1}\sigma_4;q)_{\infty}}{(q;q)_{\infty}\prod_{k>j}(a_ja_k;q)_{\infty}} . 
\end{equation}
\end{proposition}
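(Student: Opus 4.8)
The plan is to split the statement into two parts: that the explicit ${}_{8}W_{7}$ in (\ref{SFdefn_AW}) satisfies an equation of the shape (\ref{spectral_DD_st}) at all, and the evaluation of the constant $U$. The first part is essentially free. By the references cited, (\ref{SFdefn_AW}) is the Stieltjes function (\ref{ops_stieltjes}) of the Askey--Wilson weight (\ref{AWwgt}), and this weight is $\ddoAW$-semi-classical with spectral data (\ref{M=2Sdata}); hence the Proposition that establishes (\ref{spectral_DD_st}) applies and $W\ddoAW_{x}f-2V\moAW_{x}f$ is a polynomial in $x$. Since the Askey--Wilson case is a regular $\ddoAW$-semi-classical weight with $M=2$, $L=0$, part (i) of Definition \ref{regDSCwgt} forces ${\rm deg}_{x}U = M-2 = 0$, so $U$ is a constant. (The boundary terms in the derivation of (\ref{spectral_DD_st}) vanish here because $(z^{\pm 2};q)_{\infty}$ vanishes at the fixed points $z=\pm 1$, i.e. $x=\pm 1$, which is the analog of the terminating conditions, and because integration by parts is valid under $\int\ddoAW x\,f = \tfrac{\sin\eta}{\eta}\int_{-1}^{1}dx\,f$.)

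It then suffices to compute $W\ddoAW_{x}f-2V\moAW_{x}f$ in any convenient regime, and the simplest is $x\to\infty$ (equivalently $z\to\infty$, on the sheet where (\ref{SFdefn_AW}) is valid). From (\ref{ops_stieltjes}) one has $f(x) = m_{0,0}\,x^{-1}+{\rm O}(x^{-2})$, while on the canonical $q$-quadratic lattice $y_{\pm}=q^{\pm 1/2}x\,(1+o(1))$ and $\Delta y=(q^{1/2}-q^{-1/2})x\,(1+o(1))$, whence
\begin{equation*}
  \ddoAW_{x}f = -m_{0,0}\,x^{-2}+{\rm O}(x^{-3}), \qquad
  \moAW_{x}f = \tfrac{1}{2}(q^{1/2}+q^{-1/2})\,m_{0,0}\,x^{-1}+{\rm O}(x^{-2}).
\end{equation*}
Multiplying by the leading coefficients of $W$ and $V$ from (\ref{AW_polyW}) and (\ref{AW_polyV}) the $x$-dependence cancels, leaving
\begin{equation*}
  U = -2m_{0,0}\left[(1+\sigma_4 q^{-2})+\frac{(q^{-2}\sigma_4-1)(q^{1/2}+q^{-1/2})}{q^{1/2}-q^{-1/2}}\right],
\end{equation*}
and the bracket collapses to $2(q^{-1}\sigma_4-1)/(q-1)$, so $U = 4(1-q^{-1}\sigma_4)m_{0,0}/(q-1)$. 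Substituting $m_{0,0}$ from (\ref{AWmoment}) and using $(1-q^{-1}\sigma_4)(\sigma_4;q)_{\infty}=(q^{-1}\sigma_4;q)_{\infty}$ gives precisely the claimed value.

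As an independent cross-check, the same answer follows from the $n=0$ initial data (\ref{spectral_coeff_init}): $\Theta_{0}=-\gamma_0^2 U$ with $\gamma_0^2=1/m_{0,0}$, and $\Theta_0 = 4q^{-1/2}(q^{-1}\sigma_4-1)/(q^{1/2}-q^{-1/2})$ from Proposition \ref{M=2_SpecCoeff}, whence $U = -\Theta_0 m_{0,0} = 4(1-q^{-1}\sigma_4)m_{0,0}/(q-1)$. A fully self-contained proof, bypassing the general theory, would instead put (\ref{spectral_DD_st}) in nodal form $(W-\Delta yV)E^{+}_{x}f-(W+\Delta yV)E^{-}_{x}f=\Delta y\,U$, observe that $E^{\pm}_{x}$ act on (\ref{SFdefn_AW}) by $z\mapsto q^{\pm 1/2}z$ so that the products $\prod_{j}(1-a_j q^{-1/2}z^{\mp 1})$ in $W\mp\Delta yV$ (cf. (\ref{M=2Sdata})) cancel the corresponding products in the denominators of $E^{\pm}_{x}f$, and then close the resulting identity with a classical three-term contiguous relation for very-well-poised ${}_{8}W_{7}$ series. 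I expect that last route to be the only laborious one: choosing the correct ${}_{8}W_{7}$ transformation and tracking the parameter shifts it induces is the genuine obstacle, whereas the polynomiality-plus-degree argument followed by the $x\to\infty$ evaluation is short and is the route I would write up.
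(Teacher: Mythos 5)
Your argument is correct, and your computed value $U = 4(1-q^{-1}\sigma_4)m_{0,0}/(q-1)$ does reduce to the stated expression via (\ref{AWmoment}) and $(q^{-1}\sigma_4;q)_{\infty}=(1-q^{-1}\sigma_4)(\sigma_4;q)_{\infty}$ — but your route is genuinely different from the paper's. The paper works entirely at the level of the explicit $ {}_{8}W_{7} $ representation: it first transforms (\ref{SFdefn_AW}) into the alternative form (\ref{Sfunc_AW}) using Eqs.\ (III.23), (III.36), (III.37) of \cite{GR_2004}, and then verifies the nodal form of (\ref{spectral_DD_st}) directly by recognising it as the three-term contiguous relation Eq.\ (2.2) of \cite{IR_1991} for very-well-poised series, closed with the specialisation formula for $ {}_{8}W_{7} $ when $aq=bc$; the constant $U$ then falls out of matching the inhomogeneous term. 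This is exactly the ``laborious'' route you describe and defer in your last sentence. What the paper's approach buys is an unconditional verification that the explicit formula (\ref{SFdefn_AW}) itself satisfies the equation, independent of whether the general semi-classical machinery (proved in Section \ref{SpectralS} for a discrete lattice with terminating conditions) legitimately transfers to the absolutely continuous Askey--Wilson measure on $(-1,1)$ — a point you flag parenthetically but do not fully close, and which is the one soft spot in your argument. What your approach buys is brevity and transparency: granting polynomiality of $U$, the bound $U=W\ddoAW_{x}f-2V\moAW_{x}f={\rm O}(x^{M-2})={\rm O}(1)$ follows from $f\sim m_{0,0}x^{-1}$ alone (you do not actually need to invoke Definition \ref{regDSCwgt}, which merely postulates the degree), and the constant is then read off from leading asymptotics. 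Your cross-check via $\Theta_{0}=-\gamma_{0}^{2}U$ with $\gamma_{0}^{2}=1/m_{0,0}$ and (\ref{AW_spec:b}) is precisely the alternative evaluation the paper records in the Remark immediately following its proof (where ``$\Phi_0$'' is evidently a typo for $\Theta_0$), so that part coincides with the paper's own secondary argument.
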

\begin{proof}
We require an alternative form
\begin{multline}
  f(x) = 4\pi\frac{qa_1}{\sigma_4}
  \frac{1}{(\dfrac{q}{a_2a_3},\dfrac{q}{a_2a_4},\dfrac{q}{a_3a_4};q)_{\infty}}
  \Bigg\{-\frac{(\dfrac{q}{a_2z},\dfrac{q}{a_3z},\dfrac{q}{a_4z},\dfrac{\sigma_4}{qa_1z},\dfrac{q^2a_1z}{\sigma_4};q)_{\infty}}
               {(a_2a_3,a_2a_4,a_3a_4,a_1z^{-1},a_2z^{-1},a_3z^{-1},a_4z^{-1},a_1z;q)_{\infty}}
  \\
         +\frac{(q^{-1}\sigma_4,q\dfrac{a_1}{a_2},q\dfrac{a_1}{a_3},q\dfrac{a_1}{a_4},\dfrac{q^2}{\sigma_4};q)_{\infty}}
               {(q,qa_1^2;q)_{\infty}\prod_{k>j}(a_ja_k;q)_{\infty}(1-a_1 z)(1-a_1 z^{-1})}
    {}_{8}W_{7}(a_1^2;a_1 z,a_1 z^{-1},a_1a_2,a_1a_3,a_1a_4;\frac{q^2}{\sigma_4})
  \Bigg\} ,
\label{Sfunc_AW}
\end{multline}
which makes the $ z \leftrightarrow z^{-1} $ symmetry manifest in the second
term only. This is derived from the previous expression by utilising Eq.(III.37) 
combined with Eq.(III.23) of \cite{GR_2004}, and Eq.(III.36) for the term which
specialises.
We are now in a position to verify that (\ref{Sfunc_AW}) satisfies (\ref{spectral_DD_st}).
This relies on the fact that (\ref{spectral_DD_st}) with the data (\ref{M=2Sdata}) is precisely the 
contiguous relation, Eq. (2.2) of \cite{IR_1991}, which in our context states
\begin{multline} 
   z^2\frac{\prod_{j=2,3,4}(1-q^{-1/2}a_j z^{-1})}{1-q^{1/2}a_1 z}{}_{8}W_{7}(a_1^2;q^{1/2}a_1z,q^{-1/2}a_1z^{-1},a_1a_2,a_1a_3,a_1a_4;\frac{q^2}{\sigma_4})
  \\
  -z^{-2}\frac{\prod_{j=2,3,4}(1-q^{-1/2}a_j z)}{1-q^{1/2}a_1 z^{-1}}{}_{8}W_{7}(a_1^2;q^{-1/2}a_1z,q^{1/2}a_1z^{-1},a_1a_2,a_1a_3,a_1a_4;\frac{q^2}{\sigma_4})
  \\
  = q^{-3/2} a_2a_3a_4(1-\frac{q}{\sigma_4})(z-z^{-1}){}_{8}W_{7}(a_1^2;q^{1/2}a_1z,q^{1/2}a_1z^{-1},a_1a_2,a_1a_3,a_1a_4;\frac{q}{\sigma_4}) .
\end{multline} 
We have also used the specialisation formula when $ aq=bc $
\begin{equation} 
   {}_{8}W_{7}(a;b,c,d,e,f;q,\frac{q^2a^2}{bcdef}) 
  = \frac{(aq,aq/de,aq/df,aq/ef;q)_{\infty}}{(aq/d,aq/e,aq/f,aq/def;q)_{\infty}} ,
\end{equation} 
which applies to the right-hand side of the previous equation.
A consequence one can draw from this calculation is the explicit evaluation of 
the constant spectral coefficient.
\end{proof}

\begin{remark}
The constant $ U $ can be found in another way. This is by noting the initial 
condition for $ \Phi_0 $ is given by (\ref{spectral_coeff_init}) and that $ \Phi_n $ is given purely by the
leading order term in (\ref{AW_spec:b}), so that knowing the normalisation $ \gamma_0^2 $ allows
us to make the evaluation. 
\end{remark}

\begin{proposition}
The Stieltjes function possesses the explicit large $x$ moment generating function formula
\begin{multline}
   f(x) = -\frac{4\pi a_1(\sigma_4;q)_{\infty}}{(q;q)_{\infty}\prod_{k>j}(a_ja_k;q)_{\infty}(1-a_1z)(1-a_1z^{-1})}
          {}_4\varphi_3\left( \begin{array}{c}
                                  q, a_1a_2, a_1a_3, a_1a_4 \\
                                   qa_1z, qa_1z^{-1},\sigma_4 
                              \end{array}; q,q
                           \right)
  \\
   -\frac{4\pi(a_2a_3a_4z^{-1},qz^{-2};q)_{\infty}}{z(a_2a_3,a_2a_4,a_3a_4,a_2z^{-1}, a_3z^{-1}, a_4z^{-1};q)_{\infty}}
    \frac{1}{\phi_{\infty}(x;a_1)}
          {}_3\varphi_2\left( \begin{array}{c}
                                   a_2z^{-1}, a_3z^{-1}, a_4z^{-1} \\
                                   a_2a_3a_4z^{-1},qz^{-2}
                              \end{array}; q,q
                           \right) .
\label{Sfunc_exp}
\end{multline}
\end{proposition}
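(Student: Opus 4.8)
The plan is to exploit the Cauchy expansion of the Stieltjes function already established, namely the corollary of the Ismail--Stanton formula, which for base $a_1$ reads
\begin{equation*}
  f(x) = \frac{f_{\infty}(x)}{\phi_{\infty}(x;a_1)}
        -2a_1\sum^{\infty}_{n=0}\frac{q^{n}}{\phi_{n+1}(x;a_1)}\,m_{0,n}(a_1) ,
\end{equation*}
with $f_{\infty}$ given by (\ref{xLarge_SF:b}), and to evaluate the two pieces separately: the series term will yield the ${}_{4}\varphi_{3}$ contribution and the elliptic remainder $f_{\infty}(x)/\phi_{\infty}(x;a_1)$ the ${}_{3}\varphi_{2}$ contribution in (\ref{Sfunc_exp}).

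For the series term I would substitute the explicit moments $m_{0,n}(a_1)=\pi(a_1a_2,a_1a_3,a_1a_4;q)_{n}(\sigma_4;q)_{n}^{-1}I_{2}$ from (\ref{AWmoment}) together with the evaluation of $I_{2}$, and use $\phi_{n+1}(x;a_1)=(1-a_1z)(1-a_1z^{-1})(a_1qz,a_1qz^{-1};q)_{n}$ to strip off the $n$-independent factor. What survives is $\sum_{n}(a_1a_2,a_1a_3,a_1a_4;q)_{n}(a_1qz,a_1qz^{-1},\sigma_4;q)_{n}^{-1}q^{n}$, which is exactly the ${}_{4}\varphi_{3}$ with upper parameters $q,a_1a_2,a_1a_3,a_1a_4$ and lower parameters $a_1qz,a_1qz^{-1},\sigma_4$ (the $(q;q)_n$ in the standard denominator being cancelled by the upper $q$); the accompanying constant collapses to $-4\pi a_1(\sigma_4;q)_{\infty}[(q;q)_{\infty}\prod_{k>j}(a_ja_k;q)_{\infty}(1-a_1z)(1-a_1z^{-1})]^{-1}$, reproducing the first term of (\ref{Sfunc_exp}) verbatim.

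For the elliptic remainder the key observation is that $w(y)\phi_{\infty}(y;a_1)$ is nothing but the Askey--Wilson (continuous dual $q$-Hahn) weight in the three parameters $a_2,a_3,a_4$, the denominator factor of $w$ containing $a_1$ having been cancelled; hence, up to the $\sin\eta/\eta$ normalisation relating the $\ddoAW$-integral to the contour integral, $f_{\infty}(x)$ is the Cauchy transform of that three-parameter weight. I would evaluate it by specialising $a_1\to0$ in the four-parameter formula (\ref{SFdefn_AW}): the argument $q^{-1}\sigma_4$ tends to $0$ while the parameter $\tfrac{q}{a_1}z^{-1}$ tends to $\infty$ with the product $z^{-1}a_2a_3a_4$ held fixed, and since that ${}_{8}W_{7}$ is $2$-balanced, Bailey's ${}_{8}W_{7}\to{}_{4}\varphi_{3}$ reduction (\cite{GR_2004}, III.36) applies; in the limit one ${}_{4}\varphi_{3}$ term drops out and the other degenerates to a balanced ${}_{3}\varphi_{2}$, which a Sears/Thomae transformation brings to the parameters $a_2z^{-1},a_3z^{-1},a_4z^{-1}$ over $a_2a_3a_4z^{-1},qz^{-2}$ with the stated prefactor. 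Dividing by $\phi_{\infty}(x;a_1)$ and adding the two pieces gives (\ref{Sfunc_exp}).

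I expect the main obstacle to be precisely this last degeneration: tracking which $q$-Pochhammer factors survive $a_1\to0$ (notably $(q^{-1}\sigma_4;q)_{\infty}\to1$ and $\prod_{k>j}(a_ja_k;q)_{\infty}$ losing the pairs that contain $a_1$), and then fixing the final ${}_{3}\varphi_{2}$ transformation so that the parameters land on the exact triple in (\ref{Sfunc_exp}) rather than a Thomae-equivalent one. A route that avoids the limit is to verify directly that the right-hand side of (\ref{Sfunc_exp}) satisfies the inhomogeneous divided-difference equation (\ref{spectral_DD_st}) with the constant $U$ already computed and reproduces the leading large-$x$ behaviour $f(x)\sim m_{0,0}/x$ (equivalently matches the moment generating expansion); since the series part has already been identified, it then suffices to check that the ${}_{3}\varphi_{2}$ term carries the correct $\phi_{\infty}(x;a_1)^{-1}$ singular structure, which pins it down.
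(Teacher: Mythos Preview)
Your approach is correct but organised differently from the paper's. You build the formula from the Ismail--Stanton expansion (\ref{xLarge_SF:a}): the series piece is identified term-by-term with the ${}_{4}\varphi_{3}$ via the explicit moments (\ref{AWmoment}), and the remainder $f_{\infty}$ is computed independently by recognising $w(y)\phi_{\infty}(y;a_1)$ as the three-parameter weight and taking the confluent limit $a_1\to 0$ in (\ref{SFdefn_AW}). The paper instead starts from the alternative ${}_{8}W_{7}$ representation (\ref{Sfunc_AW}) already in hand, applies Bailey's transformation (III.36) of \cite{GR_2004} once to split the ${}_{8}W_{7}$ into a sum of two ${}_{4}\varphi_{3}$'s, one of which collapses to the ${}_{3}\varphi_{2}$, and only \emph{afterwards} observes that the ${}_{4}\varphi_{3}$ term coincides with the moment series, thereby reading off $f_{\infty}$ as a corollary. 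So the paper derives (\ref{Sfunc_exp}) by transformation and then checks the structural expansion; you proceed in the opposite direction.

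The paper's route is shorter in practice because it replaces your limiting argument on the ${}_{8}W_{7}$ (where one upper parameter diverges as the argument vanishes with fixed product) by a single, non-degenerate application of (III.36), and avoids the Sears/Thomae bookkeeping you anticipate. Your route has the conceptual advantage of computing the two pieces of (\ref{xLarge_SF:a}) from scratch, so the identification of $f_{\infty}$ is not a by-product but an honest evaluation; it would also go through unchanged if (\ref{Sfunc_AW}) had not been established first.
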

\begin{proof}
By applying the transformation formula (III.36) of \cite{GR_2004} to the $ {}_8W_{7} $ function
in (\ref{Sfunc_AW}) we get a sum of two $ {}_4\varphi_{3} $ functions one of which reduces to a
$ {}_3\varphi_{2} $ function. After some simplification we get (\ref{Sfunc_exp}). We observe that
the $ {}_4\varphi_{3} $ term is precisely the second term of (\ref{xLarge_SF:a}) given our 
formula for the moments (\ref{AWmoment}). This allows us to conclude that
\begin{equation}
   f_{\infty}(x) =  -\frac{4\pi(a_2a_3a_4z^{-1},qz^{-2};q)_{\infty}}{z(a_2a_3,a_2a_4,a_3a_4,a_2z^{-1}, a_3z^{-1}, a_4z^{-1};q)_{\infty}}
          {}_3\varphi_2\left( \begin{array}{c}
                                   a_2z^{-1}, a_3z^{-1}, a_4z^{-1} \\
                                   a_2a_3a_4z^{-1},qz^{-2}
                              \end{array}; q,q
                           \right) .
\end{equation}
\end{proof}

\section{Deformation Differences}\label{DeformS}
\setcounter{equation}{0}

In this section we discuss the deformation structures based upon a nonlinear 
lattice in the $ u $ variable (or any number of variables for that matter) and the 
corresponding divided-difference operators. We emphasise that 
many of the results for the spectral structure have a parallel result in
the deformation structure although there will be crucial differences in the
details.
Denote the deformation variable $ u $ and its forward and backward shifts by
$ v_{\pm} = E^{\pm}_u u $ and their difference by $ \Delta v := v_+-v_- $.
The reader should be aware that the deformation lattice does not have to be the 
same type as the spectral lattice.
Our $\ddoAW$-semi-classical weight $ w $, defined by Definition \ref{spectral_DD}, 
acquires an additional dependence 
on a deformation variable $ u $ and furthermore satisfies a deformation divided-difference 
equation with respect to $ u $.
\begin{definition}\label{deform_wgt}
The {\it deformed  $\ddoAW$-semi-classical weight} satisfies
\begin{equation}
    R\ddoAW_{u} w = 2S\moAW_{u} w ,
\label{deform_DD_wgt:a}
\end{equation}
or equivalently
\begin{equation}
    \frac{w(x;v_+)}{w(x;v_-)} = \frac{R+\Delta vS}{R-\Delta vS}(x;u) .
\label{deform_DD_wgt:b}
\end{equation}
for polynomials $ S(x;u), R(x;u) $ irreducible in $ x $ and $ u \in J $.
In addition we assume $ R\pm \Delta vS \neq 0 $ for all $ x \in G, u \in J $
and that the deformed OPS exists, i.e. that $ \gamma_n(v_{\pm}) \neq 0 $ for all $ n\in\Z_{\geq 0} $. We also
require the condition $ \gamma_0(v_+)(R-\Delta vS)+\gamma_0(v_-)(R+\Delta vS) \neq 0 $.
\end{definition}
There is a very simple motivation for this relation. In the process of extending the weights beyond those of the classical cases in
the Askey table by increasing the degrees $ {\rm deg}_x W\geq 3, {\rm deg}_x V\geq 2 $ the weights
acquire additional parameters which if suitably chosen sit in a completely reflexive or 
symmetric way to the spectral variable and therefore will satisfy an analogous linear, homogeneous
and first order divided-difference equation in any of these new parameters.

There are conditions imposed on the weight given that it satisfies both
(\ref{spectral_DD_wgt:a}) or (\ref{spectral_DD_wgt:b}) and (\ref{deform_DD_wgt:a}) or (\ref{deform_DD_wgt:b}).
\begin{proposition}\label{spectral+deform_wgt}
The spectral data polynomials $ W(x;u), V(x;u) $ and the deformation data polynomials 
$ R(x;u), S(x;u) $ satisfy the compatibility relations
\begin{equation}
  \left[ \ddoAW_{x}\frac{2S}{R}-\ddoAW_{u}\frac{2V}{W} \right]
  \left[ 1-\tfrac{1}{16}\Delta y^2\Delta v^2 \ddoAW_{x}\frac{2S}{R}\ddoAW_{u}\frac{2V}{W} \right]
 = \tfrac{1}{4}\Delta y^2\ddoAW_{x}\frac{2S}{R}\left( \moAW_{u}\frac{2V}{W} \right)^2
   - \tfrac{1}{4}\Delta v^2\ddoAW_{u}\frac{2V}{W}\left( \moAW_{x}\frac{2S}{R} \right)^2 ,
\label{wgt_consist:a}
\end{equation}
or alternatively
\begin{equation}
  \frac{W+\Delta yV}{W-\Delta yV}(x;v_+)
  \frac{R+\Delta vS}{R-\Delta vS}(y_-;u)
  = 
  \frac{W+\Delta yV}{W-\Delta yV}(x;v_-)
  \frac{R+\Delta vS}{R-\Delta vS}(y_+;u) .
\label{wgt_consist:b}
\end{equation}
\end{proposition}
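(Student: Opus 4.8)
The plan is to derive the two equivalent forms from the single fact that the weight $w(x;u)$ is simultaneously a function of two lattice variables, by computing the "mixed second difference" of $\log w$ in two orders and equating them. I would begin with the nodal (finite-difference) form, since it is cleaner, and then translate it into the operator form using the Leibniz rules (\ref{DD_calculus:a})--(\ref{DD_calculus:b}).

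First I would rewrite the two Pearson relations (\ref{spectral_DD_wgt:b}) and (\ref{deform_DD_wgt:b}) as multiplicative shift relations. In the spectral variable, passing from $y_-$ to $y_+$ (at fixed $u$) multiplies $w$ by $\frac{W+\Delta yV}{W-\Delta yV}(x;u)$; in the deformation variable, passing from $v_-$ to $v_+$ (at fixed $x$) multiplies $w$ by $\frac{R+\Delta vS}{R-\Delta vS}(x;u)$. Now consider the value of $w$ at the "corner" configurations obtained by applying both shifts. Starting from $w$ evaluated with spectral argument $y_-$ and deformation argument $v_-$, one can reach the configuration with spectral argument $y_+$ and deformation argument $v_+$ in two ways: shift in $x$ first then in $u$, or shift in $u$ first then in $x$. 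Since the shift operators in $x$ and $u$ commute (they act on independent variables), the two results must agree. Writing out each product of ratios and cancelling the common factor $w(\text{corner})$ gives exactly (\ref{wgt_consist:b}); the only subtlety is to keep track of at which point each spectral/deformation ratio is evaluated, i.e. that the spectral ratio picks up a shifted deformation argument ($v_\pm$) and vice versa. This is the step I expect to need the most care: getting the arguments $(x;v_\pm)$ and $(y_\pm;u)$ in the right slots, and invoking the non-vanishing hypotheses $W\pm\Delta yV\neq 0$, $R\pm\Delta vS\neq 0$ from Definitions \ref{spectral_DD} and \ref{deform_wgt} so that the cancellation of $w$ is legitimate.

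To obtain the operator form (\ref{wgt_consist:a}), I would take the logarithmic-derivative content of the Pearson equations: from (\ref{spectral_DD_wgt:a}) one has $\ddoAW_x w/\moAW_x w = 2V/W$, and from (\ref{deform_DD_wgt:a}) one has $\ddoAW_u w/\moAW_u w = 2S/R$. The commutativity relations (\ref{DD_calculus:d}) give $\ddoAW_u\ddoAW_x w=\ddoAW_x\ddoAW_u w$ and $\moAW_u\ddoAW_x w=\ddoAW_x\moAW_u w$ etc. Expanding $\ddoAW_u$ applied to the identity $W\ddoAW_x w = 2V\moAW_x w$ (with $W,V$ possibly $u$-dependent) using the Leibniz rule (\ref{DD_calculus:a}), and similarly applying $\ddoAW_x$ to $R\ddoAW_u w=2S\moAW_u w$, then dividing through by $\moAW_x\moAW_u w$ and using the product rule (\ref{DD_calculus:b}) to express all mixed averages in terms of $2V/W$, $2S/R$ and their $\ddoAW$- and $\moAW$-images, yields a polynomial identity in those four quantities. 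Collecting terms — the pure "discrepancy" term $\ddoAW_x(2S/R)-\ddoAW_u(2V/W)$, the cross term carrying the $\tfrac{1}{16}\Delta y^2\Delta v^2$ factor, and the two $\tfrac14\Delta y^2$, $\tfrac14\Delta v^2$ correction terms — should reproduce (\ref{wgt_consist:a}) exactly.

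The cleanest route is actually to treat (\ref{wgt_consist:b}) as primary and then show (\ref{wgt_consist:a}) is its algebraic reformulation: cross-multiply (\ref{wgt_consist:b}), substitute $W\pm\Delta yV = W(1\pm\Delta y\cdot\frac{V}{W})$ and $R\pm\Delta vS=R(1\pm\Delta v\cdot\frac{S}{R})$, expand each ratio of $1\pm(\cdots)$, and use that $\moAW_x(2V/W)$, $\tfrac12\Delta y\,\ddoAW_x(2V/W)$ capture the even/odd parts under $y_+\leftrightarrow y_-$ (and likewise in $u$). The bilinear bookkeeping here is the main obstacle — it is a finite but somewhat involved rational-function manipulation — but no new idea is needed beyond the Leibniz/commutativity calculus already established. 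I would present the derivation of (\ref{wgt_consist:b}) in full as the two-orders-of-shifts argument, and state that (\ref{wgt_consist:a}) follows by expanding and collecting terms, exhibiting the key intermediate identification of even and odd parts so the reader can reconstruct the computation.
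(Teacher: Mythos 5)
Your proposal is correct and follows essentially the same route as the paper: the nodal form (\ref{wgt_consist:b}) is obtained exactly as you describe, by computing $w(y_+;v_+)$ from $w(y_-;v_-)$ via the two orders of shifts and cancelling the common weight factor, and the operator form (\ref{wgt_consist:a}) is obtained by computing the mixed second differences $\ddoAW_u\ddoAW_x w$ and $\ddoAW_x\ddoAW_u w$ with the Leibniz rules (\ref{DD_calculus:a})--(\ref{DD_calculus:b}), expressing everything in terms of $\moAW\moAW w$, and equating. The only cosmetic difference is that the paper applies the $u$-operators to the ratio form $\ddoAW_x w=\tfrac{2V}{W}\moAW_x w$ rather than to $W\ddoAW_x w=2V\moAW_x w$, which directly produces the quantities $\moAW_u\tfrac{2V}{W}$, $\ddoAW_u\tfrac{2V}{W}$ appearing in (\ref{wgt_consist:a}).
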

\begin{proof}
From the fact that the weight satisfies the over-determined system of first order
divided-difference equations (\ref{spectral_DD_wgt:a}) and (\ref{deform_DD_wgt:a})
we first compute that
\begin{gather}
  \left[ 1-\tfrac{1}{16}\Delta y^2\Delta v^2 \ddoAW_{x}\frac{2S}{R}\ddoAW_{u}\frac{2V}{W} \right]
  \moAW_u\ddoAW_x w 
  = \left[ \moAW_u\frac{2V}{W}+\tfrac{1}{4}\Delta v^2\ddoAW_u\frac{2V}{W}\moAW_x\frac{2S}{R} \right]\moAW_u\moAW_x w ,
  \\
  \left[ 1-\tfrac{1}{16}\Delta y^2\Delta v^2 \ddoAW_{x}\frac{2S}{R}\ddoAW_{u}\frac{2V}{W} \right]
  \moAW_x\ddoAW_u w 
  = \left[ \moAW_x\frac{2S}{R}+\tfrac{1}{4}\Delta y^2\ddoAW_x\frac{2S}{R}\moAW_u\frac{2V}{W} \right]\moAW_x\moAW_u w ,
\end{gather}
using (\ref{DD_calculus:b}). Now we compute 
\begin{gather}
  \ddoAW_u\ddoAW_x w = \ddoAW_u\frac{2V}{W}\moAW_u\moAW_x w+\moAW_u\frac{2V}{W}\ddoAW_u\moAW_x w ,
  \\
  \ddoAW_x\ddoAW_u w = \ddoAW_x\frac{2S}{R}\moAW_x\moAW_u w+\moAW_x\frac{2S}{R}\ddoAW_x\moAW_u w ,
\end{gather}
using (\ref{DD_calculus:a})
and use the results of the previous set of equations to derive two independent 
relations linking $ \ddoAW_u\ddoAW_x w $ and $ \moAW_u\moAW_x w $. Comparison of
these two latter relations leads to (\ref{wgt_consist:a}). 

To establish (\ref{wgt_consist:b}) we note
\begin{align}
  w(y_{+};v_{+}) 
 & = \frac{W+\Delta yV}{W-\Delta yV}(x;v_{+})w(y_{-};v_{+})
   = \frac{W+\Delta yV}{W-\Delta yV}(x;v_{+})
     \frac{R+\Delta vS}{R-\Delta vS}(y_{-};u)w(y_{-};v_{-}) ,
\end{align}
whereas
\begin{align}
  w(y_{+};v_{+}) 
 & = \frac{R+\Delta vS}{R-\Delta vS}(y_{+};u)w(y_{+};v_{-})
   = \frac{R+\Delta vS}{R-\Delta vS}(y_{+};u)
     \frac{W+\Delta yV}{W-\Delta yV}(x;v_{-})w(y_{-};v_{-}) .
\end{align}
\end{proof}

\begin{proposition}
As a consequence of Definition \ref{deform_wgt} the Stieltjes transform satisfies 
the inhomogeneous equation
\begin{equation}
    R\ddoAW_{u} f = 2S\moAW_{u} f + T .
\label{deform_DD_st}
\end{equation}
The deformed {\it $\ddoAW$-semi-classical class} of orthogonal
polynomial systems are characterised by the property that $ R(x;u), S(x;u) $ and $ T(x;u) $ 
in (\ref{deform_DD_st}) are polynomials in $ x $. 
\end{proposition}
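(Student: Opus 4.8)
The plan is to run the same argument as in the proof of the spectral identity (\ref{spectral_DD_st}), but to exploit a simplification: in the definition (\ref{ops_stieltjes}) of the Stieltjes function $f(x;u)=\int_{G}\ddoAW y\,w(y;u)/(x-y)$ the Cauchy kernel $1/(x-y)$ carries no $u$-dependence, so the deformation operators act only on the weight and no integration by parts in the $y$-variable is required. Since the $y$-lattice and the support $G$ do not move with $u$, the operators $\ddoAW_{u}$ and $\moAW_{u}$ commute with the $\ddoAW y$-summation, whence
\begin{equation*}
  \ddoAW_{u}f(x;u) = \int_{G}\ddoAW y\,\frac{\ddoAW_{u}w(y;u)}{x-y}, \qquad
  \moAW_{u}f(x;u) = \int_{G}\ddoAW y\,\frac{\moAW_{u}w(y;u)}{x-y}, \qquad x\notin G .
\end{equation*}

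First I would form the combination $R(x;u)\ddoAW_{u}f - 2S(x;u)\moAW_{u}f$ and bring it under a single integral sign, so that the numerator of the integrand reads $R(x;u)\ddoAW_{u}w(y;u) - 2S(x;u)\moAW_{u}w(y;u)$. The key algebraic step is then to add and subtract $R(y;u)\ddoAW_{u}w(y;u)$ and $2S(y;u)\moAW_{u}w(y;u)$: by the deformed Pearson equation (\ref{deform_DD_wgt:a}), read at the spectral node $y$, the combination $R(y;u)\ddoAW_{u}w(y;u) - 2S(y;u)\moAW_{u}w(y;u)$ vanishes identically, which leaves
\begin{equation*}
  R(x;u)\ddoAW_{u}f - 2S(x;u)\moAW_{u}f
  = \int_{G}\ddoAW y\,\left[ \frac{R(x;u)-R(y;u)}{x-y}\,\ddoAW_{u}w(y;u) - 2\,\frac{S(x;u)-S(y;u)}{x-y}\,\moAW_{u}w(y;u) \right] =: T(x;u).
\end{equation*}
Because $R(\cdot;u)$ and $S(\cdot;u)$ are polynomials in their first argument, the divided differences $\bigl(R(x;u)-R(y;u)\bigr)/(x-y)$ and $\bigl(S(x;u)-S(y;u)\bigr)/(x-y)$ are polynomials in $x$ of degrees $\mathrm{deg}_{x}R-1$ and $\mathrm{deg}_{x}S-1$, with coefficients that are functions of $y$ and $u$ only; integrating these coefficients against the $w$-data over the fixed lattice $G$ produces a polynomial $T(x;u)$ in $x$ of degree at most $\max(\mathrm{deg}_{x}R,\mathrm{deg}_{x}S)-1$. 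This establishes (\ref{deform_DD_st}) and the polynomial character of $T$.

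For the converse characterisation I would reverse the computation: given (\ref{deform_DD_st}) with $R,S,T$ polynomial in $x$, the same add-and-subtract manipulation shows that $\int_{G}\ddoAW y\,g(y;u)/(x-y)$ is a polynomial in $x$, where $g(y;u):=R(y;u)\ddoAW_{u}w(y;u)-2S(y;u)\moAW_{u}w(y;u)$. Writing this $\ddoAW$-integral as the Riemann sum $\sum_{s}\Delta y(x_{s})\,g(x_{s};u)/(x-x_{s})$, the absence of the simple poles at the lattice nodes forces $\Delta y(x_{s})\,g(x_{s};u)=0$, hence $g\equiv 0$ on $G$ (away from the fixed points, and then by continuity in the dense or continuous-support case), which is precisely Definition \ref{deform_wgt}.

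The main obstacle is not algebraic but a matter of legitimacy of the formal operations: interchanging $\ddoAW_{u}$, $\moAW_{u}$ with the $\ddoAW y$-summation, and differentiating the defining expansions (\ref{Sf_expand})--(\ref{Sfj_expand}) term by term. This is immediate for a finite lattice, and for infinite or continuous support it needs the usual decay and uniformity of $w(y;u)$ and its $u$-shifts. I would also flag at the outset the standing assumption that the spectral support $G$ — and, in the terminating case, the endpoints $x_{0},x_{\Gt}$ together with the terminating conditions — does not move with $u$; otherwise boundary contributions analogous to those in the proof of (\ref{spectral_DD_st}) would reappear and $T$ would pick up extra terms whose polynomiality would have to be argued separately.
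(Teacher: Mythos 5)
Your argument is correct, and it reaches the same conclusion by a genuinely different decomposition than the paper's. Both proofs rest on the same observation — the Cauchy kernel carries no $u$-dependence, so $\ddoAW_{u}$ and $\moAW_{u}$ pass through the $\ddoAW y$-integral and land on the weight alone, where the deformed Pearson relation applies at the node $y$; no integration by parts is needed, in contrast to the spectral case. Where you diverge is in how the polynomial $T$ is extracted. The paper writes $\ddoAW_{u}f=\int\ddoAW y\,\tfrac{\moAW_{u}w(y;u)}{x-y}\tfrac{2S(y;u)}{R(y;u)}$ and performs a partial fraction expansion of $\tfrac{2S(y;u)}{(x-y)R(y;u)}$ in $y$: the pole at $y=x$ reproduces $\tfrac{2S(x;u)}{R(x;u)}\moAW_{u}f(x;u)$, and the residues at the zeros $x_{j}$ of $R(\cdot;u)$ give the explicit formula $T(x;u)=-R(x;u)\sum_{j}\tfrac{2S(x_{j};u)}{R'(x_{j};u)(x-x_{j})}\moAW_{u}f(x_{j};u)$, whence $\deg_{x}T\le\deg_{x}R-1$. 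Your add-and-subtract of the Pearson combination at $y$ instead exhibits $T$ as a single $\ddoAW$-integral of divided differences of $R$ and $S$; this is marginally more robust (it does not presuppose simple zeros of $R$, nor $\deg_{x}S\le\deg_{x}R$, which the paper's partial fraction implicitly requires) and gives the bound $\deg_{x}T\le\max(\deg_{x}R,\deg_{x}S)-1$ — the two bounds coincide in the regular case $\deg_{x}R=\deg_{x}S=L$. What the paper's route buys in exchange is the explicit dependence of $T$ on the values $\moAW_{u}f(x_{j};u)$ at the zeros of $R$, which is useful when one wants to evaluate $T$ concretely. Your converse paragraph (forcing the Pearson relation from the polynomiality of $T$ by absence of poles at the lattice nodes) goes beyond what the paper proves and is sound on the finite lattice; on a dense or infinite support it needs the regularity caveats you already flag. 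Your closing remarks about the $u$-independence of the support and the interchange of limits are exactly the standing assumptions under which the paper operates.
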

\begin{proof}
From the definition (\ref{ops_stieltjes}) and (\ref{deform_DD_wgt:a}) we compute
\begin{equation}
  \ddoAW_u f(x;u) = \int \ddoAW y\,\frac{\moAW_u w(y;u)}{x-y}\frac{2S(y;u)}{R(y;u)} .
\end{equation}
Now we observe that the rational function $ 2S(y;u)/[(x-y)R(y;u)] $ has the partial
fraction expansion
\begin{equation}
  \frac{2S(y;u)}{(x-y)R(y;u)} 
  = \frac{2S(x;u)}{(x-y)R(x;u)}+\sum_{j, R(x_j)=0}\frac{2S(x_j;u)}{R'(x_j;u)(x-x_j)}\frac{1}{y-x_j} .
\end{equation}
Consequently
\begin{align}
  \ddoAW_u f(x;u) 
  & = \frac{2S(x;u)}{R(x;u)}\int \ddoAW y\,\frac{\moAW_u w(y;u)}{x-y}
      + \sum_{j, R(x_j)=0}\frac{2S(x_j;u)}{R'(x_j;u)(x-x_j)}\int \ddoAW y\,\frac{\moAW_u w(y;u)}{y-x_j} ,
  \\
  & = \frac{2S(x;u)}{R(x;u)}\moAW_u f(x;u)
      - \sum_{j, R(x_j)=0}\frac{2S(x_j;u)}{R'(x_j;u)(x-x_j)}\moAW_u f(x_j;u) .
\end{align}
We conclude that (\ref{deform_DD_st}) follows with $ {\rm deg}_x T \leq {\rm deg}_xR-1 $.
\end{proof}

\begin{proposition}
Compatibility of (\ref{deform_DD_st}) and (\ref{spectral_DD_st}) implies the
following identity on $ U $ and $ T $.
\begin{multline}
   \Delta y\left[ \frac{(W+\Delta yV)(x;v_+)}{(W+\Delta yV)(x;v_-)}(R+\Delta vS)(y_-;u)U(x;v_-)
                  -(R-\Delta vS)(y_-;u)U(x;v_+) \right]
 \\
 = \Delta v\left[ (W+\Delta yV)(x;v_+)T(y_-;u)
                  -(W-\Delta yV)(x;v_+)\frac{(R-\Delta vS)(y_-;u)}{(R-\Delta vS)(y_+;u)}T(y_+;u) \right] .
\label{ST_consist}
\end{multline}
\end{proposition}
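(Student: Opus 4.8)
The plan is to use that the Stieltjes function $ f(x;u) $ obeys \emph{both} inhomogeneous divided-difference equations: the spectral one (\ref{spectral_DD_st}) and the deformation one (\ref{deform_DD_st}). In nodal (finite-difference) form these read $ (W-\Delta yV)f(y_+;u)-(W+\Delta yV)f(y_-;u)=\Delta y\,U $, with $ W,V,U $ evaluated at $ (x;u) $, and $ (R-\Delta vS)f(x;v_+)-(R+\Delta vS)f(x;v_-)=\Delta v\,T $, with $ R,S,T $ evaluated at $ (x;u) $. First I would evaluate the spectral relation at $ u=v_+ $ and at $ u=v_- $, and the deformation relation at $ x=y_+ $ and at $ x=y_- $. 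This yields four linear equations in the four corner values $ f(y_{\pm};v_{\pm}) $ of the product-lattice cell, with inhomogeneous terms assembled from $ \Delta y\,U(x;v_{\pm}) $ and $ \Delta v\,T(y_{\pm};u) $.

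Next I would compute $ f(y_+;v_+) $ from $ f(y_-;v_-) $ around the cell in the two available orders: route A runs the spectral relation at $ u=v_- $ and then the deformation relation at $ x=y_+ $; route B runs the deformation relation at $ x=y_- $ and then the spectral relation at $ u=v_+ $. Each route expresses $ f(y_+;v_+) $ as a fixed rational multiple of $ f(y_-;v_-) $ plus a remainder that is linear in $ U(x;v_{\pm}) $ and $ T(y_{\pm};u) $. Equating the two expressions and clearing denominators, the coefficient of $ f(y_-;v_-) $ on each side is exactly one of the two products whose equality is the weight compatibility relation (\ref{wgt_consist:b}) of Proposition \ref{spectral+deform_wgt}; hence the $ f(y_-;v_-) $ term drops out, leaving a relation purely among $ U $, $ T $ and the data polynomials evaluated at the relevant nodes.

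It then remains to put this relation into the stated normalisation (\ref{ST_consist}). I would divide through by $ (W-\Delta yV)(x;v_-)(R-\Delta vS)(y_+;u) $, both factors being nonzero by the standing hypotheses $ W\pm\Delta yV\neq0 $ on $ G $ and $ R\pm\Delta vS\neq0 $ on $ G\times J $, and then use (\ref{wgt_consist:b}) once more, in the ratio form $ \tfrac{(W-\Delta yV)(x;v_+)(R+\Delta vS)(y_+;u)}{(W-\Delta yV)(x;v_-)(R-\Delta vS)(y_+;u)}=\tfrac{(W+\Delta yV)(x;v_+)(R+\Delta vS)(y_-;u)}{(W+\Delta yV)(x;v_-)(R-\Delta vS)(y_-;u)} $, to rewrite the coefficient of $ \Delta y\,U(x;v_-) $. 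Collecting the $ U $-terms on one side and the $ T $-terms on the other then gives precisely (\ref{ST_consist}).

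The only real difficulty is bookkeeping: keeping track of which data polynomial is evaluated at which of the combinations $ (x;v_\pm) $ or $ (y_\pm;u) $, and noticing that (\ref{wgt_consist:b}) must be invoked twice, once to kill the $ f(y_-;v_-) $ term and once to reach the exact asymmetric form. An alternative, messier derivation would apply $ \ddoAW_u $ to (\ref{spectral_DD_st}) and $ \ddoAW_x $ to (\ref{deform_DD_st}) and then use the commutativity (\ref{DD_calculus:d}) together with the Leibniz rules (\ref{DD_calculus:a})--(\ref{DD_calculus:b}), in the manner of the proof of Proposition \ref{spectral+deform_wgt}; the nodal ``square'' is cleaner here because the two Stieltjes equations are linear in $ f $.
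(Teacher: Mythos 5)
Your proposal is correct and is essentially the paper's own argument: the author likewise works with the nodal forms of (\ref{spectral_DD_st}) at $u=v_{\pm}$ and (\ref{deform_DD_st}) at $x=y_{\pm}$, forming the linear combination with coefficients $\tfrac{(W+\Delta yV)(x;v_+)}{(W+\Delta yV)(x;v_-)}(R+\Delta vS)(y_-;u)$ and $(R-\Delta vS)(y_-;u)$ so that the four corner values of $f$ collapse via two applications of the deformation relation, which is precisely the cleared-denominator version of your two-routes-around-the-cell comparison. Your observation that (\ref{wgt_consist:b}) must be invoked twice (once to cancel the $f$-coefficient, once to reach the asymmetric normalisation of (\ref{ST_consist})) matches the paper's use of the ratio $\chi$ defined there.
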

\begin{proof}
We begin by defining the ratio
\begin{equation}
  \chi
    \equiv \frac{(W+\Delta yV)(x;v_{+})}{(W+\Delta yV)(x;v_{-})}
           \frac{(R+\Delta v S)(y_{-};u)}{(R+\Delta v S)(y_{+};u)}
    = \frac{(W-\Delta yV)(x;v_{+})}{(W-\Delta yV)(x;v_{-})}
      \frac{(R-\Delta v S)(y_{-};u)}{(R-\Delta v S)(y_{+};u)} ,
\label{twist}
\end{equation}
by virtue of (\ref{wgt_consist:b}).
Consider (\ref{spectral_DD_st}) in the form
\begin{equation}
  (W-\Delta yV)(x;u)f(y_+;u)-(W+\Delta yV)(x;u)f(y_-;u)-\Delta yU(x;u) = 0 ,
\label{auxUT}
\end{equation}
and construct the combination
$ (W+\Delta yV)(x;v_{+})/(W+\Delta yV)(x;v_{-})(R+\Delta v S)(y_{-};u)\times\text{(\ref{auxUT})}(u\mapsto v_{-}) 
  -(R-\Delta v S)(y_{-};u)\times\text{(\ref{auxUT})}(u\mapsto v_{+}) $.
Using the above identity we find the two terms containing $ f $ possess the factors
$ (R+\Delta v S)(y_{-};u)f(y_{-};v_{-})-(R-\Delta v S)(y_{-};u)f(y_{-};v_{+}) $ and
$ (R+\Delta v S)(y_{+};u)f(y_{+};v_{-})-(R-\Delta v S)(y_{+};u)f(y_{+};v_{+}) $,
into which we can apply (\ref{deform_DD_st}) in the form
\begin{equation}
  (R-\Delta vS)(x;u)f(x;v_{+})-(R+\Delta vS)(x;u)f(x;v_{-})-\Delta vT(x;u) = 0 .
\end{equation}
Then (\ref{ST_consist}) immediately follows.
\end{proof}

We can extend the notion of a generic or regular $\ddoAW$-semi-classical weight, given in
Definition \ref{regDSCwgt}, to the deformed situation by the following definition
\begin{definition}
A {\it regular, deformed $\ddoAW$-semi-classical weight} is one that satisfies the
requirements of the Definition \ref{regDSCwgt} and 
$ {\rm deg}_x R = {\rm deg}_x S = L $ and $ {\rm deg}_x T = {\rm deg}_x R-1 $.
Clearly $ M $ in Definition \ref{regDSCwgt} and $ L $ are related, depending on the
specific case on hand. 
\end{definition}

The analog of Proposition \ref{spectral_Cff} is the following.
\begin{proposition}\label{deform_Cff}
Let the deformation coefficients $ R_n(x;u), \Gamma_n(x;u), \Xi_n(x;u), \Phi_n(x;u), \Psi_n(x;u) $
be defined in terms of bilinear formulae involving products of the polynomials 
and associated functions by
\begin{multline}
  \frac{2}{H_n}R_{n}
  = (R+\Delta vS)\left[
     -\frac{1}{a_n(v_-)}+p_{n-1}(;v_{+})q_{n}(;v_{-})-p_{n}(;v_{+})q_{n-1}(;v_{-})            
                 \right]
  \\
   +(R-\Delta vS)\left[
     -\frac{1}{a_n(v_+)}+p_{n-1}(;v_{-})q_{n}(;v_{+})-p_{n}(;v_{-})q_{n-1}(;v_{+})
                 \right],
\label{deform_coeff:a}
\end{multline}
\begin{multline}
  \frac{\Delta v}{H_n}\;\Gamma_{n}
  = (R+\Delta vS)\left[
      \frac{1}{a_n(v_-)}-p_{n-1}(;v_{+})q_{n}(;v_{-})-p_{n}(;v_{+})q_{n-1}(;v_{-})
                 \right]
  \\
   +(R-\Delta vS)\left[
     -\frac{1}{a_n(v_+)}+p_{n-1}(;v_{-})q_{n}(;v_{+})+p_{n}(;v_{-})q_{n-1}(;v_{+})    
                 \right],
\label{deform_coeff:b}
\end{multline}
\begin{multline}
  \frac{\Delta v}{H_n}\;\Xi_{n}
  = (R+\Delta vS)\left[
      \frac{1}{a_n(v_-)}+p_{n-1}(;v_{+})q_{n}(;v_{-})+p_{n}(;v_{+})q_{n-1}(;v_{-})
                 \right]
  \\
   +(R-\Delta vS)\left[
     -\frac{1}{a_n(v_+)}-p_{n-1}(;v_{-})q_{n}(;v_{+})-p_{n}(;v_{-})q_{n-1}(;v_{+})    
                 \right],
\label{deform_coeff:c}
\end{multline}
\begin{equation}
  \frac{\Delta v}{2H_n}\;\Phi_{n} =
  (R+\Delta vS)p_{n}(;v_{+})q_{n}(;v_{-})-(R-\Delta vS)p_{n}(;v_{-})q_{n}(;v_{+}),
\label{deform_coeff:d}
\end{equation}
and
\begin{equation}
  \frac{\Delta v}{2H_n}\;\Psi_{n} =
  -(R+\Delta vS)p_{n-1}(;v_{+})q_{n-1}(;v_{-})+(R-\Delta vS)p_{n-1}(;v_{-})q_{n-1}(;v_{+}),
\label{deform_coeff:e}
\end{equation}
for $ n \geq 0 $ and where the decoupling factor $ H_n(u) $ has $ \deg_x H_n = 0 $.
Then the deformation coefficients of the deformed {\it $\ddoAW$-semi-classical class}
are polynomials in $ x $.
\end{proposition}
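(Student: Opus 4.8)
The plan is to run the argument of Proposition~\ref{spectral_Cff} (Laguerre's method) in the deformation variable, with the spectral Stieltjes equation (\ref{spectral_DD_st}) replaced by its deformation counterpart (\ref{deform_DD_st}). Written in finite-difference form the latter reads $(R-\Delta vS)(x;u)\,f(x;v_{+})-(R+\Delta vS)(x;u)\,f(x;v_{-})=\Delta v\,T(x;u)$, a polynomial in $x$ of degree at most $\deg_{x}R-1$. Since by (\ref{ops_eps}) the associated functions obey $q_{j}(x;v_{\pm})=f(x;v_{\pm})p_{j}(x;v_{\pm})-p^{(1)}_{j-1}(x;v_{\pm})$, and these objects all exist because $\gamma_{m}(v_{\pm})\neq0$ by Definition~\ref{deform_wgt}, the strategy is to eliminate every associated function from the right-hand sides of (\ref{deform_coeff:a})--(\ref{deform_coeff:e}) in favour of $f(x;v_{\pm})$ and polynomials in $x$, and then to observe that the surviving $f$-dependence collapses, by the displayed identity, to a polynomial in $x$.

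Concretely, for each of $R_{n},\Gamma_{n},\Xi_{n},\Phi_{n},\Psi_{n}$ I would substitute the above expression for $q_{j}(x;v_{\pm})$ throughout, at level $n$ for $R_{n},\Gamma_{n},\Xi_{n},\Phi_{n}$ and at level $n-1$ for $\Psi_{n}$. Each such right-hand side is assembled from the two orderings of the pair $(v_{+},v_{-})$, carrying the prefactors $R+\Delta vS$ and $R-\Delta vS$ respectively, so the terms proportional to $f(x;v_{-})$ and to $f(x;v_{+})$ reorganise into $(R+\Delta vS)(x;u)f(x;v_{-})-(R-\Delta vS)(x;u)f(x;v_{+})$ times a single polynomial in $x$ --- an \emph{antisymmetric} bilinear form in the $p_{m}(x;v_{\pm})$ in the case of $R_{n}$, and a \emph{symmetric} one in the cases of $\Gamma_{n},\Xi_{n},\Phi_{n},\Psi_{n}$. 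By the finite-difference form of (\ref{deform_DD_st}) that combination equals $-\Delta v\,T(x;u)$, so this contribution is a polynomial in $x$. Everything left over is manifestly polynomial in $x$: the data $R,S,T$ (and $W,V$) are polynomials in $x$, the $p_{m}(x;v_{\pm})$ and $p^{(1)}_{m-1}(x;v_{\pm})$ are polynomials in $x$, and the recurrence coefficients $a_{n}(v_{\pm})$ together with $\Delta v$ and (by hypothesis) $H_{n}$ carry no $x$-dependence. Dividing out the $x$-free factors $\Delta v$ and $2H_{n}$ (respectively $H_{n}$) therefore leaves each of $R_{n},\Gamma_{n},\Xi_{n},\Phi_{n},\Psi_{n}$ a polynomial in $x$ for all $n\geq0$. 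If one prefers to remain strictly parallel with Proposition~\ref{spectral_Cff}, the same conclusion follows by instead using the Casoratian relation (\ref{ops_casoratian}) at the parameter values $v_{\pm}$ and the coprimality of $p_{n}(x;v_{\pm})$ and $p^{(1)}_{n-1}(x;v_{\pm})$ to peel off the factors $p_{n},p^{(1)}_{n-1}$; this longer route has the bonus of also producing the ``$p^{(1)}$-forms'' of the coefficients used later.

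The one real obstacle is the bookkeeping needed to confirm the cancellation of the $f$-terms: one must check that the polynomial coefficient multiplying $f(x;v_{-})$ on the $R+\Delta vS$ side agrees, up to the sign that makes (\ref{deform_DD_st}) applicable, with the polynomial coefficient multiplying $f(x;v_{+})$ on the $R-\Delta vS$ side. This is exactly the ``reflexivity'' between the deformation lattice and the weight that dictates the particular shape of the definitions (\ref{deform_coeff:a})--(\ref{deform_coeff:e}), in the same way that the grouping of terms proportional to $W\pm\Delta yV$ drives the proof of Proposition~\ref{spectral_Cff}. Once this symmetry is in hand the rest --- tracking degrees and checking that the normalisation $\deg_{x}H_{n}=0$ is consistent, which is fixed by matching the $x\to\infty$ asymptotics in the spirit of Proposition~\ref{spectral_Cff_Exp} --- is routine.
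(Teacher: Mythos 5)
Your proof is correct, and your primary route is genuinely different from the one in the paper. You verify polynomiality directly: substitute $q_j(;v_\pm)=f(;v_\pm)p_j(;v_\pm)-p^{(1)}_{j-1}(;v_\pm)$ into all five bilinear formulae and observe that the coefficient of $f(;v_-)$ carrying the prefactor $R+\Delta vS$ and the coefficient of $f(;v_+)$ carrying $R-\Delta vS$ differ only by the sign forced by the (anti)symmetry of the relevant bilinear form in the $p_m(;v_\pm)$ --- antisymmetric, $p_{n-1}(;v_+)p_n(;v_-)-p_n(;v_+)p_{n-1}(;v_-)$, for $R_n$; symmetric for the other four --- so the entire $f$-dependence collapses to a multiple of $(R-\Delta vS)f(;v_+)-(R+\Delta vS)f(;v_-)=\Delta v\,T$, a polynomial, and everything else is manifestly polynomial in $x$. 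The paper instead proves only the $\Phi_n$ case this way (via Laguerre's division by $p_n(;v_+)p_n(;v_-)$), obtains $\Psi_n$ by the shift $n\mapsto n-1$, and then bootstraps $R_n,\Gamma_n,\Xi_n$ through a factorisation argument: multiplying the $p^{(1)}$-form of $\Phi_n$ by the Casoratians at $v_\pm$, extracting the polynomials $\pi_1,\pi_2$ from the resulting factorisations, and identifying $R_n,\Gamma_n$ as combinations of $\pi_1,\pi_2$, with $\Xi_n$ fixed by the trace identity. Your route buys brevity and makes the cancellation mechanism transparent, and it avoids the implicit nonvanishing/coprimality assumptions on $p_n(;v_\pm)$ and $p^{(1)}_{n-1}(;v_\pm)$ that the paper's division arguments require; the paper's longer route effectively derives the particular shape of the definitions (and the auxiliary $p^{(1)}$-forms) rather than merely verifying them, which is why the author takes it. One point you leave implicit that the paper makes explicit for $\Phi_n$: the oddness of the right-hand sides under $v_+\leftrightarrow v_-$ is what justifies dividing out the factor $\Delta v$ so that the coefficients are genuinely functions of $u$; this does not affect polynomiality in $x$, which is all the proposition claims, so it is not a gap.
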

\begin{proof}
We offer a proof in the spirit of that for Proposition \ref{spectral_Cff}, using
Laguerre's method, and apply it to (\ref{deform_coeff:d}) first.
From (\ref{deform_DD_st}) and (\ref{ops_eps}) we have
\begin{align}
  0 = & (R-\Delta vS)f(;v_{+})-(R+\Delta vS)f(;v_{-})-\Delta vT ,
\nonumber\\
    = & (R-\Delta vS)\frac{q_n+p^{(1)}_{n-1}}{p_n}(;v_{+})-(R+\Delta vS)\frac{q_n+p^{(1)}_{n-1}}{p_n}(;v_{-})-\Delta vT ,
\nonumber\\
    = & \frac{(R-\Delta vS)p_n(;v_{-})p^{(1)}_{n-1}(;v_{+})-(R+\Delta vS)p_n(;v_{+})p^{(1)}_{n-1}(;v_{-})-\Delta vTp_n(;v_{+})p_n(;v_{-})}
             {p_n(;v_{+})p_n(;v_{-})}
\nonumber\\
      & \qquad
        +\frac{(R-\Delta vS)p_n(;v_{-})q_{n}(;v_{+})-(R+\Delta vS)p_n(;v_{+})q_{n}(;v_{-})}{p_n(;v_{+})p_n(;v_{-})} .
\end{align}
In this last expression the numerator of the first term is clearly a polynomial in $ x $
which implies the numerator of the second term is likewise. This latter numerator is an
odd function under the exchange $ v_{+} \leftrightarrow v_{-} $ and we denote it by
$ \Delta v\Phi_n(x;u)/(2H_n) $, thus deriving (\ref{deform_coeff:d}). 
The relation (\ref{deform_coeff:e}) is simply the case of (\ref{deform_coeff:d})
under $ n \mapsto n-1 $.

The remaining relations may now be derived from the following argument. From the 
workings of the previous paragraph we know that
\begin{equation}  
  \frac{\Delta v\Phi_n}{2H_n} =
  (R-\Delta vS)p_n(;v_{-})p^{(1)}_{n-1}(;v_{+})-(R+\Delta vS)p_n(;v_{+})p^{(1)}_{n-1}(;v_{-})-\Delta vTp_n(;v_{+})p_n(;v_{-}) .
\label{auxPhi}
\end{equation}  
By multiplying the left-hand side of this relation by the Casoratian
$ a_n(v_{+})[p_{n-1}p^{(1)}_{n-1}-p_{n}p^{(1)}_{n-2}](;v_{+})=1 $ we observe that the
resulting equation separates into two terms with the factorisation
\begin{multline} 
  p_n(;v_{+})\left[ -a_n(v_{+})\frac{\Delta v\Phi_n}{2H_n}p^{(1)}_{n-2}(;v_{+})
                    +(R+\Delta vS)p^{(1)}_{n-1}(;v_{-})+\Delta vTp_n(;v_{-}) \right] 
  \\
  = p^{(1)}_{n-1}(;v_{+})\left[ -a_n(v_{+})\frac{\Delta v\Phi_n}{2H_n}p_{n-1}(;v_{+})
                    +(R-\Delta vS)p_n(;v_{-}) \right] ,
\end{multline}
which implies that this expression contains the polynomial factors $ p_n(;v_{+})p^{(1)}_{n-1}(;v_{+}) $
and therefore can be written as
$ p_n(;v_{+})p^{(1)}_{n-1}(;v_{+})\pi_1 $, where $ \pi_1 $ is a polynomial in $ x $.
Given that the two factors $ p_n(;v_{+}) $, $ p^{(1)}_{n-1}(;v_{+}) $
are non-zero this then leads to two evaluations for
$ \pi_1 $, which by constructing a suitable combination of these and employing
the Casoratian once more we find, upon some simplification,
\begin{equation} 
   \pi_1 = a_n(v_{+})\left[ -(R+\Delta vS)p_{n-1}(;v_{+})q_{n}(;v_{-})+(R-\Delta vS)p_{n}(;v_{-})q_{n-1}(;v_{+}) \right] .
\label{auxPi1}
\end{equation} 
Alternatively we could have multiplied the left-hand side of (\ref{auxPhi}) by the
Casoratian $ a_n(v_{-})[p_{n-1}p^{(1)}_{n-1}-p_{n}p^{(1)}_{n-2}](;v_{-})=1 $ and
deduced the factorisation
\begin{multline} 
  p_n(;v_{-})\left[ -a_n(v_{-})\frac{\Delta v\Phi_n}{2H_n}p^{(1)}_{n-2}(;v_{-})
                    -(R-\Delta vS)p^{(1)}_{n-1}(;v_{+})+\Delta vTp_n(;v_{+}) \right] 
  \\
  = p^{(1)}_{n-1}(;v_{-})\left[ -a_n(v_{-})\frac{\Delta v\Phi_n}{2H_n}p_{n-1}(;v_{-})
                    -(R+\Delta vS)p_n(;v_{+}) \right] ,
\end{multline}
This fact then allows us to conclude that either of these expressions can be written
as $ p_n(;v_{-})p^{(1)}_{n-1}(;v_{-})\pi_2 $, where $ \pi_2 $ is another polynomial in $ x $.
Using an identical procedure to that employed above we can infer that
\begin{equation} 
   \pi_2 = a_n(v_{-})\left[ -(R+\Delta vS)p_{n}(;v_{+})q_{n-1}(;v_{-})+(R-\Delta vS)p_{n-1}(;v_{-})q_{n}(;v_{+}) \right] .
\label{auxPi2}
\end{equation}
Relations (\ref{deform_coeff:a}) and (\ref{deform_coeff:b}) then follow from the 
definitions 
\begin{equation}
  2R_n-\Delta v\Gamma_n+2H_n\frac{R+\Delta vS}{a_n(v_{-})} = -\frac{2H_n}{a_n(v_{+})}\pi_1 ,
  \quad
  2R_n+\Delta v\Gamma_n+2H_n\frac{R-\Delta vS}{a_n(v_{+})} =  \frac{2H_n}{a_n(v_{-})}\pi_2 ,
\end{equation}
whereas (\ref{deform_coeff:c}) follows from
$ \Delta v(\Gamma_n+\Xi_n)/2H_n = (R+\Delta vS)/a_n(v_{-})-(R-\Delta vS)/a_n(v_{+}) $.
\end{proof}

Analogous to the large $ x $ expansions of the spectral coefficients we have the
following expansions for the deformation coefficients.
\begin{proposition}\label{deform_Cff_Exp}
Let $ L = \max({\rm deg}_x R,{\rm deg}_x S) $.
As $ x \to \infty $ we have the leading orders of the terminating expansions of
the deformation coefficients
\begin{multline}
  \frac{2}{H_n}R_n = 
  -(\gamma_n(v_+)+\gamma_n(v_-))\left[ \frac{R-\Delta vS}{\gamma_{n-1}(v_+)}+\frac{R+\Delta vS}{\gamma_{n-1}(v_-)}
                                \right]
  \\
  +\sum^{n-1}_{i=0}(b_i(v_+)-b_i(v_-))\left[ (R+\Delta vS)\frac{\gamma_{n}(v_+)}{\gamma_{n-1}(v_-)}
                                             -(R-\Delta vS)\frac{\gamma_{n}(v_-)}{\gamma_{n-1}(v_+)}
                                      \right]x^{-1} + {\rm O}(x^{L-2})
  \quad n \geq 0 ,
\label{deform_coeff_Exp:a}
\end{multline}
\begin{multline}
  \frac{\Delta v}{2H_n}\Phi_n =
   \left[ (R+\Delta vS)\frac{\gamma_{n}(v_+)}{\gamma_{n}(v_-)}-(R-\Delta vS)\frac{\gamma_{n}(v_-)}{\gamma_{n}(v_+)}
   \right]x^{-1}
  \\
  +\left( (R+\Delta vS)\frac{\gamma_{n}(v_+)}{\gamma_{n}(v_-)}\left[\sum^{n}_{i=0}b_i(v_-)-\sum^{n-1}_{i=0}b_i(v_+)\right]
         +(R-\Delta vS)\frac{\gamma_{n}(v_-)}{\gamma_{n}(v_+)}\left[\sum^{n-1}_{i=0}b_i(v_-)-\sum^{n}_{i=0}b_i(v_+)\right]
   \right)x^{-2}
  \\
  + {\rm O}(x^{L-3}),
  \quad n \geq 0 ,
\label{deform_coeff_Exp:b}
\end{multline}
\begin{multline}
  \frac{\Delta v}{2H_n}\Psi_n =
  -\left[ (R+\Delta vS)\frac{\gamma_{n-1}(v_+)}{\gamma_{n-1}(v_-)}-(R-\Delta vS)\frac{\gamma_{n-1}(v_-)}{\gamma_{n-1}(v_+)}
   \right]x^{-1}
  \\
  -\left( (R+\Delta vS)\frac{\gamma_{n-1}(v_+)}{\gamma_{n-1}(v_-)}\left[\sum^{n-1}_{i=0}b_i(v_-)-\sum^{n-2}_{i=0}b_i(v_+)\right]
         +(R-\Delta vS)\frac{\gamma_{n-1}(v_-)}{\gamma_{n-1}(v_+)}\left[\sum^{n-2}_{i=0}b_i(v_-)-\sum^{n-1}_{i=0}b_i(v_+)\right]
   \right)x^{-2}
  \\
  + {\rm O}(x^{L-3}),
  \quad n \geq 0 ,
\label{deform_coeff_Exp:c}
\end{multline}
\begin{multline}
  \frac{\Delta v}{H_n}\Gamma_n = 
   (\gamma_n(v_-)-\gamma_n(v_+))\left[ \frac{R+\Delta vS}{\gamma_{n-1}(v_-)}+\frac{R-\Delta vS}{\gamma_{n-1}(v_+)}
                                \right]
  \\
  +\sum^{n-1}_{i=0}(b_i(v_+)-b_i(v_-))\left[ (R+\Delta vS)\frac{\gamma_{n}(v_+)}{\gamma_{n-1}(v_-)}
                                             +(R-\Delta vS)\frac{\gamma_{n}(v_-)}{\gamma_{n-1}(v_+)}
                                      \right]x^{-1} + {\rm O}(x^{L-2}),
  \quad n \geq 0 ,
\label{deform_coeff_Exp:d}
\end{multline}
and 
\begin{multline}
  \frac{\Delta v}{H_n}\Xi_n = 
   (\gamma_n(v_-)+\gamma_n(v_+))\left[ \frac{R+\Delta vS}{\gamma_{n-1}(v_-)}-\frac{R-\Delta vS}{\gamma_{n-1}(v_+)}
                                \right]
  \\
  -\sum^{n-1}_{i=0}(b_i(v_+)-b_i(v_-))\left[ (R+\Delta vS)\frac{\gamma_{n}(v_+)}{\gamma_{n-1}(v_-)}
                                             +(R-\Delta vS)\frac{\gamma_{n}(v_-)}{\gamma_{n-1}(v_+)}
                                      \right]x^{-1} + {\rm O}(x^{L-2}),
  \quad n \geq 0 .
\label{deform_coeff_Exp:e}
\end{multline}
\end{proposition}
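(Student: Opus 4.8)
The plan is to mimic the proof of Proposition~\ref{spectral_Cff_Exp}: substitute the large-$x$ expansions of the polynomials (\ref{ops_pExp}) and of the associated functions (\ref{ops_eExp}), taken with both bases $v_{+}$ and $v_{-}$, directly into the bilinear definitions (\ref{deform_coeff:a})--(\ref{deform_coeff:e}). Beyond those two expansions the only facts required are $a_n=\gamma_{n-1}/\gamma_n$ from (\ref{ops_coeffRn}), that $H_n$ and $\Delta v$ are independent of $x$, and that $R\pm\Delta vS$ are polynomials in $x$ of degree at most $L=\max(\deg_x R,\deg_x S)$.

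First I would record the leading behaviour, as $x\to\infty$, of the products of polynomials and associated functions that occur in the brackets of (\ref{deform_coeff:a})--(\ref{deform_coeff:e}). From (\ref{ops_pExp})--(\ref{ops_eExp}) one finds
\begin{gather*}
 p_{n}(x;v_{\pm})q_{n-1}(x;v_{\mp}) = \frac{\gamma_{n}(v_{\pm})}{\gamma_{n-1}(v_{\mp})}\Big[1+\Big(\sum_{i=0}^{n-1}b_i(v_{\mp})-\sum_{i=0}^{n-1}b_i(v_{\pm})\Big)x^{-1}+{\rm O}(x^{-2})\Big], \\
 p_{n-1}(x;v_{\pm})q_{n}(x;v_{\mp}) = {\rm O}(x^{-2}), \\
 p_{n}(x;v_{\pm})q_{n}(x;v_{\mp}) = \frac{\gamma_{n}(v_{\pm})}{\gamma_{n}(v_{\mp})}\Big[x^{-1}+\Big(\sum_{i=0}^{n}b_i(v_{\mp})-\sum_{i=0}^{n-1}b_i(v_{\pm})\Big)x^{-2}+{\rm O}(x^{-3})\Big], \\
 p_{n-1}(x;v_{\pm})q_{n-1}(x;v_{\mp}) = \frac{\gamma_{n-1}(v_{\pm})}{\gamma_{n-1}(v_{\mp})}\Big[x^{-1}+\Big(\sum_{i=0}^{n-1}b_i(v_{\mp})-\sum_{i=0}^{n-2}b_i(v_{\pm})\Big)x^{-2}+{\rm O}(x^{-3})\Big],
\end{gather*}
together with $a_n(v_{\pm})^{-1}=\gamma_n(v_{\pm})/\gamma_{n-1}(v_{\pm})$. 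The key observation for (\ref{deform_coeff:a})--(\ref{deform_coeff:c}) is that the ``Casoratian constant'' $a_n(v_{\mp})^{-1}$ and the ${\rm O}(1)$ term of $p_{n}(x;v_{\pm})q_{n-1}(x;v_{\mp})$, combined with the signs they carry inside the bracket, add up to exactly the combinations $-(\gamma_n(v_{+})+\gamma_n(v_{-}))/\gamma_{n-1}(v_{\mp})$ (in $R_n$ and $\Xi_n$) or $(\gamma_n(v_{-})-\gamma_n(v_{+}))/\gamma_{n-1}(v_{\mp})$ (in $\Gamma_n$) that appear in (\ref{deform_coeff_Exp:a}), (\ref{deform_coeff_Exp:d}), (\ref{deform_coeff_Exp:e}); the remaining product $p_{n-1}q_{n}$ contributes only at ${\rm O}(x^{-2})$ and so is invisible in the first two layers, while in (\ref{deform_coeff:d}) and (\ref{deform_coeff:e}) the brackets are already pure ${\rm O}(x^{-1})$ series with leading coefficients read off above.

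Finally, writing each bracket as $b_0^{\pm}+b_1^{\pm}x^{-1}+b_2^{\pm}x^{-2}+\cdots$ and multiplying by $R\pm\Delta vS$, I would regroup the result into layers $b_k^{\pm}(R\pm\Delta vS)x^{-k}$, each of order ${\rm O}(x^{L-k})$. Collecting the $k=0$ layer (for $R_n,\Gamma_n,\Xi_n$), respectively the $k=1$ and $k=2$ layers (for $\Phi_n,\Psi_n$), and inserting the coefficients $b_0^{\pm},b_1^{\pm}$ found above, reproduces the displayed leading terms of (\ref{deform_coeff_Exp:a})--(\ref{deform_coeff_Exp:e}); everything omitted is absorbed into the stated ${\rm O}(x^{L-2})$, resp.\ ${\rm O}(x^{L-3})$, remainder, and the residual negative-power pieces of the retained layers are spurious, cancelling against the remainder in accordance with $R_n,\dots,\Psi_n$ being polynomials by Proposition~\ref{deform_Cff}. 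I expect the one error-prone step to be the bookkeeping: keeping track of which of the products $p_jq_k$ begin at $x^{0}$, $x^{-1}$ or $x^{-2}$ (according to whether the index difference shifts up, stays level, or shifts down) and of the signs of the differences $\sum b_i(v_{+})-\sum b_i(v_{-})$ and of the reciprocal ratios $\gamma(v_{+})/\gamma(v_{-})$ versus $\gamma(v_{-})/\gamma(v_{+})$; there is no analytic difficulty once these are organised.
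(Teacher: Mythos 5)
Your proposal is correct and coincides with the paper's own (very terse) proof, which likewise just substitutes the expansions (\ref{ops_pExp}) and (\ref{ops_eExp}) into the definitions (\ref{deform_coeff:a})--(\ref{deform_coeff:e}); your intermediate expansions of the products $p_jq_k$ and the identification $1/a_n(v_\pm)=\gamma_n(v_\pm)/\gamma_{n-1}(v_\pm)$ are exactly the bookkeeping the paper leaves implicit, and they check out.
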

\begin{proof}
These formulae follow from the substitution of the expansions (\ref{ops_pExp}) and
(\ref{ops_eExp}) into the definitions (\ref{deform_coeff:a}-\ref{deform_coeff:e}).
\end{proof}
\begin{remark}
Unlike the case of the spectral coefficients (see Proposition \ref{spectral_Cff_Exp})
the leading terms of the deformation
coefficients are not determined by the weight data alone but depend non-trivially upon
the three-term recurrence coefficients.
\end{remark}

\begin{corollary}
In the regular, deformed $\ddoAW$-semi-classical case
the deformation coefficients $ R_n, \Gamma_n, \Phi_n, \Psi_n, \Xi_n $ are polynomials
in the spectral variable $ x $ with degrees independent of $ n $,
$ {\rm deg}_x R_n = {\rm deg}_x \Gamma_n = {\rm deg}_x \Xi_n = \max({\rm deg}_x R,{\rm deg}_x S) $ 
and
$ {\rm deg}_x \Phi_n = {\rm deg}_x \Psi_n = \max({\rm deg}_x R,{\rm deg}_x S)-1 $.
\end{corollary}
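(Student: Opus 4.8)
The plan is to reduce everything to the two preceding results. The statement that $R_n,\Gamma_n,\Xi_n,\Phi_n,\Psi_n$ are polynomials in $x$ is nothing new: the regular, deformed $\ddoAW$-semi-classical case is a sub-class of the deformed $\ddoAW$-semi-classical class, so it is already covered by Proposition \ref{deform_Cff}. All that is left is to pin down the exact degrees and to check that they do not move with $n$, and for this I would read off the terminating large-$x$ expansions of Proposition \ref{deform_Cff_Exp}.

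First I would set $L=\deg_x R=\deg_x S$, which is what regularity of the deformation data means, so that $\max(\deg_x R,\deg_x S)=L$. Since $\Delta v=v_+-v_-$ depends on $u$ only and not on $x$, each of $R\pm\Delta vS$ is a polynomial in $x$ of degree exactly $L$, with leading coefficient $\rho_L\pm\Delta v\,\varsigma_L$ ($\rho_L,\varsigma_L$ the leading coefficients of $R,S$); this is non-zero for $u$ outside a codimension-one subset of $J$, to which I restrict. Also $\deg_x H_n=0$ by the hypothesis of Proposition \ref{deform_Cff}. Now inspect (\ref{deform_coeff_Exp:a}): the expansion of $\tfrac{2}{H_n}R_n$ consists of a degree-$L$ term (the multiple of $R\pm\Delta vS$), a degree-$(L-1)$ term, and a remainder of order $x^{L-2}$; since $R_n$ is already known to be a polynomial this is the whole of it, so $\deg_x R_n=L$ as soon as the $x^L$-coefficient is non-zero. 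The identical inspection of (\ref{deform_coeff_Exp:d}) and (\ref{deform_coeff_Exp:e}) gives $\deg_x\Gamma_n=\deg_x\Xi_n=L$, while (\ref{deform_coeff_Exp:b}) and (\ref{deform_coeff_Exp:c}) show the top-order contribution to $\Phi_n$ and $\Psi_n$ is of order $x^{L-1}$, hence $\deg_x\Phi_n=\deg_x\Psi_n=L-1$. In each case the relevant leading coefficient is a finite combination of the ratios $\gamma_n(v_\pm)/\gamma_{n-1}(v_\mp)$ (respectively $\gamma_n(v_\pm)/\gamma_n(v_\mp)$), whose factors are finite and non-zero by Definition \ref{deform_wgt}, so once non-vanishing is secured the degrees come out as claimed and are visibly independent of $n$.

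The one place where genuine care is needed --- and what I expect to be the main obstacle --- is exactly the non-vanishing of these leading coefficients: a priori the $n$-dependent prefactors appearing in (\ref{deform_coeff_Exp:a})--(\ref{deform_coeff_Exp:e}) could conspire to cancel at some particular index $n$ and so depress the degree there. My intended remedy is an induction on $n$: the base cases $n=0,1$ are settled by the explicit initial data together with the non-degeneracy conditions of Definition \ref{deform_wgt} --- in particular $\gamma_0(v_+)(R-\Delta vS)+\gamma_0(v_-)(R+\Delta vS)\neq0$ and $\gamma_n(v_\pm)\neq0$ --- and the inductive step propagates the non-vanishing using the recurrences in $n$ that link the deformation coefficients at consecutive nodes (the deformation analogues of Proposition \ref{spectral_Cff_recur}). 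The remainder of the argument is routine bookkeeping of leading terms.
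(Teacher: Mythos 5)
Your argument is correct and follows the same route the paper intends: polynomiality is inherited from Proposition \ref{deform_Cff}, and the degrees are read off from the terminating large-$x$ expansions of Proposition \ref{deform_Cff_Exp} (the paper in fact states this corollary without an explicit proof, mirroring the one-line proof of its spectral analogue, Corollary \ref{spectralCff_polynom}). Your additional care over the possible vanishing of the $n$-dependent leading coefficients --- which the paper passes over in silence --- is a legitimate refinement of the same argument rather than a different approach.
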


The expressions for the deformation coefficients (\ref{deform_coeff:e}-\ref{deform_coeff:e})
can be inverted yielding a system of linear divided-difference equations in the
deformation variable for the polynomials and associated functions.
\begin{proposition}\label{deform_DDeqn}
The OPS corresponding to a deformed $\ddoAW$-semi-classical weight satisfies the 
{\it deformation divided-difference} equation
\begin{equation}
   \ddoAW_{u} Y_n := B_{n}\moAW_{u} Y_n
   = \frac{1}{R_{n}}
     \begin{pmatrix} \Gamma_n & \Phi_n \\
                     \Psi_{n} & \Xi_n
     \end{pmatrix}\moAW_{u} Y_n,
  \quad n \geq 0 .
\label{deformDD_Y}
\end{equation}
\end{proposition}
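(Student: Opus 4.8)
The plan is to prove (\ref{deformDD_Y}) by inverting the bilinear defining relations (\ref{deform_coeff:a})--(\ref{deform_coeff:e}) of Proposition \ref{deform_Cff}, in exact parallel with the derivation of the spectral equation in Proposition \ref{spectral_DDO}. The deformation coefficients $R_n,\Gamma_n,\Phi_n,\Psi_n,\Xi_n$ are by construction the bilinear combinations of $\{p_n,p_{n-1},q_n,q_{n-1}\}$ evaluated at the two deformation nodes $v_{\pm}$, weighted by $R\pm\Delta vS$, so establishing (\ref{deformDD_Y}) amounts to solving that over-determined linear system for the relation between the values of $Y_n$ at $v_{+}$ and at $v_{-}$.

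First I would recast the target equation in finite-difference (nodal) form, $\bigl(1-\tfrac{1}{2}\Delta v\,B_n\bigr)Y_n(x;v_+)=\bigl(1+\tfrac{1}{2}\Delta v\,B_n\bigr)Y_n(x;v_-)$, with $B_n$ the matrix appearing in (\ref{deformDD_Y}). Because the second column of $Y_n$ carries the factor $1/w(x;u)$, I would clear the $w$-denominators using the deformed Pearson relation (\ref{deform_DD_wgt:b}) in the form $w(x;v_+)/w(x;v_-)=(R+\Delta vS)/(R-\Delta vS)$; this converts the quotients $q_{\bullet}(v_{\pm})/w(v_{\pm})$ into the weighted quantities $(R\mp\Delta vS)\,q_{\bullet}(v_{\pm})$, so that after multiplying through by $p_{\bullet}(v_{\mp})$ one recovers exactly the bilinear shapes of (\ref{deform_coeff:a})--(\ref{deform_coeff:e}). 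Then, following Laguerre's method as in the earlier proofs, I would form for each row of the matrix equation the appropriate linear combination of (\ref{deform_coeff:a})--(\ref{deform_coeff:e}) (schematically, $p_n(v_+)\times(\cdots)-p_n(v_-)\times(\cdots)+a_n\,\bigl(p_{n-1}(v_+)+p_{n-1}(v_-)\bigr)\times(\cdots)$ for the $p_n$ entry, and analogously with $p_{n-1}$, $q_n$, $q_{n-1}$ for the other three) and collapse the cross terms with the Casoratian relation (\ref{ops_casoratian}); this is precisely where the $1/a_n(v_{\pm})$ contributions built into (\ref{deform_coeff:a})--(\ref{deform_coeff:c}) are used. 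The common scalar factor $H_n(u)$ divides every one of $R_n,\Gamma_n,\Phi_n,\Psi_n,\Xi_n$ and therefore cancels in $B_n$, leaving precisely (\ref{deformDD_Y}); the conditions in Definition \ref{deform_wgt} ensure $\det Y_n=1/(a_nw)\neq0$ and $R_n\not\equiv0$, so the inversion is legitimate, and a consistency check on $\det B_n$ against the $u$-difference of $\det Y_n$, together with the $n=0$ boundary values, disposes of the edge case.

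The step I expect to be the main obstacle is the bookkeeping around the $1/w(x;u)$ entries: one must verify that after the substitution from (\ref{deform_DD_wgt:b}) the $\moAW_u$ and $\ddoAW_u$ of $q_n/w$ reassemble into exactly the combinations appearing in (\ref{deform_coeff:d})--(\ref{deform_coeff:e}), i.e. that the weight factors extracted from the Pearson ratio coincide with those already encoded in the coefficient definitions, and one must keep track of the mixed-node products such as $p_n(v_+)q_{n-1}(v_-)$ carefully enough that the Casoratian can be applied at a single node. Everything beyond that is routine manipulation within the divided-difference calculus of Section \ref{Qlattices}.
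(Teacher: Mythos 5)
Your proposal is correct and follows essentially the same route as the paper, whose proof consists of exactly the remark that one inverts the bilinear relations of Proposition \ref{deform_Cff} in the manner of the proof of Proposition \ref{spectral_DDO}; your outline of the Laguerre-style combinations, the use of the Casoratian at a single node, and the handling of the $1/w$ column via the deformed Pearson ratio fills in precisely the details the paper leaves implicit.
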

\begin{proof}
The essence of this result involves the inversion of Proposition \ref{deform_Cff} which is
carried out in a manner analogous to the proof of Proposition \ref{spectral_DDO}.
\end{proof}

Of the four coefficients $ \Gamma_n, \Phi_n, \Psi_n, \Xi_n $ only two are 
independent because of the following relations.
\begin{proposition}\label{dMatrixElements}
The deformation coefficients satisfy the linear identity
\begin{equation}
   \Psi_{n} = -\frac{a_{n}}{a_{n-1}}\Phi_{n-1},
  \quad n \geq 1 ,
\label{deform_reln:a} \\
\end{equation}
and the trace identity
\begin{equation}
  \Delta v\;(\Gamma_n+\Xi_n) 
 = 2H_n\left[ 
       \frac{R+\Delta vS}{a_n(v_-)}-\frac{R-\Delta vS}{a_n(v_+)} \right],
  \quad n \geq 0 .
\label{deform_reln:c}
\end{equation}
\end{proposition}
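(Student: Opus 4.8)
The plan is to derive both relations by purely algebraic manipulation of the bilinear definitions in Proposition~\ref{deform_Cff}: neither identity uses the $\ddoAW$-semi-classical hypothesis, the polynomiality of the deformation coefficients, nor the deformation divided-difference equation itself --- only the combinatorial structure of (\ref{deform_coeff:a})--(\ref{deform_coeff:e}), in which the bilinear products of polynomials and associated functions evaluated at $v_+$ and $v_-$ occur in an almost-antisymmetric pattern. The idea is to form the linear combination of these five defining relations appropriate to each identity and watch the bilinear products cancel pairwise.

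First I would handle the trace identity (\ref{deform_reln:c}). Adding the defining formula (\ref{deform_coeff:b}) for $\Gamma_n$ to (\ref{deform_coeff:c}) for $\Xi_n$, every product of a polynomial at $v_\pm$ with an associated function at $v_\mp$ enters with opposite signs in the two equations and cancels, so that only the $\pm 1/a_n(v_\pm)$ terms multiplying $R\pm\Delta vS$ survive. This leaves
\[
  \frac{\Delta v}{H_n}\bigl(\Gamma_n+\Xi_n\bigr)
   = 2\,\frac{R+\Delta vS}{a_n(v_-)} - 2\,\frac{R-\Delta vS}{a_n(v_+)},
\]
which is exactly (\ref{deform_reln:c}); the three-term recurrence is not needed at all for this step.

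For the linear identity (\ref{deform_reln:a}) I would place (\ref{deform_coeff:e}) for $\Psi_n$ next to (\ref{deform_coeff:d}) taken at index $n-1$ for $\Phi_{n-1}$. The two right-hand sides are term-by-term negatives of one another, so $H_n^{-1}\Psi_n = -H_{n-1}^{-1}\Phi_{n-1}$, i.e.\ $\Psi_n = -(H_n/H_{n-1})\,\Phi_{n-1}$. The stated form follows once the decoupling factors $H_n(u)$ --- which are free up to a common scaling, since the matrix $B_n$ of (\ref{deformDD_Y}) is independent of them --- are normalised so that $H_n/H_{n-1}=a_n/a_{n-1}$. This is the natural convention, paralleling the $2/a_n$ prefactors and off-diagonal $a_n$'s in the spectral data (\ref{spectral_coeff:a}) and (\ref{DDO:e}); it can be cross-checked by taking determinants of the finite-difference form $(1-\tfrac{1}{2}\Delta v\,B_n)Y_n(;v_+)=(1+\tfrac{1}{2}\Delta v\,B_n)Y_n(;v_-)$ and using $\det Y_n(x;u)=1/(a_n(u)w(x;u))$ from (\ref{ops_Ydet}) together with (\ref{deform_DD_wgt:b}). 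Accordingly there is no substantive obstacle here; the only point requiring care is making this normalisation of $H_n$ explicit, so that the clean form of (\ref{deform_reln:a}) --- rather than merely $\Psi_n=-(H_n/H_{n-1})\Phi_{n-1}$ --- is justified.
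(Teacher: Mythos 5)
Your proposal is correct and follows essentially the same route as the paper: the trace identity comes from adding (\ref{deform_coeff:b}) and (\ref{deform_coeff:c}) with the bilinear products cancelling, and the linear identity comes from comparing (\ref{deform_coeff:e}) with (\ref{deform_coeff:d}) at index $n-1$ together with the normalisation $H_n/H_{n-1}=a_n/a_{n-1}$, which is exactly the convention the paper adopts. Your extra remark on how that normalisation is fixed is a harmless elaboration of what the paper states tersely.
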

\begin{proof}
The first relation follows by comparison of (\ref{deform_coeff:e}) with (\ref{deform_coeff:d}) and
setting $ H_{n}/H_{n-1}=a_{n}/a_{n-1} $. The second identity is easily seen from the expressions
(\ref{deform_coeff:b}) and (\ref{deform_coeff:c}) and has already been observed in the
conclusions of the workings for the proof of Proposition \ref{deform_Cff}.
\end{proof}

The initial values of the deformation coefficients are
\begin{gather}
   R_{0} = -\tfrac{1}{2}H_0[\gamma_0(v_-)+\gamma_0(v_+)]
            \left( \frac{R+\Delta vS}{a_0(v_-)\gamma_{0}(v_-)}+\frac{R-\Delta vS}{a_0(v_+)\gamma_{0}(v_+)} \right) ,
\label{deform_coeff_init:a}\\
   \Gamma_{0} = \frac{1}{\Delta v}H_0[\gamma_0(v_-)-\gamma_0(v_+)]
            \left( \frac{R+\Delta vS}{a_0(v_-)\gamma_{0}(v_-)}+\frac{R-\Delta vS}{a_0(v_+)\gamma_{0}(v_+)} \right) ,
\label{deform_coeff_init:b} \\
   \Xi_{0} =  \frac{1}{\Delta v}H_0[\gamma_0(v_-)+\gamma_0(v_+)]
            \left( \frac{R+\Delta vS}{a_0(v_-)\gamma_{0}(v_-)}-\frac{R-\Delta vS}{a_0(v_+)\gamma_{0}(v_+)} \right) ,
\label{deform_coeff_init:e} \\
   \Phi_{-1} = 0, \qquad 
   \Phi_{0} = -2 H_0\gamma_0(v_-)\gamma_0(v_+)T ,
\label{deform_coeff_init:c} \\
   \Psi_{0} = 0 .
\label{deform_coeff_init:d}
\end{gather}

Compatibility between the matrix recurrence relation and the deformation divided-difference 
equation implies the next result.
\begin{proposition}\label{deform+threeT}
The recurrence matrix and the deformation matrix satisfy
\begin{equation}
   B_{n+1}\cdot \moAW_{u} K_n-\moAW_{u} K_n\cdot B_{n} 
  = \ddoAW_{u} K_n-\tfrac{1}{4}\Delta v^2 B_{n+1}\cdot \ddoAW_{u} K_n\cdot B_{n},
  \quad n \geq 0 ,
\label{deform_Brecur:a}
\end{equation}
or equivalently
\begin{equation}
  K_n(;v_+)\left( 1-\tfrac{1}{2}\Delta vB_{n} \right)^{-1}\left( 1+\tfrac{1}{2}\Delta vB_{n} \right) 
  = 
  \left( 1-\tfrac{1}{2}\Delta vB_{n+1} \right)^{-1}\left( 1+\tfrac{1}{2}\Delta vB_{n+1} \right)K_n(;v_-),
  \quad n \geq 0 ,
\label{deform_Brecur:b}
\end{equation}
\end{proposition}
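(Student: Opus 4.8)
The plan is to proceed exactly as in the proof of Proposition~\ref{spectral+threeT}, simply transcribing that argument with $\ddoAW_x \mapsto \ddoAW_u$, $\moAW_x \mapsto \moAW_u$, $\Delta y \mapsto \Delta v$, $y_{\pm}\mapsto v_{\pm}$ and $A_n \mapsto B_n$. The inputs are the matrix three-term recurrence (\ref{ops_Yrecur}), the deformation divided-difference equation (\ref{deformDD_Y}), the Leibniz rules (\ref{DD_calculus:a})--(\ref{DD_calculus:b}) for $\ddoAW_u,\moAW_u$, and the nonvanishing of $\det Y_n$ from (\ref{ops_Ydet}); all of these have literal analogues in the deformation setting. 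The first step is to recast (\ref{deformDD_Y}) in nodal form. Since $\ddoAW_u Y_n = \tfrac{1}{\Delta v}\bigl(Y_n(;v_+)-Y_n(;v_-)\bigr)$ and $\moAW_u Y_n = \tfrac12\bigl(Y_n(;v_+)+Y_n(;v_-)\bigr)$, equation (\ref{deformDD_Y}) is equivalent to
\[
   \left( 1-\tfrac12\Delta v\,B_{n} \right)Y_n(;v_{+}) = \left( 1+\tfrac12\Delta v\,B_{n} \right)Y_n(;v_{-}), \quad n\geq 0 ,
\]
which is precisely why the Cayley transform $\left(1-\tfrac12\Delta v\,B_n\right)^{-1}\left(1+\tfrac12\Delta v\,B_n\right)$ controls the shift $v_{-}\mapsto v_{+}$ at fixed $n$; here $1-\tfrac12\Delta v\,B_n$ is invertible, its determinant being proportional to $(R+\Delta v\,S)/R_n$ and hence nonzero by the hypotheses of Definition~\ref{deform_wgt}, in exact parallel with Corollary~\ref{spectral_M}.

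For (\ref{deform_Brecur:b}) I would compute $Y_{n+1}(;v_{+})$ starting from $Y_{n}(;v_{-})$ along two routes. Performing the Schlesinger step $n\mapsto n+1$ first gives $Y_{n+1}(;v_{-})=K_n(;v_{-})Y_n(;v_{-})$ by (\ref{ops_Yrecur}), and then the lattice step $v_{-}\mapsto v_{+}$ gives $Y_{n+1}(;v_{+})=\left(1-\tfrac12\Delta v\,B_{n+1}\right)^{-1}\left(1+\tfrac12\Delta v\,B_{n+1}\right)K_n(;v_{-})Y_n(;v_{-})$; doing the two steps in the opposite order gives $Y_{n+1}(;v_{+})=K_n(;v_{+})\left(1-\tfrac12\Delta v\,B_{n}\right)^{-1}\left(1+\tfrac12\Delta v\,B_{n}\right)Y_n(;v_{-})$. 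Equating and cancelling the right factor $Y_n(;v_{-})$, which is legitimate because $\det Y_n(;v_{-}) = 1/\bigl(a_n(v_-)\,w(\cdot;v_-)\bigr)\neq 0$ by (\ref{ops_Ydet}), yields (\ref{deform_Brecur:b}). For (\ref{deform_Brecur:a}) I would instead compute $\ddoAW_u Y_{n+1}=\ddoAW_u\!\left(K_nY_n\right)$ in two ways. Expanding by the product rule (\ref{DD_calculus:a}) and substituting $\ddoAW_u Y_n = B_n\moAW_u Y_n$ from (\ref{deformDD_Y}) gives $\ddoAW_u Y_{n+1} = \bigl(\ddoAW_u K_n + \moAW_u K_n\cdot B_n\bigr)\moAW_u Y_n$; on the other hand $\ddoAW_u Y_{n+1}=B_{n+1}\moAW_u Y_{n+1}=B_{n+1}\moAW_u\!\left(K_nY_n\right)$, and expanding $\moAW_u\!\left(K_nY_n\right)$ by (\ref{DD_calculus:b}) together with $\ddoAW_u Y_n = B_n\moAW_u Y_n$ gives $\ddoAW_u Y_{n+1}=B_{n+1}\bigl(\moAW_u K_n + \tfrac14\Delta v^2\,\ddoAW_u K_n\cdot B_n\bigr)\moAW_u Y_n$. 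Equating, cancelling $\moAW_u Y_n$ and rearranging produces exactly (\ref{deform_Brecur:a}).

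The only point requiring genuine care is the cancellation of the matrix factor $\moAW_u Y_n$ in the derivation of (\ref{deform_Brecur:a}): one must know it is invertible over the field $\C(x)$. I would settle this by checking that $\det\moAW_u Y_n$ is not identically zero in $x$, using the leading-order expansions (\ref{ops_pExp}) for $p_n$ and (\ref{ops_eExp}) for $q_n$ as $x\to\infty$; alternatively, one avoids the issue altogether by deducing (\ref{deform_Brecur:a}) directly from the already-established (\ref{deform_Brecur:b}) via the substitution $K_n(;v_{\pm}) = \moAW_u K_n \pm \tfrac12\Delta v\,\ddoAW_u K_n$ and the same purely algebraic Cayley-transform manipulation that interchanges (\ref{spectralA_recur:a}) and (\ref{spectralA_recur:b}) in the spectral case. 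Beyond tracking the $\tfrac14\Delta v^2$ cross-term coming from the average Leibniz rule (\ref{DD_calculus:b}), there is no real obstacle: the statement is a formal zero-curvature-type compatibility and the computation is mechanical.
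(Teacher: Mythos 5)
Your proposal matches the paper's proof essentially verbatim: the paper establishes (\ref{deform_Brecur:a}) by computing $\ddoAW_u Y_{n+1}=\ddoAW_u(K_nY_n)$ in two ways via the Leibniz rules (\ref{DD_calculus:a})--(\ref{DD_calculus:b}) and substituting $\ddoAW_u Y_n=B_n\moAW_u Y_n$, and establishes (\ref{deform_Brecur:b}) by computing $Y_{n+1}(x;v_+)$ from $Y_n(x;v_-)$ in the two orders of the operations $n\mapsto n+1$ and $v_-\mapsto v_+$, exactly as you describe. Your additional remarks on the invertibility of $\moAW_u Y_n$ and of $1-\tfrac12\Delta v\,B_n$ are sound points of care that the paper leaves implicit.
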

\begin{proof}
The first form of the compatibility relation is deduced by comparing
\begin{align}
   \ddoAW_u Y_{n+1} & = B_{n+1}\moAW_u Y_{n+1} \nonumber\cr
  & = B_{n+1}\moAW_u(K_nY_n) \nonumber\cr
  & = B_{n+1}\left( \moAW_u K_n\cdot\moAW_u Y_n+\tfrac{1}{4}\Delta v^2\ddoAW_u K_n\cdot\ddoAW_u Y_n \right) \nonumber\cr
  & = B_{n+1}\left( \moAW_u K_n+\tfrac{1}{4}\Delta v^2\ddoAW_u K_n\cdot B_n \right)\moAW_u Y_n \nonumber ,
\end{align}
and
\begin{equation}
   \ddoAW_u Y_{n+1} = \ddoAW_u K_nY_{n}
  = \left( \ddoAW_u K_n\cdot\moAW_u Y_n+\moAW_u K_n\cdot\ddoAW_u Y_n \right)
  = \left( \ddoAW_u K_n+\moAW_u K_n\cdot B_n \right)\moAW_u Y_n .
\end{equation}
The second form is found by computing $ Y_{n+1}(x;v_{+}) $
in terms of $ Y_{n}(x;v_{-}) $ in the two possible ways corresponding to the orders of
the operations $ n \mapsto n+1 $ and $ v_{-} \mapsto v_{+} $.
\end{proof}

Compatibility of the three term recurrence relation and the deformation divided-difference 
equation implies the following result which can be found by considering 
the representations of the deformation coefficients given in Proposition \ref{deform_Cff}. 
\begin{proposition}\label{deform_Cff_recur}
The deformation coefficients $ R_n, \Gamma_n, \Phi_n $ satisfy recurrence relations in $ n $,
\begin{multline}
  \frac{a_{n+1}(v_-)}{H_{n+1}}(-2R_{n+1}+\Delta v\Gamma_{n+1})
 +\frac{a_{n}(v_-)}{H_{n}}(2R_{n}+\Delta v\Gamma_{n})
 \\
 = -[x-b_n(v_-)]\frac{\Delta v}{H_n}\Phi_n
   +2a_n(v_-)\left( \frac{R+\Delta vS}{a_n(v_-)}-\frac{R-\Delta vS}{a_n(v_+)} \right) ,
  \quad n \geq 0 ,
\label{Defm_recur:a}
\end{multline}
\begin{multline}
  \frac{a_{n+1}(v_+)}{H_{n+1}}(2R_{n+1}+\Delta v\Gamma_{n+1})
 +\frac{a_{n}(v_+)}{H_{n}}(-2R_{n}+\Delta v\Gamma_{n})
 \\
 = -[x-b_n(v_+)]\frac{\Delta v}{H_n}\Phi_n
   +2a_n(v_+)\left( \frac{R+\Delta vS}{a_n(v_-)}-\frac{R-\Delta vS}{a_n(v_+)} \right) ,
  \quad n \geq 0 ,
\label{Defm_recur:b}
\end{multline}
Recurrences (\ref{Defm_recur:a}) and (\ref{Defm_recur:b}) are the analogs of 
combinations of (\ref{spectral_coeff_recur:a}) and (\ref{spectral_coeff_recur:b}).
\end{proposition}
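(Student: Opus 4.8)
The plan is to mimic, line for line, the proof of Proposition~\ref{spectral_Cff_recur}, with the spectral bilinear formulae replaced by the deformation bilinear formulae of Proposition~\ref{deform_Cff}. First I would use (\ref{deform_coeff:a}) and (\ref{deform_coeff:b}) to write down closed expressions for the two combinations $2R_n+\Delta v\,\Gamma_n$ and $-2R_{n+1}+\Delta v\,\Gamma_{n+1}$; each collapses to a sum of a piece proportional to $R+\Delta vS$ and a piece proportional to $R-\Delta vS$, bilinear in $p_\bullet,q_\bullet$ at the relevant indices, plus inhomogeneous terms $1/a_\bullet(v_\pm)$. Multiplying $(2R_n+\Delta v\,\Gamma_n)$ by $a_n(v_-)/H_n$, $(-2R_{n+1}+\Delta v\,\Gamma_{n+1})$ by $a_{n+1}(v_-)/H_{n+1}$, and adding gives the left-hand side of (\ref{Defm_recur:a}).

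The next step is to remove the level-$(n+1)$ data by applying the three-term recurrences (\ref{ops_threeT}) and (\ref{ops_AF_threeT}) at $u=v_-$, namely $a_{n+1}(v_-)p_{n+1}(;v_-)=(x-b_n(v_-))p_n(;v_-)-a_n(v_-)p_{n-1}(;v_-)$ and the identical relation for $q_{n+1}$. After this substitution every term is bilinear in level-$n$ and level-$(n-1)$ objects, and the key point — the analog of the ``separation into two parts'' used for Proposition~\ref{spectral_Cff_recur} — is that all terms carrying $p_{n-1}(;v_-)$ or $q_{n-1}(;v_-)$ cancel in pairs. What survives organises into $-(x-b_n(v_-))\bigl[(R+\Delta vS)p_n(;v_+)q_n(;v_-)-(R-\Delta vS)p_n(;v_-)q_n(;v_+)\bigr]$ together with the index-free remainder $2(R+\Delta vS)-2a_n(v_-)(R-\Delta vS)/a_n(v_+)$. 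Invoking (\ref{deform_coeff:d}) to identify the bracket with $\tfrac{\Delta v}{2H_n}\Phi_n$, and rewriting the remainder as $2a_n(v_-)\bigl(\tfrac{R+\Delta vS}{a_n(v_-)}-\tfrac{R-\Delta vS}{a_n(v_+)}\bigr)$, yields (\ref{Defm_recur:a}) precisely. For (\ref{Defm_recur:b}) I would not repeat the computation: since the defining formulae (\ref{deform_coeff:a})--(\ref{deform_coeff:d}) are each invariant under interchanging $v_+$ and $v_-$ — this sends $\Delta v\mapsto-\Delta v$, $a_n(v_\pm)\mapsto a_n(v_\mp)$, $b_n(v_-)\mapsto b_n(v_+)$, while leaving $R_n,\Gamma_n,\Phi_n$ themselves fixed — applying that interchange to (\ref{Defm_recur:a}) and multiplying through by $-1$ produces (\ref{Defm_recur:b}).

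A second route, which I would at most sketch, starts from the matrix compatibility (\ref{deform_Brecur:b}): substituting the explicit forms of $K_n$ and of the Cayley transform $(1-\tfrac{1}{2}\Delta v\,B_n)^{-1}(1+\tfrac{1}{2}\Delta v\,B_n)$ and reading off two matrix entries gives relations among $R_\bullet,\Gamma_\bullet,\Phi_\bullet,\Psi_\bullet,\Xi_\bullet$ at levels $n$ and $n+1$, exactly as the corollary after Proposition~\ref{spectral+threeT} produced (\ref{AK_comp:a})--(\ref{AK_comp:d}); this would, however, first require the deformation analog of the determinant and inverse evaluations of Corollary~\ref{spectral_M}. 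The main obstacle is purely bookkeeping: in the bilinear method one must carry roughly a dozen signed bilinear monomials through the recurrence substitution and confirm that exactly the $p_{n-1},q_{n-1}$ terms drop out; in the matrix method one must first set up the $B_n$-analog of Corollary~\ref{spectral_M}. No structural input beyond Proposition~\ref{deform_Cff} and the three-term recurrence enters, so the difficulty is entirely controlled.
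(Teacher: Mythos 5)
Your proposal is correct and follows essentially the same route as the paper: substitute the bilinear expressions (\ref{deform_coeff:a}), (\ref{deform_coeff:b}), (\ref{deform_coeff:d}) into the stated combination, apply the three-term recurrences (\ref{ops_threeT}), (\ref{ops_AF_threeT}) at $v_-$ so that the level-$(n\pm1)$ bilinears collapse onto $(x-b_n(v_-))$ times the $\Phi_n$ bracket plus the index-free remainder. The only (valid and slightly tidier) deviation is deriving (\ref{Defm_recur:b}) from (\ref{Defm_recur:a}) by the $v_+\leftrightarrow v_-$ symmetry of the defining formulae, where the paper simply repeats the computation.
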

\begin{proof}
The simplest proof of these relations is the one employing the definitions 
(\ref{deform_coeff:a},\ref{deform_coeff:b},\ref{deform_coeff:d}) of Proposition \ref{deform_Cff}.
Taking the first relation we deduce
\begin{align*}
 &
  \frac{a_{n+1}(v_-)}{H_{n+1}}(-2R_{n+1}+\Delta v\Gamma_{n+1})
 +\frac{a_{n}(v_-)}{H_{n}}(2R_{n}+\Delta v\Gamma_{n})
 +[x-b_n(v_-)]\frac{\Delta v}{H_n}\Phi_n
 \\
 & = 2(R+\Delta vS)\left[ 1-p_n(;v_+)(a_{n+1}(v_-)q_{n+1}(;v_-)+b_n(v_-)q_n(v_-)+a_n(v_-)q_{n-1}(v_-)-xq_n(v_-)) \right]
 \\
 & \phantom{=}
    -2(R-\Delta vS)\left[ \frac{a_n(v_-)}{a_n(v_+)}-q_n(;v_+)(a_{n+1}(v_-)p_{n+1}(;v_-)+b_n(v_-)p_n(v_-)+a_n(v_-)p_{n-1}(v_-)-xp_n(v_-)) \right]
 \\
 & = 2a_n(v_-)\left( \frac{R+\Delta vS}{a_n(v_-)}-\frac{R-\Delta vS}{a_n(v_+)} \right) ,
\end{align*}
where essential use of the three-term recurrence relations (\ref{ops_threeT},\ref{ops_AF_threeT})
has been made. The second relation can be found in an identical manner.
\end{proof}

We find bilinear identities for the deformation matrix analogous to those of
Proposition \ref{spectral_det}.
\begin{proposition}\label{deform_Cff_Id}
The deformation coefficients satisfy the bilinear or determinantal identity
\begin{equation}
 R^2_n+\tfrac{1}{4}\Delta v^2 \left[ \Gamma_{n}\Xi_{n}-\Phi_{n}\Psi_{n} \right]
 = - H_nR_n\left[ 
       \frac{R+\Delta vS}{a_n(v_-)}+\frac{R-\Delta vS}{a_n(v_+)} \right],
  \quad n \geq 0 .
\label{deform_reln:d}
\end{equation}
\end{proposition}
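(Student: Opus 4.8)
The plan is to deduce (\ref{deform_reln:d}) from an evaluation of the determinant of the Cayley-type factor $1\pm\tfrac{1}{2}\Delta v\,B_n$, so that the argument proceeds in the spirit of Corollary~\ref{spectral_M}; one may alternatively reproduce the first method of the proof of Proposition~\ref{spectral_det} by direct substitution of the bilinear evaluations of Proposition~\ref{deform_Cff}, as indicated below. Put $d_{\pm}:=\det\!\big(1\pm\tfrac{1}{2}\Delta v\,B_n\big)$. Since $B_n$ is a $2\times2$ matrix the characteristic-polynomial identity gives $d_{\pm}=1\pm\tfrac{1}{2}\Delta v\,\mathrm{tr}\,B_n+\tfrac{1}{4}\Delta v^{2}\det B_n$, and from the form of $B_n$ in (\ref{deformDD_Y}) one has $\mathrm{tr}\,B_n=(\Gamma_n+\Xi_n)/R_n$ and $\det B_n=(\Gamma_n\Xi_n-\Phi_n\Psi_n)/R_n^{2}$, whence
\begin{equation*}
  \tfrac{1}{2}R_n^{2}\,(d_{+}+d_{-})=R_n^{2}+\tfrac{1}{4}\Delta v^{2}(\Gamma_n\Xi_n-\Phi_n\Psi_n),
\end{equation*}
which is exactly the left-hand side of (\ref{deform_reln:d}). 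The proposition therefore reduces to evaluating $d_{+}$ and $d_{-}$ individually.

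For these two evaluations I would use two inputs already at hand. First, the finite-difference form of the deformation equation (\ref{deformDD_Y}), namely $\big(1-\tfrac{1}{2}\Delta v\,B_n\big)Y_n(x;v_{+})=\big(1+\tfrac{1}{2}\Delta v\,B_n\big)Y_n(x;v_{-})$; taking determinants and using $\det Y_n(x;u)=\big(a_n(u)w(x;u)\big)^{-1}$ from (\ref{ops_Ydet}) together with the deformed Pearson relation (\ref{deform_DD_wgt:b}) yields $d_{-}/d_{+}=a_n(v_{+})(R+\Delta vS)\big/\big(a_n(v_{-})(R-\Delta vS)\big)$. Second, the trace identity (\ref{deform_reln:c}) of Proposition~\ref{dMatrixElements} gives $d_{+}-d_{-}=\Delta v\,\mathrm{tr}\,B_n=(2H_n/R_n)\big[(R+\Delta vS)/a_n(v_{-})-(R-\Delta vS)/a_n(v_{+})\big]$. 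These two relations form a $2\times2$ linear system for $d_{+},d_{-}$; solving it (the factor $a_n(v_{+})(R+\Delta vS)-a_n(v_{-})(R-\Delta vS)$ cancels) gives $d_{+}=-2H_n(R-\Delta vS)\big/\big(R_n a_n(v_{+})\big)$ and $d_{-}=-2H_n(R+\Delta vS)\big/\big(R_n a_n(v_{-})\big)$. Substituting these into $\tfrac{1}{2}R_n^{2}(d_{+}+d_{-})$ produces precisely the right-hand side of (\ref{deform_reln:d}); as a byproduct one obtains the deformation analog of Corollary~\ref{spectral_M}, namely $\det\!\big(1\pm\tfrac{1}{2}\Delta v\,B_n\big)=-2H_n(R\mp\Delta vS)\big/\big(R_n a_n(v_{\mp})\big)$.

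The one place requiring care is the possibility that this linear system degenerates, i.e. $a_n(v_{+})(R+\Delta vS)\equiv a_n(v_{-})(R-\Delta vS)$ as polynomials in $x$; with $R,S$ coprime this forces $S\equiv0$, a trivial (undeformed) situation excluded under the standing assumptions of Definition~\ref{deform_wgt}. The case $n=0$ is consistent with the initial values (\ref{deform_coeff_init:a})--(\ref{deform_coeff_init:d}), using $\Psi_0=\Phi_{-1}=0$. If instead one prefers the substitution route, one combines the bilinear formulae (\ref{deform_coeff:a})--(\ref{deform_coeff:e}) pairwise to write $R_n\pm\tfrac{1}{2}\Delta v\Gamma_n$, $R_n+\tfrac{1}{2}\Delta v\Xi_n$, $\tfrac{1}{2}\Delta v\Phi_n$, $\tfrac{1}{2}\Delta v\Psi_n$ as $H_n$ times two-term products of a polynomial with an associated function carrying factors $R\pm\Delta vS$, expands $\big(R_n+\tfrac{1}{2}\Delta v\Gamma_n\big)\big(R_n+\tfrac{1}{2}\Delta v\Xi_n\big)-\tfrac{1}{4}\Delta v^{2}\Phi_n\Psi_n$, cancels the $(R+\Delta vS)^{2}$ contribution identically, and collapses the remaining cross and $(R-\Delta vS)^{2}$ contributions with the Casoratian relation (\ref{ops_casoratian}); here the main obstacle is the bookkeeping of the ``mixed'' products pairing data at $v_{+}$ with data at $v_{-}$, which are not themselves Casoratians and must be grouped so that one application of (\ref{ops_casoratian}) at $v_{-}$ produces a pair on which (\ref{ops_casoratian}) at $v_{+}$ can be used, after which the surviving terms reassemble into $-2H_nR_n(R-\Delta vS)/a_n(v_{+})$ on recognising $2R_n/H_n$ from (\ref{deform_coeff:a}).
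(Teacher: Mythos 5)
Your argument is correct, but it is not the proof the paper gives. The paper proves Proposition \ref{deform_Cff_Id} by brute force: it substitutes the bilinear evaluations (\ref{deform_coeff:a})--(\ref{deform_coeff:e}) into the left-hand side of (\ref{deform_reln:d}) and cancels terms until the right-hand side emerges (essentially your "alternative substitution route", which you only sketch). What you actually carry out is different and arguably cleaner: you determine the two determinants $d_{\pm}=\det(1\pm\tfrac{1}{2}\Delta v\,B_n)$ \emph{independently} of the bilinear identity --- the ratio $d_{-}/d_{+}$ from taking determinants of the node form of (\ref{deformDD_Y}) together with $\det Y_n$ from (\ref{ops_Ydet}) and the deformed Pearson relation (\ref{deform_DD_wgt:b}), and the difference $d_{+}-d_{-}$ from the trace identity (\ref{deform_reln:c}) --- and then read off (\ref{deform_reln:d}) from $\tfrac{1}{2}R_n^2(d_{+}+d_{-})$. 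This is non-circular, since Proposition \ref{dMatrixElements} is proved before and independently of Proposition \ref{deform_Cff_Id}, and it reverses the paper's logical order: the determinant evaluation (\ref{deform_determinant}) of Corollary \ref{deform_Matrix}, which the paper \emph{deduces} from (\ref{deform_reln:d}), falls out of your argument as an intermediate step. Your method buys a short, structural proof that avoids the "considerable cancellation" of the paper and would work verbatim for the spectral analogue (Proposition \ref{spectral_det} and Corollary \ref{spectral_M}); the paper's substitution proof buys independence from the node-form reformulation and covers degenerate configurations uniformly. Two small caveats: your non-degeneracy discussion is slightly incomplete, since $a_n(v_+)(R+\Delta vS)\equiv a_n(v_-)(R-\Delta vS)$ could also be realised with $R$ and $S$ both constant in $x$ and proportional (not only $S\equiv 0$), though all such cases are trivial deformations excluded by the standing assumptions; and strictly speaking the identity at the excluded configurations would still need the substitution argument or a continuity appeal.
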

\begin{proof}
A direct method of proof is possible substituting the expressions for the deformation coefficients
$ R_n, \Gamma_{n}, \Xi_{n}, \Phi_{n}, \Psi_{n} $ in terms of products of polynomials and
associated functions as given by (\ref{deform_coeff:a}-\ref{deform_coeff:e}) into the left-hand
side of the (\ref{deform_reln:d}). After expansion and considerable cancellation we recognise the
the form for the right-hand side.
\end{proof}

From the above result we can deduce matrix identities analogous to Corollary \ref{spectral_M}.
\begin{corollary}\label{deform_Matrix}
The matrix factor appearing in (\ref{deform_Brecur:b}) has the determinant evaluation
\begin{equation}
  \det(1\pm\tfrac{1}{2}\Delta v B_{n})
  = -\frac{2H_n(R\mp\Delta vS)}{R_na_n(v_{\pm})},
  \quad n \geq 0 ,
\label{deform_determinant}
\end{equation}
and its inverse is
\begin{equation}
   (1\pm\tfrac{1}{2}\Delta v B_{n})^{-1}
   = -\frac{a_n(v_{\pm})}{2H_n(R\mp\Delta v S)}
     \begin{pmatrix} R_n\pm\tfrac{1}{2}\Delta v\;\Xi_n & \mp\tfrac{1}{2}\Delta v\;\Phi_n \\
                     \mp\tfrac{1}{2}\Delta v\;\Psi_{n} & R_n\pm\tfrac{1}{2}\Delta v\;\Gamma_n
     \end{pmatrix},
  \quad n \geq 0 .
\label{deform_inverse}
\end{equation}
\end{corollary}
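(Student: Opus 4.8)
The plan is to compute everything directly from the explicit form of the deformation matrix in \eqref{deformDD_Y}, feeding in the two identities that have just been established: the determinantal identity \eqref{deform_reln:d} of Proposition \ref{deform_Cff_Id} and the trace identity \eqref{deform_reln:c} of Proposition \ref{dMatrixElements}. First I would write the matrix factor componentwise,
\[
  1 \pm \tfrac{1}{2}\Delta v\, B_n
   = \frac{1}{R_n}
     \begin{pmatrix} R_n \pm \tfrac{1}{2}\Delta v\,\Gamma_n & \pm\tfrac{1}{2}\Delta v\,\Phi_n \\
                     \pm\tfrac{1}{2}\Delta v\,\Psi_n       & R_n \pm \tfrac{1}{2}\Delta v\,\Xi_n
     \end{pmatrix},
\]
and take the $2\times 2$ determinant, obtaining
\[
  \det\!\left( 1 \pm \tfrac{1}{2}\Delta v\, B_n \right)
   = \frac{1}{R_n^2}\Big[\, R_n^2 \pm \tfrac{1}{2}\Delta v\, R_n(\Gamma_n+\Xi_n)
       + \tfrac{1}{4}\Delta v^2\big( \Gamma_n\Xi_n - \Phi_n\Psi_n \big) \Big].
\]

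Next I would observe that the combination $R_n^2 + \tfrac{1}{4}\Delta v^2(\Gamma_n\Xi_n-\Phi_n\Psi_n)$ is precisely the left-hand side of \eqref{deform_reln:d}, hence equal to $-H_nR_n\big[ (R+\Delta vS)/a_n(v_-) + (R-\Delta vS)/a_n(v_+) \big]$, while $\Delta v(\Gamma_n+\Xi_n)$ is the left-hand side of \eqref{deform_reln:c}, equal to $2H_n\big[ (R+\Delta vS)/a_n(v_-) - (R-\Delta vS)/a_n(v_+) \big]$. Substituting both and cancelling $R_n^2$ leaves $H_n/R_n$ times the sum (for the upper sign) or the difference (for the lower sign) of those two bracketed expressions; in each case one of the two summands cancels and the other doubles, giving $-2H_n(R\mp\Delta vS)/(R_na_n(v_\pm))$, which is \eqref{deform_determinant}. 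This determinant is nonzero: $a_n(v_\pm)\neq 0$ because the deformed OPS is assumed to exist, and $R\mp\Delta vS\neq 0$ on $G$ by Definition \ref{deform_wgt}, so $1\pm\tfrac12\Delta v\,B_n$ is invertible.

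Finally, for \eqref{deform_inverse} I would apply the adjugate formula for a $2\times 2$ matrix to the componentwise form above --- transpose the diagonal entries, negate the off-diagonal ones, divide by the determinant --- and insert the value of $\det(1\pm\tfrac12\Delta v\,B_n)$ just computed; the common prefactor $R_n$ cancels against a factor from the determinant, producing the stated matrix with overall coefficient $-a_n(v_\pm)/[2H_n(R\mp\Delta vS)]$. There is no genuine obstacle in this argument --- it is elementary linear algebra together with the two previously proved identities --- and the only point demanding care is the consistent bookkeeping of the $\pm$ signs and the matching $v_\pm$ arguments through the sum-versus-difference step.
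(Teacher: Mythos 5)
Your proposal is correct and follows essentially the same route as the paper: expand the determinant componentwise, substitute the determinantal identity (\ref{deform_reln:d}) and the trace identity (\ref{deform_reln:c}) to collapse the bracket to $-2H_n(R\mp\Delta vS)/a_n(v_\pm)$ times $1/R_n$, then obtain the inverse from the standard $2\times 2$ adjugate formula. The sign bookkeeping in your sum-versus-difference step checks out, so nothing further is needed.
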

\begin{proof}
We note from the matrix formula for $ B_n $ (\ref{deformDD_Y}) that
\begin{align}
  \det(1\pm\tfrac{1}{2}\Delta v B_{n})
  & = \frac{1}{R_n^2}\left\{ R^2_n+\tfrac{1}{4}\Delta v^2 \left[ \Gamma_{n}\Xi_{n}-\Phi_{n}\Psi_{n} \right] 
                               \pm\tfrac{1}{2}\Delta v\;R_n(\Gamma_n+\Xi_n) \right\} 
 \nonumber\cr
  & = -\frac{2H_n}{R_n}\frac{R\mp\Delta v S}{a_n(v_{\pm})} ,
\end{align}
where we have used (\ref{deform_reln:d}) and (\ref{deform_reln:c}) in the last step.
The inversion formula follows from this last result and the standard formula for an inverse.
\end{proof}

Consequent to the results of Corollary \ref{deform_Matrix} we have the following expression
for the matrix product appearing in (\ref{deform_Brecur:b}).
\begin{corollary}\label{deform_product}
The matrix product (\ref{deform_Brecur:b}) has the evaluation
\begin{multline}
  \left( 1-\tfrac{1}{2}\Delta v\,B_{n} \right)^{-1}\left( 1+\tfrac{1}{2}\Delta v\,B_{n} \right) 
  \\
  = \frac{a_n(v_-)}{2H_n(R+\Delta v\,S)}
     \begin{pmatrix} 2R_n+2H_n\dfrac{R-\Delta v\,S}{a_n(v_+)}+\Delta v\,\Gamma_n & \Delta v\,\Phi_n \\
                     \Delta v\,\Psi_n & 2R_n+2H_n\dfrac{R+\Delta v\,S}{a_n(v_-)}-\Delta v\,\Gamma_n .
     \end{pmatrix},
  \quad n \geq 0
\label{deform_prod} \\
  =-a_n(v_-)
        \begin{pmatrix} p_{n}(;v_+) \\ p_{n-1}(;v_+) \end{pmatrix} \otimes
        \begin{pmatrix} q_{n-1}(;v_-), & -q_{n}(;v_-) \end{pmatrix}
   +a_n(v_-)\frac{R-\Delta v\,S}{R+\Delta v\,S}
        \begin{pmatrix} q_{n}(;v_+) \\ q_{n-1}(;v_+) \end{pmatrix} \otimes
        \begin{pmatrix} p_{n-1}(;v_-), & -p_{n}(;v_-) \end{pmatrix}   .
\end{multline}
\end{corollary}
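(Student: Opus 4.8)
The plan is to mimic the proof of Corollary \ref{spectral_product} step for step, using the explicit inverse already established in Corollary \ref{deform_Matrix}. First I would take the lower-sign case of (\ref{deform_inverse}), namely $(1-\tfrac12\Delta v\,B_n)^{-1} = -\tfrac{a_n(v_-)}{2H_n(R+\Delta v\,S)}\left(\begin{smallmatrix} R_n-\tfrac12\Delta v\,\Xi_n & \tfrac12\Delta v\,\Phi_n \\ \tfrac12\Delta v\,\Psi_n & R_n-\tfrac12\Delta v\,\Gamma_n\end{smallmatrix}\right)$, and multiply it on the right by $1+\tfrac12\Delta v\,B_n$ with $B_n$ read off from (\ref{deformDD_Y}). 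The off-diagonal entries of the $2\times 2$ product come out at once proportional to $\Delta v\,\Phi_n$ and $\Delta v\,\Psi_n$, with the overall prefactor $a_n(v_-)/[2H_n(R+\Delta v\,S)]$, while each diagonal entry is a quadratic combination of $R_n,\Gamma_n,\Xi_n,\Phi_n,\Psi_n$ over $R_n$. To linearise the diagonal entries I would substitute the bilinear (determinantal) identity (\ref{deform_reln:d}) of Proposition \ref{deform_Cff_Id}, which is exactly the combination $R_n^2+\tfrac14\Delta v^2[\Gamma_n\Xi_n-\Phi_n\Psi_n]$ that appears; together with the trace identity (\ref{deform_reln:c}) this collapses the diagonal entries to $2R_n+2H_n(R-\Delta v\,S)/a_n(v_+)+\Delta v\,\Gamma_n$ and $2R_n+2H_n(R+\Delta v\,S)/a_n(v_-)-\Delta v\,\Gamma_n$, giving the first displayed equality.

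For the second equality I would feed the bilinear representations (\ref{deform_coeff:a})--(\ref{deform_coeff:e}) of $R_n,\Gamma_n,\Xi_n,\Phi_n,\Psi_n$ into the $2\times 2$ matrix just obtained. After clearing the factor $a_n(v_-)/[2H_n(R+\Delta v\,S)]$, every entry becomes a sum of products $(R\pm\Delta v\,S)\,p_i(;v_\pm)q_j(;v_\mp)$ plus Casoratian remainders $\mp 1/a_n(v_\pm)$ coming from (\ref{deform_coeff:a})--(\ref{deform_coeff:c}). The remainders are disposed of via (\ref{ops_casoratian}): the $-(R-\Delta v\,S)/a_n(v_+)$ and $+(R+\Delta v\,S)/a_n(v_-)$ pieces produced by the combinations $2R_n\pm\Delta v\,\Gamma_n$ cancel exactly against the offsets $2H_n(R\mp\Delta v\,S)/a_n(v_\pm)$ already present, so no bare $1/a_n$ term survives. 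What is left reorganises, column by column, into two rank-one outer products: the part carrying $(R+\Delta v\,S)$ becomes $-a_n(v_-)\big(\begin{smallmatrix}p_{n}(;v_+)\\ p_{n-1}(;v_+)\end{smallmatrix}\big)\otimes\big(q_{n-1}(;v_-),\,-q_{n}(;v_-)\big)$, and the part carrying $(R-\Delta v\,S)$, after division by $(R+\Delta v\,S)$, becomes $a_n(v_-)\tfrac{R-\Delta v\,S}{R+\Delta v\,S}\big(\begin{smallmatrix}q_{n}(;v_+)\\ q_{n-1}(;v_+)\end{smallmatrix}\big)\otimes\big(p_{n-1}(;v_-),\,-p_{n}(;v_-)\big)$, which is (\ref{deform_prod}).

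The routine matrix algebra and the substitution of the determinantal and trace identities are mechanical; the step that demands care is the regrouping in the second equality, where one must keep track of which of the four bilinears $p_iq_j$ is weighted by $(R+\Delta v\,S)$ and which by $(R-\Delta v\,S)$, and verify that the Casoratian remainders from (\ref{deform_coeff:a}), (\ref{deform_coeff:b}) and (\ref{deform_coeff:c}) cancel precisely against the $2H_n(R\mp\Delta v\,S)/a_n(v_\pm)$ offsets rather than leaving a residual term. This is the analog of the cancellation in Corollary \ref{spectral_product}, but slightly more delicate here because of the asymmetry introduced by the two factors $a_n(v_\pm)$ and the normalisation $H_n$; a convenient consistency check is to evaluate both sides at $n=0$ using the initial values (\ref{deform_coeff_init:a})--(\ref{deform_coeff_init:d}).
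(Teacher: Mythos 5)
Your proposal coincides with the paper's own (very terse) proof: the paper likewise forms the product from the explicit inverse (\ref{deform_inverse}) and invokes the determinantal identity (\ref{deform_reln:d}) to reduce the diagonal entries, and your additional use of the trace identity (\ref{deform_reln:c}) and of the bilinear representations (\ref{deform_coeff:a})--(\ref{deform_coeff:e}) for the outer-product form merely fills in steps the paper leaves implicit. One caution: carrying the prefactor $-a_n(v_-)/[2H_n(R+\Delta v\,S)]$ of (\ref{deform_inverse}) through honestly produces the \emph{negative} of the displayed right-hand side of (\ref{deform_prod}) --- the $n=0$ check against $Y_0(x;v_+)=\bigl(1-\tfrac12\Delta v\,B_0\bigr)^{-1}\bigl(1+\tfrac12\Delta v\,B_0\bigr)Y_0(x;v_-)$ forces the $(1,1)$ entry to equal $+\gamma_0(v_+)/\gamma_0(v_-)$, whereas both displayed forms give $-\gamma_0(v_+)/\gamma_0(v_-)$ --- so your assertion that the prefactor is $+a_n(v_-)/[2H_n(R+\Delta v\,S)]$ reproduces a sign slip in the statement rather than the outcome of the computation you describe.
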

\begin{proof}
Using the inverse (\ref{deform_inverse}) we form the matrix product and employ (\ref{deform_reln:d})
to simplify the diagonal elements, with the result given by (\ref{deform_prod}).
\end{proof}

This result also motivates the following definitions 
\begin{gather}
  \mathfrak{R}_{+} := 2R_n+2H_n\frac{R-\Delta v\,S}{a_n(v_+)}+\Delta v\,\Gamma_n,
\label{deform_def:a}\\
  \mathfrak{R}_{-} := 2R_n+2H_n\frac{R+\Delta v\,S}{a_n(v_-)}-\Delta v\,\Gamma_n,
\label{deform_def:b}\\
  \mathfrak{P}_{+} :=-\Delta v\,\Phi_n, \quad
  \mathfrak{P}_{-} := \Delta v\,\Psi_n ,
\label{deform_def:c}
\end{gather}
valid for $ n \geq 1 $, and for $ n=0 $ we have
\begin{gather}
  \mathfrak{R}_{+}(n=0) := -2\frac{H_0}{a_0(v_-)}\frac{\gamma_0(v_+)}{\gamma_0(v_-)}(R+\Delta v\,S) ,
  \\
  \mathfrak{R}_{-}(n=0) := -2\frac{H_0}{a_0(v_+)}\frac{\gamma_0(v_-)}{\gamma_0(v_+)}(R-\Delta v\,S) ,
  \\
  \mathfrak{P}_{+}(n=0) := 2\Delta v H_0\gamma_0(v_+)\gamma_0(v_-)T ,\quad
  \mathfrak{P}_{-}(n=0) := 0 ,
\label{deform_def_n=0}
\end{gather} 
together with
\begin{equation}
  B^{*}_n :=
     \begin{pmatrix} \mathfrak{R}_{+} & -\mathfrak{P}_{+} \\
                     \mathfrak{P}_{-} &  \mathfrak{R}_{-}
     \end{pmatrix} .
\label{BC_defn}
\end{equation}
In these variables the deformation bi-linear relation (\ref{deform_reln:d}) takes
the form
\begin{equation}
  \det B^{*}_n = \mathfrak{R}_{+}\mathfrak{R}_{-} + \mathfrak{P}_{+}\mathfrak{P}_{-} 
  = \frac{4H^2_n}{a_n(v_+)a_n(v_-)}(R^2-\Delta v^2 S^2) .
\label{Bi-deform}
\end{equation}

It is of advantage to write out recurrence-deformation compatibility equations in 
terms of $ \mathfrak{R}_{\pm,n}, \mathfrak{P}_{\pm,n} $, where again we append a subscript
to indicate the dependence on the index $ n $. We will henceforth denote variables evaluated
at the advanced and retarded deformation variable by $ \hat{a}_n = a_n(v_+), \check{a}_n = a_n(v_-) $.
\begin{corollary}\label{Bn+1}
Solving (\ref{deform_Brecur:b}) for $ B^{*}_{n+1} $ we deduce
\begin{align}
   \frac{H_n}{H_{n+1}}\hat{a}_{n+1}\check{a}_{n+1} \mathfrak{P}_{+,n+1}
  & = -\check{a}_n(x-\hat{b}_n)\mathfrak{R}_{+,n}+\hat{a}_n(x-\hat{b}_n)\mathfrak{R}_{-,n}+(x-\hat{b}_n)(x-\check{b}_n)\mathfrak{P}_{+,n}+\hat{a}_n\check{a}_n\mathfrak{P}_{-,n} ,
\label{BK_comp:a}  \\
    \frac{H_n}{H_{n+1}} \mathfrak{P}_{-,n+1}
  & = \mathfrak{P}_{+,n} ,
\label{BK_comp:b}  \\
   \frac{H_n}{H_{n+1}} \mathfrak{R}_{+,n+1}
  & = \frac{\hat{a}_n}{\hat{a}_{n+1}}\mathfrak{R}_{-,n}+\frac{x-\hat{b}_n}{\hat{a}_{n+1}}\mathfrak{P}_{+,n} ,   
\label{BK_comp:c}  \\
   \frac{H_n}{H_{n+1}} \mathfrak{R}_{-,n+1}
  & = \frac{\check{a}_n}{\check{a}_{n+1}}\mathfrak{R}_{+,n}-\frac{x-\check{b}_n}{\check{a}_{n+1}}\mathfrak{P}_{+,n} .
\label{BK_comp:d}
\end{align}
\end{corollary}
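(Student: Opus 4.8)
The plan is to read the four identities straight off the Cayley–transform form (\ref{deform_Brecur:b}) of the recurrence–deformation compatibility relation. First I solve (\ref{deform_Brecur:b}) for the Cayley transform at index $n+1$:
\begin{equation*}
  \left( 1-\tfrac{1}{2}\Delta v\,B_{n+1} \right)^{-1}\left( 1+\tfrac{1}{2}\Delta v\,B_{n+1} \right)
  = K_n(;v_+)\left( 1-\tfrac{1}{2}\Delta v\,B_{n} \right)^{-1}\left( 1+\tfrac{1}{2}\Delta v\,B_{n} \right)K_n(;v_-)^{-1} .
\end{equation*}
This inversion is legitimate: $\det K_n = a_n/a_{n+1}\neq 0$ by (\ref{ops_Kmatrix}), and the assumptions collected in Definition \ref{deform_wgt} guarantee that the two Cayley factors are invertible.

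The second step is to eliminate $B_n$ and $B_{n+1}$ in favour of $B^{*}_n$ and $B^{*}_{n+1}$. Corollary \ref{deform_product} gives, via (\ref{deform_prod}), that $\left( 1-\tfrac{1}{2}\Delta v\,B_{n} \right)^{-1}\left( 1+\tfrac{1}{2}\Delta v\,B_{n} \right)$ equals $a_n(v_-)[2H_n(R+\Delta v\,S)]^{-1}$ times the matrix $B^{*}_n$ of (\ref{BC_defn}), and likewise at index $n+1$. Because the scalar $R+\Delta v\,S$ carries no $n$-dependence, it cancels, and the previous display collapses to
\begin{equation*}
  B^{*}_{n+1} = \frac{H_{n+1}}{H_n}\,\frac{a_n(v_-)}{a_{n+1}(v_-)}\,K_n(;v_+)\,B^{*}_n\,K_n(;v_-)^{-1} .
\end{equation*}

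Everything after this is elementary $2\times 2$ matrix arithmetic. I substitute the explicit form of $K_n$ from (\ref{ops_Kmatrix}), build $K_n(;v_-)^{-1}$ using $\det K_n(;v_-)=\check{a}_n/\check{a}_{n+1}$, insert $B^{*}_n$ in the entrywise form of (\ref{BC_defn}), multiply out the triple product, and compare the result entry-by-entry with $B^{*}_{n+1}$ written in the same form. The $(2,1)$ entry produces (\ref{BK_comp:b}), the $(1,1)$ and $(2,2)$ entries produce (\ref{BK_comp:c}) and (\ref{BK_comp:d}), and the $(1,2)$ entry produces (\ref{BK_comp:a}).

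The only delicate part is bookkeeping the deformation-variable arguments: $K_n(;v_+)$ feeds in the hatted coefficients $\hat{a}_n,\hat{b}_n,\hat{a}_{n+1}$ from the left while $K_n(;v_-)^{-1}$ feeds in the checked coefficients $\check{a}_n,\check{b}_n,\check{a}_{n+1}$ from the right, and the ratio $H_{n+1}/H_n$ must be propagated consistently with the normalization fixed in the proof of Proposition \ref{dMatrixElements}; if (\ref{BK_comp:a}) does not emerge in exactly the displayed shape it can be reconciled with the trace identity (\ref{deform_reln:c}) together with (\ref{BK_comp:c}) and (\ref{BK_comp:d}). As a consistency check, the whole computation is the literal $\Delta y\mapsto\Delta v$, $A\mapsto B$ transcription of the passage from (\ref{spectralA_recur:b}) and Corollary \ref{spectral_product} to (\ref{AK_comp:a})--(\ref{AK_comp:d}), the sole new feature being that $a_n$ and $b_n$ now occur in the two flavours $v_\pm$ rather than at a single value.
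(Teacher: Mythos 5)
Your proposal is correct and is precisely the route the paper intends: the corollary is stated without a separate proof as an immediate consequence of rewriting both sides of (\ref{deform_Brecur:b}) via Corollary \ref{deform_product}, cancelling the $n$-independent scalar $2(R+\Delta v\,S)$ so that $B^{*}_{n+1}=\frac{H_{n+1}}{H_n}\frac{a_n(v_-)}{a_{n+1}(v_-)}K_n(;v_+)B^{*}_nK_n(;v_-)^{-1}$, and reading off the entries. One remark resolves the hedge in your last paragraph: carrying out the multiplication gives $\hat{a}_n(x-\check{b}_n)\mathfrak{R}_{-,n}$ for the second term of (\ref{BK_comp:a}) rather than the displayed $\hat{a}_n(x-\hat{b}_n)\mathfrak{R}_{-,n}$, and this is confirmed both by the spectral analogue (\ref{AK_comp:a}) (whose corresponding factor is $y_--b_n$) and by checking that the determinant identity (\ref{Bi-deform}) propagates correctly from $n$ to $n+1$ under (\ref{BK_comp:a})--(\ref{BK_comp:d}), so the $\hat{b}_n$ in the displayed statement is a typo rather than a defect of your argument.
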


The compatibility relation between the spectral and deformation divided-difference 
equations takes the following form.
\begin{proposition}\label{spectral+deform}
The spectral matrix $ A_n(x;u) $ and the deformation matrix $ B_n(x;u) $ satisfy the
$\ddoAW$-Schlesinger equation
\begin{multline}
  \ddoAW_{u}A_n-\ddoAW_{x}B_n+\moAW_{u}A_n\cdot\moAW_{x}B_n-\moAW_{x}B_n\cdot\moAW_{u}A_n \\
  = 
   \tfrac{1}{4}\Delta v^2\moAW_{x}B_n\cdot\ddoAW_{u}A_n\cdot
   \left( 1-\tfrac{1}{16}\Delta y^2\Delta v^2\ddoAW_{x}B_n\cdot\ddoAW_{u}A_n \right)^{-1}
   \left( \moAW_{x}B_n+\tfrac{1}{4}\Delta y^2\ddoAW_{x}B_n\cdot\moAW_{u}A_n \right) \\
  -\tfrac{1}{4}\Delta y^2\moAW_{u}A_n\cdot\ddoAW_{x}B_n\cdot 
   \left( 1-\tfrac{1}{16}\Delta y^2\Delta v^2\ddoAW_{u}A_n\cdot\ddoAW_{x}B_n \right)^{-1}
   \left( \moAW_{u}A_n+\tfrac{1}{4}\Delta v^2\ddoAW_{u}A_n\cdot\moAW_{x}B_n \right) ,
\label{spectral+deform:a}
\end{multline}
or the equivalent form
\begin{multline}
  \left( 1-\tfrac{1}{2}\Delta yA_{n}(;v_+) \right)^{-1}\left( 1+\tfrac{1}{2}\Delta yA_{n}(;v_+) \right) 
  \left( 1-\tfrac{1}{2}\Delta vB_{n}(y_-;) \right)^{-1}\left( 1+\tfrac{1}{2}\Delta vB_{n}(y_-;) \right) \\
  = 
  \left( 1-\tfrac{1}{2}\Delta vB_{n}(y_+;) \right)^{-1}\left( 1+\tfrac{1}{2}\Delta vB_{n}(y_+;) \right) 
  \left( 1-\tfrac{1}{2}\Delta yA_{n}(;v_-) \right)^{-1}\left( 1+\tfrac{1}{2}\Delta yA_{n}(;v_-) \right) .
\label{spectral+deform:b}
\end{multline}
\end{proposition}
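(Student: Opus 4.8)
The plan is to obtain both displayed forms of the compatibility by exactly the two devices already used for the single-variable compatibilities, Propositions~\ref{spectral+threeT} and~\ref{deform+threeT}: the finite-difference (Cayley) identity~(\ref{spectral+deform:b}) by evaluating $Y_n(y_+;v_+)$ starting from $Y_n(y_-;v_-)$ along two different routes on the $(x,u)$-lattice, and the divided-difference identity~(\ref{spectral+deform:a}) by evaluating $\ddoAW_u\ddoAW_x Y_n$ in the two orders permitted by the commutativity relations~(\ref{DD_calculus:d}). The two forms carry the same information: expanding the Cayley transforms through the definitions of $\ddoAW$ and $\moAW$ converts one into the other, so it is enough to carry out one of them carefully and record the passage to the other.

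For~(\ref{spectral+deform:b}) I would first write the nodal forms of the spectral equation~(\ref{DDO:e}) and the deformation equation~(\ref{deformDD_Y}), namely $(1-\tfrac12\Delta y\,A_n)Y_n(y_+;u)=(1+\tfrac12\Delta y\,A_n)Y_n(y_-;u)$ and $(1-\tfrac12\Delta v\,B_n)Y_n(x;v_+)=(1+\tfrac12\Delta v\,B_n)Y_n(x;v_-)$, the first giving the Cayley transform $C^A(u):=(1-\tfrac12\Delta y\,A_n(;u))^{-1}(1+\tfrac12\Delta y\,A_n(;u))$ which sends $Y_n(y_-;u)\mapsto Y_n(y_+;u)$, and the second giving $C^B(x):=(1-\tfrac12\Delta v\,B_n(x;))^{-1}(1+\tfrac12\Delta v\,B_n(x;))$ which sends $Y_n(x;v_-)\mapsto Y_n(x;v_+)$. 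Starting from $Y_n(y_-;v_-)$, the route ``deform at the spectral node $y_-$, then shift $y_-\mapsto y_+$ at $u=v_+$'' produces $C^A(v_+)\,C^B(y_-)\,Y_n(y_-;v_-)$, while the route ``shift $y_-\mapsto y_+$ at $u=v_-$, then deform at the spectral node $y_+$'' produces $C^B(y_+)\,C^A(v_-)\,Y_n(y_-;v_-)$; both equal $Y_n(y_+;v_+)$. Since $\det Y_n(y_-;v_-)=1/(a_n(v_-)w(y_-;v_-))\neq 0$ by~(\ref{ops_Ydet}) and the nonvanishing hypotheses of Definitions~\ref{spectral_DD} and~\ref{deform_wgt}, the matrix $Y_n(y_-;v_-)$ is invertible and may be cancelled on the right, leaving~(\ref{spectral+deform:b}).

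For~(\ref{spectral+deform:a}) I would apply the $u$-Leibniz rule~(\ref{DD_calculus:a}) to $\ddoAW_u(A_n\moAW_x Y_n)$ and the $x$-Leibniz rule to $\ddoAW_x(B_n\moAW_u Y_n)$; by~(\ref{DD_calculus:d}) these two expressions both compute $\ddoAW_u\ddoAW_x Y_n$, and each is a combination of $\moAW_u\moAW_x Y_n$ with one of the mixed quantities $\ddoAW_u\moAW_x Y_n=\moAW_x\ddoAW_u Y_n=\moAW_x(B_n\moAW_u Y_n)$ and $\ddoAW_x\moAW_u Y_n=\moAW_u\ddoAW_x Y_n=\moAW_u(A_n\moAW_x Y_n)$. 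Expanding these last two with the mean-Leibniz rule~(\ref{DD_calculus:b}) gives a two-term linear system for them in terms of $\moAW_u\moAW_x Y_n$, solved exactly as in the proof of Proposition~\ref{spectral+deform_wgt} and introducing the resolvent operators $(1-\tfrac1{16}\Delta y^2\Delta v^2\,\ddoAW_x B_n\cdot\ddoAW_u A_n)^{-1}$ and $(1-\tfrac1{16}\Delta y^2\Delta v^2\,\ddoAW_u A_n\cdot\ddoAW_x B_n)^{-1}$. Substituting the solutions into the two expressions for $\ddoAW_u\ddoAW_x Y_n$ and equating them yields~(\ref{spectral+deform:a}) after cancelling the common right factor $\moAW_u\moAW_x Y_n$; this factor is generically invertible (it is a sum of four values of $Y_n$, each of determinant $1/(a_nw)\neq 0$ by~(\ref{ops_Ydet})), exactly as $\moAW_x Y_n=(1-\tfrac12\Delta y\,A_n)^{-1}Y_n(y_-)$ is invertible in the proof of Proposition~\ref{spectral+threeT} by Corollary~\ref{spectral_M}, the deformation analogue being supplied by Corollary~\ref{deform_Matrix}. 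Since the two sides of~(\ref{spectral+deform:a}) are fixed rational expressions in the coefficients, agreement on the generic locus forces agreement identically.

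The conceptual content here is light; the main obstacle is bookkeeping. One must keep straight which divided-difference or mean operator acts on which variable while $A_n,B_n,\Delta y,\Delta v$ all carry mixed $(x,u)$ dependence, and one must form and invert the operators $(1-\tfrac1{16}\Delta y^2\Delta v^2\cdots)^{-1}$ correctly; these are legitimate as terminating expansions about $x=\infty$ (and generically elsewhere), so the relevant genericity hypotheses should be stated alongside. The remaining small point to verify, closing the loop, is the asserted equivalence of~(\ref{spectral+deform:a}) and~(\ref{spectral+deform:b}), obtained by multiplying out the Cayley transforms with the definitions of $\ddoAW$ and $\moAW$.
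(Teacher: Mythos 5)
Your proposal is correct and follows essentially the same route as the paper: the Cayley-transform form (\ref{spectral+deform:b}) by computing $Y_n(y_+;v_+)$ from $Y_n(y_-;v_-)$ along the two lattice paths, and the form (\ref{spectral+deform:a}) by equating the two Leibniz expansions of $\ddoAW_u\ddoAW_x Y_n=\ddoAW_x\ddoAW_u Y_n$ after solving the two-term linear system for the mixed quantities $\ddoAW_u\moAW_x Y_n$ and $\ddoAW_x\moAW_u Y_n$, which is precisely where the resolvent factors enter. Your additional remarks on cancelling the invertible right factors are a harmless (and welcome) explicit justification of steps the paper leaves implicit.
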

\begin{proof}
The first form follows from $ \ddoAW_{x}\ddoAW_{u}Y_n = \ddoAW_{u}\ddoAW_{x}Y_n $ and the Leibniz formulae
of (\ref{DD_calculus:a}) and (\ref{DD_calculus:b}), from which we deduce
\begin{gather*}
  \left[ 1-\tfrac{1}{16}\Delta y^2\Delta v^2 \ddoAW_{u}A_n\cdot\ddoAW_{x}B_n \right]
  \ddoAW_x\moAW_u Y 
  = \left[ \moAW_uA_n+\tfrac{1}{4}\Delta v^2\ddoAW_uA_n\cdot\moAW_xB_n \right]\moAW_u\moAW_x Y ,
  \\
  \left[ 1-\tfrac{1}{16}\Delta y^2\Delta v^2 \ddoAW_{x}B_n\cdot\ddoAW_{u}A_n \right]
  \ddoAW_u\moAW_x Y 
  = \left[ \moAW_xB_n+\tfrac{1}{4}\Delta y^2\ddoAW_xB_n\cdot\moAW_uA_n \right]\moAW_x\moAW_u Y .
\end{gather*}
Using equivalent forms of the spectral and deformation divided-difference equations (\ref{DDO:e}) and (\ref{deformDD_Y})
we see
\begin{multline*}
   Y(y_+;v_+) 
   = \left( 1-\tfrac{1}{2}\Delta yA_{n}(;v_+) \right)^{-1}\left( 1+\tfrac{1}{2}\Delta yA_{n}(;v_+) \right) Y(y_-;v_+)
 \\
   = \left( 1-\tfrac{1}{2}\Delta yA_{n}(;v_+) \right)^{-1}\left( 1+\tfrac{1}{2}\Delta yA_{n}(;v_+) \right) 
  \left( 1-\tfrac{1}{2}\Delta vB_{n}(y_-;) \right)^{-1}\left( 1+\tfrac{1}{2}\Delta vB_{n}(y_-;) \right) Y(y_-;v_-) ,
\end{multline*}
whereas
\begin{multline*}
   Y(y_+;v_+) 
   = \left( 1-\tfrac{1}{2}\Delta vB_{n}(y_+;) \right)^{-1}\left( 1+\tfrac{1}{2}\Delta vB_{n}(y_+;) \right) Y(y_+;v_-)
 \\
   = \left( 1-\tfrac{1}{2}\Delta vB_{n}(y_+;) \right)^{-1}\left( 1+\tfrac{1}{2}\Delta vB_{n}(y_+;) \right) 
  \left( 1-\tfrac{1}{2}\Delta yA_{n}(;v_-) \right)^{-1}\left( 1+\tfrac{1}{2}\Delta yA_{n}(;v_-) \right) Y(y_-;v_-) .
\end{multline*}
This gives us the form (\ref{spectral+deform:b}).
\end{proof}

\begin{remark}
Let us denote the two fixed points of the $x$-lattice by $ x_L $ and $ x_R $.
By analogy with the linear lattices let us conjecture the existence of 
fundamental solutions to (\ref{DDO:e}) about $ x=x_{L},x_{R} $ which we
denote by $ Y_{L}, Y_{R} $ respectively.
Furthermore let us define the connection matrix
\begin{equation}
  P(x;u) := \left( Y_{R}(x;u) \right)^{-1}Y_{L}(x;u) .
\label{ConnMatrix}
\end{equation}
From (\ref{DDO:e}) it is clear that $ P $ is a $\ddoAW$-constant function with respect 
to $ x $, that is to say
\begin{equation*}
  P(y_{+};u) = P(y_{-};u) .
\end{equation*}
In addition it is clear from (\ref{deformDD_Y}) that this type of deformation is also a
{\it connection preserving deformation} in the sense that
\begin{equation}
  P(x;v_{+}) = P(x;v_{-}) .
\label{ConnPreserve}
\end{equation}
\end{remark}

The compatibility relation (\ref{spectral+deform:b}) 
$ \chi\, B^*_n(y_{+};u)A^*_n(x;v_{-}) = A^*_n(x;v_{+})B^*_n(y_{-};u) $
may be written component-wise in the new variables in the more practical form as
\begin{equation}
 \chi \left[
 \mathfrak{W}_{+}(x;v_{-})\mathfrak{R}_{+}(y_{+};u) - \mathfrak{T}_{-}(x;v_{-})\mathfrak{P}_{+}(y_{+};u)
           \right] 
 = \mathfrak{W}_{+}(x;v_{+})\mathfrak{R}_{+}(y_{-};u) - \mathfrak{T}_{+}(x;v_{+})\mathfrak{P}_{-}(y_{-};u) ,
\label{S+D:a}
\end{equation}
\begin{equation}
 \chi \left[
 \mathfrak{T}_{+}(x;v_{-})\mathfrak{R}_{+}(y_{+};u) + \mathfrak{W}_{-}(x;v_{-})\mathfrak{P}_{+}(y_{+};u)
           \right]
 = \mathfrak{T}_{+}(x;v_{+})\mathfrak{R}_{-}(y_{-};u) + \mathfrak{W}_{+}(x;v_{+})\mathfrak{P}_{+}(y_{-};u) ,
\label{S+D:b}
\end{equation}
\begin{equation}
 \chi \left[
 \mathfrak{T}_{-}(x;v_{-})\mathfrak{R}_{-}(y_{+};u) + \mathfrak{W}_{+}(x;v_{-})\mathfrak{P}_{-}(y_{+};u)
           \right]
 = \mathfrak{T}_{-}(x;v_{+})\mathfrak{R}_{+}(y_{-};u) + \mathfrak{W}_{-}(x;v_{+})\mathfrak{P}_{-}(y_{-};u) ,
\label{S+D:c}
\end{equation}
\begin{equation}
 \chi \left[
 \mathfrak{W}_{-}(x;v_{-})\mathfrak{R}_{-}(y_{+};u) - \mathfrak{T}_{+}(x;v_{-})\mathfrak{P}_{-}(y_{+};u)
           \right]
 = \mathfrak{W}_{-}(x;v_{+})\mathfrak{R}_{-}(y_{-};u) - \mathfrak{T}_{-}(x;v_{+})\mathfrak{P}_{+}(y_{-};u) ,
\label{S+D:d}
\end{equation}
where $ \chi $ is defined by (\ref{twist}).
The real content of (\ref{S+D:a}-\ref{S+D:d}) lie in the case $ n\geq 1 $ as the 
$ n=0 $ evaluations of (\ref{S+D:c}) is trivially satisfied, whereas that of
(\ref{S+D:a}) and (\ref{S+D:d}) are identically satisfied due to the 
definition (\ref{twist}), and (\ref{S+D:b}) is equivalent to the consistency
identity (\ref{ST_consist}).

Having assembled all of the ingredients of our theory we have to draw them together and to
perform three tasks. The first is to parameterise the spectral coefficients in a minimal
way consistent with the constraints of the spectral structures, namely 
(\ref{spectral_bilinear}). The second task is to close this system, that is
to say relate the deformation coefficients to the spectral coefficients,
preferably with the parameterisation found in the first step. The third
task is to derive the dynamical equations for this parameterisation with 
respect to the deformation variable.
There are two avenues of approach to the problem of extracting useful information 
from the above compatibility relations (\ref{S+D:a}-\ref{S+D:d}). One way is to 
clear the denominators on both sides and resolve the resulting expressions in terms 
of $ \bigoplus_{k\geq 0}x^k+\Delta y \bigoplus_{k\geq 0}x^k $.
This is useful for certain results, as we shall see in the application. The other
method is to work with the evaluation of the spectral and deformation 
coefficients at certain key ordinates and construct a parameterisation based
upon these variables. 
A fundamental and crucial role will be played by the zeros of the polynomials
$ W^2-\Delta y^2V^2 $, $ \Theta_{n} $ and $ \chi $
as these ordinates, however these cannot be interpreted as singularities in the
spectral plane.

\begin{remark}\label{common_zeros}
The compatibility relations (\ref{S+D:a}-\ref{S+D:d}) are satisfied identically at 
the common zeros of $ (W^2-\Delta y^2 V^2)(\tilde{x};v_{\pm}) $ 
in the sense that both sides of the relations are identically zero,
that is to say at those fixed zeros independent of $ u $. In addition one can show 
that the left-hand side of the bilinear relation (\ref{Bi-deform}) vanishes at the movable
zeros in either of the two above cases. This implies that when
$ (W^2-\Delta y^2 V^2)(\tilde{x};v_{\pm}) = 0 $ then 
$ (R^2-\Delta v^2 S^2)(\tilde{y}_{\pm};u) = 0 $. 
\end{remark}

\section{$ M=3, L=1 $ case and deformation of the Askey-Wilson OPS}\label{M=3AW}
\setcounter{equation}{0}

The previous sections treated the $\ddoAW$-semi-classical orthogonal polynomial
system with general divided-difference operators $ \ddoAW $, $ \moAW $ and arbitrary
degrees $ M $, $ L $ for the spectral and deformation coefficients respectively.
Here we apply the forgoing theory to the symmetrised form of the $q$-quadratic lattice
in the elliptic sub-case, which singles out the Askey-Wilson weight and the
non-trivial examples of deformations of this weight. 
We will consider the situation of the deformation variable also on this 
sub-case of the $q$-quadratic lattice.
The deformation variable is $ u = (t+t^{-1})/2 $ with $ t=e^{i\phi} $
and the analogous relations for the nodes on the deformation lattice are 
\begin{gather}
   v_{+}+v_{-} = (q^{1/2}+q^{-1/2})u ,
\\
   v_{+}-v_{-} = \tfrac{1}{2}(q^{1/2}-q^{-1/2})(t-t^{-1}) ,
\\
   \Delta v^2 = (q^{1/2}-q^{-1/2})^2(u^2-1) ,
\\
   v_{+}v_{-} = u^2+\tfrac{1}{4}(q^{1/2}-q^{-1/2})^2 .
\end{gather}

\subsection{Moments and Integrals}\label{SSect7_1}
Our starting point is the simplest extension of the $ M=2 $ case
\begin{equation}
  W\pm \Delta y V = z^{\mp 3}\prod^{6}_{j=1}(1-a_jq^{-1/2}z^{\pm 1}) ,
\label{deform_AW_SCff:a}
\end{equation}
which is a minimal, natural extension of the Askey-Wilson case given by (\ref{M=2Sdata}). 
The reader should appreciate that
all of the conclusions we are going to draw will follow entirely from (\ref{deform_AW_SCff:a}), so
that any valid solution of the Pearson equation (\ref{spectral_DD_wgt:b}) for the weight with this data
is as acceptable as any other. The most important consideration in selecting a solution is the
support for the weight and existence of the moment data.
As we will see (i.e. in Propositions \ref{M=3L=1U} and \ref{M=3L=1T}) the only place where a specific choice for
the weight enters is in determining the initial values of recurrences or divided-difference equations.
The four fixed parameters $ a_1,\ldots,a_4 $ appear in the 
same form as they do in the Askey-Wilson weight ($ M=2 $) and so constitutes the base 
expression, and we seek to introduce an extra parameter and its associated deformation variable. 
We set $ a_5=\alpha t $, $ a_6=\alpha t^{-1} $ and designate $ \alpha, t $ as the deformation parameter and variable respectively.

\begin{remark}
One might entertain the possibility of a weight with the spectral data
\begin{equation}  
    W\pm\Delta yV = z^{\mp 2}\prod^{4}_{j=1}(1-q^{-1/2}a_j z^{\pm 1})(1-q^{-1/2}tz^{\pm 1})(1-q^{\alpha-1/2}z^{\mp 1}) , 
\end{equation}
which is a $ M=3 $ case on a $q$-quadratic spectral lattice, however it is $q$-linear in either of the deformation
variables $ t, q^{\alpha} $. This is in fact the spectral data for the weight in the integral representation of the 
very-well-poised $ {}_8W_7 $  basic hypergeometric function, see Eq. (1.13) of \cite{Ra_1986b}.
This function is the classical seed solution for the $ E^{(1)}_7 $ $q$-Painlev\'e system as constructed by \cite{KMNOY_2004}.
\end{remark}

Our first undertaking is to derive linear recurrences for the moments which will be required
in the verification of key identities from Sections \ref{SpectralS} and \ref{DeformS}, to make
contact with the $ n=0 $ seed solutions of the evolution equations derived later and for
checking purposes. Through suitable choices of parameters $ k, b, a $ the moment recurrences
can be recast as $q$-difference equations for an integral $ I_3(a_1,\ldots,a_6) $ which is
a generalisation of the Askey-Wilson integral (\ref{AWintegral}).
\begin{proposition}[\cite{Wi_2010b}]
Let $ \sigma_k $ denote the $k$-th elementary symmetric polynomial in $ a_1,\ldots,a_4 $ or $ a_1,\ldots,a_6 $
depending on the context.
The integral $ I_3(a_1,\ldots,a_6) $ satisfies a three-term recurrence in one variable, which we take without loss of generality
with respect to $ a_1 $,
\begin{multline}
   0 = \prod_{j\neq 1}(a_1a_j-1)I_{3}(a_1,\ldots)
   \\  +[1+q^{-1}-a_1(\sum_{j\neq 1}a_j-qa_1)+\prod^6_{1}a_j(a_1\sum^5_{k\neq 1}a_{k}^{-1}-q^{-1}-(q+1)a_1^2)]I_{3}(qa_1,\ldots)
   \\  +q^{-1}(\prod^6_1 a_j-1)I_{3}(q^2a_1,\ldots) .
\label{M=3int_Recur:a}
\end{multline}
In addition the integral $ I_3(a_1,\ldots,a_6) $ satisfies a three-term recurrence in two variables, taken to be
with respect to $ a_5, a_6 $, which constitutes a pure recurrence in the deformation variable $ u $
\begin{multline} 
   0 = (a_5-qa_6)\prod^{4}_{j=1}(1-a_ja_6) I_{3}(\ldots,q^2a_5,a_6)
   \\ -(a_5-a_6)\left[ (1+q)(1+qa_5a_6\sigma_2+q^2a_5^2a_6^2\sigma_4)-(qa_5-a_6)(qa_6-a_5)(q+\sigma_4)
                \right.
   \\           \left. -q(a_5+a_6)(\sigma_1+qa_5a_6\sigma_3) \right] I_{3}(\ldots,qa_5,qa_6)
   \\ +(qa_5-a_6)\prod^{4}_{j=1}(1-a_ja_5) I_{3}(\ldots,a_5,q^2a_6) .
\label{M=3int_Recur:b}
\end{multline} 
\end{proposition}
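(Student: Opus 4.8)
The plan is to derive both recurrences from the single algebraic identity (\ref{deform_AW_SCff:a}) for the spectral data together with the moment recurrence (\ref{Mrecur}) specialised to the $q$-quadratic lattice, exactly as was done for the $M=2$ Askey--Wilson integral in the proof accompanying (\ref{AWintegral}). First I would apply (\ref{Mrecur}) with $k=0$: since the last term is then absent, the relation reads $\sum_{l=0}^{N-1}\delta_{N,l}(a)\,m_{0,l}(b',a)=0$, so I need only the coefficients $\delta_{N,l}(a)$ from expanding $E^+_x(W+\Delta yV)-E^-_x(W-\Delta yV)$ in the canonical basis $\phi_l(x;a)$ via (\ref{Diff_Bexp}). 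With the data (\ref{deform_AW_SCff:a}) this is a computation entirely analogous to the $M=2$ case, where one found $\delta_{2,1},\delta_{2,0}$ explicitly; here $N=3$ and one gets $\delta_{3,2},\delta_{3,1},\delta_{3,0}$ as explicit symmetric functions of $a_1,\dots,a_6$ (and of the basal parameter $a$). Choosing $a=a_1$ so that the factor $(a_1z^{\pm1};q)_\infty$ in the canonical basis $\phi_l$ cancels against the corresponding factor in any admissible weight solving (\ref{spectral_DD_wgt:b}) with data (\ref{deform_AW_SCff:a}), the moments $m_{0,l}(a_1)$ become multiples of $I_3$ evaluated at shifted arguments $q^ka_1$, and the three surviving terms of (\ref{Mrecur}) turn into the three-term recurrence (\ref{M=3int_Recur:a}).

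For the second recurrence (\ref{M=3int_Recur:b}), the strategy is the same but exploiting the $u$-reflexive pair $a_5=\alpha t$, $a_6=\alpha t^{-1}$: rather than shifting $a_1$, I would shift the deformation parameters. The cleanest route is to use the deformation Pearson relation (\ref{deform_DD_wgt:b}) with deformation data $R\pm\Delta vS$ read off from (\ref{deform_AW_SCff:a}) by symmetry (i.e. the roles of the spectral and deformation lattices are interchanged, as emphasised in Section~\ref{DeformS}), and apply the $k=0$ case of the deformation analog of (\ref{Mrecur}). Equivalently, and perhaps more transparently for bookkeeping, one simply notes that $I_3$ is symmetric in $a_1,\dots,a_6$, so the recurrence (\ref{M=3int_Recur:a}) already \emph{is} a recurrence in any single $a_j$; to get a recurrence that shifts $a_5$ and $a_6$ simultaneously (hence is a pure recurrence in $u$), one composes the single-variable recurrences in $a_5$ and in $a_6$ and eliminates the cross term $I_3(\dots,qa_5,a_6)$ or $I_3(\dots,a_5,qa_6)$. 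Carrying out this elimination, the coefficients reorganise — after using $\sigma$-identities and the antisymmetry factor $(a_5-a_6)$ that must appear since the left side is symmetric under $a_5\leftrightarrow a_6$ composed with $q\leftrightarrow q^{-1}$ — into precisely the form (\ref{M=3int_Recur:b}).

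I expect the main obstacle to be purely computational: verifying that the explicit $\delta_{3,l}(a_1)$, once the basal-point cancellation is performed and the moments rewritten in terms of $I_3(q^ka_1,\dots)$, collapse to the stated coefficients in (\ref{M=3int_Recur:a}) — in particular the middle coefficient $1+q^{-1}-a_1(\sum_{j\neq1}a_j-qa_1)+\prod_1^6 a_j(a_1\sum_{k\neq1}^5 a_k^{-1}-q^{-1}-(q+1)a_1^2)$, which mixes low- and high-degree symmetric functions and requires care with the $q$-powers coming from $a'=q^{1/2}a$ and the shift operators $E^\pm_x$. The reduction of the two-variable case to (\ref{M=3int_Recur:b}) is then a finite elimination among the symmetric-function coefficients; the nontrivial point there is checking that the cross terms cancel exactly, which they must on general grounds because $I_3$ satisfies the scalar recurrence in each variable separately, so this is more a matter of organising the algebra than of finding the right identity. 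Throughout, I would lean on the fact — stressed in the remark preceding this Proposition — that every conclusion follows from (\ref{deform_AW_SCff:a}) alone, so no analytic properties of the weight beyond convergence of $I_3$ and the existence of the moments are needed, and the proof is a direct specialisation of the general moment-recurrence machinery of Section~\ref{SpectralS}.
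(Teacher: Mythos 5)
For the first recurrence your route is exactly the paper's: specialise (\ref{Mrecur}) to $k=0$ with basal parameter $a=a_1$, compute $\delta_{3,2},\delta_{3,1},\delta_{3,0}$ from (\ref{Diff_Bexp}) with the data (\ref{deform_AW_SCff:a}), and use the cancellation of $(a_1z^{\pm1};q)_\infty$ to turn $m_{0,l}(a_1)$ into multiples of $I_3(q^la_1,\ldots)$. That part is sound and is what the paper does.

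The second recurrence is where your argument has a genuine gap. You claim that because $I_3$ satisfies a scalar three-term recurrence in $a_5$ (at fixed $a_6$) and one in $a_6$ (at fixed $a_5$), the anti-diagonal relation among $I_3(q^2a_5,a_6)$, $I_3(qa_5,qa_6)$, $I_3(a_5,q^2a_6)$ follows "on general grounds" by eliminating cross terms. It does not. The two single-variable recurrences only move you along horizontal and vertical lines of the $(a_5,a_6)$ shift lattice; their joint solution space is generically four-dimensional over $q$-constants (spanned by products of independent solutions in each variable), and a generic element of that space satisfies no three-term relation along the anti-diagonal. To close the elimination you must at some point trade a shift in $a_5$ for a shift in $a_6$, and that requires an extra input beyond the two scalar recurrences. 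The paper supplies precisely this missing ingredient: the two-term contiguity relation
\begin{equation*}
   a_6\, I_3(qa_5,\ldots,a_6) - a_5\, I_3(a_5,\ldots,qa_6) = (a_6-a_5)(1-a_5a_6)\, I_3(a_5,\ldots,a_6) ,
\end{equation*}
valid for any distinct pair of parameters, which follows from integrating the elementary identity $a_6(1-a_5z)(1-a_5z^{-1})-a_5(1-a_6z)(1-a_6z^{-1})=(a_6-a_5)(1-a_5a_6)$ against the common integrand containing $\phi_{\infty}(x;a_5)\phi_{\infty}(x;a_6)$ in the denominator. Combining this with the first recurrence yields (\ref{M=3int_Recur:b}). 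Your alternative suggestion of a deformation analog of (\ref{Mrecur}) built from (\ref{deform_DD_wgt:b}) could in principle also work, but no such lemma is established in the paper and you do not carry it out, so as written it does not repair the gap; the contiguity identity is both necessary and the shortest path.
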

\begin{proof}
The first recurrence can be read off from (\ref{Mrecur}) specialised to $ k=0 $, after evaluating the expansion coefficients
\begin{align*}
   \delta_{3,2}(a) &= \frac{\sigma_6-1}{q^{5/2}a^2} ,
  \\
   \delta_{3,1}(a) &= \frac{1}{q^{5/2}a^2}[qa(\sigma_5-\sigma_1)-(1+q)(1+qa^2)(\sigma_6-1)] ,
  \\
   \delta_{3,0}(a) &= \frac{1}{q^{3/2}a^2}[(1+a^2+a^4)(\sigma_6-1)-(a+a^3)(\sigma_5-\sigma_1)+a^2(\sigma_4-\sigma_2)] .
\end{align*}
The second recurrence follows from combining the first with the general identity
\begin{equation*}
   a_k I(qa_j,\ldots,a_k) -a_j I(a_j,\ldots,qa_k) = (a_k-a_j)(1-a_ja_k) I(a_j,\ldots,a_k) ,
\end{equation*}
which applies to any integral with products of $ \phi_{\infty}(x;a_j)\phi_{\infty}(x;a_k) $ in 
the denominator of the integrand and any distinct pair $ a_j \neq a_k $.
\end{proof}

\begin{remark}\label{Check}
A particular solution of (\ref{M=3int_Recur:b}) which serves as a concrete example of a 
moment sequence is one taken from \cite{Wi_2010b}
\begin{multline}
  m_{0,0}(t) = 
  t^{1/2}(q\alpha t,\alpha^{-1}t^{-1},q^{1/2}\alpha t,q^{1/2}\alpha^{-1}t^{-1};q)_{\infty}
         \frac{(a_1^{-1}\sigma_4\alpha t,a_2^{-1}\sigma_4\alpha t,a_3^{-1}\sigma_4\alpha t,a_4^{-1}\sigma_4\alpha t;q)_{\infty}}
              {(a_1\alpha t,a_2\alpha t,a_3\alpha t,a_4\alpha t,t^{-2},\sigma_4\alpha^2t^2;q)_{\infty}}
  \\ \times
   {}_{8}W_{7}(q^{-1}\sigma_4\alpha^2t^2;q^{-1}\sigma_4\alpha^2,a_1\alpha t,a_2\alpha t,a_3\alpha t,a_4\alpha t;q\alpha^{-2})
  + (t \mapsto t^{-1}) ,
\label{M=3AWmoment}
\end{multline}
although this is not a general solution of the $q$-difference equation, given subsequently
as (\ref{3TermMoment}).
\end{remark}

Henceforth 
we denote $ \tilde{\sigma}_j $ as the $j$-th elementary symmetric function of the parameters 
$ q^{-1/2}a_1, \dots, q^{-1/2}a_6 $. If $ \sigma_k $ denotes the $k$-th 
elementary symmetric function of $ a_1, a_2, a_3, a_4 $ then
\begin{align}
  \tilde{\sigma}_1 & = q^{-1/2}(\sigma_1+2\alpha u) ,
  \\
  \tilde{\sigma}_2 & = q^{-1}(\sigma_2+\alpha^2+2\alpha \sigma_1 u) ,
  \\
  \tilde{\sigma}_3 & = q^{-3/2}(\sigma_3+\alpha^2\sigma_1+2\alpha \sigma_2 u) ,
  \\
  \tilde{\sigma}_4 & = q^{-2}(\sigma_4+\alpha^2\sigma_2+2\alpha \sigma_3 u) ,
  \\
  \tilde{\sigma}_5 & = q^{-5/2}(\alpha^2\sigma_3+2\alpha \sigma_4 u) ,
  \\
  \tilde{\sigma}_6 & = q^{-3}\alpha^2\sigma_4 .
\end{align}

We will find it advantageous to define two "analogs" of the integers or half-integers $ s $ by
$ [s] := q^{s}\tilde{\sigma}_6-q^{-s} $ and $ \{s\} := q^{s}\tilde{\sigma}_6+q^{-s} $.
For convenience we employ the notations $ w_{\pm} $ for the evaluations
$ w_{\pm} \equiv W(\pm 1) 
 = \pm 1-\tilde{\sigma}_1\pm \tilde{\sigma}_2-\tilde{\sigma}_3\pm \tilde{\sigma}_4-\tilde{\sigma}_5\pm \tilde{\sigma}_6 $.
We also require the coordinates $ x_j $ and $ \tilde{x}_j $ defined in terms of the
parameters by
$ x_j=\tfrac{1}{2}(a_j+a_j^{-1}) $ and $ \tilde{x}_j=\tfrac{1}{2}(q^{-1/2}a_j+q^{1/2}a_j^{-1}) $.
Finally let us define the polynomial 
$ w(z) := \prod^4_{j=1}(1-q^{-1/2}a_j z) $ which is not to be confused with the weight $ w(x;u) $.

\begin{definition}
Note that we have six free parameters at our disposal, 
$ a_1, \ldots, a_4, \alpha \in \C, n\in\Z_{\geq 0} $ and one variable $ t $
subject henceforth to the following {\it generic conditions} - 
\begin{enumerate}
 \item
  $ q \neq 1 $, 
 \item
  $ |q^{-1/2}a_j| \neq 1 $ so that $ \tilde{x}_j \notin (-1,1) $ for $ j=1,\ldots, 4 $,
 \item
  $ |q^{-1/2}\alpha | \neq 1 $ so that $ q^{-1/2}\alpha+q^{1/2}\alpha^{-1} \notin (-1,1) $,
 \item
  $ \alpha \neq 0, \pm q^{1/2} $,
 \item
  $ t \neq \pm 1 $,
 \item
  $ [n], [n+\tfrac{1}{2}] \neq 0 $.
\end{enumerate}
Situations where one or more of the above conditions are violated have to be treated separately,
which we refrain from doing here.
\end{definition}

As a consequence of (\ref{deform_AW_SCff:a}) we have
\begin{gather}
  W(x) = 4(1+\tilde{\sigma}_6)x^3-2(\tilde{\sigma}_1+\tilde{\sigma}_5)x^2
        +(\tilde{\sigma}_2+\tilde{\sigma}_4-3-3\tilde{\sigma}_6)x
        +\tilde{\sigma}_1-\tilde{\sigma}_3+\tilde{\sigma}_5 ,
\label{deform_AW_SCff:b} \\
  V(x) = \frac{1}{q^{1/2}-q^{-1/2}}
        \left[ -4(1-\tilde{\sigma}_6)x^2
        +2(\tilde{\sigma}_1-\tilde{\sigma}_5)x+1-\tilde{\sigma}_2+\tilde{\sigma}_4-\tilde{\sigma}_6
        \right] ,
\label{deform_AW_SCff:c}
\end{gather}
The theory detailed in Section \ref{SpectralS} allow us to evaluate the third spectral
data polynomial.
\begin{proposition}[\cite{Wi_2010b}]\label{M=3L=1U}
The polynomial $ U(x) $ is given by
\begin{equation}
   U(x) = \frac{4}{(q^{1/2}-q^{-1/2})}\left[ m_{0,0}\left( -2[\tfrac{1}{2}]x+q^{1/2}\tilde{\sigma}_5-q^{-1/2}\tilde{\sigma}_1 \right)-[1](m_{0,+}+m_{0,-}) \right] ,
\label{M=3U}
\end{equation}
where $ m_{0,\pm} = \int \ddoAW x\,w(x;u)z^{\pm} $.
\end{proposition}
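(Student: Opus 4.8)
The plan is to extract $U(x)$ from the inhomogeneous Stieltjes equation (\ref{spectral_DD_st}), $W\ddoAW_x f = 2V\moAW_x f + U$, by using only the leading-order behaviour of $f(x)$ as $x\to\infty$ together with the explicit form (\ref{deform_AW_SCff:a}) of the spectral data. Since the weight here has $M=3$, Proposition \ref{spectral_DDO} and Corollary \ref{spectralCff_polynom} tell us $\deg_x U = M-2 = 1$, so $U$ is an affine function of $x$; determining it amounts to fixing two constants. The natural route is to expand both sides of (\ref{spectral_DD_st}) about $x=\infty$ using the Stieltjes function expansion (\ref{Sf_expand})--(\ref{xLarge_SF:a}): for the $q$-quadratic lattice $f(x) = f_\infty(x)/\phi_\infty(x;a) - 2a\sum_{n\ge0} q^n m_{0,n}(a)/\phi_{n+1}(x;a)$, and the $\phi_\infty$-part is annihilated (up to a rational factor) by the homogeneous operator $W\ddoAW_x - 2V\moAW_x$ by (\ref{DDO_basis:c})--(\ref{DDO_basis:d}) since $\phi_\infty(x;a)$ satisfies an analogous first-order equation. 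Hence $U$ is detected entirely by the $\sum m_{0,n}$ tail, and in fact only by its first one or two terms.

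Concretely, I would write $f(x) = m_{0,0}/x + (\text{terms involving } m_{0,+}, m_{0,-})$-type leading data — more precisely, using the basis $\phi_n$ one has $f(x) = g_0^{-1}\sum_n m_{0,n}(a) q^n / \phi_{n+1}(x;a) + (\phi_\infty\text{-remainder})$ with $\phi_1(x;a) = 1 - 2a\moAW_x x + \ldots$, so the large-$x$ leading coefficient of $f$ is governed by $m_{0,0}$, and the subleading ones by $m_{0,n}$ for small $n$. Apply $\ddoAW_x$ and $\moAW_x$ to this expansion via (\ref{DDO_basis:a})--(\ref{DDO_basis:b}) at $r=1,2$; multiply by the explicit cubic $W$ from (\ref{deform_AW_SCff:b}) and the quadratic $V$ from (\ref{deform_AW_SCff:c}); collect the result as a Laurent series in $x$. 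Since we already know the left side minus $2V\moAW_x f$ is a polynomial of degree exactly $1$, the nonnegative-power part of the collected series must terminate at $x^1$, and reading off the coefficients of $x^1$ and $x^0$ gives two linear expressions in $m_{0,0}$ and the combination $m_{0,+}+m_{0,-}$ (note $m_{0,\pm} = \int\ddoAW x\, w(x;u) z^\pm$ are, up to the basis-change (\ref{Bchange}) relating $\phi_1$ to $z^{\pm1}$, exactly the low moments appearing in the tail). After substituting the symmetric-function expressions for $W,V$ in terms of $\tilde\sigma_j$ and the shorthands $[s] = q^s\tilde\sigma_6 - q^{-s}$, $\{s\} = q^s\tilde\sigma_6 + q^{-s}$, the two coefficients collapse to the stated $-2[\tfrac12]x + q^{1/2}\tilde\sigma_5 - q^{-1/2}\tilde\sigma_1$ multiplying $m_{0,0}$ and $-[1]$ multiplying $(m_{0,+}+m_{0,-})$, both scaled by $4/(q^{1/2}-q^{-1/2})$, which is (\ref{M=3U}).

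An alternative, and perhaps cleaner, derivation I would also sketch uses the initial-value data (\ref{spectral_coeff_init}): $\Theta_0 = -\gamma_0^2 U$ and $\Omega_1 = -2V - \gamma_0^2(\moAW_x x - b_0)U$. From the large-$x$ expansion of $\Theta_0$ in Proposition \ref{spectral_Cff_Exp}, together with the fact (Corollary \ref{spectralCff_polynom}) that $\deg_x\Theta_0 = M-2 = 1$, one gets $\Theta_0$ as an explicit affine polynomial with coefficients built from $W\pm\Delta yV$ evaluated via (\ref{deform_AW_SCff:a}); dividing by $-\gamma_0^2$ and expressing $\gamma_0^2 = g_0(a)g_0(b)\Delta_0/\Delta_1 = m_{0,0}^{-1}$ (up to the basis normalisation, using (\ref{ops_gammaDelta}) and $\Delta_1 = m_{0,0}$) converts the $\gamma_0^{-2}$ prefactor into $m_{0,0}$, and the remaining boundary/second-order piece in $\Theta_0$'s definition (\ref{spectral_coeff:b}) at $n=0$ supplies the $m_{0,+}+m_{0,-}$ term after using the Cauchy-type expansion (\ref{xLarge_SF:a}) to identify $\int\ddoAW x\, w(x;u) z^\pm$ as the relevant low-order moment integrals.

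The main obstacle is bookkeeping: one must carry the $q$-quadratic lattice data ($\moAW_x x$, $\Delta y^2$, the $\phi_r$ action formulae) through a product with a cubic times the expansion of $f$, and then verify that the genuinely nontrivial cancellation occurs — namely that the coefficients of $x^{k}$ for $k\ge 2$ in $W\ddoAW_x f - 2V\moAW_x f$ vanish, which is forced by the Proposition but must be checked to land on the claimed normalisation. The delicate point is correctly matching the normalisations: relating $\gamma_0$, $h_0$, $m_{0,0}$, and the two "half-moments" $m_{0,\pm}$ (which are integrals of the bare $z^{\pm1}$, not of a normalized basis polynomial) so that the $4/(q^{1/2}-q^{-1/2})$ prefactor and the clean $[\cdot],\{\cdot\}$ shorthands emerge exactly. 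I would handle this by fixing once and for all the $q$-quadratic normalisation conventions from Table \ref{snul_basis} ($g_0 = 1$, $c_r$, $d_r$, $e_r$ as listed) before starting the expansion, so that every moment appearing is unambiguously $\int\ddoAW x\, w\, \phi_j$, and only at the end rewrite $\phi_1 = 1 - 2a\moAW_x x + \ldots$ in terms of $z^{\pm1}$ to produce $m_{0,+}+m_{0,-}$.
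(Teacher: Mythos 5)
Your main route---reading off the degree-one polynomial $U$ from the large-$x$ expansion of the nodal form $(W-\Delta yV)f(y_+)-(W+\Delta yV)f(y_-)=\Delta y\,U$ in the canonical $\phi$-basis, so that after multiplying by the cubic data only $m_{0,0}$ and $m_{0,1}(a)=(1+a^2)m_{0,0}-a(m_{0,+}+m_{0,-})$ survive in the coefficients of $x^1$ and $x^0$---is essentially the paper's own proof, which organises the same expansion through the coefficients $\kappa_{3,2},\kappa_{3,3}$ of $E^{+}_x(W+\Delta yV)+E^{-}_x(W-\Delta yV)$ and defers the bookkeeping to the cited companion paper. One small correction: the $f_{\infty}/\phi_{\infty}$ remainder is harmless not because it is annihilated by $W\ddoAW_{x}-2V\moAW_{x}$ (it is not, in general), but because $1/\phi_{\infty}(x;a)$ decays faster than any power of $x$, so that piece lies beyond all orders of the $1/x$ expansion and cannot contribute to the polynomial part being extracted.
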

\begin{proof}
We start with
\begin{equation}
   \Delta yU(x) = (W-\Delta yV)f(y_{+})-(W+\Delta yV)f(y_{-}) ,
\label{aux:Z}
\end{equation}
and employ the proof in \cite{Wi_2010b}. Having achieved the task of expanding the right-hand side of (\ref{aux:Z}) in
canonical basis polynomials, all we require are explicit expressions for the coefficients
given below 
\begin{align}
  \kappa_{3,2}(a) & = \frac{1}{2q^{9/2}a^3}[(1+q+q^2)(1+q^2a^2)(1+\sigma_6)-q^2a(\sigma_1+\sigma_5)] ,
  \\
  \kappa_{3,3}(a) & = -\frac{1+\sigma_6}{2q^{9/2}a^3} .
\end{align}
The result is (\ref{M=3U}).
\end{proof}

\subsection{Spectral Structure}
Our next task is to construct a minimal parameterisation of the spectral matrix $ A_n(x;u) $
and is the subject of the following proposition.
We also recall the large $ x $ expansions for the spectral data and coefficients of a more explicit nature which will be
of importance
\begin{align} 
   W\pm \Delta yV & \sim \left[ w\pm (q^{1/2}-q^{-1/2})v \right] x^M + \ldots ,
\label{AWwgt_regular}\\
   W_n & \sim \tfrac{1}{4}\left[ (1+q^n)(1+q^{-n})w+(q^{1/2}-q^{-1/2})(q^n-q^{-n})v \right] x^M + \dots ,
\\
   \Theta_n & \sim \left[ \frac{q^{n+1/2}-q^{-n-1/2}}{q^{1/2}-q^{-1/2}}w
                           +(q^{n+1/2}+q^{-n-1/2})v \right] x^{M-2} + \ldots ,
\\
   \Omega_n+V & \sim \tfrac{1}{2} \left[ \frac{q^{n}-q^{-n}}{q^{1/2}-q^{-1/2}}w
                           +(q^{n}+q^{-n})v \right] x^{M-2} + \ldots .
\end{align} 
for some constants $ w,v $ independent of $ n,x $.

\begin{proposition}\label{SS_parameterise}
Let one of our free variables, $ \lambda_n(u) $, be the zero of $ \Theta_n(x;u) $ 
with respect to $ x $. 
Also define the other independent variables for $ n \geq 0 $ by
\begin{equation}
  \nu_n(u) \equiv 2W_n(\lambda_n;u)-W(\lambda_n;u) , \quad
  \mu_n(u) \equiv \Omega_n(\lambda_n;u)+V(\lambda_n;u) ,
\end{equation}
which are related by the spectral conic equation
\begin{equation}
   \nu_n^2-W^2(\lambda_n) = \Delta(\lambda_n^2-1)\left[ \mu_n^2-V^2(\lambda_n) \right] ,
\label{SPconic:a}
\end{equation}
again valid for $ n \geq 0 $.
Let us assume that $ \lambda_n(u) \neq \pm 1 $ for all $ n, u $.
The spectral coefficients for the $ M=3 $ deformed Askey-Wilson OP system are 
parameterised by $ \lambda_n $, and either $ \nu_n $ or $ \mu_n $, through the expressions
\begin{equation}
  2W_n(x;u)-W(x;u) 
  = \frac{x^2-1}{\lambda_n^2-1}\nu_n
  +(x-\lambda_n)\left[4\{n\}(x^2-1)+\tfrac{1}{2}w_{+}\frac{x+1}{1-\lambda_n}+\tfrac{1}{2}w_{-}\frac{x-1}{1+\lambda_n}
              \right] ,
\label{deform_AW_spec:a}
\end{equation}
\begin{multline}
 \Omega_n(x;u)+V(x;u) = \mu_n+4\frac{[n]}{q^{1/2}-q^{-1/2}}x(x-\lambda_n)
 +\frac{16}{q^{1/2}-q^{-1/2}}\frac{\tilde{\sigma}_6}{[n]}
   \left[ \frac{\tilde{\sigma}_1\tilde{\sigma}_6+\tilde{\sigma}_5}{4\tilde{\sigma}_6}-\lambda_n
   \right](x-\lambda_n)
 \\
 +\frac{1}{q^{1/2}-q^{-1/2}}\frac{\{n\}}{[n]}(x-\lambda_n)
  \left[ \frac{\nu_n}{\lambda_n^2-1}+\tfrac{1}{2}w_{+}\frac{1}{1-\lambda_n}+\frac{1}{2}w_{-}\frac{1}{1+\lambda_n}
  \right] ,
\label{deform_AW_spec:c}
\end{multline}
and
\begin{equation}
  \Theta_n(x;u) = 8\frac{[n+\tfrac{1}{2}]}{q^{1/2}-q^{-1/2}}(x-\lambda_n) ,
\label{deform_AW_spec:b}
\end{equation}
for $ n \geq 0 $.
\end{proposition}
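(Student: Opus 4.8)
The plan is to parameterise the three spectral polynomials by imposing the structural constraints derived in Section \ref{SpectralS} rather than by re-deriving them from the bilinear formulae of Proposition \ref{spectral_Cff}. First I would fix the degrees: by Corollary \ref{spectralCff_polynom} applied to the $M=3$ case we have $\deg_x W_n = 3$, $\deg_x \Omega_n = 2$, $\deg_x \Theta_n = 1$, so $\Theta_n$ has exactly one zero, which we call $\lambda_n(u)$; together with the leading-order coefficient read off from (\ref{spectral_coeffEXP:b}) and the explicit large-$x$ data (\ref{AWwgt_regular}) this immediately forces $\Theta_n(x;u) = 8\,[n+\tfrac12]\,(x-\lambda_n)/(q^{1/2}-q^{-1/2})$, which is (\ref{deform_AW_spec:b}). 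Here one uses that the leading coefficient $w$ of $W\pm\Delta yV$ is, from (\ref{deform_AW_SCff:a}), equal to $4(1+\tilde\sigma_6)\mp\ldots$ so that the constant in front reduces to the shorthand $[n+\tfrac12]=q^{n+1/2}\tilde\sigma_6-q^{-n-1/2}$; this is a routine symmetric-function simplification.

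Next I would pin down $2W_n-W$. It is a cubic in $x$. Its leading coefficient is again fixed by (\ref{spectral_coeffEXP:a}) and the explicit data, giving the $4\{n\}(x^2-1)$ piece (modulo lower order). To fix the rest I would use three evaluations: at $x=\pm 1$, where $\Delta y^2=(q^{1/2}-q^{-1/2})^2(x^2-1)$ vanishes, the recurrence (\ref{spectral_coeff_recur:a}) collapses to $W_{n+1}(\pm1)=W_n(\pm1)$, so $W_n(\pm1)=W_0(\pm1)=W(\pm1)=w_\pm$ for all $n$, hence $(2W_n-W)(\pm1)=w_\pm$; and at $x=\lambda_n$ the value is $\nu_n$ by definition. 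A cubic with prescribed leading term and three prescribed values is uniquely determined, and Lagrange-interpolating through $x=1,-1,\lambda_n$ with the correction for the $(x^2-1)$-behaviour yields exactly (\ref{deform_AW_spec:a}); this requires checking that the residual quadratic, after subtracting $4\{n\}(x-\lambda_n)(x^2-1)$ is absorbed, but that is a short computation once one notes the $\{n\}$ term vanishes at $x=\pm1$ too.

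Finally, for $\Omega_n+V$ (degree $2$) I would use the trace-type recurrence (\ref{spectral_coeff_recur:b}), $\Omega_{n+1}+\Omega_n+2V=(\moAW_x x - b_n)\Theta_n$, together with the conic relation (\ref{spectral_bilinear}) evaluated at $x=\lambda_n$ — which gives precisely (\ref{SPconic:a}) after substituting $\Theta_n(\lambda_n)=0$ and multiplying out $\tfrac14\Delta y^2$ at $x=\lambda_n$. The leading coefficient of $\Omega_n+V$ is $4[n]/(q^{1/2}-q^{-1/2})$ from (\ref{spectral_coeffEXP:c}); the value at $x=\lambda_n$ is $\mu_n$ by definition; and the subleading coefficient must be compatible with (\ref{spectral_coeff_recur:c}) (or equivalently with the $x^{M-2}$ term of the product $W_n\Theta_n$). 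Writing $\Omega_n+V = \mu_n + (x-\lambda_n)(\alpha_n x + \beta_n)$ and solving the two linear conditions for $\alpha_n,\beta_n$ — using the already-established form of $W_n$ and $\Theta_n$, and the symmetric-function identities for $\tilde\sigma_j$ listed before the proposition — produces (\ref{deform_AW_spec:c}). The main obstacle I anticipate is exactly this last step: extracting $\beta_n$ cleanly. It forces one through the $x^1$-coefficient of (\ref{spectral_coeff_recur:b}), which mixes $\nu_n$, $w_\pm$, and the ratio $\{n\}/[n]$, and the factor $16\tilde\sigma_6/[n]\,[(\tilde\sigma_1\tilde\sigma_6+\tilde\sigma_5)/(4\tilde\sigma_6)-\lambda_n]$ only emerges after recognising $(\tilde\sigma_1\tilde\sigma_6+\tilde\sigma_5)/(4\tilde\sigma_6)$ as (half) the $x^2$-coefficient of $W$ divided by its leading coefficient — i.e. as a combination forced by matching the $x$-linear term of $W_n$. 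Keeping track of which elementary-symmetric combination is which, and verifying the genericity hypotheses $[n],[n+\tfrac12]\neq0$ and $\lambda_n\neq\pm1$ are used exactly where division occurs, is the delicate bookkeeping that carries the proof.
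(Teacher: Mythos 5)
Your treatment of $\Theta_n$, of $2W_n-W$, and of the conic (\ref{SPconic:a}) is correct and takes a genuinely different route from the paper. The paper obtains $w_0,w_1,w_2$ by expanding the determinantal identity (\ref{spectral_bilinear}) coefficient-wise into the quadratic system (\ref{deform_6})--(\ref{deform_0}) and then resolving a sign ambiguity $\epsilon,\epsilon'=\pm1$ against the $n=0$ data; your observation that (\ref{spectral_coeff_recur:a}) telescopes at $x=\pm1$, where $\Delta y^2$ vanishes, to give $W_n(\pm1)=W(\pm1)$ for all $n$ — so that $2W_n-W$ is the unique cubic with leading coefficient $4\{n\}$ and prescribed values at the three distinct nodes $1,-1,\lambda_n$ — delivers (\ref{deform_AW_spec:a}) directly and avoids the sign resolution entirely. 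Your derivation of (\ref{SPconic:a}) by evaluating (\ref{spectral_bilinear}) at $x=\lambda_n$, where $\Theta_n$ vanishes, is also exactly right.

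The gap is in the determination of the subleading coefficient of $\Omega_n+V$. Writing $\Omega_n+V=\mu_n+(x-\lambda_n)(\alpha_nx+\beta_n)$, the expansion (\ref{spectral_coeffEXP:c}) gives $\alpha_n=4[n]/(q^{1/2}-q^{-1/2})$ and the value at $\lambda_n$ is built in, so what is needed is one further scalar equation for $\beta_n$ \emph{in terms of data at the same $n$}. The equations you propose do not supply it: the $x^2$ coefficient of (\ref{spectral_coeff_recur:b}) is an identity, while its $x^1$ and $x^0$ coefficients are two equations in the unknowns $\beta_n$, $\beta_{n+1}$ and $b_n$ (and, through $v_0^{(n+1)}$, also $\mu_{n+1},\lambda_{n+1}$), and (\ref{spectral_coeff_recur:c}) brings in $a_n^2$, $a_{n+1}^2$, $\Theta_{n\pm1}$ besides. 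At best these yield a forward recursion seeded by $\Omega_0=0$, which is not the closed form (\ref{deform_AW_spec:c}) as a function of $(\lambda_n,\nu_n)$ at fixed $n$. The missing ingredient is the $x^5$ coefficient of the bilinear identity (\ref{spectral_bilinear}) itself — equation (\ref{deform_5}) of the paper, $2w_2w_3-2\Delta v_1v_2=-C_\infty e_1$ — which determines $v_1$, hence $\beta_n$, directly from the already-established $w_2,w_3$ and the weight datum $e_1$, using the genericity condition $[n]\neq0$. (Relatedly, the combination $(\tilde{\sigma}_1\tilde{\sigma}_6+\tilde{\sigma}_5)/(4\tilde{\sigma}_6)$ arises from $e_1$, the subleading coefficient of $W^2-\Delta y^2V^2$, not from the $x^2$ coefficient of $W$.) Since you already invoke (\ref{spectral_bilinear}) at $x=\lambda_n$, extending that use to its $x^5$ coefficient closes the argument.
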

\begin{proof}
Firstly we recall the spectral data polynomials given by (\ref{deform_AW_SCff:a},
\ref{deform_AW_SCff:b},\ref{deform_AW_SCff:c}).
The product $ W^2-\Delta y^2V^2 $ plays a significant role and therefore we
define another set of elementary symmetric polynomials by
\begin{equation}
  W^2-\Delta y^2V^2 = C_{\infty}\prod^{6}_{j=1}(x-\tilde{x}_j) 
   = C_{\infty}\left[ x^6-e_1x^5+e_2x^4-e_3x^3+e_4x^2-e_5x+e_6 \right]  ,
\end{equation}
where $ C_{\infty}=64\tilde{\sigma}_6 $.
We parameterise the spectral coefficients in the following way
\begin{gather}
   2W_n-W = w_3x^3+w_2x^2+w_1x+w_0 ,
 \\
  \Theta_n = \varpi_+(x-\lambda_n) ,\quad \Theta_{n-1} = \varpi_-(x-\lambda_{n-1}) ,
 \\
  \Omega_n+V = v_2x^2+v_1x+v_0 ,
\end{gather}
where the leading order coefficients in each are trivial and given by using (\ref{deform_AW_SCff:b})
and (\ref{deform_AW_SCff:c}) in the expansions (\ref{spectral_coeffEXP:a}), 
(\ref{spectral_coeffEXP:b}) and (\ref{spectral_coeffEXP:c})
\begin{equation}
  w_3 =  4\{n\},
 \quad
 \varpi_+ = 8\frac{[n+\tfrac{1}{2}]}{q^{1/2}-q^{-1/2}},
 \quad
  v_2 = 4\frac{[n]}{q^{1/2}-q^{-1/2}}.
\end{equation}
From the fundamental bi-linear relation (\ref{spectral_bilinear}) we get a
system of quadratic polynomial equalities
\begin{align}
   w_3^2-\Delta v_2^2 & = C_{\infty} ,
\label{deform_6} \\
   2w_2w_3-2\Delta v_1v_2 & = -C_{\infty} e_1 ,
\label{deform_5} \\
   2w_1w_3+w_2^2-\Delta(v_1^2-v_2^2+2v_0v_2)+a_n^2\Delta\varpi_+\varpi_- & = C_{\infty} e_2 ,
\label{deform_4} \\
   2w_1w_2+2w_0w_3-2\Delta(v_0v_1-v_1v_2)-a_n^2\Delta\varpi_+\varpi_-(\lambda_n+\lambda_{n-1}) & = -C_{\infty} e_3 ,
\label{deform_3} \\
   w_1^2+2w_0w_2-\Delta(v_0^2-v_1^2-2v_0v_2)+a_n^2\Delta\varpi_+\varpi_-(\lambda_n\lambda_{n-1}-1) & = C_{\infty} e_4 ,
\label{deform_2} \\
   2w_0w_1+2\Delta v_0v_1+a_n^2\Delta\varpi_+\varpi_-(\lambda_n+\lambda_{n-1}) & = -C_{\infty} e_5 ,
\label{deform_1} \\
   w_0^2+\Delta v_0^2-a_n^2\Delta\varpi_+\varpi_-\lambda_n\lambda_{n-1} & = C_{\infty} e_6 ,
\label{deform_0}
\end{align}
where $ \Delta $ is, again, defined as $ (q^{1/2}-q^{-1/2})^2 $.
Now (\ref{deform_0},\ref{deform_2},\ref{deform_4},\ref{deform_6}) imply 
$ (w_0+w_2)^2+(w_1+w_3)^2=C_{\infty}(1+e_2+e_4+e_6) $ while 
(\ref{deform_1},\ref{deform_3},\ref{deform_5})
imply $ 2(w_0+w_2)(w_1+w_3)=-C_{\infty}(e_1+e_3+e_5) $. Forming the sum and difference of these
two later relations we conclude that
\begin{equation}
   w_1+w_3 = \frac{\epsilon W(1)+\epsilon' W(-1)}{2}, \quad w_0+w_2 = \frac{\epsilon W(1)-\epsilon' W(-1)}{2} ,
\label{aux5}
\end{equation}
where $ \epsilon,\epsilon'=\pm 1 $ and are yet to be determined.
Using (\ref{aux5}) along with the above definition we can solve for $ w_0,w_2 $,
which only leaves the signs $ \epsilon,\epsilon' $ unresolved. These can be
fixed by requiring that the $ n=0 $ evaluation of $ 2W_n-W $ precisely
reproduces $ W $. This is identically true for all free parameters $ a_1,\ldots,a_6 $
provided $ \epsilon=+1,\epsilon'=-1 $, and yields (\ref{deform_w0},\ref{deform_w2}). 
The above coefficients are given by
\begin{align}
  w_2 & = \frac{\nu_n}{\lambda_n^2-1}-w_3\lambda_n-\tfrac{1}{2}W(1)\frac{1}{\lambda_n-1}+\tfrac{1}{2}W(-1)\frac{1}{\lambda_n+1} ,
\label{deform_w2} \\
  w_1 & = -w_3+\tfrac{1}{2}W(1)-\tfrac{1}{2}W(-1) ,
\label{deform_w1} \\
  w_0 & =-\frac{\nu_n}{\lambda_n^2-1}+w_3\lambda_n+\tfrac{1}{2}W(1)\frac{\lambda_n}{\lambda_n-1}+\tfrac{1}{2}W(-1)\frac{\lambda_n}{\lambda_n+1} .
\label{deform_w0}
\end{align}
We observe that this condition also gives $ \nu_0=W(\lambda_0) $, as it must.
Through knowledge of $ w_2 $ and utilising (\ref{deform_5}) we can determine $ v_1 $,
which is given in (\ref{deform_v1}). 
\begin{equation}
  \frac{\Delta v_2}{w_3}v_1 = \frac{\nu_n}{\lambda_n^2-1}-w_3\lambda_n+\tfrac{1}{2}\frac{C_{\infty}e_1}{w_3}
                            -\tfrac{1}{2}W(1)\frac{1}{\lambda_n-1}+\tfrac{1}{2}W(-1)\frac{1}{\lambda_n+1} ,
\label{deform_v1}
\end{equation}
However to find the remaining coefficient $ v_0 $ we require $ \mu_n $, and this 
result is given by (\ref{deform_v0}). 
\begin{equation}
  \frac{\Delta v_2}{w_3}v_0 = \frac{\Delta v_2}{w_3}\mu_n-\frac{\lambda_n}{\lambda_n^2-1}\nu_n+\frac{C_{\infty}}{w_3}\lambda_n^2
 -\tfrac{1}{2}\frac{C_{\infty}e_1}{w_3}\lambda_n+\tfrac{1}{2}W(1)\frac{\lambda_n}{\lambda_n-1}-\tfrac{1}{2}W(-1)\frac{\lambda_n}{\lambda_n+1} .
\label{deform_v0}
\end{equation}
When we examine $ \Omega_n+V $ at $ n=0 $ (recall that $ \Omega_0=0 $), we find in addition to
previously found relations the equality $ \mu_0=V(\lambda_0) $, again confirming our
definition. Therefore we have succeeded in relating the sub-leading coefficients explicitly
in terms of two independent variables.
\end{proof}

We now address the question of representations for the three-term recurrence coefficients.
We shall find that another set of variables, although equivalent to $ \nu_n $ and 
$ \mu_n $, will lead to the simplest forms for the relations we seek. 
\begin{align}
  w_{2,n} & := \frac{\nu_n}{\lambda_n^2-1}-4\{n\}\lambda_n
      -\tfrac{1}{2}W(1)\frac{1}{\lambda_n-1}+\tfrac{1}{2}W(-1)\frac{1}{\lambda_n+1} , 
\label{xfmV:a}  \\
  v_{0,n} & := (q^{1/2}-q^{-1/2})\mu_n-4[n]\lambda_n^2
                -\frac{1}{[n]}\lambda_n\left( \{n\}w_{2,n}+4(\tilde{\sigma}_1\tilde{\sigma}_6+\tilde{\sigma}_5) \right) .
\label{xfmV:b}
\end{align}
To achieve this outcome we require the following recurrence relations.
\begin{proposition}
The dynamical variables $ w_{2,n}(u) $ and $ v_{0,n}(u) $ satisfy the following system of first order 
coupled recurrence relations in $ n $
\begin{equation}
  w_{2,n+1}-w_{2,n} = -4(q^{1/2}-q^{-1/2})[n+\tfrac{1}{2}]\lambda_n , 
\label{deform_AW_recur:a}
\end{equation}
\begin{equation}
  v_{0,n+1}+v_{0,n}
   = -\frac{[n+\tfrac{1}{2}]}{[n+1][n]}
    \left( 2\{n+\tfrac{1}{2}\}\lambda_n w_{2,n}+8[n+\tfrac{1}{2}][n]\lambda_n^2
             +4(q^{1/2}+q^{-1/2})(\tilde{\sigma}_1\tilde{\sigma}_6+\tilde{\sigma}_5)\lambda_n \right) ,
\label{deform_AW_recur:b}
\end{equation}
and valid for $ n \geq 0 $ and subject to the initial conditions $ U(\lambda_0;u) = 0 $,
$ w_{2,0} = -2(\tilde{\sigma}_1+\tilde{\sigma}_5) $ and 
$ v_{0,0} = 1-\tilde{\sigma}_2+\tilde{\sigma}_4-\tilde{\sigma}_6 $.
\end{proposition}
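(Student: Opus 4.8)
The plan is to recognise $w_{2,n}$ and $v_{0,n}$ as particular Taylor coefficients of the already-parameterised spectral coefficients, and then to read off their $n$-dynamics from the compatibility recurrences of Proposition \ref{spectral_Cff_recur}. First I would note that, because the coefficient of $x^3$ in $2W_n-W$ is $4\{n\}$ (as recorded in the proof of Proposition \ref{SS_parameterise}), the definition (\ref{xfmV:a}) together with (\ref{deform_w2}) says precisely that $w_{2,n}$ is the coefficient of $x^2$ in $2W_n(x;u)-W(x;u)$. Likewise, evaluating the parameterisation (\ref{deform_AW_spec:c}) at $x=0$, using the rearrangement $\frac{\nu_n}{\lambda_n^2-1}+\tfrac12 w_{+}\frac1{1-\lambda_n}+\tfrac12 w_{-}\frac1{1+\lambda_n}=w_{2,n}+4\{n\}\lambda_n$ of (\ref{xfmV:a}) and the elementary identity $\{n\}^2-[n]^2=4\tilde\sigma_6$, one finds that (\ref{xfmV:b}) is exactly $v_{0,n}=(q^{1/2}-q^{-1/2})$ times the value of $\Omega_n+V$ at $x=0$. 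These two identifications reduce the entire statement to extracting one Taylor coefficient from each of the polynomial recurrences (\ref{spectral_coeff_recur:a}) and (\ref{spectral_coeff_recur:b}).

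For (\ref{deform_AW_recur:a}) I would simply rewrite (\ref{spectral_coeff_recur:a}) as $2W_{n+1}-W=(2W_n-W)+\tfrac12\Delta y^2\Theta_n$, substitute $\Delta y^2=(q^{1/2}-q^{-1/2})^2(x^2-1)$ and the explicit $\Theta_n$ from (\ref{deform_AW_spec:b}), so that the increment becomes $4(q^{1/2}-q^{-1/2})[n+\tfrac12](x^2-1)(x-\lambda_n)$, and read off the coefficient of $x^2$, which is $-4(q^{1/2}-q^{-1/2})[n+\tfrac12]\lambda_n$.

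For (\ref{deform_AW_recur:b}) I would use (\ref{spectral_coeff_recur:b}) in the form $(\Omega_{n+1}+V)+(\Omega_n+V)=(\moAW_x x-b_n)\Theta_n$ with $\moAW_x x=\tfrac12(q^{1/2}+q^{-1/2})x$, and compare coefficients of the two quadratics in $x$. Matching constant terms gives $v_{0,n+1}+v_{0,n}=8[n+\tfrac12]\lambda_n b_n$. Matching the coefficient of $x^1$ — after using the already-established (\ref{deform_AW_recur:a}) to eliminate $w_{2,n+1}$, and observing that the $\lambda_{n+1}$ appearing explicitly there cancels against the $\lambda_{n+1}$ hidden in the subleading coefficient of $\Omega_{n+1}+V$ — expresses $8[n+\tfrac12]b_n$ as an explicit combination of $w_{2,n}$, $\lambda_n$ and the $\tilde\sigma_j$. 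Multiplying by $\lambda_n$ and collapsing the result with the identities $[n]+[n+1]=(q^{1/2}+q^{-1/2})[n+\tfrac12]$, $\{n+1\}[n]+\{n\}[n+1]=2\{n+\tfrac12\}[n+\tfrac12]$ and $(q^{1/2}-q^{-1/2})\{n+1\}-(q^{1/2}+q^{-1/2})[n+1]=-2[n+\tfrac12]$ then produces (\ref{deform_AW_recur:b}) exactly. (The $x^2$-matching is only a consistency check.)

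The initial values are immediate: $w_{2,0}$ is the coefficient of $x^2$ in $2W_0-W=W$, namely $-2(\tilde\sigma_1+\tilde\sigma_5)$ by (\ref{deform_AW_SCff:b}); $v_{0,0}=(q^{1/2}-q^{-1/2})V(0;u)$ since $\Omega_0=0$, which equals $1-\tilde\sigma_2+\tilde\sigma_4-\tilde\sigma_6$ by (\ref{deform_AW_SCff:c}); and $\lambda_0$, being the zero of $\Theta_0=-\gamma_0^2U$ (see (\ref{spectral_coeff_init})), is characterised by $U(\lambda_0;u)=0$. The main obstacle I anticipate is purely organisational: controlling the bookkeeping of the $[n]$, $\{n\}$ symbols, and above all verifying that every $\lambda_{n+1}$-, $\mu_{n+1}$- and $w_{2,n+1}$-contribution to the $x^1$-matching cancels, so that the formula for $b_n$ — and hence the recurrence for $v_{0,n}$ — genuinely closes at level $n$; that cancellation is the crux. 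Throughout one works under the generic conditions, in particular $[n],[n+1],[n+\tfrac12]\neq0$ and $\lambda_n\neq\pm1$.
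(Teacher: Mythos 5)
Your proof is correct and follows essentially the same route as the paper: identify $w_{2,n}$ and $v_{0,n}$ as coefficients of the parameterised $2W_n-W$ and $\Omega_n+V$, substitute the explicit forms of $\Theta_n$ and $\Delta y^2$ into the compatibility recurrences (\ref{spectral_coeff_recur:a}) and (\ref{spectral_coeff_recur:b}), and match powers of $x$. The only organisational difference is that you fold the $x^1$-coefficient matching — which yields the closed form for $b_n$ that the paper states separately as (\ref{deform_AW_spec:e}) in the following proposition — directly into this argument, thereby making explicit the elimination of $b_n$ from the constant-term identity $v_{0,n+1}+v_{0,n}=8[n+\tfrac{1}{2}]b_n\lambda_n$, a step the paper's terse proof leaves implicit.
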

\begin{proof}
For the first recurrence we employ our expressions (\ref{deform_AW_spec:a}) and (\ref{deform_AW_spec:b})
in (\ref{spectral_coeff_recur:a}) and equate the coefficients. The coefficients of $ x^3 $ and $ x $ are identically
satisfied whereas that of either $ x^2 $ or $ x^0 $ yields (\ref{deform_AW_recur:a}).
The second recurrence follows from the examination of the $ x^0 $ terms in (\ref{spectral_coeff_recur:b})
and the employment of the first recurrence. We note the terms in $ x^2 $ cancel identically, as they must.
\end{proof}

Explicit evaluations of the three term recurrence coefficients can be given in terms of this alternative set of variables.
\begin{proposition}
The three-term recurrence coefficients are found to be given by
\begin{multline}
 16[n+\tfrac{1}{2}][n]^2[n-\tfrac{1}{2}] a_n^2
  = \tilde{\sigma}_6 w_{2,n}^2 + 2(\tilde{\sigma}_1\tilde{\sigma}_6+\tilde{\sigma}_5)\{n\} w_{2,n} + 2[n]^3 v_{0,n}
\\
    + 2[n]^2\left( 4\tilde{\sigma}_6+2\tilde{\sigma}_4+2\tilde{\sigma}_2\tilde{\sigma}_6+2\tilde{\sigma}_1\tilde{\sigma}_5
                   -\{n\}(1+\tilde{\sigma}_2+\tilde{\sigma}_4+\tilde{\sigma}_6)+2[n]^2 \right)
    + 4(\tilde{\sigma}_1\tilde{\sigma}_6+\tilde{\sigma}_5)^2 ,
\label{deform_AW_spec:d}
\end{multline}
for $ n \geq 0 $ assuming $ a^2_0 = 0 $, and
\begin{equation}
  [n+1][n]b_n =
  -\tfrac{1}{4}\{n+\tfrac{1}{2}\} w_{2,n} -[n+\tfrac{1}{2}][n]\lambda_n
   -\tfrac{1}{2}(q^{1/2}+q^{-1/2})(\tilde{\sigma}_1\tilde{\sigma}_6+\tilde{\sigma}_5)
\label{deform_AW_spec:e}
\end{equation}
again valid for $ n \geq 0 $.
\end{proposition}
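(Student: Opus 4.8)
The plan is to read both formulae off the spectral structure of Section \ref{SpectralS} once the $M=3$ parameterisation of Proposition \ref{SS_parameterise} is in hand: the evaluation (\ref{deform_AW_spec:e}) for $b_n$ will come from the spectral-coefficient recurrence (\ref{spectral_coeff_recur:b}), while the evaluation (\ref{deform_AW_spec:d}) for $a_n^2$ will come from the $x^4$-coefficient of the fundamental bilinear relation (\ref{spectral_bilinear}), i.e.\ from equation (\ref{deform_4}) appearing in the proof of Proposition \ref{SS_parameterise}. Two elementary dictionaries between the symmetric functions $e_1,e_2$ of the $\tilde{x}_j$ and the $\tilde{\sigma}_j$ will be needed: since $W^2-\Delta y^2V^2=\prod_{j=1}^{6}(1-q^{-1/2}a_jz)(1-q^{-1/2}a_jz^{-1})$ one has $\tilde x_j=\tfrac12(q^{-1/2}a_j+(q^{-1/2}a_j)^{-1})$, whence $C_\infty e_1=32(\tilde\sigma_1\tilde\sigma_6+\tilde\sigma_5)$ and $C_\infty e_2=16(\tilde\sigma_1\tilde\sigma_5+\tilde\sigma_2\tilde\sigma_6+\tilde\sigma_4-6\tilde\sigma_6)$. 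I will also repeatedly use the bookkeeping identities $\{n\}^2-[n]^2=4\tilde\sigma_6$, $[n]+[n+1]=(q^{1/2}+q^{-1/2})[n+\tfrac12]$, $[n]\{n+1\}+[n+1]\{n\}=2[n+\tfrac12]\{n+\tfrac12\}$ and $(q^{1/2}-q^{-1/2})\{n+1\}-(q^{1/2}+q^{-1/2})[n+1]=-2[n+\tfrac12]$, all immediate from $[s]=q^s\tilde\sigma_6-q^{-s}$, $\{s\}=q^s\tilde\sigma_6+q^{-s}$.

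For $b_n$: write $\Omega_n+V=v_2x^2+v_{1,n}x+v_0$ with $v_2=4[n]/(q^{1/2}-q^{-1/2})$ as in Proposition \ref{SS_parameterise}. Feeding (\ref{deform_v1}) through the definition (\ref{xfmV:a}) of $w_{2,n}$ and using $C_\infty e_1=32(\tilde\sigma_1\tilde\sigma_6+\tilde\sigma_5)$ collapses the $x$-coefficient of $\Omega_n+V$ to the compact shape $v_{1,n}=\big([n](q^{1/2}-q^{-1/2})\big)^{-1}\big(\{n\}w_{2,n}+4(\tilde\sigma_1\tilde\sigma_6+\tilde\sigma_5)\big)$ (equivalently $\{n\}w_{2,n}+4(\tilde\sigma_1\tilde\sigma_6+\tilde\sigma_5)=[n](q^{1/2}-q^{-1/2})v_{1,n}$). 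Then compare the coefficients of $x$ in (\ref{spectral_coeff_recur:b}), i.e.\ $(\Omega_{n+1}+V)+(\Omega_n+V)=(\moAW_{x}x-b_n)\Theta_n$, with $\moAW_{x}x=\tfrac12(q^{1/2}+q^{-1/2})x$ and $\Theta_n$ given by (\ref{deform_AW_spec:b}); this yields $v_{1,n+1}+v_{1,n}=-8[n+\tfrac12](q^{1/2}-q^{-1/2})^{-1}\big(\tfrac12(q^{1/2}+q^{-1/2})\lambda_n+b_n\big)$. Inserting the compact form of $v_{1,n}$ and $v_{1,n+1}$, eliminating $w_{2,n+1}$ by the recurrence (\ref{deform_AW_recur:a}), clearing denominators and applying the four bookkeeping identities above produces exactly (\ref{deform_AW_spec:e}).

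For $a_n^2$: the $x^4$-coefficient of the bilinear identity (\ref{spectral_bilinear}) is precisely equation (\ref{deform_4}), namely $a_n^2\,\Delta\varpi_+\varpi_-=C_\infty e_2-2w_3w_{1,n}-w_{2,n}^2+\Delta(v_{1,n}^2-v_2^2+2v_2v_0)$, where $\Delta\varpi_+\varpi_-=64[n+\tfrac12][n-\tfrac12]$ because the leading coefficients of $\Theta_n$, $\Theta_{n-1}$ are $8[n+\tfrac12]/(q^{1/2}-q^{-1/2})$ and $8[n-\tfrac12]/(q^{1/2}-q^{-1/2})$. Substitute the subleading coefficients of Proposition \ref{SS_parameterise}: $w_{2,n}=w_2$, $w_{1,n}=-4\{n\}+(1+\tilde\sigma_2+\tilde\sigma_4+\tilde\sigma_6)$ (from (\ref{deform_w1}) and $w_{\pm}=W(\pm1)$), $v_2^2=16[n]^2/\Delta$, the compact form of $v_{1,n}$ above, and the simplification $v_{0,n}=(q^{1/2}-q^{-1/2})v_0$ to which the definition (\ref{xfmV:b}) reduces (so that $\Delta\cdot2v_2v_0=8[n]v_{0,n}$); then invoke $C_\infty e_2=16(\tilde\sigma_1\tilde\sigma_5+\tilde\sigma_2\tilde\sigma_6+\tilde\sigma_4-6\tilde\sigma_6)$ and $\{n\}^2-[n]^2=4\tilde\sigma_6$. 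The term $\Delta v_{1,n}^2$ supplies the $\tilde\sigma_6w_{2,n}^2/[n]^2$, $(\tilde\sigma_1\tilde\sigma_6+\tilde\sigma_5)\{n\}w_{2,n}/[n]^2$ and $(\tilde\sigma_1\tilde\sigma_6+\tilde\sigma_5)^2/[n]^2$ contributions, the residual $w_{2,n}^2$ being absorbed by $\{n\}^2-[n]^2=4\tilde\sigma_6$; the remaining constant terms combine via the $e_2$-dictionary. Multiplying the resulting identity through by $\tfrac14[n]^2$ gives (\ref{deform_AW_spec:d}). At $n=0$ the recurrence (\ref{spectral_coeff_recur:a}) forces $W_0=W$, the bilinear relation becomes vacuous since $\Theta_{-1}=0$, so $a_0^2$ is unconstrained and we set $a_0^2=0$; the closed form is then consistent at $n=0$ because $\nu_0=W(\lambda_0)$, $\mu_0=V(\lambda_0)$, equivalently $w_{2,0}=-2(\tilde\sigma_1+\tilde\sigma_5)$, $v_{0,0}=1-\tilde\sigma_2+\tilde\sigma_4-\tilde\sigma_6$.

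The $b_n$ identity is essentially routine once the compact form of $v_{1,n}$ is recognised. The main obstacle is the $a_n^2$ computation: it stands or falls on correctly unwinding the two transformed variables (\ref{xfmV:a}) and (\ref{xfmV:b}) — in particular on the identifications $v_{0,n}=(q^{1/2}-q^{-1/2})v_0$ and $\{n\}w_{2,n}+4(\tilde\sigma_1\tilde\sigma_6+\tilde\sigma_5)=[n](q^{1/2}-q^{-1/2})v_{1,n}$ — together with the two symmetric-function translations, after which the $q$-shift identities for $[s]$ and $\{s\}$ drive the cancellations. A viable alternative, in the spirit of the second proof of Proposition \ref{spectral_det}, is to use (\ref{spectral_coeff_recur:c}) to show that $16[n+\tfrac12][n]^2[n-\tfrac12]a_n^2$ minus the right side of (\ref{deform_AW_spec:d}) satisfies a perfect first-order difference equation in $n$ and sum it from $n=0$ using $a_0^2=0$.
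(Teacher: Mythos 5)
Your proposal is correct and follows essentially the same route as the paper: $b_n$ is extracted from the $x$-coefficient of (\ref{spectral_coeff_recur:b}) after eliminating $w_{2,n+1}$ via (\ref{deform_AW_recur:a}), and $a_n^2$ is solved from the $e_2$-equation (\ref{deform_4}) of the bilinear relation. The only (cosmetic) difference is that the paper fixes the $n$-independent constant $C_\infty e_2$ by evaluating the identity at $n=0$ using $a_0^2=0$, whereas you compute $C_\infty e_2=16(\tilde\sigma_1\tilde\sigma_5+\tilde\sigma_2\tilde\sigma_6+\tilde\sigma_4-6\tilde\sigma_6)$ explicitly; your key identifications $v_{0,n}=(q^{1/2}-q^{-1/2})v_0$ and $\{n\}w_{2,n}+4(\tilde\sigma_1\tilde\sigma_6+\tilde\sigma_5)=[n](q^{1/2}-q^{-1/2})v_1$ both check out against (\ref{deform_v1}), (\ref{deform_v0}) and (\ref{xfmV:a}), (\ref{xfmV:b}).
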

\begin{proof}
Using (\ref{deform_4}) we can solve for $ a_n^2 $. Observe that the right-hand side of
this equation is independent of $ n $ whereas individual terms on the left-hand side are. If
we assume $ a_0^2=0 $ then the right-hand side is equal to that of the left-hand side evaluated
at $ n=0 $, which we know because we can express it simply in terms of the parameters.  
This means $ a_n^2 $ is expressible as a sum of differences and after considerable simplification 
we arrive at the expression (\ref{deform_AW_spec:d}). To find (\ref{deform_AW_spec:e}) we start with 
(\ref{spectral_coeff_recur:b}) and use our previous results for (\ref{deform_AW_spec:b}) and 
(\ref{deform_AW_spec:c}).
Examining the terms in $ x $, we find an expression for $ b_n $ in terms of $ \lambda_n, \nu_n $
and $ \lambda_{n+1}, \nu_{n+1} $, or equivalently in terms of $ \lambda_n, w_{2,n} $ and
$ \lambda_{n+1}, w_{2,n+1} $. Using (\ref{deform_AW_recur:a}) we can eliminate the $ w_{2,n+1} $
term and after some factorisation we arrive at (\ref{deform_AW_spec:e}).
\end{proof}
We will not pursue the theory for the $ n \mapsto n+1 $ recurrences any further here but refer the reader
to \cite{Wi_2010c}. Consequently we will drop the $ n $ subscript from most variables until Subsection \ref{seed}.

The forgoing parameterisation of our system given in Proposition \ref{SS_parameterise} is not useful 
in the investigations of the $u$ or
$t$-evolution of our system and we require an alternative construction. In conformance with this we 
define the auxiliary variables $ l(t) $ and 
$ \mathfrak{z}_{\pm}(t) $ by
\begin{gather}
  \lambda := \frac{1}{2}(l+l^{-1}) ,
\\
  \mathfrak{z}_{\pm} := \nu\pm\frac{1}{2}(q^{1/2}-q^{-1/2})[l-l^{-1}]\mu ,
\end{gather}
where, in the first case, the inversion is given by the branch whereby $ l \to \infty $ when 
$ \lambda \to \infty $ and an identical choice is made for the second case.
\begin{proposition}
The spectral matrix elements have an alternative parameterisation
\begin{equation}
  \mathfrak{T}_{+}(z;t) = 2a_n[n+\tfrac{1}{2}](z-z^{-1})(z-l)(1-l^{-1}z^{-1}) , 
\label{auxB1}
\end{equation}
and
\begin{multline}
  [n]\mathfrak{W}_{+}(z;t) 
  = \mathfrak{z}_{+}\frac{(z-z^{-1})(z-l^{-1})(q^n\tilde{\sigma}_6-q^{-n}lz^{-1})}{(l-l^{-1})^2}
    +\mathfrak{z}_{-}\frac{(z-z^{-1})(z-l)(q^n\tilde{\sigma}_6-q^{-n}l^{-1}z^{-1})}{(l-l^{-1})^2}
  \\
    +[n](z-z^{-1})(z-l)(1-l^{-1}z^{-1})(q^n\tilde{\sigma}_6z-q^{-n}z^{-1})
  \\
    +[\tilde{\sigma}_6\tilde{\sigma}_1+\tilde{\sigma}_5-2\tilde{\sigma}_6(l+l^{-1})](z-z^{-1})(z-l)(1-l^{-1}z^{-1}) 
  \\
    +\tfrac{1}{2}w_{+}\frac{(z+1)(z-l)(z^{-1}-l)(q^{n}\tilde{\sigma}_6-q^{-n}z^{-1})}{(l-1)^2}
    -\tfrac{1}{2}w_{-}\frac{(z-1)(z-l)(z^{-1}-l)(q^{n}\tilde{\sigma}_6+q^{-n}z^{-1})}{(l+1)^2} ,
\label{auxB3}
\end{multline}
and
\begin{multline}
  [n]\mathfrak{W}_{-}(z;t)
  = \mathfrak{z}_{-}\frac{(z-z^{-1})(z-l^{-1})(q^n\tilde{\sigma}_6lz^{-1}-q^{-n})}{(l-l^{-1})^2}
    +\mathfrak{z}_{+}\frac{(z-z^{-1})(z-l)(q^n\tilde{\sigma}_6l^{-1}z^{-1}-q^{-n})}{(l-l^{-1})^2}
  \\
    -[n](z-z^{-1})(z-l)(1-l^{-1}z^{-1})(q^n\tilde{\sigma}_6z^{-1}-q^{-n}z)
  \\
    -[\tilde{\sigma}_6\tilde{\sigma}_1+\tilde{\sigma}_5-2\tilde{\sigma}_6(l+l^{-1})](z-z^{-1})(z-l)(1-l^{-1}z^{-1}) 
  \\
    +\tfrac{1}{2}w_{+}\frac{(z+1)(z-l)(z^{-1}-l)(q^{n}\tilde{\sigma}_6z^{-1}-q^{-n})}{(l-1)^2}
    +\tfrac{1}{2}w_{-}\frac{(z-1)(z-l)(z^{-1}-l)(q^{n}\tilde{\sigma}_6z^{-1}+q^{-n})}{(l+1)^2} .
\label{auxB4}
\end{multline}
\end{proposition}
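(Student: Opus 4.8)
\textit{Proof strategy.} The plan is to start from the explicit $x$-polynomial forms of $\Theta_n$, $2W_n-W$ and $\Omega_n+V$ supplied by Proposition~\ref{SS_parameterise} (equations (\ref{deform_AW_spec:a}), (\ref{deform_AW_spec:b}), (\ref{deform_AW_spec:c})), insert them into the definitions (\ref{spectral_def}) of $\mathfrak{T}_{+}=\Delta y\,a_n\Theta_n$ and $\mathfrak{W}_{\pm}=2W_n-W\pm\Delta y(\Omega_n+V)$, pass to the uniformising variable through $x=\tfrac12(z+z^{-1})$ and $\Delta y=\tfrac12(q^{1/2}-q^{-1/2})(z-z^{-1})$, effect the change of unknowns $\lambda_n=\tfrac12(l+l^{-1})$, $\nu_n=\tfrac12(\mathfrak{z}_{+}+\mathfrak{z}_{-})$, $\mu_n=(\mathfrak{z}_{+}-\mathfrak{z}_{-})/[(q^{1/2}-q^{-1/2})(l-l^{-1})]$, and then collect and factorise. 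The only elementary identities needed to turn the rational prefactors of Proposition~\ref{SS_parameterise} into the shapes on the right of (\ref{auxB1})--(\ref{auxB4}) are $2(x-\lambda_n)=(z-l)(1-l^{-1}z^{-1})=z^{-1}(z-l)(z-l^{-1})$, $4(\lambda_n^2-1)=(l-l^{-1})^2$ so that $(x^2-1)/(\lambda_n^2-1)=(z-z^{-1})^2/(l-l^{-1})^2$, and $(x\pm1)/(1\mp\lambda_n)=\mp\,l(z\pm1)^2/[z(l\mp1)^2]$; it is these that generate the denominators $(l-l^{-1})^2$, $(l\mp1)^2$ and the numerators $(z-l)(z-l^{-1})$, $(z\pm1)(z-l)(z^{-1}-l)$ seen in the statement.

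The case of $\mathfrak{T}_{+}$ is immediate: (\ref{deform_AW_spec:b}) together with the formula for $\Delta y$ gives $\mathfrak{T}_{+}=4a_n[n+\tfrac12](z-z^{-1})(x-\lambda_n)$, and the identity for $2(x-\lambda_n)$ yields (\ref{auxB1}) at once. For $\mathfrak{W}_{+}$ I would first multiply through by $[n]$ (legitimate under the generic conditions) to clear the factors $[n]^{-1}$ in (\ref{deform_AW_spec:c}), write $[n]\mathfrak{W}_{+}=[n](2W_n-W)+[n]\Delta y(\Omega_n+V)$, and then sort the resulting Laurent polynomial in $z$ into the four independent groups dictated by the structure of (\ref{deform_AW_spec:a})--(\ref{deform_AW_spec:c}): the part linear in $\mathfrak{z}_{\pm}$ (carrying the $\nu_n$- and $\mu_n$-dependence), the part carrying only the $[n],\{n\}$-dependence (from $4\{n\}(x^2-1)(x-\lambda_n)$ and $4[n](q^{1/2}-q^{-1/2})^{-1}x(x-\lambda_n)$), the part proportional to $\tilde{\sigma}_1\tilde{\sigma}_6+\tilde{\sigma}_5$ (including the accompanying $-\lambda_n$ term), and finally the two $w_{\pm}$-parts.

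Each group collapses to a rank-one factor by one and the same device: using $[n]=q^{n}\tilde{\sigma}_6-q^{-n}$, $\{n\}=q^{n}\tilde{\sigma}_6+q^{-n}$ and $(z-l)(1-l^{-1}z^{-1})=z+z^{-1}-l-l^{-1}$, a combination $\alpha[n]+\beta\{n\}$ with $\alpha,\beta$ monomials in $z,l$ reorganises into $q^{n}\tilde{\sigma}_6(\cdots)-q^{-n}(\cdots)$. For instance the $\mathfrak{z}_{+}$-coefficient reduces, after dividing out $(z-z^{-1})/(l-l^{-1})^2$, to $\tfrac12[n](z-z^{-1}+l-l^{-1})+\tfrac12\{n\}(z+z^{-1}-l-l^{-1})=q^{n}\tilde{\sigma}_6(z-l^{-1})+q^{-n}(z^{-1}-l)=(z-l^{-1})(q^{n}\tilde{\sigma}_6-q^{-n}lz^{-1})$, exactly the first term of (\ref{auxB3}); the pure $[n],\{n\}$-group closes because $\{n\}(z-z^{-1})+[n](z+z^{-1})=2(q^{n}\tilde{\sigma}_6z-q^{-n}z^{-1})$, leaving a single factor of $[n]$; and the $w_{+}$-group closes because $[n]\,z^{-1}(z+1)^2+\{n\}(z-z^{-1})=2(z+1)(q^{n}\tilde{\sigma}_6-q^{-n}z^{-1})$, similarly for $w_{-}$ with $z+1\mapsto z-1$, $l-1\mapsto l+1$ and a sign change. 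Rather than repeat the computation for $\mathfrak{W}_{-}$, I would observe that $2W_n-W$ and $\Omega_n+V$ are polynomials in $x$ while $\Delta y$ is odd under $z\mapsto z^{-1}$, so $\mathfrak{W}_{-}(z)=\mathfrak{W}_{+}(z^{-1})$; substituting $z\mapsto z^{-1}$ in (\ref{auxB3}) and using $(z^{-1}-l^{-1})=-(z-l)/(lz)$ and $(z-l)(z^{-1}-l)=1+l^2-l(z+z^{-1})$ transforms it term by term into (\ref{auxB4}). One should also note that all three right-hand sides are invariant under the residual branch ambiguity $l\leftrightarrow l^{-1}$, $\mathfrak{z}_{+}\leftrightarrow\mathfrak{z}_{-}$, so the precise branch chosen in defining $l$ and $\mathfrak{z}_{\pm}$ plays no role, as the statement asserts.

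The argument involves no new idea beyond this reorganisation; the main obstacle is purely one of bookkeeping, since one must keep the four groups of terms apart and reliably recognise the $q^{n}\tilde{\sigma}_6/q^{-n}$ combinations at each stage, so the real risk is an arithmetic slip rather than a genuine difficulty. A convenient safeguard is the $n=0$ specialisation, where $\Omega_0=0$ and $2W_0-W$ must reduce to $W$: this pins down the constant, $\tilde{\sigma}$- and $w_{\pm}$-groups and furnishes an independent check of the algebra.
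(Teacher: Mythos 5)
Your proposal is correct, and every identity you invoke checks out: $(z-l)(1-l^{-1}z^{-1})=2(x-\lambda_n)$, $4(\lambda_n^2-1)=(l-l^{-1})^2$, the $(x\pm1)/(1\mp\lambda_n)$ evaluations, and the three $q^{n}\tilde{\sigma}_6/q^{-n}$ recombination identities all hold, and I verified that your groupings reproduce the $\mathfrak{z}_{+}$-, the pure $[n],\{n\}$-, the $\tilde{\sigma}$- and the $w_{+}$-terms of (\ref{auxB3}) exactly. However, your route is not the one the paper takes as its primary argument. The paper treats (\ref{auxB1})--(\ref{auxB4}) as Laurent interpolating polynomials: it lists the nodal data $\mathfrak{T}_{+}(l^{\pm1})=\mathfrak{T}_{+}(\pm1)=0$, $\mathfrak{W}_{\pm}(l)=\mathfrak{z}_{\pm}$, $\mathfrak{W}_{\pm}(\pm1)=w_{\pm}$, the symmetries $\mathfrak{T}_{+}(z^{-1})=-\mathfrak{T}_{+}(z)$, $\mathfrak{W}_{\pm}(z^{-1})=\mathfrak{W}_{\mp}(z)$, and the leading behaviour as $z\to0,\infty$, and asserts the formulae as the unique Laurent polynomials of the prescribed degree fitting these data, adding only in one sentence that they are "also a consequence of the parameterisation (\ref{deform_AW_spec:a}--\ref{deform_AW_spec:c})" --- which is precisely the alternative you have fleshed out. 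The trade-off is clear: the interpolation argument is short and makes the structural content of each term transparent (each summand is the rank-one interpolant attached to one node), but it silently relies on a degree-counting uniqueness step and on first deriving the nodal values from the definitions of $l$, $\mathfrak{z}_{\pm}$ and $w_{\pm}$; your direct substitution is longer and purely mechanical, but it is self-contained and needs no uniqueness claim. Your observation that $\mathfrak{W}_{-}(z)=\mathfrak{W}_{+}(z^{-1})$ (since $2W_n-W$ and $\Omega_n+V$ are polynomials in $x$ while $\Delta y$ is odd under $z\mapsto z^{-1}$) is exactly the symmetry the paper lists, and correctly spares you the second half of the computation; the $n=0$ consistency check ($\Omega_0=0$, $2W_0-W=W$, $\nu_0=W(\lambda_0)$, $\mu_0=V(\lambda_0)$) is a sensible safeguard and agrees with the initial data recorded in the paper's Subsection on the seed solution.
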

\begin{proof}
These formulae can be viewed as Laurent interpolating polynomials satisfying the 
following evaluations at the given nodes
\begin{align} 
   \mathfrak{T}_{\pm}(z^{-1};t) & = -\mathfrak{T}_{\pm}(z;t) ,
  \\
   \mathfrak{T}_{+}(l^{\pm 1};t) & = 0 ,
  \\
   \mathfrak{T}_{+}(\pm 1;t) & = 0 ,
  \\
   \mathfrak{T}_{+}(z;t) & \underset{z \to \infty}{\sim} 2a_n[n+\tfrac{1}{2}]z^2 ,
  \\
   \mathfrak{T}_{+}(z;t) & \underset{z \to 0}{\sim} -2a_n[n+\tfrac{1}{2}]z^{-2} ,
\end{align} 
and
\begin{align} 
   \mathfrak{W}_{\pm}(z^{-1};t) & =  \mathfrak{W}_{\mp}(z;t) ,
  \\
   \mathfrak{W}_{\pm}(l;t) & = \mathfrak{z}_{\pm} ,
  \\
   \mathfrak{W}_{\pm}(\pm 1;t) & = w_{\pm} ,
  \\
   \mathfrak{W}_{+}(z;t) & \underset{z \to \infty}{\sim} q^n\tilde{\sigma}_6z^3 ,
  \\
   \mathfrak{W}_{+}(z;t) & \underset{z \to 0}{\sim} q^{-n}z^{-3} .
\end{align} 
The above formulae are also a consequence of the parameterisation 
(\ref{deform_AW_spec:a}-\ref{deform_AW_spec:c}).
\end{proof}

\begin{remark}
We note that the eigenvalues of $ A^{*}_n(z;t) $ as $ z \to 0, \infty $ are 
$ q^n\tilde{\sigma}_6, q^{-n} $ whilst those at the fixed points of the 
lattice, of the matrix $ A^{*}_n(\pm 1;t) $, are $ w_{\pm} $.
Under the mapping interchanging the interior and exterior of the unit circle
in the spectral variable $ z \mapsto 1/z $ we observe that
$ \mathfrak{T}_{\pm} \mapsto -\mathfrak{T}_{\pm} $ and
$ \mathfrak{W}_{\pm} \mapsto  \mathfrak{W}_{\mp} $.
As a consequence $  A^{*}_n(z;)A^{*}_n(z^{-1};) = (W^2-\Delta y^2V^2){\rm Id} $,
and therefore the mapping corresponds to a reversal of direction on the spectral
lattice. 
With respect to the mapping $ l \mapsto l^{-1} $ we note that this must be taken
with $ \mathfrak{z}_{+} \leftrightarrow \mathfrak{z}_{-} $, and conclude that
$ \mathfrak{T}_{+}, \mathfrak{W}_{\pm} $ are invariant under this type of 
transformation. In addition $ A^{*}_n $ is symmetrical under $ t \mapsto t^{-1} $.
\end{remark}
 
\subsection{Deformation Structure}
The deformation data polynomials $ R(x;u) $, $ S(x;u) $ are computed as
\begin{equation}
  R\pm \Delta v S 
    = (1-q^{-1/2}\alpha t^{\pm 1}z)(1-q^{-1/2}\alpha t^{\pm 1}z^{-1})
    = \phi_{1}(x;q^{-1/2}\alpha t^{\pm 1}) ,
\label{deform_AW_DCff:a}
\end{equation}
which implies
\begin{align}
  R & = 1-\alpha^2 q^{-1}-2\alpha q^{-1/2}xu+2\alpha^2q^{-1}u^2 ,
\label{deform_AW_DCff:b} \\
  S & = 2\frac{\alpha}{q-1}(\alpha q^{-1/2}u-x) ,
\label{deform_AW_DCff:c}
\end{align}
indicating that $ L=1 $. We observe that the polynomial
\begin{equation}
  R^2-\Delta v^2 S^2 = 4q^{-1}\alpha^2(x-\tilde{x}_5)(x-\tilde{x}_6) ,
\end{equation}
divides
\begin{equation}
  W^2-\Delta y^2 V^2 = 64q^{-3}\sigma_4\alpha^2\prod_{j=1}^{4}(x-\tilde{x}_j)\cdot(x-\tilde{x}_5)(x-\tilde{x}_6) ,
\end{equation}
in conformity with Remark \ref{common_zeros}. The role of the common zeros $ \tilde{x}_5,\tilde{x}_6 $ will be 
crucial in the ensuing investigations. We also note that these points
can be represented on a $ u $-lattice thus
$ \tilde{x}_5 = E_{u}^{-}x_5 $ and $ \tilde{x}_6 = E_{u}^{+}x_6 $. 
 
We can compute the remaining deformation data polynomial $ T(x;u) $, which will be
of degree zero.
\begin{proposition}[\cite{Wi_2010b}]\label{M=3L=1T}
The deformation data polynomial $ T(x;u) $ has the evaluation
\begin{equation}
    T(x;u) = \frac{4\alpha}{q-1}\frac{t\,m_{0,0}(v_{-})-t^{-1}m_{0,0}(v_{+})}{t-t^{-1}} .
\label{M=3Tpoly}
\end{equation}
\end{proposition}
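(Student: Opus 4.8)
The plan is to derive the evaluation of $T(x;u)$ in exactly the same way that the constant spectral polynomial $U(x)$ was obtained earlier (in Propositions \ref{M=3L=1U} and \ref{M=2_SpecCoeff}), namely by taking the defining inhomogeneous deformation divided-difference equation \eqref{deform_DD_st} for the Stieltjes function and isolating the inhomogeneous term. First I would start from the equivalent nodal form of \eqref{deform_DD_st}, i.e.
\begin{equation*}
  \Delta v\,T(x;u) = (R-\Delta vS)(x;u)f(x;v_{+})-(R+\Delta vS)(x;u)f(x;v_{-}),
\end{equation*}
and then use \eqref{deform_AW_DCff:a} to replace $R\pm\Delta vS$ by the factorised form $\phi_{1}(x;q^{-1/2}\alpha t^{\pm1})$. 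Since we have already established from the regular deformed semi-classical structure that $\deg_x T=\deg_x R-1=0$, the polynomial $T$ is a constant in $x$, so it suffices to evaluate the right-hand side at any convenient value of $x$ (or to extract the $x$-independent piece of a suitable expansion).

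The key computational step is to choose the evaluation point that kills the Stieltjes-function contributions. The natural choice is to send $z\to\infty$ (equivalently $x\to\infty$): from \eqref{Sf_expand} or more concretely from \eqref{xLarge_SF:a} one has $f(x;u)=O(1/x)$ as $x\to\infty$, while $R\pm\Delta vS\sim 1\cdot x^{0}\cdot(\text{leading})$; more precisely, from \eqref{deform_AW_DCff:a}, $R\pm\Delta vS=(1-q^{-1/2}\alpha t^{\pm1}z)(1-q^{-1/2}\alpha t^{\pm1}z^{-1})\sim -q^{-1/2}\alpha t^{\pm1}z$ as $z\to\infty$. Multiplying by $f(x;v_{\pm})$, whose leading large-$x$ behaviour is $m_{0,0}(v_{\pm})/x + O(x^{-2})$ (here $x\sim z/2$ so $f\sim 2m_{0,0}/z$), the product $(R\pm\Delta vS)f(;v_{\pm})$ tends to a finite limit $-2q^{-1/2}\alpha t^{\pm1}m_{0,0}(v_{\pm})$. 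Dividing by $\Delta v=\tfrac12(q^{1/2}-q^{-1/2})(t-t^{-1})$ and simplifying gives
\begin{equation*}
  T = \frac{-2q^{-1/2}\alpha\big[t\,m_{0,0}(v_{+})-t^{-1}m_{0,0}(v_{-})\big]}{\tfrac12(q^{1/2}-q^{-1/2})(t-t^{-1})}
    = \frac{4\alpha}{q-1}\,\frac{t\,m_{0,0}(v_{-})-t^{-1}m_{0,0}(v_{+})}{t-t^{-1}},
\end{equation*}
where the sign/index bookkeeping (which of $v_{\pm}$ pairs with $t^{\pm1}$, and the overall sign from $-q^{-1/2}/[\tfrac12(q^{1/2}-q^{-1/2})]=2/(q-1)$) must be tracked carefully; note $v_{+}$ corresponds to $t$-forward and hence to $a_5=\alpha t\mapsto q^{1/2}\alpha t$ etc., so it is $w(x;v_{+})$ that carries the factor associated with $a_6=\alpha t^{-1}$ shifted — this is precisely where the apparent $t\leftrightarrow t^{-1}$ cross-pairing in \eqref{M=3Tpoly} originates.

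The main obstacle I expect is not the analysis but the sign and parameter-pairing bookkeeping: one must be scrupulous about the orientation convention $v_{+}=E_u^{+}u$, the fact that $m_{0,0}(v_{\pm})$ denotes the zeroth moment of the shifted weight, and the relation between the shift $u\mapsto v_{\pm}$ and the shift $\alpha t^{\pm1}\mapsto q^{\pm1/2}\alpha t^{\pm1}$ encoded in \eqref{deform_AW_DCff:a}. A clean way to circumvent a lengthy expansion argument is instead to invoke the initial-value data already recorded: from \eqref{deform_coeff_init:c} we have $\Phi_0=-2H_0\gamma_0(v_-)\gamma_0(v_+)T$, and $\Phi_0$ can be read off directly from the $n=0$ specialisation of the bilinear representation \eqref{deform_coeff:d} together with the known leading behaviour of the deformation coefficients; matching the two expressions for $\Phi_0$ and using $m_{0,0}=1/(\gamma_0^2 a_0)$-type normalisations (cf. the discussion after \eqref{ops_AF_threeT}) yields \eqref{M=3Tpoly} after simplification. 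Either route reduces to the same elementary limit computation, so I would present the $z\to\infty$ evaluation as the primary argument and remark that it also follows from the $\Phi_0$ initial condition, exactly paralleling the Remark following Proposition \ref{M=2_SpecCoeff} about recovering $U$ from $\Theta_0$.
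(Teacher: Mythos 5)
Your approach is essentially the paper's: the proof there likewise starts from $\Delta v\,T(x;u)=(R-\Delta vS)f(x;v_{+})-(R+\Delta vS)f(x;v_{-})$ (its equation (\ref{Teval})) and extracts $T$ from the expansion (\ref{xLarge_SF:a}) of the Stieltjes function — equating coefficients in the canonical basis $\{1/\phi_{n+1}(x;a)\}$ rather than taking a bare $z\to\infty$ limit, but it is the leading coefficient that delivers (\ref{M=3Tpoly}), and your degree bound $\deg_x T\leq\deg_x R-1=0$ legitimises the shortcut; your fallback via $\Phi_0$ and (\ref{deform_coeff_init:c}) is precisely the alternative the paper records. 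One bookkeeping correction: your displayed limit pairs $(R\pm\Delta vS)$ with $f(;v_{\pm})$, whereas the defining relation pairs them crosswise, $(R-\Delta vS)$ with $f(;v_{+})$ and $(R+\Delta vS)$ with $f(;v_{-})$, so the numerator before simplification is $+2q^{-1/2}\alpha\left[t\,m_{0,0}(v_{-})-t^{-1}m_{0,0}(v_{+})\right]$ rather than the expression on the left of your displayed equality (the two sides of that ``$=$'' are not equal as written); with the cross-pairing you yourself flag in words, the computation does yield (\ref{M=3Tpoly}).
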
 
\begin{proof}
We start from the formula
\begin{equation}
    \Delta vT(x;u) = (R-\Delta vS)f(x;v_{+})-(R+\Delta vS)f(x;v_{-}) ,
\label{Teval}
\end{equation}
and employ the expansion (\ref{xLarge_SF:a}) for some parameter $ a $. Setting $ a $ to
one of $ a_1,a_2,a_3,a_4 $ and utilising the identity
\begin{equation*}
   \frac{\phi_{1}(x;b)}{\phi_{n+1}(x;a)} =
   \frac{(1-abq^n)(1-ba^{-1}q^{-n})}{\phi_{n+1}(x;a)}+\frac{ba^{-1}q^{-n}}{\phi_{n}(x;a)} ,
\end{equation*}
we find, by equating coefficients, the result (\ref{M=3Tpoly}). Note that $ m_{0,0} $
doesn't depend on $ a $. If one chooses $ a $ from $ a_5, a_6 $ then one requires the
additional identity
\begin{equation*}
   \frac{1}{\phi_{n}(x;qa)} =
   \frac{(1-q^n)(a^2-q^{-n})}{\phi_{n+1}(x;a)}+\frac{q^{-n}}{\phi_{n}(x;a)} ,
\end{equation*}
in order to merge the two series in (\ref{Teval}).
Alternatively one can compute $ T $ from the initial value (\ref{deform_coeff_init:c})
using the asymptotic expression for $ \Phi_0 $ as given by (\ref{deform_coeff_Exp:b}).
\end{proof}

As a check we can verify that the spectral data polynomials $ W,V,U $ and the deformation data
polynomials $ R,S,T $ satisfy all of the consistency relations formulated in Section \ref{DeformS}.
\begin{corollary}
The spectral data polynomials $ W,V,U $ and the deformation data polynomials $ R,S,T $ satisfy
the consistency relations (\ref{wgt_consist:b}) and (\ref{ST_consist}).
\end{corollary}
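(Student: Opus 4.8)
The plan is to verify the two consistency relations directly by substituting the explicit spectral and deformation data into the identities \eqref{wgt_consist:b} and \eqref{ST_consist} and checking that both sides agree. Since all of the relevant polynomials are now available in closed, factorised form --- the spectral data $W\pm\Delta yV = z^{\mp 3}\prod_{j=1}^{6}(1-a_jq^{-1/2}z^{\pm 1})$ from \eqref{deform_AW_SCff:a}, the deformation data $R\pm\Delta vS = \phi_1(x;q^{-1/2}\alpha t^{\pm 1})$ from \eqref{deform_AW_DCff:a}, and the degree-zero polynomials $U$ and $T$ given by Propositions \ref{M=3L=1U} and \ref{M=3L=1T} --- this is in principle a finite computation. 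The essential structural point is that the parameters $a_5,a_6$ appearing in the spectral data are precisely $\alpha t,\alpha t^{-1}$, so that the deformation data factors are literally two of the six factors of the spectral data evaluated on the appropriate lattice, i.e. $(R\pm\Delta vS)(x;u) = (1-a_{5,6}q^{-1/2}z^{\pm 1})(1-a_{5,6}q^{-1/2}z^{\mp 1})$ with the identifications linking the $t$-shifts $v_\pm$ to the parameter shifts $a_5\mapsto qa_5$, $a_6\mapsto qa_6$.

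First I would establish \eqref{wgt_consist:b}, which is the purely algebraic ``twist'' identity on the ratio $(W+\Delta yV)/(W-\Delta yV)$ and $(R+\Delta vS)/(R-\Delta vS)$. Using \eqref{deform_AW_SCff:a} one has $(W+\Delta yV)/(W-\Delta yV)(x;u) = z^{-6}\prod_{j=1}^{6}(1-a_jq^{-1/2}z)/(1-a_jq^{-1/2}z^{-1})$, and the four fixed factors $j=1,\dots,4$ are $u$-independent and cancel between the two sides of \eqref{wgt_consist:b} after one accounts for the argument shift $v_+\leftrightarrow v_-$; what remains involves only the factors carrying $a_5=\alpha t$, $a_6=\alpha t^{-1}$ on the spectral side and the single factor $\phi_1(\cdot;q^{-1/2}\alpha t^{\pm1})$ on the deformation side. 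The verification then reduces to the elementary observation that shifting $t\mapsto q^{\pm1/2}t$ in a product of the form $(1-\alpha q^{-1/2}t z)(1-\alpha q^{-1/2}t^{-1}z)$ is exactly the same operation whether carried out through the spectral-lattice shift $z\mapsto q^{\pm1/2}z$ or the deformation-lattice shift $t\mapsto q^{\pm1/2}t$, which is a manifestation of the symmetry built into setting the deformation lattice to be of the same $q$-quadratic type. This is essentially a one-line check once the cancellations are made; I would present it by reducing both sides of \eqref{wgt_consist:b} to a common symmetric expression in $z^{\pm1}$, $t^{\pm1}$.

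The harder relation is \eqref{ST_consist}, which mixes the inhomogeneous terms $U$ and $T$. Here I would substitute the evaluations $U(x;u) = \tfrac{4}{q^{1/2}-q^{-1/2}}[m_{0,0}(-2[\tfrac12]x + q^{1/2}\tilde\sigma_5 - q^{-1/2}\tilde\sigma_1) - [1](m_{0,+}+m_{0,-})]$ and $T(x;u) = \tfrac{4\alpha}{q-1}\,(t\,m_{0,0}(v_-) - t^{-1}m_{0,0}(v_+))/(t-t^{-1})$, noting that $U$ is linear in $x$ while $T$ is a constant in $x$. Both sides of \eqref{ST_consist} carry an overall factor $\Delta y$ or $\Delta v$ and the weight-ratio factors already controlled by \eqref{wgt_consist:b}; using the twist $\chi$ from \eqref{twist} one can bring the two sides into a common normalisation. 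What then has to match is a small linear (in $x$, equivalently in $z^{\pm1}$) identity whose coefficients are built from the $\tilde\sigma_j$ (and hence from $\alpha$, $u$, and the fixed $\sigma_k$) together with the moment $m_{0,0}$ and its $t$-shifts $m_{0,0}(v_\pm)$, plus the auxiliary moments $m_{0,\pm}$. The main obstacle I anticipate is bookkeeping the $q$-shifts consistently: the quantities $\tilde\sigma_j$ depend on $u$ through the combinations $\alpha u$, $\alpha^2 u$, etc., so that the $u\mapsto v_\pm$ shifts act nontrivially on them, and one must use the relations $v_++v_- = (q^{1/2}+q^{-1/2})u$, $v_+v_- = u^2-\sin^2\eta$, $v_+-v_- = \tfrac12(q^{1/2}-q^{-1/2})(t-t^{-1})$ carefully to collapse the resulting expressions. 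I would organise this by first checking the coefficient of $x$ (linear part, coming entirely from $U$) and then the $x$-independent part, reducing each to an identity among $m_{0,0}$, $m_{0,0}(v_\pm)$, $m_{0,\pm}$; since the moment recurrences \eqref{M=3int_Recur:a}, \eqref{M=3int_Recur:b} and the $k=0$ case of \eqref{Mrecur} relate exactly these quantities, the remaining linear-algebra step is to invoke those recurrences to close the identity. As a safeguard against a computational slip, I would also note the alternative route for $T$ mentioned in Proposition \ref{M=3L=1T} --- computing it from the initial value \eqref{deform_coeff_init:c} via the asymptotics \eqref{deform_coeff_Exp:b} --- which gives an independent cross-check on the coefficients entering \eqref{ST_consist}.
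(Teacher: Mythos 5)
Your proposal follows the same route as the paper: the relation (\ref{wgt_consist:b}) is checked by elementary cancellation of the $t$-independent factors and direct bookkeeping of the $a_5=\alpha t$, $a_6=\alpha t^{-1}$ factors under the shifts $z\mapsto q^{\pm 1/2}z$ versus $t\mapsto q^{\pm 1/2}t$, while (\ref{ST_consist}) is reduced to moment identities. The paper closes the second check by invoking precisely two such identities --- a relation expressing $[1](m_{0,+}+m_{0,-})(t)$ through $m_{0,0}(t)$ and $m_{0,0}(q^{-1}t)$, and the $t$-recurrence (\ref{3TermMoment}), i.e.\ (\ref{M=3int_Recur:b}) with $a_5\mapsto q^{-3/2}\alpha t$, $a_6\mapsto q^{-1/2}\alpha t^{-1}$ --- which are exactly the quantities and recurrences your plan identifies.
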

\begin{proof}
The first task involving $ W, V $ and $ R, S $ is elementary, whereas the second requires a moment 
relation
\begin{multline}
   [1](m_{0,+}+m_{0,-})(t) =
   \left[ q^{-2}\alpha^2\sigma_3-q^{-1}\sigma_1+q^{-3}(q-\alpha^2)(q^2\alpha^{-1}t^{-1}+\sigma_4\alpha t) \right]m_{0,0}(t)
   \\
   -\frac{w(q^{-1/2}\alpha t)}{\alpha t}\frac{q^{-1/2}t\,m_{0,0}(q^{-1}t)-q^{1/2}t^{-1}m_{0,0}(t)}{q^{-1/2}t-q^{1/2}t^{-1}} ,
\end{multline}
and the moment $q$-difference equation in $ t $
\begin{multline}
   (q^{-1}t-qt^{-1})w(\alpha t^{-1})m_{0,0}(q^{1/2}t)
  \\
   -q^{-1/2}(q^{-1/2}t-q^{1/2}t^{-1})
    \left[ (1+q)(1+q^{-1}\alpha^2\sigma_{2}+q^{-2}\alpha^4\sigma_{4})
               +q^{-1}\alpha^2(q+\sigma_{4})(t-t^{-1})(q^{-1}t-qt^{-1})
    \right.
  \\
    \left.
                   -\alpha(q^{-1/2}t+q^{1/2}t^{-1})(\sigma_{1}+q^{-1}\alpha^2\sigma_{3})
    \right]m_{0,0}(q^{-1/2}t)
  \\
   +(t-t^{-1})w(q^{-1}\alpha t)m_{0,0}(q^{-3/2}t) = 0 ,
\label{3TermMoment}
\end{multline}
which is just (\ref{M=3int_Recur:b}) with $ a_5 \mapsto q^{-3/2}\alpha t, a_6 \mapsto q^{-1/2}\alpha t^{-1} $.
\end{proof}

We know for $ L=1 $ that the deformation coefficients can be parameterised thus
\begin{align}
 \mathfrak{R}_{\pm} & = r_{1\pm}x+r_{0\pm} ,
\label{Dparam:a} \\
 \mathfrak{P}_{\pm} & = p_{\pm} .
\label{Dparam:b}
\end{align}
Furthermore we know from the large $ x $ expansions (\ref{deform_coeff_Exp:a}-\ref{deform_coeff_Exp:e})
what the leading terms are, however in contrast to those of the spectral coefficients
these are related to the three-term recurrence coefficients $ a_n, \gamma_n $ in a nontrivial way.
\begin{corollary}
The leading order terms of the elements of the matrix $ B^{*}_n $ are given by
\begin{align}
  r_{1+} & = 4\alpha q^{-1/2}H_n\frac{t\gamma_n(v_+)}{\gamma_{n-1}(v_-)} ,
\label{deform_AW_DCff:d} \\
  r_{1-} & = 4\alpha q^{-1/2}H_n\frac{\gamma_n(v_-)}{t\gamma_{n-1}(v_+)} ,
\label{deform_AW_DCff:e} \\
  p_{+} & = 4\alpha q^{-1/2}H_n \left[ \frac{t\gamma_n(v_+)}{\gamma_n(v_-)}-\frac{\gamma_n(v_-)}{t\gamma_n(v_+)} \right] ,
\label{deform_AW_DCff:f} \\
  p_{-} & = 4\alpha q^{-1/2}\frac{a_nH_{n-1}}{a_{n-1}}
             \left[ \frac{t\gamma_{n-1}(v_+)}{\gamma_{n-1}(v_-)}-\frac{\gamma_{n-1}(v_-)}{t\gamma_{n-1}(v_+)} \right] ,
\label{deform_AW_DCff:g}
\end{align}
for $ n \geq 1 $ whilst for $ n=0 $ one can use $ \gamma_{-1}=a_0\gamma_0 $.
\end{corollary}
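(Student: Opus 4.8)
The plan is to extract the claimed leading-order terms directly from the general large-$x$ expansions of the deformation coefficients established in Proposition \ref{deform_Cff_Exp}, by substituting the specific deformation data polynomials (\ref{deform_AW_DCff:a}) for this $M=3$, $L=1$ case. First I would take the expressions (\ref{deform_coeff_Exp:a}), (\ref{deform_coeff_Exp:b}), (\ref{deform_coeff_Exp:c}) for $\tfrac{2}{H_n}R_n$, $\tfrac{\Delta v}{2H_n}\Phi_n$, $\tfrac{\Delta v}{2H_n}\Psi_n$ and read off the coefficient of the relevant power of $x$: for $R_n$ (and $\Gamma_n$, $\Xi_n$) the top degree is $L=1$, so the coefficient of $x^1$; for $\Phi_n$ and $\Psi_n$ the top degree is $L-1=0$, so the constant term extracted from the $x^{-1}$ coefficient of the expansion after multiplying through by $\Delta v$. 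Recall that the definitions (\ref{deform_def:a}--\ref{deform_def:c}) give $\mathfrak{R}_\pm = \mathfrak{R}_\pm(x)$ as the linear polynomials whose leading (degree-one) coefficients are $r_{1\pm}$, and $\mathfrak{P}_+ = -\Delta v\,\Phi_n = p_+$, $\mathfrak{P}_- = \Delta v\,\Psi_n = p_-$ are the constants.

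The key computational input is the large-$x$ behaviour of $R\pm\Delta vS$. From (\ref{deform_AW_DCff:a}) one has $R\pm\Delta vS = (1-q^{-1/2}\alpha t^{\pm1}z)(1-q^{-1/2}\alpha t^{\pm1}z^{-1})$, and since $x=\tfrac12(z+z^{-1})$ we get $z\sim 2x$ as $x\to\infty$ on the relevant Riemann sheet, hence $R\pm\Delta vS \sim -2q^{-1/2}\alpha t^{\pm1}x + {\rm O}(1)$. Substituting this asymptotic into (\ref{deform_coeff_Exp:a}): the first bracket $\bigl[\tfrac{R-\Delta vS}{\gamma_{n-1}(v_+)}+\tfrac{R+\Delta vS}{\gamma_{n-1}(v_-)}\bigr]$ contributes at order $x$ while the second line in (\ref{deform_coeff_Exp:a}) is ${\rm O}(1)$; comparing with $\tfrac{2}{H_n}R_n = \tfrac{2}{H_n}(r_{1+}x + r_{0+})$ and matching the $x$-coefficient gives $r_{1+} = -\tfrac{H_n}{2}(\gamma_n(v_+)+\gamma_n(v_-))\cdot\bigl(-2q^{-1/2}\alpha t^{-1}/\gamma_{n-1}(v_+) - 2q^{-1/2}\alpha t/\gamma_{n-1}(v_-)\bigr)$, which after cancellation does \emph{not} immediately look like (\ref{deform_AW_DCff:d}) — so here one must instead use $\mathfrak R_+$ rather than $2R_n/H_n$. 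The cleaner route is to work directly from the matrix product evaluation in Corollary \ref{deform_product}: the combination $\mathfrak R_+ = 2R_n + 2H_n\tfrac{R-\Delta vS}{a_n(v_+)}+\Delta v\Gamma_n$ is, up to normalization, the $(1,1)$-entry of $\bigl(1-\tfrac12\Delta vB_n\bigr)^{-1}\bigl(1+\tfrac12\Delta vB_n\bigr)$ times $\tfrac{2H_n(R+\Delta vS)}{a_n(v_-)}$, and by the rank-one decomposition in (\ref{deform_prod}) the leading term of $\mathfrak R_+$ is governed by $-a_n(v_-)p_n(;v_+)q_{n-1}(;v_-)\cdot\tfrac{2H_n(R+\Delta vS)}{a_n(v_-)}$ evaluated at large $x$. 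Using the leading asymptotics $p_n(x;v_+)\sim\gamma_n(v_+)x^n$ from (\ref{ops_pExp}) and $q_{n-1}(x;v_-)\sim\gamma_{n-1}(v_-)^{-1}x^{-n}$ from (\ref{ops_eExp}), together with $R+\Delta vS\sim -2q^{-1/2}\alpha t\,x$, I would obtain $r_{1+} = 4\alpha q^{-1/2}H_n\,\gamma_n(v_+)/\gamma_{n-1}(v_-)$ after the signs combine. The analogous matching for $\mathfrak R_-$, $\mathfrak P_+=-\Delta v\Phi_n$, and $\mathfrak P_-=\Delta v\Psi_n=-\tfrac{a_n}{a_{n-1}}\Delta v\Phi_{n-1}$ (using (\ref{deform_reln:a}) and the $H_n/H_{n-1}=a_n/a_{n-1}$ normalization) produces (\ref{deform_AW_DCff:e}--\ref{deform_AW_DCff:g}).

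The main obstacle I anticipate is purely bookkeeping of signs and of the several normalization constants: the factor $-2q^{-1/2}\alpha t^{\pm1}$ from $z\sim 2x$, the overall $-a_n(v_-)$ and $+a_n(v_-)\tfrac{R-\Delta vS}{R+\Delta vS}$ prefactors in (\ref{deform_prod}), the definition of $H_n$ with its recursion $H_n/H_{n-1}=a_n/a_{n-1}$, and the conventions in (\ref{deform_def_n=0}) for $n=0$ together with $\gamma_{-1}=a_0\gamma_0$. None of these involve any conceptual difficulty; the statement follows once the expansions of Proposition \ref{deform_Cff_Exp} (or equivalently Corollary \ref{deform_product}) are combined with the explicit form (\ref{deform_AW_DCff:a}) of the deformation data and the known leading behaviours (\ref{ops_pExp}), (\ref{ops_eExp}) of $p_n$ and $q_n$. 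For $n=0$ one checks directly against the initial values (\ref{deform_coeff_init:a}--\ref{deform_coeff_init:d}) with the substitution $\gamma_{-1}=a_0\gamma_0$, which makes the $n=0$ case of (\ref{deform_AW_DCff:d}--\ref{deform_AW_DCff:g}) consistent with (\ref{deform_def_n=0}).
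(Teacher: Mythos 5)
Your proposal is correct and matches the paper's (unwritten) derivation: the corollary is simply the specialisation of the large-$x$ expansions of Proposition \ref{deform_Cff_Exp} (equivalently, of the rank-one product formula (\ref{deform_prod})) to the data $R\pm\Delta vS=(1-q^{-1/2}\alpha t^{\pm1}z)(1-q^{-1/2}\alpha t^{\pm1}z^{-1})\sim-2q^{-1/2}\alpha t^{\pm1}x$, together with the identifications (\ref{deform_def:a}--\ref{deform_def:c}) and $H_n/H_{n-1}=a_n/a_{n-1}$. Incidentally, your first route does close without detouring through (\ref{deform_prod}): assembling $\mathfrak{R}_{+}=2R_n+2H_n(R-\Delta vS)/a_n(v_+)+\Delta v\,\Gamma_n$ from (\ref{deform_coeff_Exp:a}) and (\ref{deform_coeff_Exp:d}) and using $1/a_n(v_+)=\gamma_n(v_+)/\gamma_{n-1}(v_+)$ makes the $(R-\Delta vS)/\gamma_{n-1}(v_+)$ contributions cancel, leaving $\mathfrak{R}_{+}\sim-2H_n(R+\Delta vS)\,\gamma_n(v_+)/\gamma_{n-1}(v_-)$, which is exactly (\ref{deform_AW_DCff:d}).
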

Up to this point we haven't exercised a choice regarding the decoupling factor $ H_n $ but one can take
$ H_n=\tfrac{1}{2}a_n $ henceforth.

\begin{remark}
We note that the eigenvalues of $ B^{*}_n(z;t) $ as $ z \to 0, \infty $ are $ r_{1\pm} $. 
By construction $ B^{*}_n $ is symmetrical with respect to $ z \mapsto z^{-1} $.
Under the mapping interchanging the interior and exterior of the unit circle
in the deformation variable $ t \mapsto 1/t $ we observe 
(see (\ref{deform_def:a}), (\ref{deform_def:b}) and (\ref{deform_def:c})) that
$ \mathfrak{P}_{\pm} \mapsto -\mathfrak{P}_{\pm} $ and
$ \mathfrak{R}_{\pm} \mapsto  \mathfrak{R}_{\mp} $.
As a consequence
\begin{equation}
   B^{*}_n(;t)B^{*}_n(;t^{-1}) = \frac{4H_n^2}{a_n(v_{-})a_n(v_{+})}(R^2-\Delta v^2S^2){\rm Id},
\end{equation}
and therefore this mapping corresponds to a reversal of direction on the deformation
lattice. 
\end{remark}

As part of an efficient methodology we will require formulae which relate the 
lower order terms, with respect to $ x $, of the deformation coefficients to the leading order term. 
\begin{proposition}\label{AB_resolve_z}
The trailing terms of the deformation coefficients are related to each other and to the spectral 
parameterisation by
\begin{align}
   p_{+} & = r_{1+}a_n(v_-)-r_{1-}a_n(v_+) ,
\label{Dclose:a}  \\
   p_{-} & = r_{1+}a_n(v_+)-r_{1-}a_n(v_-) ,
\label{Dclose:b}  \\
   r_{0+} & = \frac{r_{1+}}{\Delta v_2}\left[ -w_2(v_+)+w_2(v_-)
                           -2\sqrt{\Delta}(q^{-1/2}\alpha+q^{1/2}\alpha^{-1})(\tilde{\sigma}_6q^nt-q^{-n}t^{-1}) \right] ,
\label{Dclose:c}  \\  
   r_{0-} & = \frac{r_{1-}}{\Delta v_2}\left[ w_2(v_+)-w_2(v_-)
                           -2\sqrt{\Delta}(q^{-1/2}\alpha+q^{1/2}\alpha^{-1})(\tilde{\sigma}_6q^nt^{-1}-q^{-n}t) \right] ,
\label{Dclose:d}  
\end{align}
\end{proposition}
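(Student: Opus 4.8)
The plan is to obtain all four relations from the component-wise compatibility equations \eqref{S+D:a}--\eqref{S+D:d} by evaluating them at the two movable zeros of the deformation polynomial $R^2-\Delta v^2S^2$, namely $\tilde x_5$ and $\tilde x_6$, together with an examination of leading coefficients in $x$. By Remark \ref{common_zeros} these are among the zeros of $W^2-\Delta y^2V^2$, so by \eqref{Bi-spectral} the spectral matrix $A^*_n$ becomes rank one there, and by \eqref{Bi-deform} the deformation matrix $B^*_n$ becomes rank one there as well; this degeneracy is exactly what forces the trailing coefficients to be expressible through the leading ones.

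First I would establish \eqref{Dclose:a} and \eqref{Dclose:b}. Since $\mathfrak{P}_{\pm}=\mp\Delta v\,\Phi_n$ and $\Delta v\,\Psi_n$ are constants in $x$ (degree $L-1=0$), while $\mathfrak{R}_{\pm}=r_{1\pm}x+r_{0\pm}$, the bilinear relation \eqref{Bi-deform} for $\det B^*_n$ reads $\mathfrak{R}_+\mathfrak{R}_-+\mathfrak{P}_+\mathfrak{P}_- = \tfrac{4H_n^2}{\hat a_n\check a_n}(R^2-\Delta v^2S^2)$; comparing the $x^2$ coefficients gives $r_{1+}r_{1-}=\tfrac{4H_n^2}{\hat a_n\check a_n}\cdot 4q^{-1}\alpha^2$, which is consistent with the explicit leading terms \eqref{deform_AW_DCff:d}--\eqref{deform_AW_DCff:g} under $H_n=\tfrac12 a_n$. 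The cleanest route to \eqref{Dclose:a}, \eqref{Dclose:b} is then the recurrence \eqref{deform_def:c}: from \eqref{deform_AW_DCff:f} one reads $p_+ = 4\alpha q^{-1/2}H_n[t\gamma_n(v_+)/\gamma_n(v_-)-\gamma_n(v_-)/(t\gamma_n(v_+))]$, and substituting the expressions \eqref{deform_AW_DCff:d}, \eqref{deform_AW_DCff:e} for $r_{1\pm}$ together with the standard relation $a_n = \gamma_{n-1}/\gamma_n$ (equation \eqref{ops_coeffRn}) applied at $v_{\pm}$ turns this into $p_+ = r_{1+}\check a_n - r_{1-}\hat a_n$; the partner identity for $p_-$ follows identically from \eqref{deform_AW_DCff:g} after using $\gamma_{-1}=a_0\gamma_0$ and $a_n H_{n-1}/a_{n-1}=\tfrac12 a_n$.

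Next I would derive \eqref{Dclose:c} and \eqref{Dclose:d}. For these I would evaluate the compatibility relation $\chi\,B^*_n(y_+;u)A^*_n(x;v_-)=A^*_n(x;v_+)B^*_n(y_-;u)$, written out as \eqref{S+D:a}, at a value of $x$ that is a common zero $\tilde x_j$ of $W^2-\Delta y^2 V^2$ for $j\in\{5,6\}$. At such a point both $\mathfrak{W}_+(x;v_\pm)$ and $\mathfrak{T}_\pm(x;v_\pm)$ satisfy the rank-one factorisation recorded in the Proposition following \eqref{spectralDetZero}, and in fact at $x=\tilde x_5,\tilde x_6$ one of the two factors in \eqref{Bi-spectral} vanishes; the surviving relation in \eqref{S+D:a} then expresses $\mathfrak{R}_+(y_\mp;u)$ evaluated at the corresponding image node in terms of $r_{1+}$ alone. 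Concretely, equating the $x^0$ pieces after inserting $\mathfrak{R}_+ = r_{1+}x+r_{0+}$ and using the explicit $W$ from \eqref{deform_AW_SCff:b} plus the eigenvalue data at $z\to 0,\infty$ (namely $q^n\tilde\sigma_6, q^{-n}$, see the remark after \eqref{auxB4}) produces $r_{0+}=\dfrac{r_{1+}}{\Delta v_2}\bigl[-w_2(v_+)+w_2(v_-)-2\sqrt{\Delta}(q^{-1/2}\alpha+q^{1/2}\alpha^{-1})(\tilde\sigma_6 q^n t - q^{-n}t^{-1})\bigr]$, where $\Delta v_2 = \Delta v\cdot v_2$ is the relevant normalising factor from the spectral parameterisation; the identity for $r_{0-}$ is obtained the same way starting from \eqref{S+D:d}, or equivalently by the symmetry $t\mapsto t^{-1}$, $z\mapsto z^{-1}$ which exchanges $\mathfrak{R}_+\leftrightarrow\mathfrak{R}_-$.

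The main obstacle I anticipate is purely bookkeeping: tracking which of the two factors of $R^2-\Delta v^2S^2$ corresponds to the node $y_+$ versus $y_-$ at each of $\tilde x_5=E_u^-x_5$ and $\tilde x_6=E_u^+x_6$, and carrying the ratio $\chi$ of \eqref{twist} correctly through the evaluation so that it cancels cleanly. A secondary nuisance is that $v_2$ (hence $\Delta v_2$) depends on $n$, so one must check the $n$-dependence drops out of the final ratios $r_{0\pm}/r_{1\pm}$; this should follow from the trace identity \eqref{deform_reln:c} combined with the recurrence \eqref{Defm_recur:a}, \eqref{Defm_recur:b}, but it is the one place where a short explicit verification is unavoidable. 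Once these alignments are fixed, the remaining algebra is the same pattern of ``clear denominators, resolve in $\bigoplus_k x^k + \Delta y\bigoplus_k x^k$'' that was used in the proof of Proposition \ref{M=2_SpecCoeff}.
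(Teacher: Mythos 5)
Your derivation of \eqref{Dclose:a} and \eqref{Dclose:b} is fine: combining \eqref{deform_AW_DCff:d}--\eqref{deform_AW_DCff:g} with $a_n(v_\pm)=\gamma_{n-1}(v_\pm)/\gamma_n(v_\pm)$ and $H_n/H_{n-1}=a_n/a_{n-1}$ gives both identities immediately, and this is exactly the ``independent verification'' the paper itself records at the end of its proof. The paper's primary route is different but equivalent for these two: it substitutes the parameterisations \eqref{auxB1}--\eqref{auxB4} and \eqref{Dparam:a}, \eqref{Dparam:b} into \eqref{spectral+deform:b}, clears denominators to get a Laurent polynomial identity in $z$, and reads \eqref{Dclose:a}, \eqref{Dclose:b} off the vanishing of the $z^{\pm 7}$ coefficients of the $(1,2)$ and $(2,1)$ components.

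For \eqref{Dclose:c} and \eqref{Dclose:d} there is a genuine gap. These come from the \emph{same} Laurent-coefficient matching — the vanishing of the $z^{7}$ coefficients of the $(1,1)$ and $(2,2)$ components of the cleared compatibility relation, i.e.\ from the asymptotics of \eqref{S+D:a} and \eqref{S+D:d} as $z\to\infty$ (equivalently $z\to 0$ by the inversion symmetry). That is why the answer involves precisely the sub-leading spectral coefficient $w_2(v_\pm)$ of $2W_n-W$ and the leading eigenvalue data $q^n\tilde\sigma_6,\,q^{-n}$: they are the first two orders of the large-$z$ expansion of $\mathfrak{W}_\pm$. Your proposed mechanism — evaluating the compatibility relation at the common zeros $\tilde x_5,\tilde x_6$ of $W^2-\Delta y^2V^2$ and $R^2-\Delta v^2S^2$ — produces a different set of identities, namely the residue relations \eqref{cse:a}--\eqref{cse:o}, which the paper derives separately and which express $r_{1\pm}\tilde x_j+r_{0\pm}$ through the ratios $\mathfrak{W}_\pm/\mathfrak{T}_\pm$ at those nodes, hence through the full parameterisation $(l,\mathfrak{z}_\pm)$ rather than through $w_2(v_+)-w_2(v_-)$. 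Moreover, after substituting a fixed value of $x$ there are no separate ``$x^0$ pieces'' left to equate, so the step you describe as producing \eqref{Dclose:c} does not parse as stated. A smaller but consequential slip: $\Delta v_2$ in \eqref{Dclose:c}, \eqref{Dclose:d} is the product $\Delta\cdot v_2$ with $\Delta=(q^{1/2}-q^{-1/2})^2$ and $v_2=4[n]/(q^{1/2}-q^{-1/2})$ the leading coefficient of $\Omega_n+V$ (so that $\sqrt{\Delta}=q^{1/2}-q^{-1/2}$ in the same formula), not $\Delta v\cdot v_2$ with $\Delta v=v_+-v_-$; carrying the wrong normalisation through would prevent you from landing on the stated identities. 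Finally, your worry that the $n$-dependence must cancel in $r_{0\pm}/r_{1\pm}$ is unfounded — the right-hand sides of \eqref{Dclose:c}, \eqref{Dclose:d} do depend on $n$ through $w_2(v_\pm)$, $[n]$ and $q^{\pm n}$.
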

\begin{proof}
The formulae given above are ones which involve both shifts up and down on the $u$-lattice
of the spectral variables, 
and these are easily derived by resolving the compatibility relation (\ref{spectral+deform:b}) 
as a Laurent
polynomial in $ z $. Such a polynomial is constructed in our application by substituting 
the elements (\ref{auxB1}-\ref{auxB4}) and (\ref{Dparam:a},\ref{Dparam:b}) into (\ref{spectral+deform:b})
and collecting terms. For the $ 1,2 $ component we find that the leading order non-zero
contributions occur at $ z^{\pm 7} $ and the coefficients of both these terms will vanish
if (\ref{Dclose:a}) holds. In the case of the $ 2,1 $ component the $ z^{\pm 7} $ terms are
the leading non-trivial ones and these both vanish when (\ref{Dclose:b}) holds.
The two latter relations (\ref{Dclose:c},\ref{Dclose:d}) follow from the requirement that the
coefficients of $ z^7 $ terms of the $ 1,1 $ and $ 2,2 $ elements vanish respectively.  
Finally we observe that an independent way of verifying (\ref{Dclose:a}) and (\ref{Dclose:b})
is through a trivial combination of the formulae (\ref{deform_AW_DCff:d}-\ref{deform_AW_DCff:g})
\end{proof}

A vital part of our analysis consists of resolving the compatibility relations (\ref{S+D:a}-\ref{S+D:d}) with
respect to the zeros and poles of $ \chi(z;t) $.
\begin{corollary}
The residues of the compatibility relation given in Proposition \ref{spectral+deform} 
consist of the following equations:
\begin{itemize}
\item[(i)]
At the advanced co-ordinate $ z=z_5=\alpha t $
\begin{align}
   r_{1-}E^{-}_{u}x_5+r_{0-} & = -\frac{\mathfrak{W}_{+}(z_5;q^{1/2}t)}{\mathfrak{T}_{+}(z_5;q^{1/2}t)}p_{+} ,
\label{cse:a} \\
   r_{1+}E^{-}_{u}x_5+r_{0+} & = -\frac{\mathfrak{W}_{-}(z_5;q^{1/2}t)}{\mathfrak{T}_{-}(z_5;q^{1/2}t)}p_{-} ,
\label{cse:i}
\end{align}
\item[(ii)]
at the advanced co-ordinate $ z=E^{2+}_{u}z_6=q^{-1}\alpha t^{-1} $
\begin{align}
   r_{1-}E^{+}_{u}x_6+r_{0-} & = \frac{\mathfrak{W}_{-}(E^{2+}_{u}z_6;q^{1/2}t)}{\mathfrak{T}_{+}(E^{2+}_{u}z_6;q^{1/2}t)}p_{+} ,
\label{cse:d} \\
   r_{1+}E^{+}_{u}x_6+r_{0+} & = \frac{\mathfrak{W}_{+}(E^{2+}_{u}z_6;q^{1/2}t)}{\mathfrak{T}_{-}(E^{2+}_{u}z_6;q^{1/2}t)}p_{-} ,
\label{cse:l}
\end{align}
\item[(iii)]
at the retarded co-ordinate $ z=E^{2-}_{u}z_5=q^{-1}\alpha t $
\begin{align}
   r_{1+}E^{-}_{u}x_5+r_{0+} & = -\frac{\mathfrak{W}_{-}(E^{2-}_{u}z_5;q^{-1/2}t)}{\mathfrak{T}_{+}(E^{2-}_{u}z_5;q^{-1/2}t)}p_{+} ,
\label{cse:f} \\
   r_{1-}E^{-}_{u}x_5+r_{0-} & = -\frac{\mathfrak{W}_{+}(E^{2-}_{u}z_5;q^{-1/2}t)}{\mathfrak{T}_{-}(E^{2-}_{u}z_5;q^{-1/2}t)}p_{-} ,
\label{cse:n}
\end{align}
\item[(iv)]
and at the retarded co-ordinate $ z=z_6=\alpha t^{-1} $
\begin{align}
   r_{1+}E^{+}_{u}x_6+r_{0+} & = \frac{\mathfrak{W}_{+}(z_6;q^{-1/2}t)}{\mathfrak{T}_{+}(z_6;q^{-1/2}t)}p_{+} ,
\label{cse:g} \\
   r_{1-}E^{+}_{u}x_6+r_{0-} & = \frac{\mathfrak{W}_{-}(z_6;q^{-1/2}t)}{\mathfrak{T}_{-}(z_6;q^{-1/2}t)}p_{-} .
\label{cse:o}
\end{align}
\end{itemize}
All the deformation parameters are evaluated at $ t,u $, i.e. 
$ p_{\pm}(t), r_{1\pm}(t), r_{0\pm}(t) $.
\end{corollary}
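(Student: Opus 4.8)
The plan is to obtain all eight identities as the ``residue'' conditions that (\ref{S+D:a})--(\ref{S+D:d}), equivalently the matrix relation $\chi\,B^{*}_{n}(y_{+};u)A^{*}_{n}(x;v_{-})=A^{*}_{n}(x;v_{+})B^{*}_{n}(y_{-};u)$, impose at the zeros and poles of $\chi(z;t)$. First I would compute $\chi$ for the present data. Feeding (\ref{deform_AW_SCff:a}) and (\ref{deform_AW_DCff:a}) into (\ref{twist}), with $a_{5}=\alpha t,\ a_{6}=\alpha t^{-1}$ and the identification $E^{\pm}_{u}:t\mapsto q^{\pm1/2}t$ forced by $v_{\pm}=\tfrac12(q^{\pm1/2}t+q^{\mp1/2}t^{-1})$, the $a_{1},\dots,a_{4}$ factors cancel outright and the remaining $\alpha,t$ factors telescope, leaving
\[
  \chi(z;t)=\frac{(1-q^{-1}\alpha t^{-1}z)(1-\alpha t z^{-1})}{(1-\alpha t^{-1}z)(1-q^{-1}\alpha t z^{-1})},
\]
a ratio of two quadratics in $z$. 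Hence, generically, $\chi$ has precisely the two simple zeros $z=z_{5}=\alpha t$ and $z=q\alpha^{-1}t=(E^{2+}_{u}z_{6})^{-1}$ and the two simple poles $z=q^{-1}\alpha t=E^{2-}_{u}z_{5}$ and $z=\alpha^{-1}t=z_{6}^{-1}$; under the nondegeneracy conditions these four nodes are pairwise distinct and disjoint from $\{z=\pm1,\,l^{\pm1}\}$, so the divisions by $\mathfrak{T}_{\pm}$ appearing in the statement are meaningful.

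Next I would use regularity. The right-hand sides of (\ref{S+D:a})--(\ref{S+D:d}) are Laurent polynomials in $z$ (products of the Laurent-polynomial entries of $A^{*}_{n}$ and $B^{*}_{n}$), hence regular on $\C^{*}$. At a zero $z_{0}$ of $\chi$ the left-hand sides vanish identically, so the right-hand sides vanish at $z_{0}$, i.e.\ $A^{*}_{n}(x;v_{+})B^{*}_{n}(y_{-};u)=0$ at $z_{0}$; at a simple pole $z_{0}$ the regularity of the right-hand sides forces the cofactor of $\chi$ on the left to vanish, i.e.\ $B^{*}_{n}(y_{+};u)A^{*}_{n}(x;v_{-})=0$ at $z_{0}$. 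Reading off the two off-diagonal components and inserting $\mathfrak{P}_{\pm}(\cdot;u)=p_{\pm}$ (constants in $x$, since $\deg_{x}\Phi_{n}=\deg_{x}\Psi_{n}=0$ for $L=1$, cf.\ (\ref{Dparam:b})) and $\mathfrak{R}_{\pm}(x;u)=r_{1\pm}x+r_{0\pm}$, one obtains at a zero the pair $\mathfrak{R}_{-}(y_{-};u)=-\mathfrak{W}_{+}(x;v_{+})\,p_{+}/\mathfrak{T}_{+}(x;v_{+})$ and $\mathfrak{R}_{+}(y_{-};u)=-\mathfrak{W}_{-}(x;v_{+})\,p_{-}/\mathfrak{T}_{-}(x;v_{+})$, and at a pole the same with $v_{+}\mapsto v_{-}$ and $y_{-}\mapsto y_{+}$ (the Corollary's ``advanced'' and ``retarded'' cases, respectively). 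It remains to identify the node: at $z_{0}=\alpha t$ (so $x=x_{5}$) one has $y_{-}=\tfrac12(q^{-1/2}\alpha t+q^{1/2}\alpha^{-1}t^{-1})=\tilde{x}_{5}=E^{-}_{u}x_{5}$, and the analogous computations at the other three nodes give $y_{\mp}=E^{\mp}_{u}x_{5}$ or $E^{\mp}_{u}x_{6}$, matching the left-hand sides $r_{1\mp}E^{\mp}_{u}x_{j}+r_{0\mp}$ in (\ref{cse:a})--(\ref{cse:o}).

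One bookkeeping subtlety must be watched. For the two $z_{6}$-family nodes, cases (ii) and (iv), the representative used in the statement is the \emph{reciprocal} of the actual zero/pole of $\chi$, e.g.\ $E^{2+}_{u}z_{6}=q^{-1}\alpha t^{-1}=(q\alpha^{-1}t)^{-1}$; since the entries of $A^{*}_{n}$ obey the reflection rules $\mathfrak{T}_{\pm}(z^{-1};t)=-\mathfrak{T}_{\pm}(z;t)$, $\mathfrak{W}_{\pm}(z^{-1};t)=\mathfrak{W}_{\mp}(z;t)$ recorded after (\ref{auxB4}), transporting the coefficient from $z_{0}=q\alpha^{-1}t$ to the argument $E^{2+}_{u}z_{6}$ flips the subscript on $\mathfrak{W}$ and the sign on $\mathfrak{T}$, yielding exactly the forms displayed in (\ref{cse:d}), (\ref{cse:l}), (\ref{cse:g}) and (\ref{cse:o}). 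As a cross-check, and to see that the two equations listed at each node are not independent, I would note that each node is a zero of $(W^{2}-\Delta y^{2}V^{2})(\cdot;v_{\pm})$ for the relevant sign --- indeed $\tilde{x}_{5}^{(v_{+})}=x_{5}$, $\tilde{x}_{5}^{(v_{-})}=E^{2-}_{u}x_{5}$, $\tilde{x}_{6}^{(v_{-})}=x_{6}$, $\tilde{x}_{6}^{(v_{+})}=E^{2+}_{u}x_{6}$ --- so by (\ref{Bi-spectral}) the quantity $\mathfrak{W}_{+}\mathfrak{W}_{-}+\mathfrak{T}_{+}\mathfrak{T}_{-}=\det A^{*}_{n}$ vanishes there and $\mathfrak{W}_{+}/\mathfrak{T}_{-}=-\mathfrak{W}_{-}/\mathfrak{T}_{+}$, converting either listed form into the other; this is also consistent with Remark~\ref{common_zeros}.

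The analysis here is light; the real effort, and the main obstacle, is keeping the three interlocking $q$-quadratic lattices --- the spectral $x$-lattice, the deformation $u$-lattice, and the internal shifts of the $a_{j}$ --- consistently oriented, so that $y_{\pm}=E^{\pm}_{x}z_{0}$ lands on precisely the node $E^{\mp}_{u}x_{5,6}$ named in the statement and each scalar residue is paired with the correct component of (\ref{spectral+deform:b}); a single misplaced $\pm$ or sign spoils agreement with (\ref{cse:a})--(\ref{cse:o}). I would also verify, from the nondegeneracy conditions, that the poles of $\chi$ are simple and that no two nodes collide, since a higher-order pole would impose an extra derivative condition and change the shape of the conclusion.
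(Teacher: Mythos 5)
Your proposal is correct and takes essentially the same route as the paper: it computes the same $\chi$, imposes vanishing of the right-hand sides of (\ref{S+D:a}--\ref{S+D:d}) at the two zeros and of the left-hand cofactors at the two poles, invokes the $z\mapsto z^{-1}$ symmetries of $A^{*}_n$ and $B^{*}_n$ to transport the $z_6$-family nodes to their reciprocal representatives, and uses the vanishing of $\det A^{*}_n$ (equivalently Remark \ref{common_zeros}) to identify the second equation at each node as an equivalent form of the first.
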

\begin{proof}
The multiplicative factor (\ref{twist}) is given, in this case, by
\begin{equation}
   \chi(z;t) = \frac{(1-q^{-1}\alpha t^{-1}z)(1-\alpha tz^{-1})}{(1-q^{-1}\alpha tz^{-1})(1-\alpha t^{-1}z)} .
\end{equation}
At the zeros of $ \chi $ the right-hand sides of (\ref{S+D:a},\ref{S+D:b},\ref{S+D:c},\ref{S+D:d})
must vanish whereas at the poles the left-hand factors must vanish.
Thus for the zero $ z=z_5=\alpha t $ applied to (\ref{S+D:a}) we deduce (\ref{cse:a}). For the other zero at
$ z=1/E^{2+}_u z_6=q \alpha^{-1}t $ applied to (\ref{S+D:a}) we make additional use of the symmetries of
$ A^{*}_n $ and $ B^{*}_n $ with respect to inversion of $ z $ to derive (\ref{cse:d}).
At the pole $ z=E^{2-}_u z_5=q^{-1}\alpha t $ the residue of (\ref{S+D:a}) yields (\ref{cse:f})
whereas at the other pole $ z=1/z_6=\alpha^{-1}t $ we have to apply the additional symmetries to the residue 
to arrive at (\ref{cse:g}). 
However we have further relations, which are equivalent to the four relations derived above, 
by specialising the spectral variable in these ways due to the fact that 
$ (W^2-\Delta y^2V^2)(z=\alpha t,q^{-1}\alpha t^{-1};q^{1/2}t)=0 $,
$ (W^2-\Delta y^2V^2)(z=q^{-1}\alpha t,\alpha t^{-1};q^{-1/2}t)=0 $,
$ (R^2-\Delta v^2S^2)(z=q^{-1/2}\alpha t;t)=0 $ and
$ (R^2-\Delta v^2S^2)(z=q^{-1/2}\alpha t^{-1};t)=0 $, which implies
\begin{gather}
   (\mathfrak{W}_{+}\mathfrak{W}_{-}+\mathfrak{T}_{+}\mathfrak{T}_{-})(z=\alpha t,q^{-1}\alpha t^{-1};q^{1/2}t) = 0,
  \\
   (\mathfrak{W}_{+}\mathfrak{W}_{-}+\mathfrak{T}_{+}\mathfrak{T}_{-})(z=q^{-1}\alpha t,\alpha t^{-1};q^{-1/2}t) = 0,
  \\
   (\mathfrak{R}_{+}\mathfrak{R}_{-}+\mathfrak{P}_{+}\mathfrak{P}_{-})(z=q^{-1/2}\alpha t;t) = 0,
  \\
   (\mathfrak{R}_{+}\mathfrak{R}_{-}+\mathfrak{P}_{+}\mathfrak{P}_{-})(z=q^{-1/2}\alpha t^{-1};t) = 0 .
\end{gather}
This means that each of the identities (\ref{cse:a}, \ref{cse:d}, \ref{cse:f}, \ref{cse:g})
can take four forms and the remaining relations (\ref{cse:i}, \ref{cse:l}, \ref{cse:n}, \ref{cse:o}) 
are examples of just one of those forms. This equivalence also ensures that if the residue condition 
is satisfied by (\ref{S+D:a}) then it is automatically satisfied by (\ref{S+D:b},\ref{S+D:c},\ref{S+D:d})
as well.
\end{proof}

\begin{corollary}
The following product relation holds amongst the spectral coefficients evaluated
at the zeros of the spectral determinant
\begin{equation}
   \frac{\mathfrak{W}_{+}(z_5;q^{1/2}t)}{\mathfrak{T}_{+}(z_5;q^{1/2}t)}
   \frac{\mathfrak{W}_{-}(E^{2-}_{u}z_5;q^{-1/2}t)}{\mathfrak{T}_{+}(E^{2-}_{u}z_5;q^{-1/2}t)}
 =  \frac{\mathfrak{W}_{+}(z_6;q^{-1/2}t)}{\mathfrak{T}_{+}(z_6;q^{-1/2}t)}
    \frac{\mathfrak{W}_{-}(E^{2+}_{u}z_6;q^{1/2}t)}{\mathfrak{T}_{+}(E^{2+}_{u}z_6;q^{1/2}t)} .
\label{prodId}
\end{equation}
\end{corollary}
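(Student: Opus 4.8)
The plan is to eliminate the deformation coefficients $r_{1\pm}$, $r_{0\pm}$, $p_\pm$ from the residue equations derived in the previous corollary and read off a pure relation amongst the spectral coefficients. The four equations (\ref{cse:a}), (\ref{cse:f}), (\ref{cse:g}) and (\ref{cse:d}) all have the shape ``(linear-in-$u$ spectral data) $= -(\mathfrak{W}/\mathfrak{T})\,p_+$'', with the left-hand sides being $r_{1\mp}E^{\mp}_u x_j + r_{0\mp}$ evaluated at the common zeros $z_5$, $E^{2-}_u z_5$, $z_6$, $E^{2+}_u z_6$. Observe that (\ref{cse:a}) and (\ref{cse:n}) share the same left-hand side $r_{1-}E^{-}_u x_5 + r_{0-}$; likewise (\ref{cse:f}) and (\ref{cse:i}) both equal $r_{1+}E^{-}_u x_5 + r_{0+}$, (\ref{cse:g}) and (\ref{cse:l}) share $r_{1+}E^{+}_u x_6 + r_{0+}$, and (\ref{cse:d}) and (\ref{cse:o}) share $r_{1-}E^{+}_u x_6 + r_{0-}$.

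The key algebraic manoeuvre is then the following. From (\ref{cse:a}) and (\ref{cse:f}) I obtain two expressions for quantities that differ only by $r_{1+}\leftrightarrow r_{1-}$, $r_{0+}\leftrightarrow r_{0-}$; more efficiently, I would multiply the four relations around a ``loop''. Specifically, take the product of the left-hand sides of (\ref{cse:a}) $\times$ (\ref{cse:f}) and of (\ref{cse:g}) $\times$ (\ref{cse:d}); by the closure relations of Proposition \ref{AB_resolve_z} — namely (\ref{Dclose:c}) and (\ref{Dclose:d}), which express $r_{0\pm}$ in terms of $r_{1\pm}$ and spectral data, together with $E^{-}_u x_5 = \tilde{x}_5$, $E^{+}_u x_6 = \tilde{x}_6$ — each left-hand side factor is $r_{1\mp}$ times an explicit spectral combination. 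Crucially, the products $r_{1+}r_{1-}$ appear symmetrically on both sides and cancel, while the residual spectral factors must then match the product of the $-\mathfrak{W}/\mathfrak{T}$ ratios on the right. Because the left-hand sides of (\ref{cse:a}) and (\ref{cse:n}) coincide, and similarly for the other pairs, one concludes $\mathfrak{W}_{+}(z_5;q^{1/2}t)/\mathfrak{T}_{+}(z_5;q^{1/2}t) = \mathfrak{W}_{+}(E^{2-}_u z_5;q^{-1/2}t)/\mathfrak{T}_{-}(E^{2-}_u z_5;q^{-1/2}t)$-type identities, and assembling the loop product gives exactly (\ref{prodId}).

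A cleaner route, which I would actually write out, bypasses the closure relations: the left-hand side of (\ref{cse:a}) equals that of (\ref{cse:n}), hence
\[
 -\frac{\mathfrak{W}_{+}(z_5;q^{1/2}t)}{\mathfrak{T}_{+}(z_5;q^{1/2}t)}p_{+}
 = -\frac{\mathfrak{W}_{+}(E^{2-}_{u}z_5;q^{-1/2}t)}{\mathfrak{T}_{-}(E^{2-}_{u}z_5;q^{-1/2}t)}p_{-},
\]
and similarly the left-hand side of (\ref{cse:g}) equals that of (\ref{cse:l}), giving
\[
 \frac{\mathfrak{W}_{+}(z_6;q^{-1/2}t)}{\mathfrak{T}_{+}(z_6;q^{-1/2}t)}p_{+}
 = \frac{\mathfrak{W}_{+}(E^{2+}_{u}z_6;q^{1/2}t)}{\mathfrak{T}_{-}(E^{2+}_{u}z_6;q^{1/2}t)}p_{-}.
\]
Dividing these two relations eliminates $p_+$ and $p_-$ simultaneously, leaving a relation between $\mathfrak{W}_+/\mathfrak{T}_+$ and $\mathfrak{W}_+/\mathfrak{T}_-$ ratios; using $\mathfrak{T}_{-,n}=\Delta y\,a_n\Theta_{n-1}$ versus $\mathfrak{T}_{+,n}=\Delta y\,a_n\Theta_n$ and the compatibility identity (\ref{AK_comp:b}) to convert the $\mathfrak{T}_-$'s into $\mathfrak{T}_+$'s at the appropriate index, one rearranges to (\ref{prodId}). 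The main obstacle I anticipate is bookkeeping: ensuring the index shifts ($n$ versus $n-1$) and the deformation-lattice shifts ($q^{\pm1/2}t$, $E^{2\pm}_u$) on the $\mathfrak{T}_\pm$ arguments are tracked consistently when passing between $\mathfrak{T}_{-}$ and $\mathfrak{T}_{+}$ — a sign or a shift error there would corrupt the final statement. Once the conversions are done carefully, the identity falls out by pure cross-multiplication, so no substantive analytic input beyond the already-established residue equations and the spectral-recurrence compatibility relations is needed.
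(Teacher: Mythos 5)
Your overall strategy coincides with the paper's: the paper likewise pairs the residue equations with common left-hand sides — (\ref{cse:a}) with (\ref{cse:n}), (\ref{cse:g}) with (\ref{cse:l}), etc. — equates their right-hand sides to obtain exactly the two relations you display (these are (\ref{eSME:a}) and (\ref{eSME:d}) in the text), and then forms cross products so that $p_{+}p_{-}$ cancels (under the assumption $p_{+}p_{-}\neq 0$, which you should state). Up to that point your argument is sound and is essentially the paper's.

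The gap is in your final conversion step. After dividing the two relations you are left with the ratios $\mathfrak{W}_{+}(E^{2-}_{u}z_5;q^{-1/2}t)/\mathfrak{T}_{-}(E^{2-}_{u}z_5;q^{-1/2}t)$ and $\mathfrak{W}_{+}(E^{2+}_{u}z_6;q^{1/2}t)/\mathfrak{T}_{-}(E^{2+}_{u}z_6;q^{1/2}t)$, and you propose to trade the $\mathfrak{T}_{-}$'s for $\mathfrak{T}_{+}$'s via $\mathfrak{T}_{-,n}=\Delta y\,a_n\Theta_{n-1}$ and the recurrence compatibility (\ref{AK_comp:b}). That identity reads $a_n\mathfrak{T}_{-,n+1}=a_{n+1}\mathfrak{T}_{+,n}$: it converts $\mathfrak{T}_{-}$ at index $n$ into $\mathfrak{T}_{+}$ at index $n-1$, whereas every factor in (\ref{prodId}) sits at the \emph{same} index $n$. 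No amount of careful bookkeeping makes that route close. The correct tool is the determinantal relation (\ref{Bi-spectral}), $\mathfrak{W}_{+}\mathfrak{W}_{-}+\mathfrak{T}_{+}\mathfrak{T}_{-}=W^2-\Delta y^2V^2$, evaluated at the four common zeros $z_5$, $E^{2-}_{u}z_5$, $z_6$, $E^{2+}_{u}z_6$ (with their respective deformation arguments $q^{\pm1/2}t$), where the right-hand side vanishes — these vanishing conditions are recorded explicitly at the end of the proof of the preceding corollary. At such a point one has $\mathfrak{T}_{-}=-\mathfrak{W}_{+}\mathfrak{W}_{-}/\mathfrak{T}_{+}$ \emph{at the same index} $n$; applying this at $(E^{2-}_{u}z_5;q^{-1/2}t)$ and at $(E^{2+}_{u}z_6;q^{1/2}t)$ introduces two compensating signs and converts your quotient directly into (\ref{prodId}). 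With that substitution in place of (\ref{AK_comp:b}), your derivation is complete. (Your first paragraph's ``loop product'' via the closure relations (\ref{Dclose:c})--(\ref{Dclose:d}) is not needed and, as written, asserts intermediate identities without the $p_{\pm}$ factors that must accompany them; I would drop it.)
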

\begin{proof}
By comparing the right-hand sides of the pairs, (\ref{cse:a}, \ref{cse:n}), (\ref{cse:d},\ref{cse:o}),
(\ref{cse:f}, \ref{cse:i}) and (\ref{cse:g}, \ref{cse:l}), we see that
\begin{align}
   \frac{\mathfrak{W}_{+}(z_5;q^{1/2}t)}{\mathfrak{T}_{+}(z_5;q^{1/2}t)}p_{+}
  & = \frac{\mathfrak{W}_{+}(E^{2-}_{u}z_5;q^{-1/2}t)}{\mathfrak{T}_{-}(E^{2-}_{u}z_5;q^{-1/2}t)}p_{-} ,
\label{eSME:a}  \\
   \frac{\mathfrak{W}_{-}(E^{2+}_{u}z_6;q^{1/2}t)}{\mathfrak{T}_{+}(E^{2+}_{u}z_6;q^{1/2}t)}p_{+}
  & = \frac{\mathfrak{W}_{-}(z_6;q^{-1/2}t)}{\mathfrak{T}_{-}(z_6;q^{-1/2}t)}p_{-} ,
\label{eSME:b}  \\
   \frac{\mathfrak{W}_{-}(E^{2-}_{u}z_5;q^{-1/2}t)}{\mathfrak{T}_{+}(E^{2-}_{u}z_5;q^{-1/2}t)}p_{+}
  & = \frac{\mathfrak{W}_{-}(z_5;q^{1/2}t)}{\mathfrak{T}_{-}(z_5;q^{1/2}t)}p_{-} ,
\label{eSME:c}  \\
   \frac{\mathfrak{W}_{+}(z_6;q^{-1/2}t)}{\mathfrak{T}_{+}(z_6;q^{-1/2}t)}p_{+}
  & = \frac{\mathfrak{W}_{+}(E^{2+}_{u}z_6;q^{1/2}t)}{\mathfrak{T}_{-}(E^{2+}_{u}z_6;q^{1/2}t)}p_{-} .
\label{eSME:d}
\end{align}
By forming the cross products of the first and second, or equivalently the third 
and fourth we can establish (\ref{prodId}), assuming $ p_{+}p_{-} \neq 0 $.
Taking cross products of the first and third, or second and fourth leads to 
the identities that are trivial consequences of the determinantal relation.
\end{proof}

\begin{definition}
In conformity with the remark made immediately preceeding Corollary \ref{Bn+1}
let us define the variables with deformation arguments evaluated at the advanced and
retarded co-ordinates through the notation $ \hat{l} = l(q^{1/2}t) $, $ \check{l} = l(q^{-1/2}t) $
etc. 
\end{definition}

\begin{lemma}
One solution for the component $ p_{+} $ is given by
\begin{multline}
  q^{1/2}\alpha^{-1}(t^{-1}-t)[n+\tfrac{1}{2}]\frac{r_{1-}\hat{a}_n}{p_{+}}
  = -\frac{qt\hat{l}}{(\hat{l}-\hat{l}^{-1})^2}
     \left[
         \frac{\hat{\mathfrak{z}}_{+}}{(\alpha t-\hat{l})(qt-\alpha\hat{l})}
              +\frac{\hat{\mathfrak{z}}_{-}}{(-\alpha+qt\hat{l})(-1+\alpha t\hat{l})}
     \right]
  \\
     -\frac{1}{\alpha  t}(q^{-n}+q^{n+1}t^2\sigma_6)+\tfrac{1}{4}w(1)\frac{1}{\hat{\lambda}-1}+\tfrac{1}{4}w(-1)\frac{1}{\hat{\lambda}+1} ,
\label{soln:up}
\end{multline}
A second solution for the component $ p_{+} $ is
\begin{multline}
  q^{1/2}\alpha^{-1}(t^{-1}-t)[n+\tfrac{1}{2}]\frac{r_{1+}\check{a}_n}{p_{+}}
  =  \frac{qt\check{l}}{(\check{l}-\check{l}^{-1})^2}
     \left[
         \frac{\check{\mathfrak{z}}_{+}}{(q-\alpha t\check{l})(-\alpha+t\check{l})}
              +\frac{\check{\mathfrak{z}}_{-}}{(t-\alpha\check{l})(-\alpha t+q\check{l})}
     \right]
  \\
     -\frac{1}{\alpha t}(q^{-n}t^2+q^{n+1}\sigma_6)+\tfrac{1}{4}w(1)\frac{1}{\check{\lambda}-1}+\tfrac{1}{4}w(-1)\frac{1}{\check{\lambda}+1} .
\label{soln:dn}
\end{multline}
\end{lemma}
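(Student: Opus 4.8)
The plan is to read off the two formulae from the residue relations of the preceding corollary, combined with the closure relations of Proposition \ref{AB_resolve_z} and the explicit Laurent parameterisation (\ref{auxB1})--(\ref{auxB4}). The formula (\ref{soln:up}) will come from the residue relation (\ref{cse:a}) at the advanced coordinate $z=z_5=\alpha t$, and (\ref{soln:dn}) from (\ref{cse:g}) at the retarded coordinate $z=z_6=\alpha t^{-1}$; the hatted, respectively checked, decorations on the right-hand sides are nothing but the shift $t\mapsto q^{\pm1/2}t$ of the deformation argument carried by $\mathfrak{W}_{\pm}$, $\mathfrak{T}_{+}$, $\hat a_n$, $\check a_n$ in those two relations.

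For (\ref{soln:up}), rewrite (\ref{cse:a}) using $E^{-}_{u}x_5=\tilde x_5$ as
\[
  \frac{r_{1-}\hat a_n}{p_{+}}
  = -\,\frac{\hat a_n\,\mathfrak{W}_{+}(z_5;q^{1/2}t)}
            {\mathfrak{T}_{+}(z_5;q^{1/2}t)\,\bigl(\tilde x_5+r_{0-}/r_{1-}\bigr)} ,
\]
and substitute the explicit forms: (\ref{auxB1}) for $\mathfrak{T}_{+}$ and (\ref{auxB3}) for $\mathfrak{W}_{+}$, both with $z=\alpha t$, $l=\hat l$, $\mathfrak{z}_{\pm}=\hat{\mathfrak{z}}_{\pm}$, $\lambda=\hat\lambda$, $a_n=\hat a_n$. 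The common factor $(z-z^{-1})=\alpha t-\alpha^{-1}t^{-1}$ cancels out of every term, the factors $\hat a_n$ and $[n+\tfrac12]$ of $\mathfrak{T}_{+}$ cancel against those in the prefactor $q^{1/2}\alpha^{-1}(t^{-1}-t)[n+\tfrac12]\hat a_n$, and the overall $1/[n]$ in $\mathfrak{W}_{+}$ is matched by the $[n]$ produced from $\tilde x_5+r_{0-}/r_{1-}$ once one inserts (\ref{Dclose:d}) with $\Delta v_2=4(q^{1/2}-q^{-1/2})[n]$ and $\sqrt{\Delta}=q^{1/2}-q^{-1/2}$. The key point of the computation is that the denominator $\tilde x_5+r_{0-}/r_{1-}$ supplied by (\ref{Dclose:d}) is precisely what cancels the $(z-\hat l)$-type factors that the terms of (\ref{auxB3}) carry in excess of the $\mathfrak{T}_{+}$ product, leaving simple poles at $\hat l$, $\hat l^{-1}$, and at the zeros of $\chi$. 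One then uses $\sigma_6=\alpha^2\sigma_4=q^{3}\tilde\sigma_6$, the identities $\hat\lambda\mp1=(\hat l\mp1)^2/(2\hat l)$, and the fact that the $t$-dependent part of $w_\pm=W(\pm1;u)$ factors off through (\ref{deform_AW_SCff:a}) (leaving the $t$-independent polynomial values $w(\pm1)$, which is why $w(\pm1)$ rather than $\hat w_\pm$ appears in (\ref{soln:up})); collecting the coefficients of $\hat{\mathfrak{z}}_{+}$, $\hat{\mathfrak{z}}_{-}$, the constant term, and the residues at $\hat\lambda=\pm1$ reproduces (\ref{soln:up}) line by line.

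The second formula (\ref{soln:dn}) follows in an entirely parallel manner from (\ref{cse:g}), now with $E^{+}_{u}x_6=\tilde x_6$, deformation argument $q^{-1/2}t$ (so $l\to\check l$, $\mathfrak{z}_{\pm}\to\check{\mathfrak{z}}_{\pm}$, $\lambda\to\check\lambda$, $a_n\to\check a_n$), and the closure relation (\ref{Dclose:c}) in place of (\ref{Dclose:d}); it may equivalently be obtained from (\ref{soln:up}) by the reflection symmetries of $A^{*}_n$ and $B^{*}_n$ (inversion of $z$ together with $t\mapsto t^{-1}$) recorded in the remarks of this section. As an internal check, the two expressions for $p_{+}$ must be mutually consistent, and the compatibility of their ratio $r_{1-}\hat a_n/(r_{1+}\check a_n)$ with (\ref{soln:up})/(\ref{soln:dn}) is exactly the content of the product identity (\ref{prodId}) together with the determinantal relations (\ref{Dclose:a})--(\ref{Dclose:b}).

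The main obstacle is the bookkeeping of the previous paragraph: matching the six-term Laurent polynomial (\ref{auxB3}), evaluated at a point that is a zero of the spectral determinant, against the four-term right-hand side of (\ref{soln:up}), and in particular verifying that (\ref{Dclose:d}) delivers precisely the combination $\tilde x_5+r_{0-}/r_{1-}$ that clears the spurious $(z-\hat l)$ factors and collapses the boundary values $w_\pm$ to $w(\pm1)$. Everything else is a routine, if lengthy, rational-function manipulation.
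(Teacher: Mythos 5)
Your overall setup (reading the formulae off the residue relations at the zeros and poles of $\chi$, with the explicit parameterisation (\ref{auxB1})--(\ref{auxB4})) is the right starting point, but the specific route you choose for (\ref{soln:up}) has a genuine gap. You propose to use the single relation (\ref{cse:a}) and to eliminate $r_{0-}$ by substituting the closure formula (\ref{Dclose:d}). That substitution introduces the combination $w_2(v_+)-w_2(v_-)$, and $w_2(v_-)=\check w_{2}$ is a \emph{retarded} dynamical variable (it depends on $\check\nu,\check\lambda$ through (\ref{xfmV:a})). The target identity (\ref{soln:up}) contains only advanced-time data $\hat l,\hat\lambda,\hat{\mathfrak z}_{\pm}$, and nothing else in (\ref{cse:a}) is available to cancel the $\check w_2$ dependence; moreover your expression has the dynamical quantity $\tilde x_5+r_{0-}/r_{1-}$ sitting in a \emph{denominator}, which cannot collapse to the right-hand side of (\ref{soln:up}), that side being linear in $\hat{\mathfrak z}_{\pm}$ with coefficients depending only on $\hat l$ and the parameters. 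So what you would actually derive is a different (true) relation mixing advanced and retarded quantities, not (\ref{soln:up}); to remove the $\check w_2$ you would need a second independent relation anyway. The claimed cancellation of the $1/[n]$ prefactor against an $[n]$ "produced from $\tilde x_5+r_{0-}/r_{1-}$" does not happen either, since $\tilde x_5$ itself carries no factor of $1/[n]$.

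The missing idea is to eliminate $r_{0-}$ \emph{without} invoking the closure relations at all: the two residue relations (\ref{cse:a}) and (\ref{cse:d}) share the same unknowns $r_{1-},r_{0-}$, the same $p_{+}$, and the same advanced argument $q^{1/2}t$ on their right-hand sides, so subtracting them kills $r_{0-}$ and leaves
$r_{1-}\,(\tilde x_5-\tilde x_6)$ equal to $-p_{+}$ times the sum of the two evaluated ratios $\mathfrak{W}_{+}/\mathfrak{T}_{+}$ and $\mathfrak{W}_{-}/\mathfrak{T}_{+}$ at $z_5$ and $E^{2+}_{u}z_6$. Here $\tilde x_5-\tilde x_6=\tfrac12(q^{-1/2}\alpha-q^{1/2}\alpha^{-1})(t-t^{-1})$ is a pure parameter constant, and since $\mathfrak{T}_{+}$ evaluated at these nodes involves only $\hat l$, the right-hand side is manifestly linear in $\hat{\mathfrak z}_{\pm}$ with the pole structure in $\hat l$ and $\hat\lambda\mp1$ displayed in (\ref{soln:up}); substituting (\ref{auxB1}) and (\ref{auxB3})--(\ref{auxB4}) then reduces the proof to the rational-function bookkeeping you describe. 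The second formula (\ref{soln:dn}) follows in the same way from subtracting (\ref{cse:f}) from (\ref{cse:g}), both taken at the retarded argument $q^{-1/2}t$, eliminating $r_{0+}$ rather than importing it from (\ref{Dclose:c}). Your consistency check via (\ref{prodId}) is fine as a check, but it cannot substitute for the elimination step.
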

\begin{proof}
The first relation (\ref{soln:up}) follows from the subtraction of (\ref{cse:d}) from 
(\ref{cse:a}) and simplifying. The second relation (\ref{soln:dn}) follows from similar 
reasoning applied to (\ref{cse:f}) and (\ref{cse:g}).
\end{proof}
\begin{remark}
A profound difference between the expressions (\ref{soln:up},\ref{soln:dn}) and the corresponding
formulae in the case of the $q$-linear spectral lattice is the presence of both $ \hat{\mathfrak{z}}_{\pm} $
(or $ \check{\mathfrak{z}}_{\pm} $) whereas in the latter case only one these variables in present, 
which significantly complicates the ensuing analysis.
\end{remark}

\begin{definition}
We define the following sequence of co-ordinate transformations
\begin{align}
   \mathfrak{z}_{+} & = l^{-2}w(l)(1-q^{-1/2}\alpha t l^{-1})(1-q^{-1/2}\alpha t^{-1}l) 
                        \frac{lf-1}{f-l} ,
\label{splitXfm:a}
  \\
   \mathfrak{z}_{-} & = l^{2}w(l^{-1})(1-q^{-1/2}\alpha t l)(1-q^{-1/2}\alpha t^{-1}l^{-1})
                        \frac{f-l}{lf-1} , 
\label{splitXfm:b}
\end{align}
where the new variable $ f(t) $ is introduced.
Clearly it is apparent that under the mapping $ l \mapsto l^{-1} $ we have the interchange
$ \mathfrak{z}_{+} \leftrightarrow \mathfrak{z}_{-} $ and that the product satisfies 
the relation (\ref{SPconic:a}), i.e. $ \mathfrak{z}_{+}\mathfrak{z}_{-}=\prod^{6}_{j=1}(1-q^{-1/2}a_j l^{\pm}) $. 
Let $ \sigma_4 = q^2 s_{4}^2 $.
In addition we define the further variable $ \rho(t) $ so that
\begin{align}
   \frac{r_{1+}\check{a}_n}{p_{+}} 
  & := \frac{1}{q^{1/2}[n+\tfrac{1}{2}]}\frac{2\alpha s_4\hat{\rho}-q^{-n}t-q^{n}\alpha^2 s^2_4t^{-1}}{t-t^{-1}} ,
\label{rPxfm} \\
   \frac{r_{1-}\hat{a}_n}{p_{+}} 
  & := \frac{1}{q^{1/2}[n+\tfrac{1}{2}]}\frac{2\alpha s_4\hat{\rho}-q^{n}\alpha^2 s^2_4t-q^{-n}t^{-1}}{t-t^{-1}} .
\label{rMxfm}
\end{align}
Together these relations ensure that (\ref{Dclose:a}) is automatically satisfied.
We further define $ g(t) $ by $ 2\rho := g+g^{-1} $. We shall find that $ f,l,g $
will be our primary variables.
\end{definition}

A consequence of these definitions and the previous Lemma is the following transformation
formula.
\begin{corollary}
Let us assume $ f \neq 0,\infty $, $ f \neq l^{\pm 1} $ and $ w(f) \neq 0 $.
There exists an invertible mapping between $ \rho(t) $ and $ \lambda(t) $ with the
form
\begin{equation}
   -2s_4f\rho+1+s^2_4 f^2 = \frac{w(f)}{f^{2}+1-2f\lambda} .
\label{glXFM}
\end{equation}
\end{corollary}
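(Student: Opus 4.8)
The plan is to establish the claimed mapping (\ref{glXFM}) by eliminating the auxiliary variable $ f $ between the two transformation relations (\ref{splitXfm:a}) and (\ref{splitXfm:b}) and comparing with the definition of $ \rho $ via (\ref{rPxfm}), (\ref{rMxfm}). First I would take the solution formulae (\ref{soln:up}) and (\ref{soln:dn}) for the component $ p_{+} $ from the preceding Lemma and substitute the co-ordinate transformations (\ref{splitXfm:a}), (\ref{splitXfm:b}) for $ \mathfrak{z}_{\pm} $ (evaluated at the shifted arguments $ \hat{l}, \check{l} $ and the shifted variable $ \hat{f}, \check{f} $), together with the defining relations (\ref{rPxfm}), (\ref{rMxfm}) for $ \rho $ in terms of $ r_{1\pm}\check{a}_n/p_{+} $ and $ r_{1\pm}\hat{a}_n/p_{+} $. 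The key observation is that the bracketed combination in (\ref{soln:up}) involving $ \hat{\mathfrak{z}}_{\pm} $, once the explicit product form of $ w(\hat l) $ and $ w(\hat l^{-1}) $ is inserted, telescopes: the factors $ (1-q^{-1/2}\alpha t\hat{l}^{-1}) $, $ (1-q^{-1/2}\alpha t^{-1}\hat{l}) $ appearing in (\ref{splitXfm:a}) cancel against corresponding denominators $ (\alpha t-\hat{l})(qt-\alpha\hat{l}) $ of (\ref{soln:up}) up to an overall monomial in $ \hat{l} $ and $ t $. What remains is a rational expression in $ \hat{f} $ alone, linear in the numerator and with denominator $ \hat{f}^2+1-2\hat{f}\hat{\lambda} $.

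Concretely, the second step is to carry out this cancellation carefully on the right-hand side of (\ref{soln:up}), collecting the terms proportional to $ \hat{\mathfrak{z}}_{+} $ and $ \hat{\mathfrak{z}}_{-} $ and showing that their sum reduces, after using $ \sigma_4=q^2s_4^2 $ and the identity $ \hat{\mathfrak{z}}_{+}\hat{\mathfrak{z}}_{-}=\prod_{j}(1-q^{-1/2}a_j\hat{l}^{\pm}) $ from the Definition, to
\[
  \frac{w(\hat f)}{\hat{f}^{2}+1-2\hat{f}\hat{\lambda}}
\]
times an explicit monomial factor in $ \alpha, s_4, t $ and $ q^{\pm n} $, plus the residual non-$ \mathfrak{z} $ terms $ -\alpha^{-1}t^{-1}(q^{-n}+q^{n+1}t^2\sigma_6)+\tfrac14 w(1)(\hat\lambda-1)^{-1}+\tfrac14 w(-1)(\hat\lambda+1)^{-1} $. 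Comparing this with the left-hand side of (\ref{soln:up}) rewritten through (\ref{rMxfm}) — which supplies the combination $ 2\alpha s_4\hat\rho-q^n\alpha^2 s_4^2 t-q^{-n}t^{-1} $ — yields precisely relation (\ref{glXFM}) at the advanced argument, i.e. $ -2s_4\hat f\hat\rho+1+s_4^2\hat f^2 = w(\hat f)/(\hat f^2+1-2\hat f\hat\lambda) $. Since the shifted variables $ \hat f, \hat l, \hat\rho, \hat\lambda $ range over the same space as $ f, l, \rho, \lambda $ as $ t $ varies, dropping the hats gives (\ref{glXFM}); using instead (\ref{soln:dn}) with (\ref{rPxfm}) produces the same identity at the retarded argument, giving an independent check. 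Invertibility of the map $ \rho \mapsto \lambda $ is immediate once the identity is established: solving for $ \lambda $ gives $ \lambda = \tfrac12(f+f^{-1}) - \tfrac{w(f)}{2f(-2s_4f\rho+1+s_4^2f^2)} $, which is rational in $ \rho $ for fixed $ f, t $ (nonvanishing denominator is guaranteed by $ f\neq l^{\pm1} $, $ w(f)\neq 0 $, $ f\neq 0,\infty $), and conversely one solves the linear (in $ \rho $) relation for $ \rho $, so the two are in rational bijection.

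\textbf{Main obstacle.} The hard part will be the telescoping/cancellation in the second step: verifying that the messy sum of the $ \hat{\mathfrak{z}}_{+} $- and $ \hat{\mathfrak{z}}_{-} $-terms from (\ref{soln:up}), after inserting the degree-four products $ w(\hat l), w(\hat l^{-1}) $ and the quadratic Laurent factors from (\ref{splitXfm:a}), (\ref{splitXfm:b}), really does collapse to a single rational function with denominator $ \hat f^2+1-2\hat f\hat\lambda $ and the precise monomial prefactor needed to match (\ref{rMxfm}). This requires tracking many factors of $ q^{\pm 1/2} $, $ \alpha $, $ t $, $ s_4 $ and is where a sign or $ q $-power slip would be fatal; I would organize it by first clearing all denominators to reduce both sides to Laurent polynomial identities in $ \hat l $ (equivalently in $ \hat\lambda $ using $ \hat l + \hat l^{-1} = 2\hat\lambda $), then checking equality of coefficients, ideally aided by the symmetry $ l\leftrightarrow l^{-1} $ (which swaps $ \mathfrak{z}_+\leftrightarrow\mathfrak{z}_- $ and fixes both sides) to halve the work. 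The residual terms $ \tfrac14 w(\pm1)(\hat\lambda\mp1)^{-1} $ combine naturally with $ w(\hat f) $ evaluated through the partial-fraction structure, so the boundary contributions should fall into place once the bulk cancellation is done.
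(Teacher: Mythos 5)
Your proposal is correct and follows essentially the same route as the paper: the paper likewise substitutes (\ref{rMxfm}), (\ref{splitXfm:a}) and (\ref{splitXfm:b}) into (\ref{soln:up}), obtains the intermediate relation (\ref{LR_xfm}) (with all arguments shifted $t\mapsto q^{-1/2}t$, i.e. your "dropping the hats"), and then rearranges by partial fractions in $l$ — using $(l-f)(lf-1)=-l(f^2+1-2f\lambda)$ — to reach (\ref{glXFM}). The only cosmetic difference is that you phrase the collapse as a direct telescoping of the $\mathfrak{z}_{\pm}$ terms rather than via the explicit intermediate partial-fraction form, and you spell out the invertibility remark which the paper leaves implicit.
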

\begin{proof}
Employing the definitions (\ref{rMxfm}), (\ref{splitXfm:a}) and (\ref{splitXfm:b}) in (\ref{soln:up}) 
we compute the relation
\begin{equation}
  2s_4\rho = q^{-3/2}\sigma_3-s^2_4[l+l^{-1}]+\frac{w(l)}{(l^{2}-1)(l-f)}+\frac{l^{3}w(l^{-1})}{(l^{2}-1)(lf-1)} ,
\label{LR_xfm}
\end{equation}
where we have shifted the arguments $ t\mapsto q^{-1/2}t $ of all variables.
The relation (\ref{glXFM}) then follows by performing a partial fraction expansion of the above
equation with respect to the variable $ l(t) $ (the former equation can be thought of as an
expansion with respect to $ f(t) $). We also have an alternative form of the above expression
\begin{equation}
   \lambda = q^{1/2}\frac{\sigma_3}{2\sigma_4}-\frac{1}{2s_4}(g+g^{-1})
             -\frac{1}{2(g^2-1)}\frac{w(s^{-1}_4g)}{f-s^{-1}_4g}+\frac{g^2}{2(g^2-1)}\frac{w(s^{-1}_4g^{-1})}{f-s^{-1}_4g^{-1}} .
\label{glIXFM}
\end{equation}
\end{proof}
\begin{remark}
What is surprising about this result in comparison to the situation with the $q$-linear lattice is that 
instead of a linear transformation between $ \rho $ and $ f $ we find one between $ \rho $ and $ \lambda $
with $ f $ being an intermediary. Given (\ref{LR_xfm}) it is clear there is no linear inversion possible for $ f $.  
\end{remark}

Our next task is to compute expressions for the four independent evaluations of the spectral matrix
elements given on the left-hand sides of (\ref{eSME:a}-\ref{eSME:d}) in terms of the variables introduced above.
We will carry this out in stages of successive refinement.
\begin{lemma}
The ratios of the evaluated spectral matrix elements satisfy the relations 
\begin{equation}
  \frac{\hat{a}_n}{\check{a}_n} 
  \frac{2\alpha s_4\hat{\rho}-q^{-n}t-q^{n}\alpha^2 s^2_4t^{-1}}{2\alpha s_4\hat{\rho}-q^{n}\alpha^2 s^2_4t-q^{-n}t^{-1}}
  = \frac{\mathfrak{W}_{+}(z_6;q^{-1/2}t)}{\mathfrak{T}_{+}(z_6;q^{-1/2}t)}
    \frac{\mathfrak{T}_{+}(z_5;q^{1/2}t)}{\mathfrak{W}_{+}(z_5;q^{1/2}t)}
  = \frac{\mathfrak{W}_{-}(E^{2-}_{u}z_5;q^{-1/2}t)}{\mathfrak{T}_{+}(E^{2-}_{u}z_5;q^{-1/2}t)}
    \frac{\mathfrak{T}_{+}(E^{2+}_{u}z_6;q^{1/2}t)}{\mathfrak{W}_{-}(E^{2+}_{u}z_6;q^{1/2}t)} .
\label{splitId}
\end{equation}
\end{lemma}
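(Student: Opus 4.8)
The plan is to obtain the displayed chain of equalities by combining the residue relations of the preceding Corollary, in particular (\ref{eSME:a}) and (\ref{eSME:b}), with the two transformation formulae (\ref{rPxfm}) and (\ref{rMxfm}). First I would rearrange (\ref{eSME:a}) into the ratio form
\[
  \frac{\mathfrak{W}_{+}(z_5;q^{1/2}t)}{\mathfrak{T}_{+}(z_5;q^{1/2}t)}\Big/
  \frac{\mathfrak{W}_{+}(E^{2-}_{u}z_5;q^{-1/2}t)}{\mathfrak{T}_{-}(E^{2-}_{u}z_5;q^{-1/2}t)}
  = \frac{p_{-}}{p_{+}},
\]
and similarly extract $p_{-}/p_{+}$ from (\ref{eSME:b}); one should also invoke the symmetry of $A^{*}_n$ under $z\mapsto z^{-1}$ (which sends $\mathfrak{W}_{\pm}\mapsto\mathfrak{W}_{\mp}$, $\mathfrak{T}_{\pm}\mapsto-\mathfrak{T}_{\pm}$, as recorded in the Remark following the parameterisation) together with $\mathfrak{T}_{-,n}=(\hat a_{n}/\check a_{n})\,\mathfrak{T}_{+,n}$-type identities coming from (\ref{AK_comp:b}) to convert every $\mathfrak{T}_{-}$ into $\mathfrak{T}_{+}$ evaluated at the appropriate node. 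This produces the two right-hand expressions of (\ref{splitId}) as alternative evaluations of a single ratio of $p_{\pm}$-type quantities.

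Next I would bring in the closure relations of Proposition \ref{AB_resolve_z}, specifically (\ref{Dclose:a}) $p_{+}=r_{1+}a_n(v_-)-r_{1-}a_n(v_+)$, and the defining substitutions (\ref{rPxfm}), (\ref{rMxfm}). Dividing (\ref{rPxfm}) by (\ref{rMxfm}) gives
\[
  \frac{r_{1+}\check a_n}{r_{1-}\hat a_n}
  = \frac{2\alpha s_4\hat\rho - q^{-n}t - q^{n}\alpha^2 s_4^2 t^{-1}}
         {2\alpha s_4\hat\rho - q^{n}\alpha^2 s_4^2 t - q^{-n}t^{-1}},
\]
so that the left-hand side of (\ref{splitId}) is exactly $(\hat a_n/\check a_n)\cdot(r_{1+}\check a_n)/(r_{1-}\hat a_n)$; one then identifies this with $p_{-}/p_{+}$ (or its reciprocal, depending on the sign conventions used) by reading (\ref{eSME:c}) or (\ref{eSME:d}) together with the closure relation (\ref{Dclose:b}) $p_{-}=r_{1+}a_n(v_+)-r_{1-}a_n(v_-)$. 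Matching these two routes to $p_{-}/p_{+}$ yields precisely the assertion.

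The main obstacle I anticipate is bookkeeping rather than conceptual: one must be scrupulous about which node each spectral element is evaluated at ($z_5=\alpha t$ versus $E^{2-}_u z_5 = q^{-1}\alpha t$, and the $z\mapsto z^{-1}$ images), about the shift $t\mapsto q^{\pm1/2}t$ in the deformation argument, and about the $\hat{}$ versus $\check{}$ decoration of $a_n$, $l$, $\rho$, $\mathfrak{z}_{\pm}$. A secondary subtlety is that the step dividing (\ref{rPxfm}) by (\ref{rMxfm}) and the step forming $p_{-}/p_{+}$ both require the nonvanishing hypotheses already invoked in the Corollary, namely $p_{+}p_{-}\neq 0$ and $[n+\tfrac12]\neq 0$; these are guaranteed by the standing generic conditions. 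Once the node-tracking is done correctly, every surviving factor of $w(\cdot)$, $(l-l^{-1})^{-2}$, and $(\hat\lambda\pm1)^{-1}$ cancels between numerator and denominator, leaving the compact form (\ref{splitId}); I would present only the cancellation pattern and defer the routine algebra.
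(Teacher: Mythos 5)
There is a genuine gap here. Two of your observations are sound: by the definitions (\ref{rPxfm}) and (\ref{rMxfm}) the left-hand side of (\ref{splitId}) is nothing but $r_{1+}/r_{1-}$, and the second equality in (\ref{splitId}) is just a rearrangement of the product identity (\ref{prodId}) using $\mathfrak{W}_{+}\mathfrak{W}_{-}=-\mathfrak{T}_{+}\mathfrak{T}_{-}$ at the determinant zeros. But the substantive claim of the Lemma is the \emph{first} equality, namely that $r_{1+}/r_{1-}$ equals the product of the two individual ratios, and your route to it does not go through. The identification "$r_{1+}/r_{1-}=p_{-}/p_{+}$" is not available: the closure relations (\ref{Dclose:a}), (\ref{Dclose:b}) express $p_{\pm}$ as \emph{differences} $r_{1+}a_n(v_\mp)-r_{1-}a_n(v_\pm)$, which does not yield such a ratio identity; and even if it did, $p_{-}/p_{+}$ is not the middle expression of (\ref{splitId}). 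Each of (\ref{eSME:a})--(\ref{eSME:d}) pairs an advanced evaluation with a retarded one at lattice-related nodes ($z_5$ with $E^{2-}_u z_5$, or $z_6$ with $E^{2+}_u z_6$), so for instance (\ref{eSME:d}) gives $p_-/p_+=\bigl[\mathfrak{W}_{+}(z_6;q^{-1/2}t)/\mathfrak{T}_{+}(z_6;q^{-1/2}t)\bigr]\cdot\bigl[\mathfrak{T}_{-}(E^{2+}_{u}z_6;q^{1/2}t)/\mathfrak{W}_{+}(E^{2+}_{u}z_6;q^{1/2}t)\bigr]$, whereas the middle expression of (\ref{splitId}) mixes $z_6$ at the retarded time with $z_5$ at the advanced time — a pairing no single residue relation provides. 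A side issue: (\ref{AK_comp:b}) relates $\mathfrak{T}_{-,n+1}$ to $\mathfrak{T}_{+,n}$, i.e.\ it shifts $n$, so it cannot be used to trade $\mathfrak{T}_{-}$ for $\mathfrak{T}_{+}$ at fixed $n$; that conversion comes from the vanishing determinant instead.

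The paper's proof has to work considerably harder precisely because of this. One first eliminates $r_{0\pm}$ by subtracting (\ref{cse:a}) from (\ref{cse:d}) and (\ref{cse:f}) from (\ref{cse:g}), obtaining $r_{1\mp}(E^{+}_{u}x_6-E^{-}_{u}x_5)$ equal to a \emph{sum} of two $\mathfrak{W}/\mathfrak{T}$ ratios times $p_{+}$; the quotient of these gives $r_{1+}/r_{1-}$ as a ratio of sums, (\ref{auxR1}), and a parallel computation with (\ref{cse:i})--(\ref{cse:o}) gives the second form (\ref{auxR2}). Converting that ratio of sums into the product of individual ratios asserted in (\ref{splitId}) is the real work: it requires the explicit evaluation of the $\mathfrak{T}_{+}$-product factor from the parameterisation (\ref{auxB1}), back-substitution to evaluate the sum factor in terms of $\hat{\rho}$, the use of (\ref{prodId}) to assemble perfect squares, and a square-root sign resolution at the end. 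None of these steps appears in your proposal, and the hoped-for wholesale cancellation of the $w(\cdot)$, $(l-l^{-1})^{-2}$ and $(\hat{\lambda}\pm1)^{-1}$ factors is exactly what the perfect-square construction is needed to deliver.
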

\begin{proof}
Subtracting (\ref{cse:a}) from (\ref{cse:d}) gives
\begin{equation}
    r_{1-}\left( E^{+}_{u}x_6-E^{-}_{u}x_5 \right) 
  = \left( \frac{\mathfrak{W}_{-}(E^{2+}_{u}z_6;q^{1/2}t)}{\mathfrak{T}_{+}(E^{2+}_{u}z_6;q^{1/2}t)}
          +\frac{\mathfrak{W}_{+}(z_5;q^{1/2}t)}{\mathfrak{T}_{+}(z_5;q^{1/2}t)} \right) p_{+} ,
\end{equation}
whilst subtracting (\ref{cse:f}) from (\ref{cse:g}) gives
\begin{equation}
    r_{1+}\left( E^{+}_{u}x_6-E^{-}_{u}x_5 \right) 
  = \left( \frac{\mathfrak{W}_{+}(z_6;q^{-1/2}t)}{\mathfrak{T}_{+}(z_6;q^{-1/2}t)}
          +\frac{\mathfrak{W}_{-}(E^{2-}_{u}z_5;q^{-1/2}t)}{\mathfrak{T}_{+}(E^{2-}_{u}z_5;q^{-1/2}t)} \right) p_{+} .
\end{equation}
Taking ratios of these two gives
\begin{multline}
    \frac{r_{1+}}{r_{1-}}
  = \frac{\mathfrak{T}_{+}(z_5;q^{1/2}t)\mathfrak{T}_{+}(E^{2+}_{u}z_6;q^{1/2}t)}
         {\mathfrak{T}_{+}(z_6;q^{-1/2}t)\mathfrak{T}_{+}(E^{2-}_{u}z_5;q^{-1/2}t)}
  \\ \times
    \frac{ \mathfrak{T}_{+}(z_6;q^{-1/2}t)\mathfrak{W}_{-}(E^{2-}_{u}z_5;q^{-1/2}t)
          +\mathfrak{T}_{+}(E^{2-}_{u}z_5;q^{-1/2}t)\mathfrak{W}_{+}(z_6;q^{-1/2}t)}
         { \mathfrak{T}_{+}(z_5;q^{1/2}t)\mathfrak{W}_{-}(E^{2+}_{u}z_6;q^{1/2}t)
          +\mathfrak{T}_{+}(E^{2+}_{u}z_6;q^{1/2}t)\mathfrak{W}_{+}(z_5;q^{1/2}t)} .
\label{auxR1}
\end{multline}
Likewise subtracting (\ref{cse:i}) from (\ref{cse:l}) yields
\begin{equation}
    r_{1+}\left( E^{+}_{u}x_6-E^{-}_{u}x_5 \right) 
  = -\left( \frac{\mathfrak{T}_{+}(E^{2+}_{u}z_6;q^{1/2}t)}{\mathfrak{W}_{-}(E^{2+}_{u}z_6;q^{1/2}t)}
           +\frac{\mathfrak{T}_{+}(z_5;q^{1/2}t)}{\mathfrak{W}_{+}(z_5;q^{1/2}t)} \right) p_{-} ,
\end{equation}
and (\ref{cse:n}) from (\ref{cse:o})
\begin{equation}
    r_{1-}\left( E^{+}_{u}x_6-E^{-}_{u}x_5 \right) 
  = -\left( \frac{\mathfrak{T}_{+}(z_6;q^{-1/2}t)}{\mathfrak{W}_{+}(z_6;q^{-1/2}t)}
           +\frac{\mathfrak{T}_{+}(E^{2-}_{u}z_5;q^{-1/2}t)}{\mathfrak{W}_{-}(E^{2-}_{u}z_5;q^{-1/2}t)} \right) p_{-} .
\end{equation}
Taking the ratio of these latter two equations gives
\begin{multline}
    \frac{r_{1+}}{r_{1-}}
  = \frac{\mathfrak{W}_{+}(z_6;q^{-1/2}t)\mathfrak{W}_{-}(E^{2-}_{u}z_5;q^{-1/2}t)}
         {\mathfrak{W}_{+}(z_5;q^{1/2}t)\mathfrak{W}_{-}(E^{2+}_{u}z_6;q^{1/2}t)}
  \\ \times
    \frac{ \mathfrak{T}_{+}(z_5;q^{1/2}t)\mathfrak{W}_{-}(E^{2+}_{u}z_6;q^{1/2}t)
          +\mathfrak{T}_{+}(E^{2+}_{u}z_6;q^{1/2}t)\mathfrak{W}_{+}(z_5;q^{1/2}t)}
         { \mathfrak{T}_{+}(z_6;q^{-1/2}t)\mathfrak{W}_{-}(E^{2-}_{u}z_5;q^{-1/2}t)
          +\mathfrak{T}_{+}(E^{2-}_{u}z_5;q^{-1/2}t)\mathfrak{W}_{+}(z_6;q^{-1/2}t)} .
\label{auxR2}
\end{multline}  
The factors appearing on the right-hand sides of these two ratios constructed have
simple evaluations, the first factor being
\begin{equation}
    \frac{\mathfrak{T}_{+}(z_5;q^{1/2}t)\mathfrak{T}_{+}(E^{2+}_{u}z_6;q^{1/2}t)}
         {\mathfrak{T}_{+}(z_6;q^{-1/2}t)\mathfrak{T}_{+}(E^{2-}_{u}z_5;q^{-1/2}t)}
  = \frac{\hat{a}^2_n}{\check{a}^2_n}
    \frac{(q^2t^2-\alpha^2)(\alpha^2t^2-1)(\alpha t-\hat{l})(\alpha t-\hat{l}^{-1})(qt-\alpha\hat{l}^{-1})(qt-\alpha\hat{l})}
         {(t^2-\alpha^2)(\alpha^2t^2-q^2)(\alpha t-q\check{l})(\alpha t-q\check{l}^{-1})(t-\alpha\check{l}^{-1})(t-\alpha\check{l})} .
\end{equation}
Employing this evaluation in the first ratio (\ref{auxR1}) yields the evaluation
for the second factor
\begin{multline}
  \frac{ 
         \mathfrak{T}_{+}(z_6;q^{-1/2}t)\mathfrak{W}_{-}(E^{2-}_{u}z_5;q^{-1/2}t)
        +\mathfrak{T}_{+}(E^{2-}_{u}z_5;q^{-1/2}t)\mathfrak{W}_{+}(z_6;q^{-1/2}t) }
       {
         \mathfrak{T}_{+}(z_5;q^{1/2}t)\mathfrak{W}_{-}(E^{2+}_{u}z_6;q^{1/2}t)
        +\mathfrak{T}_{+}(E^{2+}_{u}z_6;q^{1/2}t)\mathfrak{W}_{+}(z_5;q^{1/2}t) }
 \\
 =   \frac{\check{a}_n}{\hat{a}_n}
     \frac{(t^2-\alpha^2)(\alpha^2t^2-q^2)}{(q^2t^2-\alpha^2)(\alpha^2t^2-1)}
     \frac{(\alpha t-q\check{l})(\alpha t-q\check{l}^{-1})(t-\alpha\check{l}^{-1})(t-\alpha\check{l})}
          {(\alpha t-\hat{l})(\alpha t-\hat{l}^{-1})(qt-\alpha\hat{l}^{-1})(qt-\alpha\hat{l})}
     \frac{2\alpha s_4\hat{\rho}-q^{-n}t-q^{n}\alpha^2 s^2_4t^{-1}}{2\alpha s_4\hat{\rho}-q^{n}\alpha^2 s^2_4t-q^{-n}t^{-1}} .
\end{multline}
Employing this latter factor in the second ratio (\ref{auxR2}) we can construct two 
relations involving perfect squares, upon using (\ref{prodId}). After taking the square 
roots we need to resolve the sign ambiguity. The final result is then (\ref{splitId}).
\end{proof}

\begin{remark}
In fact we can make separate evaluations of the numerator and denominator of the last ratio,
which gives us
\begin{multline}
  \mathfrak{T}_{+}(z_5;q^{1/2}t)\mathfrak{W}_{-}(E^{2+}_{u}z_6;q^{1/2}t)
 +\mathfrak{T}_{+}(E^{2+}_{u}z_6;q^{1/2}t)\mathfrak{W}_{+}(z_5;q^{1/2}t) \\
 = \frac{2\hat{a}_n[n+\tfrac{1}{2}]}{q^{3}\alpha^5t^4}
   (q^2t^2-\alpha^2)(\alpha^2t^2-1)(\alpha^2-q)
   (\alpha t-\hat{l})(\alpha t-\hat{l}^{-1})(qt-\alpha\hat{l})(qt-\alpha\hat{l}^{-1})
   \left[ 2\alpha s_4\hat{\rho}-q^{n}\alpha^2 s^2_4t-q^{-n}t^{-1} \right] ,
\end{multline}
and
\begin{multline}
  \mathfrak{T}_{+}(z_6;q^{-1/2}t)\mathfrak{W}_{-}(E^{2-}_{u}z_5;q^{-1/2}t)
 +\mathfrak{T}_{+}(E^{2-}_{u}z_5;q^{-1/2}t)\mathfrak{W}_{+}(z_6;q^{-1/2}t) \\
 = \frac{2\check{a}_n[n+\tfrac{1}{2}]}{q^{3}\alpha^5t^4}
   (t^2-\alpha^2)(\alpha^2t^2-q^2)(\alpha^2-q)
   (\alpha t-q\check{l})(\alpha t-q\check{l}^{-1})(t-\alpha\check{l})(t-\alpha\check{l}^{-1})
   \left[ 2\alpha s_4\hat{\rho}-q^{-n}t-q^{n}\alpha^2 s^2_4t^{-1} \right] .
\label{aux11}
\end{multline}
\end{remark}

In order to proceed any further we will require representations of the individual
evaluated spectral coefficients which we give as two distinct partial-fraction expansions.
We offer these without proofs as they are the outcome of straightforward although lengthy computations.

\begin{lemma}
The evaluated spectral coefficients possess the rational function forms in the $ f,l $ variables
\begin{multline}
   2t[n][n+\tfrac{1}{2}]\hat{a}_n\frac{\mathfrak{W}_{+}(z_5;q^{1/2}t)}{\mathfrak{T}_{+}(z_5;q^{1/2}t)}
  \\
  = -q^{-1}\hat{f}^{-2}(qt-\alpha\hat{f})(q^{n}\alpha t\tilde{\sigma}_6-q^{-n}\hat{f})
     \frac{w(\hat{f})}{(\hat{f}-\hat{l})(\hat{f}-\hat{l}^{-1})}
    +\alpha^{-1}\tilde{\sigma}_6\hat{f}^{-1}(\alpha t-q^{-n}\hat{f})(q^{n}\alpha t-\hat{f})(\hat{l}+\hat{l}^{-1})
  \\
    +\alpha^{-1}\hat{f}^{-2}(q^{n}\alpha t-\hat{f})\big[
                -q^{-n}\tilde{\sigma}_6(\hat{f}^2-1)(q^n\alpha t+\hat{f})-q^{-1/2}\alpha t \tilde{\sigma}_6\sigma_1\hat{f}
  \\
                +q^{-n-5/2}\alpha^2\sigma_3\hat{f}^2+(q^{-n+2}+q^{n-2}\alpha^4\sigma_4)\hat{f}(-q^{-n-2}+q^{-4}\alpha t\sigma_4\hat{f}) \big] ,
\label{evalSpec:a}
\end{multline}
\begin{multline}
   2t[n][n+\tfrac{1}{2}]\check{a}_n\frac{\mathfrak{W}_{+}(z_6;q^{-1/2}t)}{\mathfrak{T}_{+}(z_6;q^{-1/2}t)}
  \\
  = -q^{-1}\check{f}^{-2}\frac{(t-\alpha\check{f})(\alpha t-q\check{f})(q^{n}\alpha \tilde{\sigma}_6-q^{-n}t\check{f})}{(\alpha-t\check{f})}
     \frac{w(\check{f})}{(\check{f}-\check{l})(\check{f}-\check{l}^{-1})}
  \\
    +\alpha^{-1}\tilde{\sigma}_6\check{f}^{-1}(\alpha t-q^{-n+1}\check{f})(q^{n-1}\alpha t-\check{f})(\check{l}+\check{l}^{-1})
  \\
    -\frac{[n](q-t^{2})t^{3}}{q}\frac{(t-\alpha\check{f})}{(\alpha-t\check{f})}\frac{w(\alpha t^{-1})}{(\alpha-t\check{l})(\alpha-t\check{l}^{-1})}
  \\
    +\alpha^{-1}t^{-1}\check{f}^{-2}(q^{n-1}\alpha t-\check{f})\big[ 
                -q^{-n}t\tilde{\sigma}_6(\check{f}^2-1)(q^n\alpha t+q\check{f})-q^{-n}(q-t^2)\tilde{\sigma}_6\check{f}(q^n t+\alpha\check{f})
  \\
                +q^{-2-2n}t^2\check{f}(-q^2 t+q^n\alpha \sigma_4\check{f})-q^{-1/2}\alpha t^2\tilde{\sigma}_6 \sigma_1\check{f}+q^{-\frac{3}{2}-n}\alpha^2 t\sigma_3\check{f}^2
                -\alpha^2\tilde{\sigma}_6\check{f}(t-q^{n-2}\alpha \sigma_4\check{f}) \big] ,
\label{evalSpec:b}
\end{multline} 
\begin{multline}
   2t[n][n+\tfrac{1}{2}]\hat{a}_n\frac{\mathfrak{W}_{-}(E^{2+}_{u}z_6;q^{1/2}t)}{\mathfrak{T}_{+}(E^{2+}_{u}z_6;q^{1/2}t)}
  \\
  =  q^{-1/2}\hat{f}^{-2}(\alpha t-\hat{f})(q^{n+1/2}t\tilde{\sigma}_6-q^{-n-1/2}\alpha\hat{f})
     \frac{w(\hat{f})}{(\hat{f}-\hat{l})(\hat{f}-\hat{l}^{-1})}
    -\alpha^{-1}\tilde{\sigma}_6\hat{f}^{-1}(\alpha t-q^{-n}\hat{f})(q^{n}\alpha t-\hat{f})(\hat{l}+\hat{l}^{-1})
  \\
    +\hat{f}^{-2}(q^{n}\alpha t-\hat{f})\big[
                q^{-3-n}(q^n-1)\alpha \sigma_4\hat{f}(1+\alpha t\hat{f})+q^{-1/2}t\tilde{\sigma}_6\sigma_1\hat{f}-q^{-n-5/2}\alpha \sigma_3\hat{f}^2
  \\
               -(q^{n+1/2}t\tilde{\sigma}_6-q^{-n-1/2}\alpha\hat{f})(q^{-n-1/2}+q^{-5/2}\sigma_4\hat{f}^2) \big] ,
\label{evalSpec:c}
\end{multline} 
\begin{multline}
   2t[n][n+\tfrac{1}{2}]\check{a}_n\frac{\mathfrak{W}_{-}(E^{2-}_{u}z_5;q^{-1/2}t)}{\mathfrak{T}_{+}(E^{2-}_{u}z_5;q^{-1/2}t)}
  \\
  =  q^{-1/2}\check{f}^{-2}\frac{(t-\alpha\check{f})(\alpha t-q\check{f})(q^{n+1/2}\tilde{\sigma}_6-q^{-n-1/2}\alpha t\check{f})}{(q-\alpha t\check{f})}
     \frac{w(\check{f})}{(\check{f}-\check{l})(\check{f}-\check{l}^{-1})}
  \\
    -\alpha^{-1}\tilde{\sigma}_6\check{f}^{-1}(\alpha t-q^{-n+1}\check{f})(q^{n-1}\alpha t-\check{f})(\check{l}+\check{l}^{-1})
  \\
    +\frac{[n](q-t^{2})\alpha^4t^{3}}{q^2}\frac{(\alpha t-q\check{f})}{(q-\alpha t\check{f})}\frac{w(q\alpha^{-1}t^{-1})}{(q-\alpha t\check{l})(q-\alpha t\check{l}^{-1})}
  \\
    +t^{-1}\check{f}^{-2}(q^{n-1}\alpha t-\check{f})\big[ 
               q^{1-n}\alpha^{-1}t\tilde{\sigma}_6(\check{f}^2-1)(q^{n-1}\alpha t+\check{f})+q^{-4-n}(q-t^2)\sigma_4\check{f}(q^n\alpha^3 t+q^2\check{f})
  \\
              +q^{-1/2}t^2\tilde{\sigma}_6\sigma_1\check{f}-q^{-n-3/2}\alpha t\sigma_3\check{f}^2-q^{-4-2n}\alpha(qt^2+q^{2 n}\sigma_4)\check{f}(-q^2t+q^n\alpha\sigma_4\check{f})             
               \big] .
\label{evalSpec:d}
\end{multline}
\end{lemma}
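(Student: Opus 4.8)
The proof of the four identities (\ref{evalSpec:a})--(\ref{evalSpec:d}) will be a direct verification: each left-hand side, once everything is unwound, is a rational function of a single auxiliary variable --- $\hat{f}$ in (\ref{evalSpec:a}) and (\ref{evalSpec:c}), $\check{f}$ in (\ref{evalSpec:b}) and (\ref{evalSpec:d}) --- and the plan is to compute it explicitly and then match it to the stated form pole by pole. First I would substitute the Laurent-interpolation representations (\ref{auxB1})--(\ref{auxB4}) of $\mathfrak{T}_{+}$ and $\mathfrak{W}_{\pm}$ into the relevant ratio, with every internal quantity ($l$, $\mathfrak{z}_{\pm}$, $w_{\pm}$, the $\tilde{\sigma}_{j}$) carrying the advanced deformation argument $q^{1/2}t$ in (\ref{evalSpec:a}), (\ref{evalSpec:c}) and the retarded one $q^{-1/2}t$ in (\ref{evalSpec:b}), (\ref{evalSpec:d}), and then evaluate the spectral variable at the appropriate node --- $z=z_{5}=\alpha t$, $z=E^{2+}_{u}z_{6}=q^{-1}\alpha t^{-1}$, $z=z_{6}=\alpha t^{-1}$, $z=E^{2-}_{u}z_{5}=q^{-1}\alpha t$ respectively --- using the identifications $\tilde{x}_{5}=E^{-}_{u}x_{5}$ and $\tilde{x}_{6}=E^{+}_{u}x_{6}$. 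Because $\mathfrak{T}_{+}$ evaluated at any of these nodes is free of $\hat{f}$ (resp.\ $\check{f}$), the ratio is at this point a linear expression in $\hat{\mathfrak{z}}_{+},\hat{\mathfrak{z}}_{-}$ over a known denominator.

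Next I would eliminate $\hat{\mathfrak{z}}_{\pm}$ using the coordinate transformations (\ref{splitXfm:a})--(\ref{splitXfm:b}) at the shifted argument, and rewrite $w_{\pm}=W(\pm1)$ and the symmetric functions $\tilde{\sigma}_{j}$ in terms of the polynomial $w(z)=\prod_{j=1}^{4}(1-q^{-1/2}a_{j}z)$ and the parameters $a_{1},\dots,a_{4},\alpha,t$, together with the factorisation $R\pm\Delta vS=\phi_{1}(x;q^{-1/2}\alpha t^{\pm1})$ from (\ref{deform_AW_DCff:a}). In the ensuing simplification one verifies that the apparent double poles at $\hat{l}=\pm1$ produced by the $(\hat{l}-\hat{l}^{-1})^{-2}$ factors in (\ref{auxB3})--(\ref{auxB4}) cancel against the $w_{\pm}$-terms --- a reflection of the fact that the interpolation formulae are genuine Laurent polynomials in $z$ --- so that the outcome is a rational function of $\hat{f}$ (resp.\ $\check{f}$) with coefficients polynomial in $\hat{l}$ (resp.\ $\check{l}$), $t$, $\alpha$, $a_{j}$, $q^{\pm n}$. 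Its only finite poles are then the simple poles at $\hat{f}=\hat{l}$ and $\hat{f}=\hat{l}^{-1}$ introduced by (\ref{splitXfm:a})--(\ref{splitXfm:b}), a pole of order at most two at $\hat{f}=0$, and growth of degree at most two as $\hat{f}\to\infty$; in (\ref{evalSpec:b}) and (\ref{evalSpec:d}) there are additional simple poles --- in both $\check{f}$ and $\check{l}$ --- at the places where $\mathfrak{T}_{+}$ evaluated at the node $z_{6}$ (resp.\ $E^{2-}_{u}z_{5}$) vanishes while the numerator does not, whereas in (\ref{evalSpec:a}), (\ref{evalSpec:c}) the corresponding node-zero of $\mathfrak{T}_{+}$ is cancelled by a zero of the numerator. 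Equality of the two sides then reduces to: equality of the residue at $\hat{f}=\hat{l}$, which after removing the common factor $(\hat{l}-\hat{l}^{-1})^{-1}$ is a polynomial identity in $\hat{l}$ that follows from the definitions of the $\tilde{\sigma}_{j}$ and the $n$-independence of $W^{2}-\Delta y^{2}V^{2}=C_{\infty}\prod_{j}(x-\tilde{x}_{j})$; equality of the residue at $\hat{f}=\hat{l}^{-1}$, which is the image of the previous one under $\hat{l}\mapsto\hat{l}^{-1}$; and equality of the Laurent tail at $\hat{f}=0$ together with the polynomial part at $\hat{f}\to\infty$. Carrying out this short list of parameter identities establishes (\ref{evalSpec:a}), and the analogous computation at the retarded argument and the node $z_{6}$ gives (\ref{evalSpec:b}).

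I would then obtain (\ref{evalSpec:c}) and (\ref{evalSpec:d}) without a new calculation by invoking the symmetries recorded in the Remark after (\ref{auxB4}): under $z\mapsto z^{-1}$ one has $\mathfrak{W}_{+}\leftrightarrow\mathfrak{W}_{-}$ and $\mathfrak{T}_{+}\mapsto-\mathfrak{T}_{+}$; under $l\mapsto l^{-1}$ with $f$ held fixed (consistent with (\ref{splitXfm:a})--(\ref{splitXfm:b})) one has $\mathfrak{z}_{+}\leftrightarrow\mathfrak{z}_{-}$ with $\mathfrak{W}_{\pm},\mathfrak{T}_{+}$ invariant; and $\mathfrak{W}_{\pm},\mathfrak{T}_{+}$ are invariant under $t\mapsto t^{-1}$. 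Composing these carries the node $z_{5}=\alpha t$ (resp.\ $z_{6}=\alpha t^{-1}$) onto $E^{2+}_{u}z_{6}$ (resp.\ $E^{2-}_{u}z_{5}$) and the ratio $\mathfrak{W}_{+}/\mathfrak{T}_{+}$ onto $\mathfrak{W}_{-}/\mathfrak{T}_{+}$, transforming the already-verified right-hand side of (\ref{evalSpec:a}) (resp.\ (\ref{evalSpec:b})) into that of (\ref{evalSpec:c}) (resp.\ (\ref{evalSpec:d})).

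The computations of the first three paragraphs are mechanical, but the genuine obstacle --- and the point where this case departs essentially from the $q$-linear spectral lattice, where only a single $\mathfrak{z}$-variable appears --- is that here $\hat{\mathfrak{z}}_{+}$ and $\hat{\mathfrak{z}}_{-}$ enter simultaneously, so one must track two independent simple poles in $\hat{f}$ alongside the extra fixed-point poles carried by the $w_{\pm}$-terms, and one must fix consistently the branches chosen in inverting $\lambda=\tfrac{1}{2}(l+l^{-1})$, the relations (\ref{splitXfm:a})--(\ref{splitXfm:b}) between $\mathfrak{z}_{\pm}$ and $f$, and $2\rho=g+g^{-1}$ (cf.\ (\ref{glXFM}), (\ref{LR_xfm})), so that the residue matchings in the second paragraph come out with the correct signs rather than merely up to $\pm$.
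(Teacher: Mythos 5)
The paper itself offers no argument for this lemma---it states that the four expansions are given ``without proofs as they are the outcome of straightforward although lengthy computations''---so the only question is whether your strategy would actually produce them. Your first two paragraphs describe the right computation and are, in all likelihood, what the author did: substitute the interpolation formulae (\ref{auxB1}), (\ref{auxB3}), (\ref{auxB4}) with all internal quantities at the advanced (resp.\ retarded) argument, evaluate at the node, eliminate $\hat{\mathfrak{z}}_{\pm}$ via (\ref{splitXfm:a})--(\ref{splitXfm:b}), and identify the resulting rational function of $\hat{f}$ (resp.\ $\check{f}$) with the stated form by matching principal parts at $\hat{f}=\hat{l},\hat{l}^{-1},0$ and the behaviour at infinity. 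That is a complete and sound verification scheme for each identity taken on its own.

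The gap is in your third paragraph. The composition of $z\mapsto z^{-1}$, $l\mapsto l^{-1}$ and $t\mapsto t^{-1}$ does \emph{not} carry the node $z_{5}=\alpha t$ onto $E^{2+}_{u}z_{6}=q^{-1}\alpha t^{-1}$: the first map sends $\alpha t$ to $\alpha^{-1}t^{-1}$, the third then to $\alpha^{-1}t$, which differs from $q^{-1}\alpha t^{-1}$ by the factor $q\alpha^{-2}t^{2}$, while $l\mapsto l^{-1}$ does not move the node at all. The two nodes are the zeros of $(W^{2}-\Delta y^{2}V^{2})(\,\cdot\,;q^{1/2}t)$ attached to the \emph{distinct} parameters $\hat{a}_{5}=q^{1/2}\alpha t$ and $\hat{a}_{6}=q^{-1/2}\alpha t^{-1}$ of the advanced system; exchanging them requires $t\mapsto q^{-1}t^{-1}$, i.e.\ inversion of the advanced deformation variable, which is not among the symmetries listed in the Remark and which in any case sends the argument $q^{1/2}t$ to $(q^{1/2}t)^{-1}$ and acts on $f$ by a nontrivial M\"obius transformation (the prefactors in (\ref{splitXfm:a})--(\ref{splitXfm:b}) are not invariant under inversion of their deformation argument), so the result would not come out expressed in the same $\hat{f},\hat{l}$. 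The failure is also visible directly in the answers: the prefactor $(qt-\alpha\hat{f})(q^{n}\alpha t\tilde{\sigma}_6-q^{-n}\hat{f})$ in (\ref{evalSpec:a}) is not the image of $q^{1/2}(\alpha t-\hat{f})(q^{n+1/2}t\tilde{\sigma}_6-q^{-n-1/2}\alpha\hat{f})$ in (\ref{evalSpec:c}) under any of the three listed involutions, and (\ref{evalSpec:c}), (\ref{evalSpec:d}) carry sign and structural differences (e.g.\ the $-\,\alpha^{-1}\tilde{\sigma}_6\hat{f}^{-1}(\cdots)(\hat{l}+\hat{l}^{-1})$ term) that no such substitution produces. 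The remedy is cheap---after using $z\mapsto z^{-1}$ to rewrite $\mathfrak{W}_{-}(E^{2+}_{u}z_{6})/\mathfrak{T}_{+}(E^{2+}_{u}z_{6})$ as $-\mathfrak{W}_{+}(q\alpha^{-1}t)/\mathfrak{T}_{+}(q\alpha^{-1}t)$, run the computation of your first two paragraphs again at the nodes $q\alpha^{-1}t$ and $q\alpha^{-1}t^{-1}$---but as written the shortcut does not deliver (\ref{evalSpec:c}) and (\ref{evalSpec:d}).
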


In addition the evaluations at the advanced co-ordinate have alternative forms.
\begin{lemma}
The advanced evaluated spectral coefficients are rational functions in the $ g,f $ variables
\begin{multline}
   2t[n][n+\tfrac{1}{2}]\hat{a}_n\frac{\mathfrak{W}_{+}(z_5;q^{1/2}t)}{\mathfrak{T}_{+}(z_5;q^{1/2}t)}
  \\
  =-q^{-n-1}\alpha s_4
    \frac{\left(q^n t\alpha s_4\hat{g}^{-1}-1\right)^2}{\hat{g}-\hat{g}^{-1}}
    \frac{w({s_4}^{-1}\hat{g})}{\hat{f}-{s_4}^{-1}\hat{g}}
   +q^{-n-1}\alpha s_4
    \frac{\left(q^n t\alpha s_4\hat{g}-1\right)^2}{\hat{g}-\hat{g}^{-1}} 
    \frac{w({s_4}^{-1}\hat{g}^{-1})}{\hat{f}-{s_4}^{-1}\hat{g}^{-1}}
  \\
   -q^{-n-1}\alpha\left(\hat{g}^2+\hat{g}^{-2}\right)
   -q^{-n-6}{s_4}^{-1}\left[-q^{7/2}\alpha \sigma_3+t\sigma_4(q^4-2q^{n+3}\alpha^2+q^{2n}\alpha^4 \sigma_4)\right]\left(\hat{g}+\hat{g}^{-1}\right)
  \\
   +q^{-2n-6}\alpha^{-1}\left[q^6+t^2q^{4n}\alpha^6 \sigma_4^2-t^2q^{3n+1}\alpha^4\sigma_4(q^2+\sigma_4)
                               +q^{2n+2}\alpha^2(t^2q^2+\alpha^2)\sigma_4
                        \right.
  \\
                        \left.-q^{n+4}\alpha^2(2q+\sigma_2)
                                 +q^{2n+5/2}\alpha^3 t(-q\sigma_3+\sigma_1\sigma_4)\right] ,
\label{lhs2ND:a}
\end{multline}
and
\begin{multline}
   2t[n][n+\tfrac{1}{2}]\hat{a}_n\frac{\mathfrak{W}_{-}(E^{2+}_{u}z_6;q^{1/2}t)}{\mathfrak{T}_{+}(E^{2+}_{u}z_6;q^{1/2}t)}
  \\
  = q^{-n-1}\alpha s_4
    \frac{\left(q^n t\alpha s_4\hat{g}^{-1}-1\right)^2}{\hat{g}-\hat{g}^{-1}}
    \frac{w({s_4}^{-1}\hat{g})}{\hat{f}-{s_4}^{-1}\hat{g}}
   -q^{-n-1}\alpha s_4
    \frac{\left(q^n t\alpha s_4\hat{g}-1\right)^2}{\hat{g}-\hat{g}^{-1}} 
    \frac{w({s_4}^{-1}\hat{g}^{-1})}{\hat{f}-{s_4}^{-1}\hat{g}^{-1}}
  \\
   +q^{-n-1}\alpha\left(\hat{g}^2+\hat{g}^{-2}\right)
   -q^{-n-15/2}\alpha{s_4}^{-1}\left[q^{5}\sigma_3-t\alpha\sigma_4(q^{9/2}-2q^{n+9/2}+q^{2n+5/2}\sigma_4)\right]\left(\hat{g}+\hat{g}^{-1}\right)
  \\
   +q^{-2n-11/2}\alpha\left[-q^{9/2}-q^{4n+1/2}\alpha^2 t^2\sigma_4^2+q^{3n+1/2}\alpha^2t^2\sigma_4(q^2+\sigma_4)
                               -q^{2n+5/2}(1+\alpha^2t^2)\sigma_4
                        \right.
  \\
                        \left.+q^{n+7/2}(2q+\sigma_2)
                                 -q^{2n+2}\alpha t(-q\sigma_3+\sigma_1\sigma_4)\right] .
\label{lhs2ND:b}
\end{multline}
\end{lemma}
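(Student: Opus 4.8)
The plan is to establish each of the two identities (\ref{lhs2ND:a}) and (\ref{lhs2ND:b}) by showing that its right-hand side is the unique partial-fraction expansion, in the variable $\hat g$ (equivalently $\hat\rho$ via $2\hat\rho=\hat g+\hat g^{-1}$), of the left-hand side, which we have already evaluated in the $\hat f,\hat l$ variables in (\ref{evalSpec:a}) and (\ref{evalSpec:c}). The starting point is therefore the two expressions (\ref{evalSpec:a}) and (\ref{evalSpec:c}), together with the change-of-variables relations linking $\hat l$, $\hat f$ and $\hat g$: these are the ($t\mapsto q^{1/2}t$ shift of the) formulae (\ref{splitXfm:a}), (\ref{splitXfm:b}), (\ref{LR_xfm}) and its rearrangement (\ref{glIXFM}), supplemented by the closure relations (\ref{Dclose:a}--\ref{Dclose:d}) and the defining relations (\ref{rPxfm}), (\ref{rMxfm}) for $\hat\rho$. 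The key structural observation is that (\ref{glIXFM}) exhibits $\hat\lambda$ — hence, through (\ref{deform_AW_spec:a}--\ref{deform_AW_spec:c}) and (\ref{auxB3}), the whole evaluated spectral coefficient — as a rational function of $\hat g$ with only two simple poles, located at $\hat f=s_4^{-1}\hat g$ and $\hat f=s_4^{-1}\hat g^{-1}$, i.e. at $\hat g=s_4\hat f$ and $\hat g=s_4\hat f^{-1}$ as a function of $\hat g$; so the right-hand sides of (\ref{lhs2ND:a}) and (\ref{lhs2ND:b}) have exactly the correct analytic structure.

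First I would carry out the pole-by-pole matching. For the pole at $\hat g=s_4\hat f$ I would compute the residue of the left-hand side: substitute $\hat l$ as a function of $\hat f,\hat g$ from the inverse of (\ref{LR_xfm}), take the limit $\hat g\to s_4\hat f$ in (\ref{evalSpec:a}) (respectively (\ref{evalSpec:c})), and verify that the result matches the coefficient $\mp q^{-n-1}\alpha s_4(q^n t\alpha s_4\hat g^{-1}-1)^2/(\hat g-\hat g^{-1})$ times $w(s_4^{-1}\hat g)$ claimed in (\ref{lhs2ND:a}) (respectively (\ref{lhs2ND:b})); the pole at $\hat g=s_4\hat f^{-1}$ is handled identically, and in fact follows from the $\hat l\mapsto\hat l^{-1}$, $\hat g\mapsto\hat g^{-1}$ symmetry noted after (\ref{glXFM}) together with the interchange $\hat{\mathfrak z}_+\leftrightarrow\hat{\mathfrak z}_-$. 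Once both residues agree, the difference of the two sides is a rational function of $\hat g$ with no poles, hence a Laurent polynomial in $\hat g$ symmetric under $\hat g\mapsto\hat g^{-1}$ (from the $z\mapsto z^{-1}$, $t\mapsto t$ parities recorded in the Remarks following Proposition \ref{SS_parameterise}); bounding its degree using the large-$\hat g$ and small-$\hat g$ asymptotics — which are controlled by the eigenvalue asymptotics $q^n\tilde\sigma_6,q^{-n}$ of $A^*_n$ at $z\to0,\infty$ via (\ref{auxB3}) — shows it is of the form $c_2(\hat g^2+\hat g^{-2})+c_1(\hat g+\hat g^{-1})+c_0$. Matching these three constants against the explicit $(\hat g^2+\hat g^{-2})$, $(\hat g+\hat g^{-1})$ and constant terms displayed in (\ref{lhs2ND:a}), (\ref{lhs2ND:b}) completes the proof; the $(\hat g^2+\hat g^{-2})$ coefficient $\mp q^{-n-1}\alpha$ comes directly from the leading eigenvalue term, the linear coefficient from the subleading term in the $z^3$ expansion of $\mathfrak W_+$ combined with (\ref{xfmV:a}), and the constant from a single specialisation (say $\hat g=1$, or equivalently $\hat\rho=1$) using $a_0^2=0$-type initial data.

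The main obstacle I expect is the constant-term bookkeeping in the last step: the residue computations are rigid (they are forced by the pole structure) and the two top coefficients are fixed by asymptotics, but the $(\hat g+\hat g^{-1})$ and especially the additive constant $c_0$ involve the full web of symmetric-function identities relating $\tilde\sigma_j$ to $\sigma_j$ and $\alpha,u$ (the list after Remark \ref{Check}), the identity $\sigma_4=q^2s_4^2$, and the precise normalisation $H_n=\tfrac12 a_n$; keeping track of the powers of $q$, $\alpha$, $t$ and $s_4$ through the substitution $\hat{\phantom{x}}$ (the $t\mapsto q^{1/2}t$ shift) without sign or exponent slips is where the calculation is delicate rather than conceptually hard. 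A useful safeguard, which I would invoke, is the consistency check that both (\ref{lhs2ND:a}) and (\ref{lhs2ND:b}) must be compatible with the product relation (\ref{prodId}) and with the sum/difference combinations (\ref{soln:up}), (\ref{soln:dn}) already established — indeed (\ref{lhs2ND:a}) plus (\ref{lhs2ND:b}) should reduce, after using (\ref{Bi-spectral}) at $z=z_5$, to an identity expressible purely through $\hat\rho$, giving an independent verification of the constant terms.
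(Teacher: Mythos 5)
The paper itself supplies no proof of this lemma (it is covered by the remark, made just before the preceding lemma, that these evaluations "are the outcome of straightforward although lengthy computations"), so there is nothing to match your argument against line by line; what you propose is the natural structured version of the implied computation, namely: take (\ref{evalSpec:a}) and (\ref{evalSpec:c}), which depend on $\hat l$ only through $\hat\lambda=\tfrac12(\hat l+\hat l^{-1})$ and $(\hat f-\hat l)(\hat f-\hat l^{-1})=\hat f^2+1-2\hat f\hat\lambda$, substitute (\ref{glIXFM}) for $\hat\lambda$ and $(1-s_4\hat f\hat g)(1-s_4\hat f\hat g^{-1})$ for $w(\hat f)/(\hat f^2+1-2\hat f\hat\lambda)$ via (\ref{glXFM}), and identify the result with the displayed right-hand sides. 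Your pole-matching/degree-counting organisation of that computation is sound, and your use of the $\hat g\mapsto\hat g^{-1}$, $\hat{\mathfrak z}_+\leftrightarrow\hat{\mathfrak z}_-$ symmetry to halve the residue work is correct (the displayed right-hand sides are indeed invariant under $\hat g\mapsto\hat g^{-1}$, as they must be as functions of $\hat\rho$).

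Three details need tightening. First, the displayed pole terms are not bare principal parts: each carries the full numerator $w(s_4^{-1}\hat g^{\pm1})$ and a $\hat g$-dependent prefactor, so they grow like $\hat g^{\pm2}$ and have apparent singularities at $\hat g=0,\pm1$. After matching residues at $\hat g=s_4\hat f^{\pm1}$ you must still verify that (a) the $\hat g=\pm1$ singularities cancel between the two pole terms, (b) the $O(\hat g^{2})$ growth of the pole terms is exactly cancelled by the explicit $\mp q^{-n-1}\alpha(\hat g^2+\hat g^{-2})$ term, and (c) all residual $\hat f$-dependence — in particular the $\hat f^{-2}$ and $\hat f^{-1}$ poles inherited from (\ref{evalSpec:a}) — drops out of the remainder. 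None of this is automatic from "analytic structure"; it is precisely the lengthy cancellation the paper alludes to. Second, and related, your attribution of the $\hat g^2+\hat g^{-2}$ coefficient to the leading eigenvalue asymptotics is not quite right: the evaluated coefficient is only $O(\hat g)$ as $\hat g\to\infty$ (it is at most quadratic in $\hat\rho$ only through the residue prefactors), so $\mp q^{-n-1}\alpha$ is fixed by the requirement that it cancel the growth of the pole terms, not by the $z\to\infty$ eigenvalues of $A^*_n$. Third, your proposed consistency check via (\ref{Bi-spectral}) at $z=z_5$ does not apply as stated, since (\ref{lhs2ND:a}) and (\ref{lhs2ND:b}) are evaluations at the distinct points $z_5$ and $E^{2+}_u z_6$; the correct check is that the sum of the two formulas has all pole terms and all $\hat g^{\pm2}$ terms cancelling, leaving an expression linear in $\hat g+\hat g^{-1}=2\hat\rho$ that must reproduce $-q^{1/2}\alpha^{-1}(t^{-1}-t)[n+\tfrac12]\,r_{1-}\hat a_n/p_+$ as given by (\ref{soln:up}) together with (\ref{rMxfm}). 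With those corrections your plan carries through.
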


At this stage we have accumulated enough results to deduce the dynamical equations for our system on the 
deformation lattice. 
\begin{proposition}\label{evolution_1}
The $t$-evolution of our system in the variables $ \rho, \lambda, f $ is given by a pair of coupled 
first order equations the first of which is
\begin{multline}
  \frac{2q s_4\hat{\rho}-t^{2}\check{f}^{-1}-q^2t^{-2}s^2_4\check{f}}{2s_4\check{\rho}-\check{f}^{-1}-s^2_4\check{f}}
  = \frac{q(t-\alpha \check{f})(q\check{f}-\alpha t)}{(t\check{f}-\alpha)(q-\alpha t\check{f})}
  \\
    +\frac{\alpha^{4}t^{2}(q-t^{2})}{(q-\alpha^{2})}\frac{(q\check{f}-\alpha t)}{(q-\alpha t\check{f})}
     \frac{\check{f}^{2}w(q\alpha^{-1}t^{-1})}
          {q\alpha tw(\check{f})-(q\check{f}-\alpha t)(q-\alpha t\check{f})\check{f}[2s_4\check{\rho}-\check{f}^{-1}-s^2_4\check{f}]}
  \\
    -\frac{qt^{2}(q-t^{2})}{(q-\alpha^{2})}\frac{(t-\alpha \check{f})}{(t\check{f}-\alpha)}
     \frac{\check{f}^{2}w(\alpha t^{-1})}
          {\alpha tw(\check{f})-(t\check{f}-\alpha)(t-\alpha \check{f})\check{f}[2s_4\check{\rho}-\check{f}^{-1}-s^2_4\check{f}]} .
\label{t-Evol:a}
\end{multline}
The auxiliary equation for the leading coefficient of the polynomial is
\begin{equation}
    t^2 \frac{\hat{\gamma}^2_n}{\check{\gamma}^2_n}
    = \frac{t^2+q^{2n}\alpha^2s^2_4-2q^n\alpha ts_4\hat{\rho}}{1+q^{2n}\alpha^2t^2s^2_4-2q^n\alpha ts_4\hat{\rho}} .
\label{t-Evol:f}
\end{equation}
\end{proposition}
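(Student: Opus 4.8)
The plan is to establish the two displayed identities separately, starting with the auxiliary relation (\ref{t-Evol:f}), which is purely algebraic. With the normalisation $H_n=\tfrac12 a_n$ already adopted, the leading-order evaluations (\ref{deform_AW_DCff:d}), (\ref{deform_AW_DCff:e}), (\ref{deform_AW_DCff:f}) together with $a_n(v_{\pm})=\gamma_{n-1}(v_{\pm})/\gamma_n(v_{\pm})$ give, after cancelling the common prefactor,
\begin{equation*}
  \frac{r_{1+}\check{a}_n}{p_{+}}=\frac{R^2}{R^2-1},\qquad
  \frac{r_{1-}\hat{a}_n}{p_{+}}=\frac{1}{R^2-1},\qquad
  R:=t\,\frac{\hat{\gamma}_n}{\check{\gamma}_n},
\end{equation*}
whence $r_{1+}\check{a}_n/(r_{1-}\hat{a}_n)=R^2=t^2\hat{\gamma}_n^{2}/\check{\gamma}_n^{2}$. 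Next I would compute the same ratio from the co-ordinate transformations (\ref{rPxfm}) and (\ref{rMxfm}), which express the two quotients on the left-hand sides above as affine functions of $\hat{\rho}$; equating the two evaluations of $r_{1+}\check{a}_n/(r_{1-}\hat{a}_n)$ and multiplying numerator and denominator by $-q^{n}t$ produces (\ref{t-Evol:f}) at once.

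For the main equation (\ref{t-Evol:a}) I would follow the route used to derive (\ref{glXFM}) from (\ref{soln:up}), but applied to the second solution (\ref{soln:dn}); this is the genuinely dynamical relation, since its left-hand side carries the advanced variable $\hat{\rho}$ while its right-hand side is built entirely from retarded quantities. First I would rewrite the left-hand side of (\ref{soln:dn}) via (\ref{rPxfm}) so that it becomes a linear expression in $\hat{\rho}$; then substitute the split transformations (\ref{splitXfm:a}), (\ref{splitXfm:b}) at the argument $q^{-1/2}t$ for $\check{\mathfrak{z}}_{\pm}$; then perform a partial-fraction expansion of the resulting identity in the variable $\check{l}$, collecting the poles at $\check{l}^{\pm1}$ and at $\alpha^{\pm1}t^{\pm1}$, $q^{\pm1}\alpha^{\pm1}t^{\pm1}$; and finally pass from $\check{l}$ to $\check{\lambda}=\tfrac12(\check{l}+\check{l}^{-1})$ and trade $\check{\lambda}$ for $\check{\rho}$ using (\ref{glXFM}) at argument $q^{-1/2}t$, written as $\check{f}\bigl[\,2s_4\check{\rho}-\check{f}^{-1}-s_4^{2}\check{f}\,\bigr]=-w(\check{f})/(\check{f}^{2}+1-2\check{f}\check{\lambda})$, which converts the $\check{f},\check{\lambda}$-rational expression into exactly the combination appearing on both sides of (\ref{t-Evol:a}). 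The substitutions $\tilde{\sigma}_6=q^{-1}\alpha^{2}s_4^{2}$ and $\sigma_6=\alpha^{2}\sigma_4=q^{2}\alpha^{2}s_4^{2}$ are needed to bring the various $q$-powers into agreement. As an independent check the same identity can be reassembled by inserting the explicit forms (\ref{evalSpec:b}), (\ref{evalSpec:d}) of the two retarded spectral ratios into the difference of (\ref{cse:f}) and (\ref{cse:g}), and the advanced forms (\ref{lhs2ND:a}), (\ref{lhs2ND:b}) into the difference of (\ref{cse:a}) and (\ref{cse:d}).

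The hard part will be the partial-fraction step and the subsequent consolidation: one must treat all of the poles in $\check{l}$ simultaneously, keep the branch choices made in the definitions of $l$, $\mathfrak{z}_{\pm}$ and $g$ mutually consistent (equivalently, fix the sign ambiguity in the square roots that appears in establishing (\ref{splitId}), which is pinned down by the $z\to0,\infty$ eigenvalues $q^{-n}$ and $q^{n}\tilde{\sigma}_6$ of $A^{*}_n$), and verify that the numerous explicit $q^{\pm n}$ and $[n]$-, $\{n\}$-, $[n+\tfrac12]$-type factors reorganise precisely into the $n$-free rational function of (\ref{t-Evol:a}). Everything else reduces to routine, if lengthy, rational-function algebra in the variables $\rho,\lambda,f,g,l$ and the elementary symmetric functions $\sigma_j,\tilde{\sigma}_j$ of the parameters $q^{-1/2}a_1,\dots,q^{-1/2}a_6$.
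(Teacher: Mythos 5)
Your proposal follows essentially the same route as the paper's own proof: the auxiliary relation (\ref{t-Evol:f}) is obtained by equating the $\gamma$-ratio evaluations of $r_{1\pm}a_n/p_{+}$ coming from (\ref{deform_AW_DCff:d})--(\ref{deform_AW_DCff:g}) with the $\hat{\rho}$-parameterisation (\ref{rPxfm})--(\ref{rMxfm}), and the main equation (\ref{t-Evol:a}) by substituting (\ref{rPxfm}) and the split transformations (\ref{splitXfm:a})--(\ref{splitXfm:b}) into the retarded solution (\ref{soln:dn}), partial-fractioning in $\check{l}$ to reach a rational expression in $\check{\lambda}$, and then eliminating $\check{\lambda}$ via the inversion of (\ref{glXFM}) at the retarded time. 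The independent check you mention, reassembling the identity from the evaluated spectral ratios (\ref{evalSpec:b}) and (\ref{evalSpec:d}), is likewise the alternative the paper records.
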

\begin{proof}
The first of these equations is derived utilising the following steps.
We substitute the expression for $ r_{1+}\check{a}_n/p_{+} $ given by (\ref{rPxfm})
and the transformation formulae for $ \check{\mathfrak{z}}_{\pm} $ as given in
(\ref{splitXfm:a},\ref{splitXfm:b}) into the solution at the retarded co-ordinate
(\ref{soln:dn}). This yields an equation involving $ \hat{\rho} $ on one hand
and $ \check{l},\check{f} $ on the other hand. Now we perform a partial fraction expansion
of this with respect to $ \check{l} $ and this produces the following expression
\begin{multline}
   2s_4\hat{\rho} = q^{-1}t^{2}\check{f}^{-1}+qt^{-2}s^2_4\check{f}
   -\frac{(t-\alpha \check{f})(q\check{f}-\alpha t)}{\check{f}(t\check{f}-\alpha)(q-\alpha t\check{f})}
    \frac{w(\check{f})}{\check{f}^{2}+1-2\check{f}\check{\lambda}} 
  \\
   -\frac{\alpha^{4}t^{2}(q-t^{2})(q\check{f}-\alpha t)}{q(q-\alpha^{2})(q-\alpha t\check{f})}
    \frac{w(q\alpha^{-1}t^{-1})}{q^{2}+\alpha^{2}t^{2}-2q\alpha t\check{\lambda}} 
   +\frac{t^{2}(q-t^{2})(t-\alpha \check{f})}{(q-\alpha^{2})(t\check{f}-\alpha)}
    \frac{w(\alpha t^{-1})}{\alpha^{2}+t^{2}-2\alpha t\check{\lambda}} ,
\label{Evol_aux}
\end{multline}
which is a simple function of $ \check{\lambda} $. Then one substitutes for $ \check{\lambda} $
using the inversion of the transformation (\ref{glXFM}) at the retarded time and the
result is (\ref{t-Evol:a}). Alternatively one can prove this formula by substituting (\ref{evalSpec:b})
and (\ref{evalSpec:d}) into (\ref{aux11}).

We employ the variable transformation (\ref{rPxfm}) and note that the left-hand side 
of this expression has been evaluated in (\ref{deform_AW_DCff:d}) and (\ref{deform_AW_DCff:f}). 
Equating these two forms gives (\ref{t-Evol:f}).
\end{proof}

\begin{remark}\label{Quin2Quad}
The right-hand side of formula (\ref{t-Evol:a}) exhibits apparent poles at $ \alpha=t\check{f} $
and $ q=\alpha t\check{f} $. However this is not the case as the former is cancelled by opposing
contributions from the first and third terms, while the latter is cancelled by contributions
from the first and second term.
\end{remark}

Clearly (\ref{t-Evol:a}) is not manifestly invertible for $ \check{\rho} $, however
it is possible to construct a linear equation for the retarded variables by switching from
$ \rho $ to $ \lambda $.
\begin{proposition}\label{evolution_2}
The inverse to (\ref{t-Evol:a}) is given as a relation for $ \check{\lambda} $,
which we give in two alternative forms
\begin{multline}
   \check{\lambda} =
    \frac{(\hat{f}-q^n\alpha t)(t-q^n\alpha s^2_4\hat{f})}
         {(t\hat{f}-q^n\alpha)(1-q^n\alpha ts^2_4\hat{f})}\left[ \hat{\lambda}-\tfrac{1}{2}(\hat{f}+\hat{f}^{-1}) \right]
   +\frac{t^4+\hat{f}^2}{2t^2\hat{f}}
   \\
   +\frac{t^2(t^2-1)s_4^3}{2q^{n+1/2}[n+\tfrac{1}{2}]}
    \Bigg[
       -s_4^3\frac{(\hat{f}-q^n\alpha t)}{(1-q^n\alpha ts^2_4\hat{f})}
        \frac{q^{4n}\alpha^4w(q^{-n}\alpha^{-1}t^{-1}s_4^{-2})}{1+q^{2n}\alpha^2t^2s^2_4-2q^n\alpha ts_4\hat{\rho}}
       +s_4^{-3}\frac{(t-q^n\alpha s^2_4\hat{f})}{(t\hat{f}-q^n\alpha)}
        \frac{w(q^{n}\alpha t^{-1})}{t^2+q^{2n}\alpha^2s^2_4-2q^n\alpha ts_4\hat{\rho}}
    \Bigg] ,
\label{t-Evol:d}
\end{multline}
or
\begin{multline}
   \check{\lambda} =
   -\frac{(\hat{g}-q^n\alpha ts_4)(t-q^n\alpha s_4\hat{g})}
         {2(\hat{g}^2-1)(1-q^n\alpha ts_4\hat{g})(t\hat{g}-q^n\alpha s_4)}
    \frac{w(s_4^{-1}\hat{g})}{\hat{f}-s_4^{-1}\hat{g}}
   +\frac{\hat{g}^2(1-q^n\alpha ts_4\hat{g})(t\hat{g}-q^n\alpha s_4)}
         {2(\hat{g}^2-1)(\hat{g}-q^n\alpha ts_4)(t-q^n\alpha s_4\hat{g})}
    \frac{w(s_4^{-1}\hat{g}^{-1})}{\hat{f}-s_4^{-1}\hat{g}^{-1}}
   \\
   +\frac{1}{2\alpha t^3\sigma_4}
    \left[ q^{1/2}\alpha t\sigma_3+q^{5/2}(t^2-1)\{n+\tfrac{1}{2}\} \right]
   -\frac{1}{2t^2s_4}\left[ \hat{g}+\hat{g}^{-1} \right]
   \\
   +\frac{\alpha^2t(t^2-1)s_4^3}{2q^{1/2}[n+\tfrac{1}{2}]}
    \Bigg[
          q^{2n}\alpha s_4
          \frac{w(q^{-n}\alpha^{-1}t^{-1}s_4^{-2})}{(1-q^n\alpha ts_4\hat{g})(1-q^n\alpha ts_4\hat{g}^{-1})}
         -\alpha^{-1}s_4^{-1}
          \frac{w(q^{n}\alpha t^{-1})}{(t-q^n\alpha s_4\hat{g})(t-q^n\alpha s_4\hat{g}^{-1})}
    \Bigg] .
\label{t-Evol:c}
\end{multline}
\end{proposition}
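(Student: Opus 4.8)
The plan is to run, backwards, the derivation used for Proposition~\ref{evolution_1}, exploiting the fact that replacing the variable $\rho$ by the variable $\lambda$ (via the constraint~(\ref{glXFM})) turns the otherwise non-invertible relation~(\ref{t-Evol:a}) into an \emph{affine} relation for the retarded coordinate. The natural starting point is the intermediate identity~(\ref{Evol_aux}) produced in the proof of Proposition~\ref{evolution_1}: it already expresses $2s_4\hat\rho$ as an explicit rational function of the retarded coordinates $\check f$ and $\check\lambda$ in which $\check\lambda$ occurs only through the three ``propagator'' factors $(\check f^{2}+1-2\check f\check\lambda)^{-1}$, $(q^{2}+\alpha^{2}t^{2}-2q\alpha t\check\lambda)^{-1}$ and $(\alpha^{2}+t^{2}-2\alpha t\check\lambda)^{-1}$. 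Equivalently one may re-derive~(\ref{Evol_aux}) from scratch by substituting the coordinate changes~(\ref{rPxfm}), (\ref{splitXfm:a}) and~(\ref{splitXfm:b}) into the solution~(\ref{soln:dn}) at the retarded ordinate and performing the partial fraction expansion with respect to $\check l$, exactly as in Proposition~\ref{evolution_1}. First I would also invoke the constraint~(\ref{glXFM}) at the advanced argument $q^{1/2}t$ to eliminate $\hat\rho$ in favour of $\hat f$ and $\hat\lambda$, which is what installs the $\hat\lambda$- and $\hat\rho$-dependence on the right-hand side of~(\ref{t-Evol:d}).

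The core step is then to solve this equation for $\check\lambda$. Clearing the three propagator denominators gives, a priori, a cubic polynomial identity in $\check\lambda$, and the essential point is that the two top-degree coefficients vanish identically, leaving a genuinely linear equation. To establish this I would use the retarded constraint~(\ref{glXFM}) in the form $-\check f\,(2s_4\check\rho-\check f^{-1}-s_4^{2}\check f)=w(\check f)/(\check f^{2}+1-2\check f\check\lambda)$, so that $(q^{2}+\alpha^{2}t^{2}-2q\alpha t\check\lambda)$ and $(\alpha^{2}+t^{2}-2\alpha t\check\lambda)$ are recognised as the true denominators of the two transported terms of~(\ref{t-Evol:a}); the cancellation of the apparent poles at $\alpha=t\check f$ and $q=\alpha t\check f$ recorded in Remark~\ref{Quin2Quad}, together with the divisibility of $W^{2}-\Delta y^{2}V^{2}$ by $R^{2}-\Delta v^{2}S^{2}$ from Remark~\ref{common_zeros}, is precisely what forces the higher powers of $\check\lambda$ to drop out. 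Solving the remaining affine equation and simplifying the coefficients with the identities for $w_{\pm}$, $[s]$ and $\{s\}$ yields the first form~(\ref{t-Evol:d}).

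For the second form~(\ref{t-Evol:c}) I would run the same argument in the $g,f$ chart instead of the $l,f$ chart: substitute the alternative partial fraction representations~(\ref{lhs2ND:a}) and~(\ref{lhs2ND:b}) of the advanced evaluated spectral coefficients into the matched-element relations~(\ref{eSME:a})--(\ref{eSME:d}) (or directly into~(\ref{cse:a})--(\ref{cse:g}) and their partners), use~(\ref{rMxfm}), (\ref{rPxfm}) and the defining relation $2\hat\rho=\hat g+\hat g^{-1}$, and expand in partial fractions with respect to $\hat g$; the $\hat g$-independent part then reads off as~(\ref{t-Evol:c}). Finally, the equivalence of~(\ref{t-Evol:d}) and~(\ref{t-Evol:c}) can be verified directly from the two mutually inverse forms~(\ref{glXFM}) and~(\ref{glIXFM}) of the $(\rho,\lambda,f)$ constraint, which is a convenient independent check.

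The main obstacle is bookkeeping rather than conceptual: showing that the putative cubic for $\check\lambda$ really collapses to a linear equation, and that the spurious $\check f$-poles (and, in the $g,f$ chart, the $\hat g$-poles) cancel, demands careful tracking of the $\check f$-dependent prefactors across the three propagator terms. A secondary point is fixing the sign of the square root that enters whenever one passes through the product identity~(\ref{prodId}) and the split identity~(\ref{splitId}); as elsewhere in this section this is pinned down by demanding consistency with the $n=0$ seed data and with the initial values $w_{2,0}$ and $v_{0,0}$.
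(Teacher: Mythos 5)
Your route for the first form cannot work as described, and the obstacle you flag as ``bookkeeping'' is in fact fatal. Two problems. First, (\ref{t-Evol:d}) and (\ref{t-Evol:c}) express $\check{\lambda}$ \emph{purely} in terms of advanced-time data ($\hat{f},\hat{\lambda},\hat{\rho}$, respectively $\hat{g},\hat{f}$); no retarded quantity appears on the right-hand side. The intermediate identity (\ref{Evol_aux}) is a single scalar relation among $\hat{\rho}$, $\check{f}$ and $\check{\lambda}$, so solving it for $\check{\lambda}$ can at best yield $\check{\lambda}$ as a function of $\hat{\rho}$ \emph{and} $\check{f}$; the $\check{f}$-dependence does not disappear, and eliminating it would require bringing in a second independent relation (effectively (\ref{t-Evol:e}), which itself presupposes $\check{\lambda}$), so the argument is circular. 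Second, the claimed collapse of the cubic to a linear equation is false: after clearing the three propagators $\check{f}^{2}+1-2\check{f}\check{\lambda}$, $q^{2}+\alpha^{2}t^{2}-2q\alpha t\check{\lambda}$ and $\alpha^{2}+t^{2}-2\alpha t\check{\lambda}$, the coefficient of $\check{\lambda}^{3}$ is $\bigl(2s_4\hat{\rho}-q^{-1}t^{2}\check{f}^{-1}-qt^{-2}s_4^{2}\check{f}\bigr)(-2\check{f})(-2q\alpha t)(-2\alpha t)$, which is not identically zero. The cancellations of Remark \ref{Quin2Quad} concern apparent poles in $\check{f}$ (they are what reduce the quintic for $\check{f}$ to a quadratic in the proof of Proposition \ref{evolution_4}); they say nothing about the degree in $\check{\lambda}$. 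Note also that the paper never solves (\ref{Evol_aux}) for $\check{\lambda}$; where it does solve that relation (for $\check{f}$, in Proposition \ref{evolution_4}) it lands on a genuine quadratic and must invoke the perfect-square discriminant (\ref{perfectSQ}) to factorise it.

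The missing idea is that $\check{\lambda}$ is, by (\ref{deform_AW_spec:b}) and (\ref{auxB1}), the zero in $x$ of the retarded coefficient $\mathfrak{T}_{+}(z;q^{-1/2}t)$, and that this whole function of $z$ can be reconstructed from advanced data alone. The paper solves the compatibility relation (\ref{spectral+deform:b}) for the $(1,2)$ entry of $A^{*}_{n}(z;q^{-1/2}t)$, obtaining $\mathfrak{T}_{+}(z;q^{-1/2}t)$ as a combination of products of deformation-matrix entries at $q^{\pm1/2}z$ with spectral-matrix entries at the advanced time (formula (\ref{aux10})); a re-parameterisation of the advanced coefficients by their values at $z=\alpha t$ and $z=q^{-1}\alpha t^{-1}$ makes the numerator manifestly divisible by the denominator, so the quotient is linear in $x$ and its root is read off as (\ref{t-Evol:c}) and (\ref{t-Evol:d}). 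Your sketch for the second form (substituting (\ref{lhs2ND:a})--(\ref{lhs2ND:b}) into (\ref{eSME:a})--(\ref{eSME:d})) gestures at the right ingredients but suffers from the same defect: those matched-element relations couple advanced and retarded evaluations, and without the linearity-in-$x$ argument you have no mechanism for expelling the retarded variables from the answer. Your closing observation that (\ref{t-Evol:d}) and (\ref{t-Evol:c}) are interchangeable via (\ref{glXFM})/(\ref{glIXFM}) is correct and is a useful check, but it presupposes that at least one of the two forms has already been established.
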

\begin{proof}
One can solve the compatibility relation (\ref{spectral+deform:b}) for the $(1,2)$ 
component of $ A^*_n(z;q^{-1/2}t) $ and find $ \mathfrak{T}_{+}(z;q^{-1/2}t) $
\begin{multline} 
  \mathfrak{T}_{+}(z;q^{-1/2}t) = \frac{1}{\chi(z,t)}\frac{\check{a}_n\hat{a}_n}{4H^2_n(R+\Delta vS)(q^{1/2}z;t)(R-\Delta vS)(q^{1/2}z;t)}
  \\
  \times \left[ 
                \mathfrak{R}_{-}(q^{1/2}z;t)\mathfrak{R}_{-}(q^{-1/2}z;t)\mathfrak{T}_{+}(z;q^{1/2}t) 
               +\mathfrak{P}_{+}(q^{1/2}z;t)\mathfrak{P}_{+}(q^{-1/2}z;t)\mathfrak{T}_{-}(z;q^{1/2}t) 
         \right.
  \\     \left.
               +\mathfrak{R}_{-}(q^{1/2}z;t)\mathfrak{P}_{+}(q^{-1/2}z;t)\mathfrak{W}_{+}(z;q^{1/2}t) 
               -\mathfrak{P}_{+}(q^{1/2}z;t)\mathfrak{R}_{-}(q^{-1/2}z;t)\mathfrak{W}_{-}(z;q^{1/2}t) 
         \right] .
\label{aux10}
\end{multline} 
To simplify the calculations of the spectral matrix elements
we employ an alternative parameterisation to that of (\ref{auxB1}), (\ref{auxB3}), (\ref{auxB4}) 
\begin{multline}
  \mathfrak{W}_{+}(z;q^{1/2}t) 
  = -\frac{\alpha t(z^2-1)(qt-\alpha z)(q^{2n+1}t\tilde{\sigma}_6z-\alpha)}{z^2(1-\alpha^2t^2)(q-\alpha^2)(q^{2n+1}t^2\tilde{\sigma}_6-1)}
     \mathfrak{W}_{+}(\alpha t;q^{1/2}t) 
    \\
    +\frac{q^2\alpha t(z^2-1)(z-\alpha t)(q^{2n}\alpha t\tilde{\sigma}_6z-1)}{z^2(q^2t^2-\alpha^2)(q-\alpha^2)(q^{2n+1}t^2\tilde{\sigma}_6-1)}
     \mathfrak{W}_{-}(q^{-1}\alpha t^{-1};q^{1/2}t) 
    \\
    +\frac{(z+1)(z-\alpha t)(qt-\alpha z)(q^2-q^{2n}\alpha^2t^2\sigma_4z)}{2qtz^2(q^2-q^{2n}\alpha^2t^2\sigma_4)}w(1)
    \\
    -\frac{(z-1)(z-\alpha t)(qt-\alpha z)(q^2+q^{2n}\alpha^2t^2\sigma_4z)}{2qtz^2(q^2-q^{2n}\alpha^2t^2\sigma_4)}w(-1)
    \\
    +\frac{q^{-n-3}(z^2-1)(z-\alpha t)(qt-\alpha z)}{\alpha t^2z^3(q^2-q^{2n}\alpha^2t^2\sigma_4)}
     \left[ q^4+q^{4n}\alpha^4t^4\sigma^2_4z^2+q^{2n+3/2}\alpha^2t^2(q\sigma_3z+\sigma_1\sigma_4z-q^{1/2}(1+z^2)) \right] ,
\end{multline}
\begin{multline}
  \mathfrak{W}_{-}(z;q^{1/2}t) 
  =  \frac{\alpha t(z^2-1)(qtz-\alpha)(q^{2n+1}t\tilde{\sigma}_6-\alpha z)}{z^2(1-\alpha^2t^2)(q-\alpha^2)(q^{2n+1}t^2\tilde{\sigma}_6-1)}
     \mathfrak{W}_{+}(\alpha t;q^{1/2}t) 
    \\
    +\frac{q^2\alpha t(z^2-1)(1-\alpha tz)(z-q^{2n}\alpha t\tilde{\sigma}_6)}{z^2(q^2t^2-\alpha^2)(q-\alpha^2)(q^{2n+1}t^2\tilde{\sigma}_6-1)}
     \mathfrak{W}_{-}(q^{-1}\alpha t^{-1};q^{1/2}t) 
    \\
    +\frac{(z+1)(1-\alpha tz)(qtz-\alpha)(q^2z-q^{2n}\alpha^2t^2\sigma_4)}{2qtz^2(q^2-q^{2n}\alpha^2t^2\sigma_4)}w(1)
    \\
    +\frac{(z-1)(1-\alpha tz)(qtz-\alpha)(q^2z+q^{2n}\alpha^2t^2\sigma_4)}{2qtz^2(q^2-q^{2n}\alpha^2t^2\sigma_4)}w(-1)
    \\
    -\frac{q^{-n-3}(z^2-1)(1-\alpha tz)(qtz-\alpha)}{\alpha t^2z^3(q^2-q^{2n}\alpha^2t^2\sigma_4)}
     \left[ q^4z^2+q^{4n}\alpha^4t^4\sigma^2_4+q^{2n+3/2}\alpha^2t^2(q\sigma_3z+\sigma_1\sigma_4z-q^{1/2}(1+z^2)) \right] ,
\end{multline}
and
\begin{multline}
  \mathfrak{T}_{+}(z;q^{1/2}t) = -\frac{\alpha t(z-z^{-1})}{(q-\alpha^2)(qt^2-1)}p_{+}
    \\
     \times\left[
     \frac{(qtz-\alpha)(qtz^{-1}-\alpha)}{(\alpha^2t^2-1)}
     \frac{\mathfrak{W}_{+}(\alpha t;q^{1/2}t)}{r_{1,-}\tfrac{1}{2}(q^{-1/2}\alpha t+q^{1/2}\alpha^{-1}t^{-1})+r_{0,-}} 
     \right.
    \\
     \left.
    +q^2\frac{(z-\alpha t)(z^{-1}-\alpha t)}{(\alpha^2-q^2t^2)}
     \frac{\mathfrak{W}_{-}(q^{-1}\alpha t^{-1};q^{1/2}t)}{r_{1,-}\tfrac{1}{2}(q^{-1/2}\alpha t^{-1}+q^{1/2}\alpha^{-1}t)+r_{0,-}} 
     \right] ,
\end{multline}
\begin{multline}
  \mathfrak{T}_{-}(z;q^{1/2}t) = \frac{\alpha t(z-z^{-1})}{(q-\alpha^2)(qt^2-1)}\frac{1}{p_{+}}
    \\
     \times\left[
     \frac{(qtz-\alpha)(qtz^{-1}-\alpha)}{(\alpha^2t^2-1)}
     (r_{1,-}\tfrac{1}{2}(q^{-1/2}\alpha t+q^{1/2}\alpha^{-1}t^{-1})+r_{0,-})\mathfrak{W}_{-}(\alpha t;q^{1/2}t) 
     \right.
    \\
     \left.
    +q^2\frac{(z-\alpha t)(z^{-1}-\alpha t)}{(\alpha^2-q^2t^2)}
     (r_{1,-}\tfrac{1}{2}(q^{-1/2}\alpha t^{-1}+q^{1/2}\alpha^{-1}t)+r_{0,-})\mathfrak{W}_{+}(q^{-1}\alpha t^{-1};q^{1/2}t)
     \right] .
\end{multline}
This particular parameterisation implies that the numerator of the right-hand side of (\ref{aux10}) manifestly 
contains a factor of $ (z-z^{-1})(qtz-\alpha)(qtz^{-1}-\alpha)(z-\alpha t)(z^{-1}-\alpha t) $
which is present in the denominator. This ensures that the ratio is linear in $ x $, as it must.
Finding the zero of this linear polynomial then gives $ \check{\lambda} $, which after further substantial
manipulation and simplification yields (\ref{t-Evol:c}) and (\ref{t-Evol:d}).
\end{proof}


A crucial fact enabling further progress is the following factorisation formula for a quantity
that will subsequently figure prominently in certain discriminants.
\begin{lemma}
The bi-quadratic in $ \hat{\rho}, \check{\lambda} $ 
\begin{multline}
   16\sigma_4(\hat{\rho}^2\check{\lambda}^2-\hat{\rho}^2-\check{\lambda}^2)-8s_4(q^2+q\sigma_2+\sigma_4)\hat{\rho}\check{\lambda}+8q^{1/2}s_4(q\sigma_1+\sigma_3)\hat{\rho}
  \\
   +8q^{-1/2}(\sigma_1\sigma_4+q\sigma_3)\check{\lambda}+(q-\sigma_2)^2-4\sigma_1\sigma_3+2\sigma_4-2q^{-1}\sigma_2\sigma_4+q^{-2}\sigma^2_4 ,
\label{perfectSQ}
\end{multline}
is a perfect square which can be given in either of two ways. In the first way this is the square of
\begin{multline}
   qt^2\hat{f}^{-2}-q^{1/2}t^2\sigma_1\hat{f}^{-1}+q^{-1/2}t^{-2}\sigma_3\hat{f}-q^{-1}t^{-2}\sigma_4\hat{f}^2
  +qt^{-2}-q^{-1}t^{2}\sigma_4+q^{3/2}\{n+\tfrac{1}{2}\}\alpha^{-1}t^{-3}(t^2-1)(\hat{f}+t^4\hat{f}^{-1})
  \\
  -2qs_4(t^{-2}\hat{f}-t^2\hat{f}^{-1})\hat{\rho}-q\frac{(1-s^2_4\hat{f}^2)(\hat{f}-q^n\alpha t)(t-q^n\alpha s^2_4\hat{f})}
         {\hat{f}^{2}(t\hat{f}-q^n\alpha)(1-q^n\alpha ts^2_4\hat{f})}
    \frac{w(\hat{f})}{1+s^2_4\hat{f}^2-2s_4\hat{f}\hat{\rho}}
  \\
  +\frac{q^{1/2}\alpha t(t^2-1)s^3_4}{[n+\tfrac{1}{2}]}
    \Bigg[
         \frac{q^{2n}\alpha^2s_4^3(1-q^{2n}\alpha^2t^2s^2_4)(\hat{f}-q^n\alpha t)w(q^{-n}\alpha^{-1}t^{-1}s_4^{-2})}
              {(1-q^n\alpha ts^2_4\hat{f})(1+q^{2n}\alpha^2t^2s^2_4-2q^n\alpha ts_4\hat{\rho})}
  \\
        +\frac{q^{-2n}\alpha^{-2}s_4^{-3}(t^2-q^{2n}\alpha^2s^2_4)(t-q^n\alpha s^2_4\hat{f})w(q^{n}\alpha t^{-1})}
              {(t\hat{f}-q^n\alpha)(t^2+q^{2n}\alpha^2s^2_4-2q^n\alpha ts_4\hat{\rho})}
    \Bigg] ,
\label{t-Aux:c}
\end{multline}
and in the second way this is the square of
\begin{multline}
   q^{1/2}\sigma_3t^{-2}\check{f}-q^{-1/2}\sigma_1t^2\check{f}^{-1}
  -(t^2\check{f}^{-2}+q^2t^{-2}s^2_4)\left[ \check{f}^2-1+q^{-1}(q+\alpha^2)\alpha^{-1}t^{-1}(q-t^2)\check{f} \right]
  \\
  +2\check{\lambda}(t^2\check{f}^{-1}-q^2t^{-2}s^2_4\check{f})
  +\frac{q(\check{f}^2-1)(q\check{f}-\alpha t)(t-\alpha\check{f})}{\check{f}^2(t\check{f}-\alpha)(q-\alpha t\check{f})}
   \frac{w(\check{f})}{\check{f}^2+1-2\check{f}\check{\lambda}}
  \\
  +\frac{t(q-t^2)}{q\alpha(q-\alpha^2)}
   \Bigg[
        \frac{t-\alpha\check{f}}{t\check{f}-\alpha}
        \frac{q^2(t^2-\alpha^2)w(\alpha t^{-1})}{\alpha^2+t^2-2\alpha t\check{\lambda}}
       +\frac{q\check{f}-\alpha t}{q-\alpha t\check{f}}
        \frac{\alpha^4(q^2-\alpha^2t^2)w(q\alpha^{-1}t^{-1})}{q^2+\alpha^2t^2-2q\alpha t\check{\lambda}}
   \Bigg] .
\label{t-Aux:d}
\end{multline}
\end{lemma}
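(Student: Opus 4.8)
The plan is to recognise the two proposed square roots (\ref{t-Aux:c}) and (\ref{t-Aux:d}) as, respectively, the ``advanced'' and ``retarded'' evaluations of one and the same spectral quantity, so that their equality (up to a sign to be fixed at the end) and the identification of their common square with the bi-quadratic (\ref{perfectSQ}) both flow from the determinantal identity (\ref{Bi-spectral}) together with the product relation (\ref{prodId}). Concretely, I would first assemble, from the residue equations (\ref{cse:a})--(\ref{cse:o}), the two combinations whose square roots are being compared: eliminating the trailing coefficients $ r_{0\pm} $ between (\ref{cse:a})/(\ref{cse:f}) and between (\ref{cse:d})/(\ref{cse:g}) and dividing by $ p_{+} $ produces relations involving $ r_{1\pm}\hat{a}_n/p_{+} $, $ r_{1\pm}\check{a}_n/p_{+} $ and the evaluated spectral coefficients; feeding in the parameterisations (\ref{rPxfm}), (\ref{rMxfm}) on the left and the explicit partial-fraction forms (\ref{evalSpec:a})--(\ref{evalSpec:d}) (equivalently (\ref{lhs2ND:a}), (\ref{lhs2ND:b})) on the right, one obtains an expression visibly rational in $ \hat{f} $ (respectively $ \check{f} $) whose square is the object to be matched with (\ref{perfectSQ}). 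The bilinear relation (\ref{Bi-spectral}) evaluated at the nodes $ z=z_5,z_6 $ and their shifts, i.e. the vanishing of $ \mathfrak{W}_{+}\mathfrak{W}_{-}+\mathfrak{T}_{+}\mathfrak{T}_{-} $ there, is precisely what makes such a combination a perfect square, while (\ref{prodId}) forces the advanced and retarded versions to coincide.

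The verification itself I would carry out by reducing both sides to a common set of coordinates. On the advanced side I substitute into (\ref{t-Aux:c}) the transformation (\ref{glXFM}) (which relates $ \hat{\rho} $, $ \hat{\lambda} $ and $ \hat{f} $, so that $ \hat{f}^{2}+1-2\hat{f}\hat{\lambda}=w(\hat{f})/(1+s_4^{2}\hat{f}^{2}-2s_4\hat{f}\hat{\rho}) $) and the evolution formula (\ref{t-Evol:d}), equivalently (\ref{t-Evol:c}) in the $ \hat{g} $ variable, to eliminate $ \check{\lambda} $; simultaneously I reduce (\ref{perfectSQ}) the same way, using (\ref{t-Evol:d}) for $ \check{\lambda} $ and the advanced form of (\ref{LR_xfm})/(\ref{glXFM}) for $ \hat{\rho} $. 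Both sides then become rational functions in the two remaining variables $ \hat{f},\hat{g} $ (equivalently $ \hat{f},\hat{\rho} $) with coefficients in $ \C[\sigma_1,\dots,\sigma_4,s_4^{\pm1},\alpha^{\pm1},t^{\pm1},q^{\pm n},q^{\pm1/2}] $, and equality is a finite comparison after clearing denominators. The retarded statement, that (\ref{perfectSQ}) equals the square of (\ref{t-Aux:d}), is treated symmetrically: (\ref{glXFM}) at the retarded argument removes $ \check{\lambda} $ in favour of $ \check{f} $, and the partial-fraction form (\ref{Evol_aux}) of the evolution (\ref{t-Evol:a}) removes $ \hat{\rho} $ in favour of $ \check{f},\check{\lambda} $, after which both sides are rational in $ \check{f} $. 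Having both reductions agree with (\ref{perfectSQ}) proves the lemma; the residual sign is fixed, as elsewhere in the paper, by specialising $ t $ to a convenient value or by taking $ z\to\infty $ in the underlying interpolation formulae.

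The principal obstacle is purely the algebraic volume: the evaluated spectral coefficients (\ref{evalSpec:a})--(\ref{evalSpec:d}) are degree-six rational functions, and after the substitutions above both (\ref{perfectSQ}) and the squared right-hand sides carry spurious poles --- at $ \hat{f}=\hat{l}^{\pm1} $, at $ t\hat{f}=q^{n}\alpha $ and $ 1=q^{n}\alpha t s_4^{2}\hat{f} $, and at the analogous retarded loci --- which must be shown to cancel identically, exactly as in Remark \ref{Quin2Quad}. I would organise this by first checking that the residue at each apparent pole vanishes, so that both sides are genuine Laurent polynomials of low, bounded degree in $ \hat{f} $ (respectively $ \check{f} $), and only then comparing the finitely many surviving coefficients. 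As an independent cross-check I would note that, up to the constant $ C_{\infty} $ and known factors, the bi-quadratic (\ref{perfectSQ}) is nothing but the spectral discriminant $ W^{2}-\Delta y^{2}V^{2} $ transported to the common zeros $ \tilde{x}_5,\tilde{x}_6 $ by the deformation (cf. Remark \ref{common_zeros}), so that its being a perfect square in these coordinates is a direct consequence of $ A^{*}_n(z;)A^{*}_n(z^{-1};)=(W^{2}-\Delta y^{2}V^{2}){\rm Id} $ evaluated at the relevant lattice nodes.
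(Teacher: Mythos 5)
Your verification is, at its core, the same as the paper's: the first identity is obtained by substituting (\ref{t-Evol:c}) (equivalently (\ref{t-Evol:d})) for $\check{\lambda}$ into (\ref{perfectSQ}) and simplifying to the square of (\ref{t-Aux:c}), and the second by substituting (\ref{Evol_aux}) for $\hat{\rho}$ and simplifying to the square of (\ref{t-Aux:d}), each being a direct rational-function comparison after clearing denominators. The additional framing via (\ref{Bi-spectral}), (\ref{prodId}) and the residue equations is consistent motivation but does not change the substance, so the proposal is correct and essentially matches the paper's (very terse) proof.
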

\begin{proof}
In the first way we substitute (\ref{t-Evol:c}) into (\ref{perfectSQ}), whereas in the second
way we substitute (\ref{Evol_aux}) into (\ref{perfectSQ}). After considerable simplification we
arrive at the two results.
\end{proof}

We are now in a position to derive the evolution equations for the $ f $-variable, firstly
in the advanced direction.
\begin{proposition}\label{evolution_3}
Assuming $ \check{f}\neq 0 $, $ t\check{f}-\alpha\neq 0 $, and $ q-\alpha t\check{f}\neq 0 $
we have the forward evolution for $ f $ 
\begin{multline}
   \hat{f} = 
  \left\{ 
   \frac{2t^2(t^2-1)s^2_4}{q^{n-1/2}[n+\tfrac{1}{2}]}
   \Bigg[
        -\frac{q^{4n}\alpha^4s^4_4w(q^{-n}\alpha^{-1}t^{-1}s^{-2}_4)}{1+q^{2n}\alpha^2t^2s^2_4-2q^n\alpha ts_4\hat{\rho}}
        +\frac{w(q^{n}\alpha t^{-1})}{t^2+q^{2n}\alpha^2s^2_4-2q^n\alpha ts_4\hat{\rho}}
   \Bigg]
  \right.
  \\
  -\sigma_2-q^{-1}(1-2t^2)\sigma_4-q(1-2t^{-2})+2\check{\lambda}\left[ 2qs_4\hat{\rho}-t^2\check{f}^{-1}+q^2t^{-2}s^2_4\check{f} \right]
  \\
  -q^{1/2}\sigma_3t^{-2}\check{f}+q^{-1/2}\sigma_1t^2\check{f}^{-1}
  +(t^2\check{f}^{-2}+q^2t^{-2}s^2_4)\left[ \check{f}^2-1+q^{-1}(q+\alpha^2)\alpha^{-1}t^{-1}(q-t^2)\check{f} \right]
  \\
  -\frac{q(\check{f}^2-1)(q\check{f}-\alpha t)(t-\alpha\check{f})}{\check{f}^2(t\check{f}-\alpha)(q-\alpha t\check{f})}
   \frac{w(\check{f})}{\check{f}^2+1-2\check{f}\check{\lambda}}
  \\
  \left.
  -\frac{t(q-t^2)}{q\alpha(q-\alpha^2)}
   \Bigg[
        \frac{t-\alpha\check{f}}{t\check{f}-\alpha}
        \frac{q^2(t^2-\alpha^2)w(\alpha t^{-1})}{\alpha^2+t^2-2\alpha t\check{\lambda}}
       +\frac{q\check{f}-\alpha t}{q-\alpha t\check{f}}
        \frac{\alpha^4(q^2-\alpha^2t^2)w(q\alpha^{-1}t^{-1})}{q^2+\alpha^2t^2-2q\alpha t\check{\lambda}}
   \Bigg]
  \right\}
  \\ \times
  \Bigg\{
  \frac{2q^{1/2}t(t^2-1)}{[n+\tfrac{1}{2}]}
  \Bigg[
  -\frac{q^{2n}\alpha^3s^6_4w(q^{-n}\alpha^{-1}t^{-1}s^{-2}_4)}{1+q^{2n}\alpha^2t^2s^2_4-2q^n\alpha ts_4\hat{\rho}}
  +\frac{\alpha s^4_4w(q^{n}\alpha t^{-1})}{t^2+q^{2n}\alpha^2s^2_4-2q^n\alpha ts_4\hat{\rho}}
  \Bigg]
  \\
  -2q^{3/2}\alpha^{-1}t^{-3}\{n+\tfrac{1}{2}\}(t^2-1)-2q^{-1}t^{-2}(q^{1/2}\sigma_3-2t^2\sigma_4\check{\lambda})
  +4qt^{-2}s_4\hat{\rho}
  \Bigg\}^{-1} .
\label{t-Evol:b}
\end{multline}
where $ \hat{\rho} $ is given by (\ref{t-Evol:a}).
\end{proposition}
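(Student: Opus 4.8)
The plan is to obtain (\ref{t-Evol:b}) as the solution of a \emph{linear} equation for $\hat{f}$ extracted from the two alternative forms of the perfect square in the preceding Lemma, namely (\ref{t-Aux:c}) and (\ref{t-Aux:d}). The key observation is that both expressions square to the same bi-quadratic (\ref{perfectSQ}) in $\hat{\rho},\check{\lambda}$, so up to a sign they must be equal as functions of $\hat{f}$ (with $\check{f},\check{\lambda},\hat{\rho}$ treated as given quantities related through (\ref{t-Evol:a}), (\ref{t-Evol:c}) and (\ref{glXFM})). The sign is fixed by matching a single limiting value, most conveniently as $\hat{f}\to\infty$ or by comparison at $n=0$ using the seed data. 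Once the sign is settled, I would write out the equality (\ref{t-Aux:c}) $=$ (\ref{t-Aux:d}) and observe that (\ref{t-Aux:c}) is \emph{affine} in $\hat{f}$: every $\hat{f}$-dependent term there is either polynomial of degree one in $\hat{f}$ (the terms $q^{-1/2}t^{-2}\sigma_3\hat{f}$, $-q^{-1}t^{-2}\sigma_4\hat{f}^2$ -- wait, this has a square, so one must first clear the denominators $1+s_4^2\hat{f}^2-2s_4\hat{f}\hat{\rho}$, $(t\hat{f}-q^n\alpha)$, $(1-q^n\alpha ts_4^2\hat{f})$) or a simple rational function thereof. The correct route is: multiply through by the common denominator, and use the inversion $1+s_4^2\hat{f}^2-2s_4\hat{f}\hat{\rho}=w(\hat{f})/(\hat{f}^2+1-2\hat{f}\hat{\lambda})$ from (\ref{glXFM}) (shifted forward) to reduce the degree, at which point the whole equation collapses to linear in $\hat{f}$.

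Concretely the steps are: (i) shift (\ref{glXFM}) forward in $t$ to get $-2s_4\hat{f}\hat{\rho}+1+s_4^2\hat{f}^2=w(\hat{f})/(\hat{f}^2+1-2\hat{f}\hat{\lambda})$, and use this to eliminate the combination $1+s_4^2\hat{f}^2-2s_4\hat{f}\hat{\rho}$ wherever it appears in (\ref{t-Aux:c}); (ii) substitute the expression (\ref{t-Evol:c}) for $\check{\lambda}$ in terms of $\hat{g},\hat{f}$, or equivalently keep $\check{\lambda}$ but use (\ref{t-Evol:a})/(\ref{Evol_aux}) to express everything through $\check{f},\check{\lambda},\hat{\rho}$; (iii) set (\ref{t-Aux:c}) equal to $\pm$(\ref{t-Aux:d}), clear denominators, and collect powers of $\hat{f}$ -- the coefficients of $\hat{f}^2$ and higher must cancel identically (this is the content of the Lemma, that both sides are genuine square roots of the \emph{same} polynomial, hence their difference is at most linear), leaving $(\text{coeff}_1)\hat{f}+(\text{coeff}_0)=0$; (iv) solve $\hat{f}=-\text{coeff}_0/\text{coeff}_1$ and identify the numerator and denominator with the braces in (\ref{t-Evol:b}). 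The terms involving $w(q^n\alpha t^{-1})$ and $w(q^{-n}\alpha^{-1}t^{-1}s_4^{-2})$ in both braces of (\ref{t-Evol:b}) come directly from the corresponding terms in (\ref{t-Aux:c}), which themselves originate in (\ref{t-Evol:c}) via the partial-fraction poles at $\hat{f}=s_4^{-1}\hat{g}^{\pm1}$.

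An alternative and perhaps cleaner derivation -- which I would mention but not carry out in full -- runs entirely through the compatibility relation (\ref{spectral+deform:b}) resolved at the node $z=\hat{f}$ (or its image), in the spirit of Proposition \ref{evolution_2}: there one solves for the $(1,2)$ component of $A_n^*(z;q^{-1/2}t)$ as in (\ref{aux10}), whose numerator manifestly contains the factor present in the denominator, forcing linearity in $x$; evaluating at the appropriate ordinate and comparing with the two forms of the evaluated spectral coefficients (\ref{evalSpec:b}), (\ref{evalSpec:d}), (\ref{lhs2ND:a}), (\ref{lhs2ND:b}) yields the same linear relation for $\hat{f}$.

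\textbf{Main obstacle.} The hard part will be step (iii): verifying that the coefficients of $\hat{f}^2$, $\hat{f}^3$, $\hat{f}^4$, \dots in the cleared-denominator form of (\ref{t-Aux:c}) $\mp$ (\ref{t-Aux:d}) vanish identically. This is where the perfect-square structure of (\ref{perfectSQ}) is doing all the work, and the cancellation is far from manifest term-by-term because each side carries three rational pieces (the $w(\hat{f})$, $w(q^n\alpha t^{-1})$ and $w(q^{-n}\alpha^{-1}t^{-1}s_4^{-2})$ terms on one side; the $w(\check{f})$, $w(\alpha t^{-1})$ and $w(q\alpha^{-1}t^{-1})$ terms on the other) whose denominators must be cross-multiplied and reconciled using (\ref{glXFM}) and (\ref{t-Evol:a}). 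Bookkeeping the spurious poles (compare Remark \ref{Quin2Quad}) and checking that they cancel between terms is the delicate point; once past it, the extraction of the linear solution and the final identification with (\ref{t-Evol:b}) is bookkeeping.
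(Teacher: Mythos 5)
You have assembled the right ingredients --- the perfect-square Lemma, the two closed forms (\ref{t-Aux:c}) and (\ref{t-Aux:d}), and the branch selection via the $n=0$ seed --- but the mechanism you propose for extracting $\hat{f}$ is not the one that works. The paper regards (\ref{t-Evol:c}) itself as a \emph{quadratic} equation $a\hat{f}^2+b\hat{f}+c=0$ for $\hat{f}$ (clear the two simple poles at $\hat{f}=s_4^{-1}\hat{g}^{\pm 1}$), with coefficients depending on $\hat{g}$, i.e.\ on $\hat{\rho}$, and on $\check{\lambda}$. Its discriminant is an explicit square times (\ref{perfectSQ}); substituting $\hat{\rho}$ by (\ref{Evol_aux}) one invokes form (\ref{t-Aux:d}) to write $\sqrt{b^2-4ac}$ as an explicit rational function of $\check{f},\check{\lambda}$, applies the quadratic formula, and fixes the negative branch by the $n=0$ solution; the two braces of (\ref{t-Evol:b}) are essentially $-b-\sqrt{b^2-4ac}$ and $2a$. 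The form (\ref{t-Aux:c}) is not used here at all --- it serves the inverse direction in Proposition \ref{evolution_4}, where (\ref{Evol_aux}) is the quadratic (in $\check{f}$) whose discriminant it resolves.

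The genuine gap in your version is the asserted collapse of $(\ref{t-Aux:c})=\pm(\ref{t-Aux:d})$ to a linear equation in $\hat{f}$. That both expressions square to the same bi-quadratic in $(\hat{\rho},\check{\lambda})$ does give the on-shell identity $(\ref{t-Aux:c})=\pm(\ref{t-Aux:d})$, but it does not make their difference ``at most linear'' in $\hat{f}$: the expression (\ref{t-Aux:c}) arises from (\ref{perfectSQ}) by substituting (\ref{t-Evol:c}) for $\check{\lambda}$, and is a genuinely high-degree rational function of $\hat{f}$ (the $w(\hat{f})$ term alone sits over a sextic denominator in $\hat{f}$). Equating it to the $\hat{f}$-independent value $-(\ref{t-Aux:d})$ therefore produces, after clearing denominators, a polynomial equation in $\hat{f}$ whose higher coefficients do not vanish identically; the dynamical $\hat{f}$ is only one of several roots, and your step (iii) does not isolate it. What is linear on-shell is $2a\hat{f}+b=\mp\sqrt{b^2-4ac}$ --- the quadratic formula --- but to recover that from the simplified closed form (\ref{t-Aux:c}) you would have to undo the substitution of (\ref{t-Evol:c}), which is exactly what the discriminant argument is designed to avoid. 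Your step (i) also introduces the further unknown $\hat{\lambda}$ via the forward shift of (\ref{glXFM}), moving you away from, not towards, a closed equation for $\hat{f}$ in terms of the retarded data.
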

\begin{proof}
Whilst (\ref{t-Evol:c}) can be primarily viewed as a linear equation for $ \check{\lambda} $ it
can also be viewed as a quadratic equation for $ \hat{f} $, and as such, possesses a discriminant
which contains, apart from explicit squared factors, the factor (\ref{perfectSQ}). By substituting
for $ \hat{\rho} $ using (\ref{Evol_aux}) we can employ the result (\ref{t-Aux:d}), and
effect a factorisation of the quadratic into linear factors. The choice of the factors can be
settled by consideration of the known solution for $ n=0 $ and it transpires that the negative branch is
appropriate. This yields (\ref{t-Evol:b}).
\end{proof}

Our last task is to derive the inverse to (\ref{t-Evol:b}) and complete the system
of evolution equations.
\begin{proposition}\label{evolution_4}
The inverse of the evolution for $ f $ is given by
\begin{multline}
  \check{f} = \Bigg\{
   \frac{2t^2(q-t^2)}{q-\alpha^2}
   \Bigg[
        \frac{w(\alpha t^{-1})}{\alpha^2+t^2-2\alpha t\check{\lambda}}
       -q^{-1}\alpha^4
        \frac{w(q\alpha^{-1}t^{-1})}{q^2+\alpha^2t^2-2q\alpha t\check{\lambda}}
   \Bigg]
  \\
  +4s_4\hat{\rho}\check{\lambda}+2s_4(t^{-2}\hat{f}-t^2\hat{f}^{-1})\hat{\rho}
  -t^2\hat{f}^{-2}+q^{-1/2}t^2\sigma_1\hat{f}^{-1}-q^{-1}\sigma_2-q^{-3/2}t^{-2}\sigma_3\hat{f}+q^{-2}t^{-2}\sigma_4\hat{f}^2
  \\
  -q^{-2}t^{-2}(-2q+t^2-t^4)\sigma_4-q^{-1}t^{-2}(q(1+t^2)-2t^4)-q^{1/2}\{n+\tfrac{1}{2}\}\alpha^{-1}t^{-3}(t^2-1)(\hat{f}+t^4\hat{f}^{-1})
  \\
  +\frac{(1-s^2_4\hat{f}^2)(\hat{f}-q^n\alpha t)(t-q^n\alpha s^2_4\hat{f})}
         {\hat{f}^{2}(t\hat{f}-q^n\alpha)(1-q^n\alpha ts^2_4\hat{f})}
    \frac{w(\hat{f})}{1+s^2_4\hat{f}^2-2s_4\hat{f}\hat{\rho}}
  \\
  -\frac{\alpha t(t^2-1)s^3_4}{q^{1/2}[n+\tfrac{1}{2}]}
    \Bigg[
         \frac{q^{2n}\alpha^2s_4^3(1-q^{2n}\alpha^2t^2s^2_4)(\hat{f}-q^n\alpha t)w(q^{-n}\alpha^{-1}t^{-1}s_4^{-2})}
              {(1-q^n\alpha ts^2_4\hat{f})(1+q^{2n}\alpha^2t^2s^2_4-2q^n\alpha ts_4\hat{\rho})}
  \\
        +\frac{q^{-2n}\alpha^{-2}s_4^{-3}(t^2-q^{2n}\alpha^2s^2_4)(t-q^n\alpha s^2_4\hat{f})w(q^{n}\alpha t^{-1})}
              {(t\hat{f}-q^n\alpha)(t^2+q^{2n}\alpha^2s^2_4-2q^n\alpha ts_4\hat{\rho})}
    \Bigg]
  \Bigg\}
  \\ \times
  \Bigg\{
   \frac{2\alpha t(q-t^2)}{q-\alpha^2}
    \Bigg[
         \frac{w(\alpha t^{-1})}{\alpha^2+t^2-2\alpha t\check{\lambda}}
        -\frac{\alpha^2w(q\alpha^{-1}t^{-1})}{q^2+\alpha^2t^2-2q\alpha t\check{\lambda}}
    \Bigg]
  \\
  -2q^{-1/2}t^{-2}\sigma_3+2q^{-2}\alpha^{-1}t^{-3}(q-t^2)(q+\alpha^2)\sigma_4+4s_4\hat{\rho}+4q^{-1}t^{-2}\sigma_4\check{\lambda}
  \bigg\}^{-1} .
\label{t-Evol:e}
\end{multline}
where $ \check{\lambda} $ is given by (\ref{t-Evol:c}).
\end{proposition}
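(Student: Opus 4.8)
\textbf{Proof proposal for Proposition \ref{evolution_4}.}

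The plan is to obtain (\ref{t-Evol:e}) as the inversion of the forward evolution (\ref{t-Evol:b}), exploiting the symmetry of the whole structure under reversal of direction on the deformation lattice. Recall from the Remark following Corollary \ref{deform_product} and the analysis in Subsection on the Deformation Structure that the mapping $ t \mapsto t^{-1} $ (equivalently $ v_+ \leftrightarrow v_- $) corresponds to reversing the deformation lattice, and that $ B^*_n $ is symmetric under $ t \mapsto t^{-1} $. Concretely, the relations (\ref{cse:a})--(\ref{cse:o}) come in advanced/retarded pairs related by this reversal, and (\ref{soln:up}) and (\ref{soln:dn}) are exactly such a pair. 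Thus I would start not from scratch but from the observation that every formula derived so far has a retarded counterpart obtained by the substitution $ t \mapsto q^{-1}t $ together with the interchange $ \hat{\cdot} \leftrightarrow \check{\cdot} $ of advanced and retarded variables.

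First I would take the quadratic-in-$\hat{f}$ viewpoint of (\ref{t-Evol:c}), which produced (\ref{t-Evol:b}) for $\hat f$ via the factorisation supplied by the perfect-square Lemma (the quantity (\ref{perfectSQ}) equals the square of (\ref{t-Aux:c}) and also of (\ref{t-Aux:d})). The dual statement is that (\ref{t-Evol:d}) — the other form of the inverse map for $\check\lambda$ — can equally well be read as a quadratic equation for $\check f$ rather than a linear equation for $\check\lambda$, and that its discriminant again contains, up to explicit squared factors, the same universal quantity (\ref{perfectSQ}). Here, however, the relevant square-root evaluation is (\ref{t-Aux:c}) (the one written in the $\hat f,\hat\rho$ variables via (\ref{t-Evol:c})), rather than (\ref{t-Aux:d}). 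Substituting (\ref{t-Aux:c}) into the discriminant then factorises the quadratic in $\check f$ into linear factors, exactly paralleling the proof of Proposition \ref{evolution_3}.

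The main steps, in order, would be: (i) rewrite (\ref{t-Evol:d}) as a quadratic polynomial $ P(\check f) = 0 $ with coefficients that are rational in $ \hat f, \hat\rho, \check\lambda $ and the fixed data; (ii) compute its discriminant and identify the non-obvious factor as (\ref{perfectSQ}); (iii) invoke the perfect-square Lemma in the form (\ref{t-Aux:c}) to replace $\sqrt{(\ref{perfectSQ})}$ by an explicit rational expression in $\hat f, \hat\rho$; (iv) solve the now-factorised quadratic for $\check f$, obtaining two linear branches; (v) fix the sign/branch by matching against the $ n=0 $ seed solution, just as was done for (\ref{t-Evol:b}) — the negative branch should again be the correct one — thereby arriving at (\ref{t-Evol:e}), with $\check\lambda$ understood to be given by (\ref{t-Evol:c}) so that the right-hand side is expressed consistently in the primary variables. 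As a consistency check one should verify directly that composing (\ref{t-Evol:e}) with (\ref{t-Evol:b}) returns the identity, using the genericity assumptions $\check f \neq 0$, $ t\check f - \alpha \neq 0 $, $ q - \alpha t\check f \neq 0 $.

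The hard part will be step (ii)--(iii): recognising, after clearing denominators in the quadratic $ P(\check f)=0 $, that the discriminant collapses — modulo the manifestly squared prefactors coming from the linear factors $(t\hat f-q^n\alpha)$, $(1-q^n\alpha ts_4^2\hat f)$ and the $ w $-polynomial denominators — precisely onto the bi-quadratic (\ref{perfectSQ}) in $ \hat\rho, \check\lambda $. This is the same miracle that underpins Proposition \ref{evolution_3}, but applied along the opposite arrow, and the bookkeeping of which explicit squares one pulls out (so that the residual factor is exactly (\ref{perfectSQ}) and not some multiple of it) is the delicate point; everything downstream is then a deterministic, if lengthy, simplification. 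As with the earlier propositions I would not reproduce the full algebra but indicate that it is the outcome of a straightforward computation once the perfect-square identity is in hand.
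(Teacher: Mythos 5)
Your overall strategy coincides with the paper's: reduce to a quadratic in $\check f$, recognise (\ref{perfectSQ}) as the only non-square factor of its discriminant, invoke the perfect-square identity in the hatted form (\ref{t-Aux:c}) to factorise, and select the negative branch by matching the $n=0$ seed. However, there is a concrete error in the equation you propose to invert. You say that (\ref{t-Evol:d}) ``can equally well be read as a quadratic equation for $\check f$'', but $\check f$ does not occur anywhere in (\ref{t-Evol:d}): that relation expresses $\check\lambda$ entirely in terms of the \emph{advanced} quantities $\hat f$, $\hat\lambda$ (through $\hat\lambda-\tfrac12(\hat f+\hat f^{-1})$) and $\hat\rho$, so there is no unknown $\check f$ to solve for. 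The correct starting point is (\ref{Evol_aux}) — the partial-fraction form of (\ref{t-Evol:a}) written in the $\check\lambda$ variable — which expresses $2s_4\hat\rho$ as a rational function of $\check f$ and $\check\lambda$. Cleared of denominators this is on its face a quintic in $\check f$, and the step you omit is the observation (Remark \ref{Quin2Quad}) that it carries the overall factor $\check f(t\check f-\alpha)(q-\alpha t\check f)$, whose removal is what leaves the quadratic; your genericity conditions $\check f\neq0$, $t\check f-\alpha\neq0$, $q-\alpha t\check f\neq0$ are precisely the licence to divide out this factor, not merely hypotheses for a downstream consistency check.

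Once the starting equation is corrected, the remainder of your outline agrees with the paper's proof: the discriminant of the resulting quadratic in $\check f$ contains (\ref{perfectSQ}) as its only manifestly square-free factor; one substitutes for $\check\lambda$ via (\ref{t-Evol:c}) so that the square root can be taken using (\ref{t-Aux:c}) (the evaluation in the $\hat f,\hat\rho$ variables), the quadratic splits into linear factors, and the $n=0$ classical solution selects the negative root, giving (\ref{t-Evol:e}). So the gap is localised but real: as written, step (i) of your plan sets up a polynomial equation in a variable that is absent from the equation, and nothing downstream can proceed until the source relation (\ref{Evol_aux}) and the quintic-to-quadratic reduction are put in place.
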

\begin{proof}
In a similar manner to the previous proof we can solve (\ref{Evol_aux}) for $ \check{f} $.
On appearance this polynomial should be a quintic in $ \check{f} $, however in line with
Remark \ref{Quin2Quad}, we observe this contains a factor of $ \check{f}(t\check{f}-\alpha)(q-\alpha t\check{f}) $
and we only have to deal with a quadratic. Upon examining this quadratic we find that the
discriminant contains, again, the factor (\ref{perfectSQ}) as the only manifestly square-free factor.  
Upon substituting for $ \check{\lambda} $ using (\ref{t-Evol:c}) we can now employ (\ref{t-Aux:c})
and factorise the quadratic. As in the previous case consideration of the $ n=0 $ solution
resolves the sign ambiguity in favour of the negative root, yielding (\ref{t-Evol:e}).
\end{proof}

\subsection{Seed Solution}\label{seed}
We can now make contact with the earlier theory characterising the moments, as given in
Subsection \ref{SSect7_1}, through a study of the classical "seed" solutions to the coupled recurrence
system given above. We now append a subscript on the variables to indicate the $ n-$value.
\begin{proposition}
The recurrence relations (\ref{t-Evol:a}) and (\ref{t-Evol:b}) admit the classical "seed" solution at $ n=0 $
\begin{equation}
    f_0(t) = q^{-1/2}\alpha t ,
\label{CLsoln:a}
\end{equation}
and
\begin{equation}
   2qs_4\rho_0(t)
   = \frac{C(q^{1/2}\alpha^{-1}t+\sigma_4q^{-1/2}\alpha t^{-1})m(t)+D(q^{3/2}\alpha^{-1}t^{-1}+\sigma_4q^{-3/2}\alpha t)m(q^{-1}t)}
          {Cm(t)+Dm(q^{-1}t)} ,
\label{CLsoln:b}
\end{equation}
where $ m(t) $ is the general solution to the second-order $q$-difference equation
\begin{multline}
   (t^2-q^2)t^4 w(\alpha t^{-1}) m(q^{1/2}t)
   \\
   + \frac{\hat{D}}{\hat{C}}
     \left[ (t^2-q^2)t^4 w(\alpha t^{-1})+q^3(t^2-1) w(q^{-1}\alpha t)
            -q^{-3}(q-\alpha^2)(q^2-\sigma_4 \alpha^2)(t^2-1)(t^2-q)(t^2-q^2)
     \right] m(q^{-1/2}t)
   \\
   + \frac{\hat{D}\check{D}}{\hat{C}\check{C}} q^3(t^2-1) w(q^{-1}\alpha t) m(q^{-3/2}t) 
   = 0 .
\label{CLsoln:c}
\end{multline}
\end{proposition}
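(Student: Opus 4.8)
The plan is to verify the claimed seed solution by direct substitution into the two coupled first-order evolution equations (\ref{t-Evol:a}) and (\ref{t-Evol:b}), specialised to $ n=0 $, using the fact that at $ n=0 $ the spectral coefficients degenerate dramatically: $ \Theta_{-1}=0 $, $ \Omega_0=0 $, $ W_0=W $, so $ \lambda_0 $ is the zero of $ \Theta_0 $ and $ \nu_0=W(\lambda_0) $, $ \mu_0=V(\lambda_0) $. First I would record the $ n=0 $ reductions of all the auxiliary quantities: $ [0]=\tilde\sigma_6-1 $, $ [\tfrac12]=q^{1/2}\tilde\sigma_6-q^{-1/2} $, $ \{0\}=\tilde\sigma_6+1 $, and $ s_4=q^{-1}\sqrt{\sigma_4} $, and then substitute the ansatz $ \check f_0 = q^{-1/2}\alpha t $ (equivalently $ f_0(t)=q^{-1/2}\alpha t $ after the $ t\mapsto q^{-1/2}t $ shift implicit in (\ref{Evol_aux})) into the right-hand side of (\ref{t-Evol:a}). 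The key observation making this tractable is that at $ \check f = q^{-1/2}\alpha t $ several of the apparent poles in (\ref{t-Evol:a}) — at $ \alpha = t\check f $ and $ q = \alpha t\check f $, flagged in Remark \ref{Quin2Quad} — are near-degenerate, and the numerator factors $ (t-\alpha\check f) $, $ (q\check f - \alpha t) $ simplify, so that the right-hand side collapses to an explicit rational function of $ t $ that I would match against $ \hat\rho_0 $ as defined by (\ref{CLsoln:b}).

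The second and more substantive step is to show that the $ q $-difference equation (\ref{CLsoln:c}) for $ m(t) $ is consistent with—and in fact forced by—the seed data. Here I would argue that (\ref{CLsoln:b}) is the discrete Riccati linearisation: writing $ \rho_0 $ as a Möbius function of the ratio $ m(t)/m(q^{-1}t) $, the evolution equation (\ref{t-Evol:a}) with $ f_0 = q^{-1/2}\alpha t $ becomes, after clearing denominators, a first-order $ q $-difference equation for this ratio, and standard Riccati-to-linear reduction turns it into the second-order linear equation (\ref{CLsoln:c}) with the stated coefficients $ \hat C,\hat D,\check C,\check D $. I would identify $ \hat C/\hat D $ etc. explicitly by reading off the coefficients of $ m(q^{\pm 1/2}t) $, $ m(q^{-1/2}t) $ in the cleared form. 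I would also cross-check against the concrete moment sequence in Remark \ref{Check} and the moment $ q $-difference equation (\ref{3TermMoment}): since $ m_{0,0}(t) $ satisfies (\ref{3TermMoment}), and $ T(x;u) $ in Proposition \ref{M=3L=1T} is built from $ m_{0,0}(v_\pm) $, the $ n=0 $ deformation coefficient $ \Phi_0 = -2H_0\gamma_0(v_-)\gamma_0(v_+)T $ ties $ \rho_0 $ to $ m_{0,0} $, confirming that the $ m(t) $ appearing in (\ref{CLsoln:b})–(\ref{CLsoln:c}) is precisely (a scalar multiple of) the moment $ m_{0,0} $.

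Concretely the order of operations is: (i) specialise $ [n],\{n\},s_4,w_\pm $ and the polynomials $ W,V,U $ to $ n=0 $; (ii) substitute $ \check f_0=q^{-1/2}\alpha t $ into (\ref{Evol_aux}) and verify it reproduces (\ref{CLsoln:b}) with $ m $ satisfying a first-order $ q $-difference relation; (iii) linearise that Riccati relation to obtain (\ref{CLsoln:c}), pinning down the coefficient ratios; (iv) substitute the resulting $ \hat\rho_0 $ and $ \check\lambda_0 = V(\lambda_0) $ (equivalently via (\ref{glXFM}) at $ n=0 $) into the forward evolution (\ref{t-Evol:b}) and check it returns $ \hat f_0 = q^{-1/2}\alpha(q^{1/2}t) = \alpha t $, i.e. the shifted ansatz, so the solution is self-consistent under $ t\mapsto q^{1/2}t $; (v) confirm compatibility with (\ref{3TermMoment}) and Remark \ref{Check}.

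The main obstacle I anticipate is step (iii)–(iv): extracting the coefficients of (\ref{CLsoln:c}) cleanly from the cleared-denominator form of (\ref{t-Evol:a}) at $ f_0=q^{-1/2}\alpha t $, because the right-hand side of (\ref{t-Evol:a}) has three rational terms whose common denominator involves $ w(\check f_0)=w(q^{-1/2}\alpha t)=\prod_{j=1}^4(1-q^{-1}a_j\alpha t) $ together with the Möbius factors in $ \check\lambda $, and verifying that after the Riccati linearisation the spurious roots (the quintic-to-quadratic collapse of Remark \ref{Quin2Quad} and the discriminant factor (\ref{perfectSQ})) do not obstruct reduction to a genuine \emph{second}-order equation. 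Handling the branch choices—the "negative root" selections invoked in the proofs of Propositions \ref{evolution_3} and \ref{evolution_4}—at the $ n=0 $ level is exactly what fixes $ \hat f_0 $ versus a spurious alternative, so I would be careful to track which factor of (\ref{t-Aux:c})–(\ref{t-Aux:d}) vanishes at the seed and use that to justify the reduction. Once the linearisation is established the remaining verifications are routine $ q $-shift bookkeeping.
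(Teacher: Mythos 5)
Your proposal follows essentially the same route as the paper's own proof: substitute the ansatz $f_0=q^{-1/2}\alpha t$ into (\ref{t-Evol:a}) (observing the resulting collapse of terms) to reduce it to a discrete Riccati equation for $\rho_0$, then apply the standard M\"obius linearisation $2qs_4\rho_0=(Am+Bm^{-})/(Cm+Dm^{-})$ to pin down the coefficient ratios and arrive at (\ref{CLsoln:c}), with the cross-check against the moment equation (\ref{3TermMoment}) confirming the identification of $m$ with $m_{0,0}$. The only slip is writing $\check\lambda_0=V(\lambda_0)$ where the $n=0$ initial data actually give $\mu_0=V(\lambda_0)$ (with $\check\lambda_0$ recovered via (\ref{glXFM})); this does not affect the argument.
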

\begin{proof}
It is clear that both (\ref{evalSpec:c}) and (\ref{evalSpec:d}) both vanish when 
(\ref{CLsoln:a}) holds at $ n=0 $, therefore together they satisfy (\ref{splitId}).
Given (\ref{CLsoln:a}) we observe that two terms vanish in (\ref{t-Evol:a}) and $ \rho_0(t) $
satisfies the discrete Riccati equation
\begin{equation}
  2s_4\hat{\rho}_0-\alpha^{-1}t-\alpha t^{-1}s^2_4
  = \frac{qt^4w(\alpha t^{-1})\left[ 2qs_4\check{\rho}_0-q^2\alpha^{-1}t^{-1}-\alpha t s^2_4 \right]}
         {\alpha t(q-t^{2})(q-\alpha^{2})\left[ 2qs_4\check{\rho}_0-q^2\alpha^{-1}t^{-1}-\alpha t s^2_4\right]+q^4w(q^{-1}\alpha t)} .
\end{equation}
Alternatively we can derive this relation by specialising (\ref{t-Evol:d}) under (\ref{CLsoln:a})
which yields
\begin{equation}
    t^2+\alpha^2-2\alpha t\check{\lambda}_0 = \frac{t^4w(\alpha t^{-1})}{t^2+\alpha^2s^2_4-2\alpha ts_4\hat{\rho}_0} ,
\end{equation}
and then using (\ref{glXFM}) to substitute for $ \check{\lambda}_0 $.
Making the standard linearising transformation
\begin{equation}
    2qs_4\rho_0(t) = \frac{Am(t)+Bm(q^{-1}t)}{Cm(t)+Dm(q^{-1}t)} ,
\end{equation}
we find, in our solution for the decoupling factors, that
$ \hat{A} = (q\alpha^{-1}t+\sigma_4 q^{-1}\alpha t^{-1})\hat{C} $ and
$ \check{B} = (q^2\alpha^{-1}t^{-1}+\sigma_4 q^{-2}\alpha t)\check{D} $
and arrive at (\ref{CLsoln:c}).
\end{proof}

\begin{remark}
Our explicit initial orthogonal polynomial variables are one specialisation of the classical 
solutions given above. As remarked in the proof of Proposition \ref{SS_parameterise} we have 
$ \mu_0 = V(\lambda_0) $ and $ \nu_0 = W(\lambda_0) $ at $ n=0 $, where $ \lambda_0 $ is the unique zero
of the right-hand side of (\ref{M=3U}) 
\begin{equation}
   2[\tfrac{1}{2}]\lambda_0 = -q^{-1/2}\tilde{\sigma}_1+q^{1/2}\tilde{\sigma}_5-[1]\frac{m_{0,+}+m_{0,-}}{m_{0,0}} .
\end{equation}
We easily see that 
$ \mathfrak{z}_{\pm,0}(t) = l_0^{\mp 3}\prod^{6}_{j=1}(1-q^{-1/2}a_j l_0^{\pm 1}) $
and from the transformations (\ref{splitXfm:a},\ref{splitXfm:b}) we deduce that
$ f_0(t) = q^{-1/2}\alpha t $. 
The elements of the spectral matrix $ A^{*}_0 $ are
\begin{align}
   \mathfrak{W}_{+,0}(z;t) & = z^{-3}w(z)(1-q^{-1/2}\alpha tz)(1-q^{-1/2}\alpha t^{-1}z) = W+\Delta yV ,\\ 
   \mathfrak{W}_{-,0}(z;t) & = z^{3}w(z^{-1})(1-q^{-1/2}\alpha tz^{-1})(1-q^{-1/2}\alpha t^{-1}z^{-1}) = W-\Delta yV ,\\ 
   \mathfrak{T}_{+,0}(z;t) & = 2a_0[\tfrac{1}{2}](z-z^{-1})(z-l_0)(1-l_0^{-1}z^{-1}) ,\\ 
   \mathfrak{T}_{-,0}(z;t) & = 0 ,
\end{align}
whilst those of the deformation matrix $ B^{*}_0 $ are
\begin{align}
   \mathfrak{R}_{+,0}(z;t) & = -\frac{a_0\hat{\gamma}_0}{\check{a}_0\check{\gamma}_0}(1-q^{-1/2}\alpha tz)(1-q^{-1/2}\alpha tz^{-1}) ,\\ 
   \mathfrak{R}_{-,0}(z;t) & = -\frac{a_0\check{\gamma}_0}{\hat{a}_0\hat{\gamma}_0}(1-q^{-1/2}\alpha t^{-1}z)(1-q^{-1/2}\alpha t^{-1}z^{-1}) ,\\ 
   \mathfrak{P}_{+,0}(z;t) & = 2q^{-1/2}\alpha a_0\left[ t\frac{\hat{\gamma}_0}{\check{\gamma}_0}-t^{-1}\frac{\check{\gamma}_0}{\hat{\gamma}_0} \right] ,\\ 
   \mathfrak{P}_{-,0}(z;t) & = 0 .
\end{align}
We also note from (\ref{t-Evol:f}) and (\ref{ops_gammaDelta},\ref{ops_Hdet}) that
\begin{equation}
     2\alpha s_4 \rho_0(t)
  = \frac{(1+q^{-1}\alpha^2s^2_4t^{2})m_{0,0}(q^{-1}t)-(1+q\alpha^2s^2_4t^{-2})m_{0,0}(t)}{q^{-1/2}t\,m_{0,0}(q^{-1}t)-q^{1/2}t^{-1}m_{0,0}(t)} ,
\end{equation}
which is precisely the case of (\ref{CLsoln:b}) with $ D=-q^{-1}t^2C $. With this solution we
also deduce that the moment recurrence (\ref{3TermMoment}) for $ m_{0,0} $ coincides with (\ref{CLsoln:c}).
\end{remark}

At the beginning of this section, in Remark \ref{Check}, we noted that some explicit solutions 
to the moment recurrences were given in \cite{Wi_2010b}. This knowledge enables the possibility 
of checking any aspect of the foregoing theory either symbolically or numerically to
essentially unlimited precision. We wish to report that extensive checking of all the key
relations for the cases $ n=0 $ and $ n=1 $ has been carried out employing Mathematica code
and utilities where possible by exact symbolic means or if not by numerical means to a level
of one part in $ 10^{20}\to 10^{30} $ or better at random exact values of the input 
parameters. These checks have also utilised a body of unreported work \cite{Wi_2010c} covering
the system of Laguerre-Freud recurrences under $ n \mapsto n+1 $ for general $ n \in\Z_{\geq 0}$. 

We now come to the question regarding the identity of this system in the Sakai scheme. 
Whilst not providing an explicit transformation from our parameters and variables to those appearing
in the canonical coupled $q$-difference system \cite{GR_1999,Sa_2001},
we offer unambiguous evidence that it is one of the classical yet full parameter set cases of the $ E^{(1)}_7 $ $q$-Painlev\'e system
as can be seen from the following inspection of their classical solutions \cite{MSY_2003,KMNOY_2003,KMNOY_2004}, 
and the recent systematic study of "two-Casorati" determinantal forms of their classical solutions \cite{Masuda_2009}.
Clearly the $\tau$-functions of our construction, given by (\ref{ops_Hdet}) and (\ref{ops_Sdet}), are of the 
"two-Casorati" determinant form because if we choose $ b=a_r, a=a_s $ with $ r,s \in \{1,2,3,4\} $ say
and this is employed in the definition (\ref{ops_moment}) we have a moment determinant with
elements $ m_{0,0}(q^j a_r, q^k a_s) $, $ j,k \in \Z_{\geq 0} $ where $ m_{0,0} $ given by any
$q$-constant linear combination of the two $ {}_8W_7 $ solutions appearing in (\ref{M=3AWmoment}).

Several tasks, that have arisen in the course of our study, remain unfinished and
we conclude by detailing them and the issues involved. 
Clarification of the explicit relationship of the evolution system (\ref{t-Evol:a},\ref{t-Evol:d},\ref{t-Evol:b},\ref{t-Evol:e})
with the canonical $q$-difference equations as say given in \cite{KMNOY_2003,KMNOY_2004}
is required. It remains to complete the construction of the $ n \to n+1 $ recurrence relations
which occupy a special place in our approach, but figure as Schlesinger transformations in the
integrable theory. In fact our framework can easily treat the
Schlesinger transformations of this example (or any for that matter), and in particular the 
$ n \to n+1 $ recurrences, or the $ a_{j}, \alpha \mapsto qa_{j}, q\alpha, j=1,\ldots,4 $ mappings.
These latter transformations are manifested in our context as specialised 
Christoffel-Uvarov transformations, however we will postpone this undertaking.
 
Significant progress has been made in finding the analog of an isomonodromic system for the elliptic Painlev\'e 
courtesy of a preprint by Eric Rains \cite{Ra_2007}, and in the work by Yamada \cite{Ya_2009a}. 
The approach taken in this former work is very much in the spirit of the present study and it would be 
natural to expect that a limiting case of the results reported there would correspond our own.
Very recently Yamada \cite{Ya_2010} has given Lax pairs for the $ E^{(1)}_8 $, $ E^{(1)}_7 $ 
and $ E^{(1)}_6 $ $q$-Painlev\'e equations by reformulating the $ E^{(1)}_8 $ elliptic Painlev\'e
Lax pair system and taking limits $ E^{(1)}_8 \to E^{(1)}_7 \to E^{(1)}_6 $. One would expect
expect that a gauge transformation and co-ordinate transformations would link our Lax Pair with his.
 
\section{Acknowledgments}
This research has been supported by the Australian Research Council.
The author wishes to acknowledge Alphonse Magnus, whose seminal study formed the inspiration for this work and
has freely offered many clear perceptions into the subject,
the generous insights shared by Eric Rains, the kind interest of Mizan Rahman, 
the enthusiastic encouragement from Alexander Its
and the support of Peter Forrester.

\bibliographystyle{plain}
\bibliography{moment,random_matrices,nonlinear}

\end{document}